\newcommand{\bb}[1]{\mathbb{#1}}
\newcommand{\pard}[2]{\frac{\partial #1}{\partial #2}}
\newcommand{\ho}{\left(\frac{d}{dt} -\Delta \right)}
\newcommand{\ddt}[1]{\frac{ d #1}{dt}}
\newcommand{\hos}{\left(\frac{d}{ds} -\ti\Delta \right)}
\newcommand{\ip}[2]{\left \langle #1 , #2 \right\rangle}
\newcommand{\n}{\nabla}
\newcommand{\e}{\epsilon}
\renewcommand{\l}{\lambda}
\renewcommand{\t}{\theta}
\renewcommand{\d}{\delta}
\renewcommand{\a}{\alpha}
\newcommand{\Si}{\Sigma}
\newcommand{\ra}{\rightarrow}
\newcommand{\ov}{\overline}
\renewcommand{\ddt}[1]{\frac{d #1}{dt}}
\newcommand{\II}{{I\!I}}
\newcommand{\nt}{\ov\n_\ddt{}}
\newcommand{\np}{\n^\perp}
\newcommand{\ti}[1]{\tilde{#1}}
\newcommand{\sensible}{tame}
\newcommand{\AS}{{I\!I}^\Sigma}
\newcommand{\U}{\hat{u}}
\DeclareMathOperator\GG{G}
\DeclareMathOperator\GL{GL}
\numberwithin{equation}{section}
\begin{document}
\theoremstyle{plain}
\newtheorem{theorem}{Theorem}
\newtheorem{lemma}[theorem]{Lemma}
\newtheorem{claim}[theorem]{Claim}
\newtheorem{proposition}[theorem]{Proposition}
\newtheorem{cor}[theorem]{Corollary}
\theoremstyle{definition}
\newtheorem{Examples}[theorem]{Examples}
\newtheorem{Example}[theorem]{Example}
\newtheorem{defses}[theorem]{Definition}
\newtheorem{assumption}[theorem]{Assumption}
\newtheorem*{ack}{Acknowledgement}
\theoremstyle{remark}
\newtheorem{remark}[theorem]{Remark}
\numberwithin{theorem}{section}
\setcounter{tocdepth}{1}

\title[Spacelike MCF]{Spacelike mean curvature flow
}
\author{Ben Lambert}
\address{Department of Mathematics, University College London, Gower Street, London, WC1E 6BT, United Kingdom}

\email{b.lambert@ucl.ac.uk}
\author{Jason D. Lotay}
\address{Mathematical Institute, University of Oxford, Woodstock Road,	Oxford,	OX2 6GG, United Kingdom}
\email{jason.lotay@maths.ox.ac.uk}

\date{\today}

%\email{b.lambert@ucl.ac.uk, jason.lotay@maths.ox.ac.uk}

\begin{abstract}
We prove long-time existence and convergence results for spacelike solutions to mean curvature flow in the pseudo-Euclidean space $\bb{R}^{n,m}$, which are entire or defined on bounded domains and satisfying Neumann or Dirichlet boundary conditions.  As an application, we prove long-time existence and convergence of the $\GG_2$-Laplacian flow in cases related to coassociative fibrations.
\end{abstract}
\maketitle
\tableofcontents
\section{Introduction}\label{s:intro}

Whilst mean curvature flow (MCF) in Euclidean space, particularly in the case of hypersurfaces, has been much studied with many celebrated results, and continues to be a very active area of research, 
the corresponding MCF in pseudo-Euclidean space $\bb{R}^{n,m}$ has received relatively little attention.  A simple but important observation is that the condition for a $n$-dimensional submanifold $M$ of $\bb{R}^{n,m}$ to be \emph{spacelike}, in the sense that the ambient quadratic form of signature $(n,m)$ restricts to be a Riemannian metric on $M$, is preserved by MCF, naturally leading to the notion of spacelike mean curvature flow, whose critical points are called \emph{maximal} submanifolds.  Surprisingly, as we shall demonstrate in this article, spacelike MCF is very well-behaved in $\bb{R}^{n,m}$ for any $m\geq 1$ (i.e.~regardless of the codimension of the flowing spacelike submanifold). This is in marked contrast to the usual mean curvature flow of $n$-dimensional submanifolds in $\bb{R}^{n+m}$, where the difference between the setting of hypersurfaces (i.e.~$m=1$) and higher codimension submanifolds is significant.  We show that spacelike MCF for entire graphs in any codimension always has smooth long-time existence under weak initial assumptions. We also show the same is true for spacelike MCF on bounded domains satisfying the natural Neumann and Dirichlet boundary conditions, where we also get convergence to a maximal submanifold.   
These results for the boundary value problems are particularly striking in the context of the Dirichlet problem, since it is known that for higher codimension MCF with Dirichlet boundary conditions in Euclidean space one cannot always 
have convergence by results in \cite{LawsonOsserman}.  Moreover, our result in the Dirichlet case can be seen as an extension of the very recent work in \cite{YangLi} on the Dirichlet problem for maximal submanifolds in $\bb{R}^{n,m}$.

There is a direct, yet surprising, link between spacelike mean curvature flow  and Bryant's \cite{BryantRemarks} $\GG_2$-Laplacian flow in 7 dimensions, whose critical points define metrics with exceptional holonomy $\GG_2$ (and are thus Ricci-flat).  Finding holonomy $\GG_2$ metrics is a challenging problem, and the $\GG_2$-Laplacian flow is a potentially powerful and attractive means for tackling it.  For a simply connected domain $B$ in $\bb{R}^3$, spacelike MCF of $B$ in $\bb{R}^{3,3}$ is equivalent to the $\GG_2$-Laplacian flow on $Z^7=B\times T^4$, where the evolving closed $\GG_2$-structure $\varphi$ is $T^4$-invariant and $Z$ is a (trivial) coassociative $T^4$-fibration over $B$.  Here, coassociative means   the submanifold is calibrated by $*\varphi$, and the aforementioned correspondence is an extension of a result in \cite{Baraglia}.   Moreover, it follows from work in \cite{DonaldsonAdiabatic} 
that spacelike MCF in $\bb{R}^{3,19}$ is the \emph{adiabatic limit} of the $\GG_2$-Laplacian flow on $Z^7$ which is a coassociative K3 fibration: i.e., spacelike MCF appears in the limit as the $\GG_2$-Laplacian flow in this setting as one sends the volume of the coassociative K3 fibres to zero.  Coassociative fibrations are expected to play a key role in $\GG_2$ geometry, motivated by ideas both from mathematics (e.g.~\cite{Baraglia,DonaldsonAdiabatic}) and M-Theory in theoretical physics (e.g.~\cite{AcharyaWitten,GukovYauZaslow}).

Despite recent  progress in the study of the $\GG_2$-Laplacian flow, it seems difficult in general to obtain long-time existence.  By utilizing the link to spacelike MCF, we obtain long-time existence and convergence results for the $\GG_2$-Laplacian flow in settings pertinent to the study of the important topic of coassociative fibrations.  These are   the first such general results for the $\GG_2$-Laplacian flow without assumptions about closeness to a critical point or curvature bounds along the flow.

\subsection{Main results}  
 Let $\Omega$ be a domain in $\bb{R}^n$ (which we will often identify with the standard spacelike $\bb{R}^n$ in $\bb{R}^{n,m}$) and let $\hat{X}_0:\Omega\ra\bb{R}^{n,m}$ be an initial smooth spacelike immersion.  We   consider   unparameterised mean curvature flow starting at $\hat{X}_0$: a one-parameter family of immersions,  given by $\hat{X}:\Omega\times [0,T) \ra \bb{R}^{n,m}$ with
\begin{equation}\
\begin{cases}\left(\displaystyle\ddt{\hat{X}}\right)^\perp  = H & \text{on } \Omega
\times [0,T) ,\\[12pt]
 \hat X(\cdot,0)=\hat{X}_0(\cdot) & \text{on } \Omega ,
\end{cases}
 \label{UnparametrisedMCF}
\end{equation}
where $H$ is the mean curvature of $M_t$, the image of $\hat{X}$ at time $t$, in $\bb{R}^{n,m}$.
Locally (in space and time)  there exists a parametrisation $X$ of $M_t$ which satisfies the standard mean curvature flow equation:
\begin{equation}\label{MCF}
\ddt{X} = H .
\end{equation}

Since any spacelike submanifold in $\bb{R}^{n,m}$ is a graph over a domain in the standard spacelike $\bb{R}^n$,  we may consider \eqref{UnparametrisedMCF} as (locally) equivalent to a parabolic system  for graph function $\U=(\U^1,\ldots,\U^m):\Omega\times [0,T)\to \bb{R}^m$ with initial condition $\U_0$ (see Appendix \ref{MCFasGraphs} for details):   
\begin{equation}\label{EntireMCF}
 \begin{cases} 
  \displaystyle\ddt\U 
   -g^{ij}(D\U)D^2_{ij} \U
   =0 & \text{on } \Omega \times[0,T) ,\\[8pt]
  \U
  (\cdot,0) =\U
  _0(\cdot) & \text{on } \Omega ,
 \end{cases}
\end{equation}
for $i,j\in\{1,\ldots,n\}$, where $g^{ij}$ is the inverse of the induced metric.

Spacelike mean curvature flow has been studied in codimension 1 by Ecker and Huisken \cite{EckerHuiskenSpacelike}, Ecker \cite{EckerEntire}\cite{EckerNull} and also Gerhardt \cite{Gerhardt}. The first author has also worked on boundary conditions for this flow \cite{Blatentselfreference}\cite{LambertMinkowski}. The elliptic counterpart was studied by Bartnik \cite{Bartnik} and Bartnik and Simon \cite{BartnikSimon}. For higher codimensions, less is known. The flow of compact manifolds was investigated by G.~Li and Salavessa \cite{LiSalavessa}. The higher codimensional maximal surface equation was recently studied by Y.~Li \cite{YangLi}. 

\subsubsection*{Entire graphs} There are several well-known explicit long-time solutions to spacelike MCF.   Throughout the article we let $\langle\cdot,\cdot\rangle$ denote the standard quadratic form with signature $(n,m)$ on $\bb{R}^{n,m}$ and let $|x|^2=\langle x,x\rangle$ for $x\in\bb{R}^{n,m}$.   Recall that $x\neq 0$ is \emph{spacelike} if $|x|^2>0$, \emph{lightlike} or \emph{null} if $|x|^2=0$ and \emph{timelike} if $|x|^2<0$.   The light cone is the set of lightlike vectors.

\begin{Example}[Grim Reaper]\label{ex:GrimReaper}
The \emph{Grim Reaper} is the unique translating solution to  \eqref{EntireMCF} in $\bb{R}^{1,1}$ (up to translations, dilations and rotations), given by
  \[\hat{u}(x,t) = \log \cosh x +t\ .\]  
\end{Example}

\begin{Example}[Hyperbolic space]\label{ex:hyperbolic}
In $\bb{R}^{n,1}$,  
  \[M_t:=\{x\in \bb{R}^{n,1}| |x|^2=-2nt\}\]
  is a self-expander for \eqref{MCF} (i.e.~$X^\perp=tH$) coming out of the light cone. For each $t$, $M_t$ is an embedded hyperbolic space in $\bb{R}^{n,1}$.
\end{Example}

\noindent Explicit solutions may be constructed from Examples \ref{ex:GrimReaper} and \ref{ex:hyperbolic} in higher codimension, simply by evolving in $\bb{R}^{n,1}\subset\bb{R}^{n,m}$.

All the examples described thus far are entire graphs, and so it is natural to study this setting, where we have the following existence theorem.
\begin{theorem}\label{thm:entire} Let $\Omega=\bb{R}^n$, so the initial spacelike submanifold $M_0$ is an entire graph.  There exists a spacelike solution $M_t$ of mean curvature flow starting at $M_0$ which is smooth and exists for all $t>0$.  
Furthermore, if the mean curvature of $M_0$ is bounded, then $M_t$ attains the initial data $M_0$ smoothly as $t\to 0$.
\end{theorem}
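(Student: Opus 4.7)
The plan is to approximate the entire graph by a family of solutions on large balls $B_R(0)\subset\Omega=\bb{R}^n$ with suitable boundary data, prove interior $a$ $priori$ estimates that are uniform in $R$, and pass to a limit. Short-time existence on each $B_R$ for the quasilinear parabolic system \eqref{EntireMCF} is standard once spacelikeness is preserved; the real content is the interior control.

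The key geometric estimate, and the crucial difference with Euclidean higher-codimension MCF, is the preservation of the spacelike condition via a one-sided maximum principle for the ``tilt'' or gradient function. Concretely, one introduces the function $v$ measuring the angle between $M_t$ and the standard spacelike $\bb{R}^n$ (equivalently, the reciprocal of the volume element of the graph projection). A direct computation gives an evolution equation of the schematic form
\[
\Bigl(\ddt{} - \Delta_{M_t}\Bigr) v = -|A|^2\, Q(v) + \text{good terms},
\]
where, in contrast to the Euclidean case, $Q(v)\ge 0$ in the Lorentzian signature. Hence $\sup v$ is non-increasing, and the spacelike character is preserved quantitatively for all time by just the initial gradient of $\U_0$. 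This is the step that fails in codimension $\ge 2$ Euclidean MCF and is the heart of why entire spacelike MCF is so well-behaved.

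With $v$ bounded, I would next carry out an Ecker--Huisken type interior estimate for $|A|^2$. Choose a height-adapted cutoff $\eta$ compactly supported in $B_R$ and apply the maximum principle to $\eta\, |A|^2/(c-v)^2$ or $\eta\,|A|^2\,v^p$ for an appropriate power. The preserved spacelike bound and a Michael--Simon / Sobolev-type step (or a direct pointwise maximum-principle argument as in \cite{EckerHuiskenSpacelike}) yield an interior bound on $|A|^2$ depending only on $t$, the distance to $\partial B_R$, and the initial tilt, but not on $R$ or on $|A|$ at the boundary. Bootstrapping with standard parabolic Schauder theory (the equation is now uniformly parabolic on compact subsets) gives interior $C^{k}$ bounds for all $k$. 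Letting $R\to\infty$ and applying Arzel\`a--Ascoli on compact sets in space-time produces the desired entire smooth solution for all $t>0$; the gradient bound ensures the limit is spacelike, and uniqueness of the limit follows from the usual quasilinear parabolic uniqueness applied on compact sets.

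For the second statement, attaining the initial data smoothly as $t\to 0$ under the assumption that $|H|$ is bounded on $M_0$, I would use that $\partial_t \U = g^{ij}D^2_{ij}\U$, whose right-hand side is essentially $H$ expressed in graph coordinates, to get uniform $C^0$ control of $\U(\cdot,t)-\U_0$ in $t$; combined with the interior higher-derivative estimates, which under a bounded-$H$ assumption extend uniformly down to $t=0$ (the test-function argument for $|A|^2$ above does not degenerate near $t=0$ once $H$ is initially bounded), one obtains $C^\infty_{\mathrm{loc}}$ convergence back to $M_0$.

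The main obstacle is the interior $|A|^2$ estimate in Step~3: in codimension $m\ge 2$, $A$ and $H$ are normal-bundle--valued, so the usual scalar Simons-type identity must be replaced by its higher-codimension version, and one must verify that every ``bad'' curvature term carries a favourable sign or is absorbed by the $v$-factor in the test function, using the Lorentzian signature. Once this sign is confirmed --- which is exactly where the spacelike/pseudo-Euclidean setting helps rather than hurts --- the rest of the argument is the standard Ecker--Huisken scheme adapted to $\bb{R}^{n,m}$.
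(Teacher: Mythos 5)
Your overall scaffolding---approximate on bounded domains, prove interior estimates uniform in $R$, pass to a diagonal limit---matches the paper's approach, and your intuition that the Lorentzian signature makes the first-order term in the evolution of the gradient function carry the favourable sign is correct (cf.~Lemma~\ref{wkJevol} and Corollary~\ref{wkevolest}). However, there is a genuine gap at the heart of the argument.

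You write that ``$\sup v$ is non-increasing, and the spacelike character is preserved quantitatively for all time by just the initial gradient of $\U_0$.'' This only helps if $v$ is \emph{bounded on $M_0$}, but Theorem~\ref{thm:entire} makes no such hypothesis: the initial data is only assumed to be smooth and (pointwise) spacelike, so it is allowed to become asymptotically null, as in the Grim Reaper of Example~\ref{ex:GrimReaper}. Your scheme produces no interior estimate in that case, because the constant in your maximum-principle bound degenerates. The paper deals with this by never relying on a global initial gradient bound: instead it proves Lemma~\ref{goodgood}, which shows $|X|^2\to\infty$ along $M_0$ at spatial infinity, and then works with the cutoff $\eta_R=(R^2-|X|^2-2nt)_+$, whose support collapses onto the light cone in finite time (Corollary~\ref{C0evol}). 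Combined with the ``free'' bound $t\|H\|^2\le n/2$ and the $C^0$ barrier $\|\U(\cdot,t)-\U_0\|\le\sqrt{2nt}$, the local estimates of Lemmas~\ref{Jtest} and~\ref{localtgradest} then give an interior gradient bound of the form $tv^2\eta_R^p\le C$ --- which degenerates as $t\to0$, as it must without initial control, but gives smoothness for $t>0$. Your Step~3 (interior $\|\II\|^2$ via a weighted test function) is in the right spirit, but again requires the correct cutoff and the $t\|H\|^2$ bound, not an a priori gradient bound.

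Two further points. First, you leave the choice of boundary data on $B_R$ unspecified; the paper uses Neumann boundary conditions on convex domains precisely because the derivative identities of Corollary~\ref{boundaryident} plus convexity give one-signed boundary derivatives of $w_A^2$, $v^2$ and $\|H\|^2$, so the maximum principle applies cleanly at the boundary, whereas a Dirichlet approximation would require controlling degenerating boundary gradients. Second, your closing claim that ``uniqueness of the limit follows from the usual quasilinear parabolic uniqueness applied on compact sets'' is incorrect: the paper's Remark after Theorem~\ref{thm:entire} explicitly notes that entire solutions to \eqref{EntireMCF} are \emph{not unique in general} (the Grim Reaper gives a concrete example), and the construction only yields subsequential convergence to \emph{some} solution. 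Local parabolic uniqueness on compact subdomains does not upgrade to uniqueness of the entire flow because the boundary data on those subdomains is not prescribed.
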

\noindent See Theorem \ref{EntireExistence} for further details.  Notice that we make no assumption on the spacelike condition at infinity, so we can start with initial data that asymptotically develops lightlike directions (like the Grim Reaper), and that we obtain long-time existence even without an initial bound on the mean curvature. This theorem is an extension and improvement of the codimension 1 result proven in \cite[Theorem 4.2]{EckerEntire} (see also Remark \ref{rem:AlternativeLocalEst}).
\begin{remark}
As is to be expected for entire flows we make no statement about uniqueness in Theorem \ref{thm:entire}, and solutions to (\ref{EntireMCF}) are not unique in general.  For example, if we take $M_0$ to be the Grim Reaper in Example \ref{ex:GrimReaper} at $t=0$, which is a translating solution, then any solution constructed by our proof of Theorem \ref{thm:entire} cannot remain a translator (since it would satisfy $|\U(x,t)-\U_0(x)|\leq \sqrt{2nt}$).
\end{remark}

We are also prove results on the qualitative behaviour of entire flows. In Section \ref{s:sensible} we develop further estimates for entire spacelike MCF in which, in particular, demonstrate the following result.

\begin{proposition}
 There are no shrinking or translating solutions to spacelike MCF with bounded gradient and mean curvature.
\end{proposition}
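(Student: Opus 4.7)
The plan rests on the evolution identity $(\partial_t - \Delta_{M_t})(|X|^2 + 2nt) = 0$ along spacelike MCF, which follows from $\partial_t X = H$, $\Delta_{M_t}X = H$, and the spacelike computation $\Delta_{M_t}|X|^2 = 2n + 2\langle X, H\rangle$. Comparison with the hyperbolic self-expander of Example~\ref{ex:hyperbolic}, combined with the uniform parabolicity supplied by the bounded gradient hypothesis, should yield the height bound $|\hat{u}(x,t) - \hat{u}_0(x)| \leq \sqrt{2nt}$ foreshadowed in the remark after Theorem~\ref{thm:entire}. I expect this bound to be one of the main outputs of Section~\ref{s:sensible} and will take it as given.

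For a translator with translation vector $V=(V_1,V_2)\in\mathbb{R}^n\oplus\mathbb{R}^m$, the graph parametrization gives $\hat{u}(x,t) = \hat{u}_0(x-tV_1) + tV_2$, and the bounded gradient of $\hat{u}_0$ yields
\[
|\hat{u}(x,t) - \hat{u}_0(x)| \geq t|V_2| - ct|V_1|,
\]
which grows linearly in $t$ and contradicts the $\sqrt{t}$ bound whenever the translator's timelike component dominates — the generic nontrivial case. In the degenerate case where $V$ is essentially spacelike, the graph equation reduces to a quasilinear drift-diffusion equation on $\mathbb{R}^n$, uniformly elliptic by bounded gradient, which a Liouville-type argument forces to have only affine solutions — giving the trivial translator.

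For a shrinker the self-similar structure $X(p,t)=\sqrt{t/t_0}\,X_0(p)$ on $[t_0,0)$ together with the shrinker equation $H = -X_0^\perp/(2|t_0|)$ and bounded mean curvature gives $|X_0^\perp|$ uniformly bounded on $M_0$. Since bounded gradient keeps the normal bundle uniformly timelike, $X_0^\perp$ dominates the timelike component of position and forces $\hat{u}_0$ to be bounded on $\mathbb{R}^n$. An entire bounded spacelike graph with bounded gradient must be a flat plane through the origin by Bernstein-type rigidity (reducing to a Liouville theorem given uniform ellipticity), leaving only the trivial shrinker.

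The hardest step, I expect, is to derive the height bound $|\hat{u}(x,t)-\hat{u}_0(x)|\leq\sqrt{2nt}$ with sufficient generality to apply to any spacelike MCF meeting the hypotheses, rather than only to those produced by the existence construction in Theorem~\ref{thm:entire}; the uniform parabolicity from bounded gradient should suffice to run the maximum principle against the hyperboloid barrier on non-compact graphs. The Liouville and Bernstein-type rigidities needed for the degenerate tangential-translator case and the shrinker case are nontrivial but standard given the uniform ellipticity that bounded gradient provides.
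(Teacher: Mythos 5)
Your proposal departs substantially from the paper's route, and both halves of it contain genuine gaps. The paper derives this corollary directly from Proposition~\ref{prop:tame.est}: once a solution with bounded $v$ and $\|H\|$ is observed to be \emph{\sensible}, one gets the decay estimates
\[\|H\|^2 \leq \frac{1}{C_H^{-1}+\frac{2}{n}t}, \qquad \|\II\|^2\leq \frac{m}{2t}.\]
For a translator, $M_t$ is a rigid translate of $M_0$, so $\sup_{M_t}\|H\|^2$ is constant in $t$; the decay forces $H\equiv 0$, so there is no translating solution at all. For a shrinker, $\|H\|^2$ scales like $1/(T-t)$ at a fixed rescaled point and blows up as $t\to T^-$, while the decay estimate keeps $\|H\|^2$ bounded; again a contradiction. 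Neither a height bound nor a Liouville-type rigidity theorem is needed. This is a strictly simpler argument than the one you sketch, and it is the one the paper intends.

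The translator half of your argument has an incomplete dichotomy. Writing $V=(V_1,V_2)$, your height-bound contradiction requires $|V_2|>\sup|D\hat{u}_0|\cdot|V_1|$, and your ``degenerate case'' handles $V$ essentially spacelike; but nothing covers the intermediate regime $0<|V_2|\leq(1-c)|V_1|$ where the height bound is satisfied by the translator and $V$ has a nontrivial timelike component. Moreover, the Liouville claim for the resulting quasilinear drift--diffusion equation is asserted, not established; uniformly elliptic equations with first-order drift do not in general admit Liouville theorems without further structure, and the paper nowhere develops such a result.

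The shrinker half rests on a false inference. Boundedness of $\|X_0^\perp\|$ does \emph{not} force $\hat{u}_0$ to be bounded on $\mathbb{R}^n$: the tilted spacelike plane $\hat{u}_0(x)=a\cdot x$ with $|a|<1$ has $X_0^\perp\equiv 0$ yet unbounded $\hat{u}_0$. The quantity $\langle X,e_A\rangle=-\hat{u}_A$ receives a contribution from $X^\top$ as well as from $X^\perp$, and a bound on the normal part says nothing about the tangential part. The Bernstein-type step you invoke therefore never gets off the ground. You should instead observe that the shrinker's $\|H\|^2$ is unbounded on its maximal existence interval, which is directly incompatible with the a priori bound supplied by Proposition~\ref{prop:tame.est}.
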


Finally, in Section \ref{s:sensibleconvergence} we show that if $M_0$ is asymptotic to a strictly spacelike cone, then the entire renormalised flow converges subsequentially to a self-expanding solution to MCF, see Theorem \ref{ConvergenceTheorem} for full details.

\subsubsection*{Boundary conditions} To prove Theorem \ref{thm:entire}, we solve auxiliary problems on compact domains with boundary conditions.  In this article we solve for both the Neumann and Dirichlet cases.
A key step in the proof of Theorem \ref{thm:entire} is that quasi-sphere expanders acts as barriers to the flow on compact domains, a notion we now define.  
\begin{defses}[Quasi-sphere expander]\label{quasisphere}  
 A \emph{quasi-sphere expander} with centre $p$ and starting square radius $-R^2$ is given by 
 \[S_t:=\{x\in\bb{R}^{n,m}|\, |p-x|^2 = -R^2-2nt\}\ .\]
We define the \emph{inside} of $S_t$ to be \[I_t:=\{x\in\bb{R}^{n,m}| \,|p-x|^2 \geq -R^2-2nt\}.\] An expanding quasi-sphere $S_t$ is said to be an \emph{outer barrier} if the property $M_t\subset I_t$ is preserved by the mean curvature  flow.
\end{defses}

We have the following existence and convergence theorems for mean curvature flow with Neumann and Dirichlet boundary conditions.  Here we need the initial submanifold to be uniformly spacelike, i.e.~the submanifold does not asymptotically develop lightlike directions at the boundary.  We first state the Neumann case.
 \begin{theorem}\label{thm:Neumann}
 Let $\Omega$ be a bounded convex domain with smooth boundary 
 and let $M_0$ be uniformly spacelike satisfying the Neumann boundary condition.  There exists a unique spacelike solution $M_t$ of mean curvature flow starting at $M_0$ satisfying the Neumann boundary condition, which is smooth, exists for all $t>0$, and converges smoothly to a translate of $\Omega$ as $t\ra\infty$.
  Furthermore, expanding quasi-spheres with centre in $\Omega \times \bb{R}^m$ act as outer barriers to the flow.
\end{theorem}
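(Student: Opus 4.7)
The plan is to combine classical quasilinear parabolic theory on a bounded domain with a maximum-principle barrier argument and a componentwise convergence argument. Short-time existence of a smooth spacelike solution to \eqref{EntireMCF} with the Neumann boundary condition will follow from standard quasilinear parabolic theory (e.g.\ Ladyzhenskaya--Solonnikov--Ural'tseva), since the uniform spacelikeness of $M_0$ makes the system uniformly parabolic and $D_\nu\U=0$ is a smooth oblique boundary condition compatible with the initial data; uniqueness in this smooth class will follow by a standard parabolic comparison argument.

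I would first verify the barrier claim directly. Set $f(x,t):=|p-\hat X(x,t)|^2+R^2+2nt$ along a smooth spacelike flow starting inside $I_0$. Using $\Delta\hat X=H$ on $M_t$, a direct computation gives $\ho f=0$, while $M_0\subset I_0$ reads $f(\cdot,0)\geq 0$. At a Neumann boundary point the lift of the outward Euclidean normal $\nu_\Omega$ to $\partial\Omega$ is $V=(\nu_\Omega,0)\in\bb{R}^n\subset\bb{R}^{n,m}$ (because $D_{\nu_\Omega}\U=0$), and a short calculation shows $V$ is orthogonal to $\partial M_t$ in $M_t$, so $V$ is, up to normalisation, the outward conormal. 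Since $p\in\Omega\times\bb{R}^m$ means $\pi_n(p)\in\Omega$ while $\pi_n(X)\in\partial\Omega$, the convexity of $\Omega$ yields $\n_V f=-2\ip{p-X}{V}=-2\ip{\pi_n(p)-\pi_n(X)}{\nu_\Omega}>0$. The parabolic Hopf lemma then preserves $f\geq 0$, establishing the barrier.

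With these barriers in hand, judiciously chosen quasi-spheres will confine $M_t$ to a bounded region of $\bb{R}^{n,m}$, giving a $C^0$-bound on $\U$. The central a priori estimate is then a uniform gradient bound, equivalently a uniform bound on a spacelike-angle quantity $v$ whose blow-up would indicate the tangent plane approaching lightlike. I would derive this from the evolution equation for $v$ (or a suitable monotone function thereof), exploiting that at the Neumann boundary the condition $D_\nu\U=0$ together with convexity of $\Omega$ gives the boundary term from Hopf-type arguments a favourable sign, and applying the parabolic maximum principle to a product of $v$ with an appropriate barrier. The main obstacle will be managing the curvature coupling in the evolution of $v$ in higher codimension, where the codimension-$1$ simplification is unavailable. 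Once $v$ is uniformly controlled, \eqref{EntireMCF} is a uniformly parabolic system with smooth oblique boundary data, and standard Schauder theory with bootstrapping will yield $C^k$-estimates for all $k$, hence long-time existence.

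Finally, for convergence each component $\U^\a$ satisfies the scalar equation $\ddt{\U^\a}=g^{ij}D^2_{ij}\U^\a$, which is linear in $\U^\a$, with Neumann data, so the parabolic maximum principle forces $\sup_\Omega\U^\a(\cdot,t)$ to be non-increasing and $\inf_\Omega\U^\a(\cdot,t)$ to be non-decreasing, each converging to a limit $S^\a\geq I^\a$. Combined with the $C^k$-estimates, Arzel\`a--Ascoli yields smooth subsequential convergence to a maximal spacelike graph $\U_\infty$ satisfying the Neumann BC. Since each $\U^\a_\infty$ solves a linear elliptic equation with respect to its Riemannian induced metric, the Hopf lemma combined with the Neumann condition forces each $\U^\a_\infty$ to be constant, so $S^\a=I^\a=:c^\a$, and therefore $\U(\cdot,t)\to c$ uniformly. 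Higher-order estimates upgrade this to smooth convergence of $M_t$ to the appropriate translate of $\Omega$, completing the proof.
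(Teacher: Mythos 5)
Your overall strategy matches the paper's in its broad architecture (barriers, gradient estimate via convexity of $\Omega$ and the sign of $\AS$, bootstrapping, maximum-principle convergence), but there is a genuine gap in the step "Once $v$ is uniformly controlled \dots standard Schauder theory with bootstrapping will yield $C^k$-estimates for all $k$." For a \emph{scalar} quasilinear parabolic equation, a $C^1$ bound plus uniform parabolicity gives $C^{1+\a;\frac{1+\a}{2}}$ via Krylov--Safonov/De Giorgi--Nash--Moser, and then Schauder applies. But \eqref{MCFgraphNeumann} is a \emph{system} of $m$ coupled equations (with $m\geq 1$, and the interesting cases $m=3,19$), and the De Giorgi--Nash--Moser H\"older estimates fail in general for systems. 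Schauder theory for this system requires you to already hold a $C^{1+\a;\frac{1+\a}{2}}$ bound on every component $\U^A$, and the gradient estimate alone does not supply it. The paper closes this gap by deriving a uniform \emph{curvature} (i.e.\ $C^2$) estimate directly before invoking Schauder. This is non-trivial at the Neumann boundary, because the boundary identities give no control on $\n_\mu\II(\mu,E_{\hat\imath})$. The paper's Proposition~\ref{curvestNeumann} handles this by perturbing $\II$ to a tensor $\ov\II$ for which $\mu$ becomes an eigenvector (an adaptation of Edelen's method), then running a maximum-principle argument on $\|\ov\II\|^2 e^{\l d}$ with $d$ the distance to $\partial\Omega$. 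This curvature estimate is the technical heart of the Neumann theorem and is entirely absent from your proposal; without it the bootstrap does not start.

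Your barrier verification and the convexity sign computation at the boundary are both correct and match Lemmas~\ref{C0evol}, \ref{boundaryz}, and \ref{C0EstimatesNeumann}. Your convergence argument is a genuinely different, and arguably cleaner, route than the paper's: you apply the scalar maximum principle and the Hopf lemma to each component $\U^A$ to deduce monotonicity of $\sup\U^A$ and $\inf\U^A$, pass to a subsequential limit using the $C^k$ estimates, and conclude constancy of the limit. The paper instead uses the monotonicity of the enclosed volume to obtain an $L^2$ estimate $\int_0^\infty\int_{M_t}(w_A^2-1)\,dV\,dt<\infty$ and then Ehrling's lemma. Both arguments work once the higher-order estimates are in hand; yours avoids the integral bookkeeping, while the paper's extracts the extra information that $|D\U_A|\to 0$ in $L^2$ directly from a geometric monotonicity. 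The choice is stylistic, but neither convergence argument can begin without the missing $C^2$ estimate.
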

\noindent See Theorem \ref{Neumanntheorem} for a more precise statement, including conditions for regularity up to $t=0$.

For the Dirichlet condition, we require a constraint on the boundary data, which is called \emph{acausal} (see \eqref{acausal} for a definition): this condition is necessary on a convex domain to have a spacelike graph with the given boundary data.
 \begin{theorem}\label{thm:Dirichlet}
 Let $\Omega$ be a bounded domain with smooth boundary and let $M_0$ be uniformly spacelike with acausal boundary.
  There exists a unique spacelike solution $M_t$ to mean curvature flow starting at $M_0$ satisfying $\partial M_t=\partial M_0$ which is smooth, exists for all $t>0$ and converges smoothly to the unique maximal submanifold with boundary $\partial M_0$ as $t\ra\infty$.
 Furthermore, expanding quasi-spheres with centre in $\Omega \times \bb{R}^m$ act as outer barriers to the flow.
\end{theorem}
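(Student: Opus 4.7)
The plan is to recast the Dirichlet problem as a boundary value problem for the quasilinear parabolic system \eqref{EntireMCF} on $\Omega$ with fixed boundary data $\U|_{\partial\Omega\times[0,\infty)}=\U_0|_{\partial\Omega}$, and to establish smooth long-time existence by the standard continuation scheme based on a priori $C^0$, gradient and higher-order estimates. Short-time existence is immediate: uniform spacelikeness of $M_0$ makes $g^{ij}(D\U_0)$ uniformly positive definite, so \eqref{EntireMCF} is uniformly parabolic near the initial data, and classical theory for quasilinear parabolic systems with Dirichlet data yields a smooth solution on a maximal interval $[0,T)$. Uniqueness then follows from parabolic comparison applied to the difference of two such spacelike solutions.

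The first central estimate is the barrier property. For any centre $p\in\Omega\times\bb{R}^m$ and starting radius $R$, the quasi-sphere expander $S_t$ evolves by \eqref{MCF} (direct computation: $(\frac{d}{dt}x)^\perp=H$ holds for the radial flow), so by parabolic comparison, applied on the portion of $S_t$ that graphs over $\Omega$, the condition $M_t\subset I_t$ is preserved provided it holds initially and on $\partial\Omega$. Choosing a family of such quasi-spheres tightly enclosing $M_0$ gives a uniform $C^0$ bound on $\U$. The second, and hardest, estimate is the uniform spacelike (gradient) bound. On $\partial\Omega$ the acausality condition \eqref{acausal} together with the fixed Dirichlet data provides uniform spacelikeness of the boundary. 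In the interior, I would apply the parabolic maximum principle to a suitable gradient quantity $v$ measuring the deviation from a lightlike direction (the spacelike analogue of the graph-angle used in the codimension-one case, generalised via $\det(I-D\U^T D\U)^{-1/2}$ in higher codimension), whose evolution under \eqref{EntireMCF} has a favourable reaction term; the quasi-sphere barriers, which are themselves uniformly spacelike away from the light cone, control $v$ from above in terms of the data.

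With the uniform spacelike condition, \eqref{EntireMCF} is uniformly parabolic with smooth coefficients, and Krylov--Safonov together with interior and boundary Schauder estimates (using the smooth $\partial\Omega$ and fixed acausal boundary data) yield $C^{k,\alpha}$ bounds for all $k$, independent of $T$. This rules out finite-time singularities and gives long-time existence. For the long-time behaviour, the area of $M_t$ is decreasing with $\ddt{}\mathrm{Area}(M_t)=-\int_{M_t}|H|^2\,d\mu$, and is bounded below by the area of any maximal competitor, so
\[
\int_0^\infty\!\!\int_{M_t}|H|^2\,d\mu\,dt<\infty.
\]
Combined with the uniform higher-order estimates, this extracts a smooth subsequential limit $M_\infty$ that is maximal with boundary $\partial M_0$. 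The uniqueness of the maximal submanifold with prescribed acausal boundary, from \cite{YangLi}, then upgrades the subsequential convergence to full smooth convergence as $t\to\infty$.

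The main obstacle will be the uniform spacelike / gradient estimate up to the boundary in higher codimension $m\ge2$. In codimension one the relevant quantity is a scalar admitting a clean maximum principle, but for $m\ge2$ one must work with a matrix-valued (or determinant) quantity whose evolution involves normal-bundle curvature terms; ensuring these terms have the right sign, and coupling the interior evolution with the acausality-controlled boundary behaviour so that the quasi-sphere barriers actually force a uniform spacelike bound, is the technical heart of the argument. All remaining ingredients---short-time existence, uniqueness, higher regularity and convergence---then follow essentially by standard parabolic machinery combined with \cite{YangLi}.
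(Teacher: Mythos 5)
Your outline correctly identifies the boundary gradient estimate as the heart of the matter, but the mechanism you propose for it does not work. Quasi-sphere expanders only give a $C^0$ barrier: since $\partial M_t$ is fixed, a quasi-sphere that encloses $M_0$ and touches $\partial M_0$ at a point is tangent to the light cone there and degenerates; it says nothing about how fast $\U$ can vary in the inward normal direction at $\partial\Omega$. Similarly, acausality of the boundary data $\phi$ only controls tangential derivatives of $\U$ along $\partial\Omega$; the normal derivative $D_\mu\U$ is the unknown and is exactly what needs bounding. The paper does this with an entirely different barrier family, the $SO(n-1)\times SO(m-1)$-invariant hypersurfaces $\tilde\Gamma_{K,\Lambda}$ of Yang Li (Definition \ref{def:barriers}), whose radial profile $f_{K,\Lambda}$ is chosen so that the mean curvature relative to any spacelike $n$-plane satisfies $H_\Pi\geq -\Lambda$; Lemma \ref{Staybarriers} shows they are parabolic barriers, and Lemma \ref{AttachBarriers} constructs one tangent to $\partial M_0$ at each boundary point and direction, with strict spacelikeness $f'_{K,\Lambda}<1-\delta$ at the tangency point, giving $D_{-\mu}\U\cdot\theta\leq 1-\delta$. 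That construction is the genuine content your sketch is missing, and I do not see how to replace it by quasi-spheres or by an interior maximum principle alone, since the boundary term of the relevant gradient quantity has no sign without it.

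Two secondary points. First, you invoke Krylov--Safonov after the gradient bound, but these De Giorgi--Nash--Moser-type estimates are not available for parabolic \emph{systems}; the paper is explicit about this, which is why in both the Neumann and Dirichlet cases it goes through curvature or boundary oscillation estimates (Remark \ref{localcurvestremark}, Lemma \ref{C1alpha} applying a scalar Krylov estimate componentwise to the linearised equation and then interpolating as in \cite{YangLi}). Second, in spacelike MCF the volume is \emph{increasing}, $\ddt{}\int_{M_t}dV=\int_{M_t}\|H\|^2\,dV\geq 0$, and is bounded above by $|\Omega|$ (maximal submanifolds are maximisers, not minimisers); your sign is reversed, though the conclusion $\int_0^\infty\int_{M_t}\|H\|^2<\infty$ is the same. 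The remaining ingredients (short-time existence, uniqueness, bootstrapping, and using \cite{YangLi} for uniqueness of the limit) match the paper's argument.
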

\noindent See Theorem \ref{Dirichlettheorem} for further details and a more precise statement, including conditions for improved regularity up to the initial time.  We emphasise that existence and uniqueness of a maximal submanifold with given acausal boundary data, given as a graph on a bounded convex domain, is shown in \cite[Theorem 2.1]{YangLi}.  Theorem \ref{thm:Dirichlet} extends this result to the MCF setting.

\subsection{Applications to $\GG_2$-Laplacian flow} If we view $\bb{R}^7=\bb{R}^3\times\bb{R}^4$ and let $(x_1,x_2,x_3)$ be coordinates on $\bb{R}^3$ and $(y_0,y_1,y_2,y_3)$ be coordinates on $\bb{R}^4$, we can define a 3-form $\varphi_0$ on $\bb{R}^7$ by
\begin{equation}\label{eq:phi0}
\varphi_0=-dx_1\wedge dx_2\wedge dx_3+dx_1\wedge \omega_1+dx_2\wedge\omega_2+dx_3\wedge\omega_3
\end{equation}
where 
\begin{equation}\label{eq:sd}
\omega_1=dy_0\wedge dy_1+dy_2\wedge dy_3,\, \omega_2=dy_0\wedge dy_2+dy_3\wedge dy_1,\,\omega_3=dy_0\wedge dy_3+dy_1\wedge dy_2.
\end{equation}
The stabilizer of $\varphi_0$, under the action of $\GL(7,\bb{R})$, is the exceptional Lie group $\GG_2$.  Given an oriented 7-manifold $Z^7$, we can define a 3-form $\varphi$ to be \emph{positive} if at every point $p\in Z$ there exists an orientation preserving isomorphism between $T_pZ$ and $\bb{R}^7$ identifying $\varphi|_p$ with $\varphi_0$.  A positive 3-form (which will exist if and only if $Z$ is also spin) naturally defines a principal $\GG_2$-subbundle of the oriented frame bundle of $Z$; in other words, a $\GG_2$-structure.  We therefore often call a choice of positive 3-form (or simply the 3-form itself) a $\GG_2$-structure.

The interest in $\GG_2$-structures $\varphi$ comes from the fact that they always define a metric $g_{\varphi}$ and an orientation (since $\GG_2\subset\text{SO}(7)$) and one sees that the holonomy group of  $g_{\varphi}$ is contained in $\GG_2$ when $\varphi$ is \emph{torsion-free}, which is equivalent to
\begin{equation}\label{eq:torsionfree}
d\varphi=0\quad\text{and}\quad d^*_{\varphi}\varphi=0.
\end{equation}
It should be noted here that the first equation is linear, whilst the second is nonlinear, since the adjoint $d^*_{\varphi}$ of the exterior derivative depends on $g_{\varphi}$ and the orientation $\varphi$ defines.  A metric with holonomy   contained in $\GG_2$ is Ricci-flat, and this is the only known means to obtain non-trivial examples of Ricci-flat metrics in odd dimensions.

Solving the torsion-free conditions \eqref{eq:torsionfree} is very challenging in general, with the only compact examples arising from sophisticated gluing techniques, going back to work of Joyce (see \cite{Joyce}).  The key to these methods is the fundamental work of Joyce, which allows one to perturb a closed $\GG_2$-structure (i.e.~one with $d\varphi=0$) which is ``close'' to torsion-free in a suitable sense, to become torsion-free.  As an alternative approach to the problem of solving \eqref{eq:torsionfree}, Bryant \cite{BryantRemarks} proposed the following $\GG_2$-Laplacian flow for closed $\GG_2$-structures:
\begin{equation}\label{eq:Lflow}
\begin{cases}
\displaystyle\ddt\varphi=\Delta_\varphi\varphi=(dd^*_{\varphi}+d^*_{\varphi}d)\varphi,\\[8pt]
d\varphi=0.
\end{cases}
\end{equation}
Important foundational results for this flow have been developed \cite{GaoChen,LotayWei3,LotayWei2,LotayWei1} and recent impressive results have been obtained in the special case when $Z^7=T^3\times N^4$, where $N^4$ is compact and the flow is $T^3$-invariant \cite{FineYao}.  In general, however, there are many unresolved questions concerning the $\GG_2$-Laplacian flow, in particular regarding long-time existence, convergence and the formation of singularities.

\subsubsection*{Semi-flat coassociative  $T^4$-fibrations} For our applications, we let $B$ be a domain in $\bb{R}^3$ and consider $Z^7=B\times T^4$, where $T^4=\bb{R}^4/\bb{Z}^4$ is the standard flat 4-torus, which we can view as a trivial $T^4$-fibration over $B$.  Everything we now describe can be found in \cite{Baraglia, DonaldsonAdiabatic, DonaldsonRemarks}.  

Recall the model $\GG_2$-structure $\varphi_0$ in \eqref{eq:phi0}.  This can equivalently be written as
\begin{equation*}
\varphi_0=-\text{vol}_{\bb{R}^3}+d(x_1\omega_1+x_2\omega_2+x_3\omega_3)
\end{equation*}
   Therefore, to define a $\GG_2$ structure on $Z$ we need to find a 2-form on $Z$ to play the role of $x_1\omega_1+x_2\omega_2+x_3\omega_3$. 
   Notice that constant 2-forms on $T^4$ are in one-to-one correspondence with cohomology classes in $H^2(T^4)$.
   We now observe that the cup product on $H^2(T^4)$
naturally identifies $H^2(T^4)$ with $\bb{R}^{3,3}$ via
 \[\langle [\alpha],[\beta]\rangle=\int_{T^4}\alpha\wedge\beta.\]  Thus, given an immersion $X:B\ra\bb{R}^{3,3}\cong H^2(T^4)$, we have that 
\begin{equation*}
\frac{\partial X}{\partial x_i}=[\omega_i]
\end{equation*}
for some unique constant 2-forms $\omega_i$.  We therefore see that we can write
\begin{equation*}
dx_1\wedge \omega_1+dx_2\wedge\omega_2+dx_3\wedge\omega_3=dX.
\end{equation*}
 We may then define
\begin{equation}\label{eq:phi}
\varphi=-X^*\text{vol}_{X(B)}+dX.
\end{equation}
It is observed in \cite{DonaldsonAdiabatic} that the condition for $\varphi$ in \eqref{eq:phi} to be positive, and thus to define a $\GG_2$-structure, is precisely that $X:B\ra\bb{R}^{3,3}$ is spacelike.  Notice that at each point $p$ of $X(B)$, the mean curvature $H$ of $X(B)$ in $\bb{R}^{3,3}\cong H^2(T^4)$ lies in the orthogonal complement of the maximal positive definite subspace $T_pX(B)=\text{Span}\{[\omega_1],[\omega_2],[\omega_3]\}$, and so $H(p)$ can be identified with an anti-self-dual 2-form on $T^4$ using the metric determined by the $\omega_i$. Moreover, by construction, $d\varphi=0$ and one finds from \cite[Lemma 6]{DonaldsonAdiabatic} that
\begin{equation}\label{eq:d*phi}
d^*_{\varphi}dX=H,
\end{equation}
where $H$ is identified with a 2-form on $Z$ %which restricts to be anti-self-dual on $T^4$ for each point in $B$ 
as described above.   %the mean curvature of $X(B)$ in $\bb{R}^{3,3}$. 
Using the formula \eqref{eq:d*phi}, we see that
 if $X$ satisfies spacelike mean curvature flow \eqref{MCF} then $\varphi$ in \eqref{eq:phi} satisfies the $\GG_2$-Laplacian flow \eqref{eq:Lflow}.  This correspondence can also be deduced from the relationship between the volume form on $Z$ and the induced volume form on $X(B)$, since the $\GG_2$-Laplacian flow \eqref{eq:Lflow} is the gradient flow for the volume on $Z$ determined by $\varphi$, and spacelike mean curvature flow \eqref{MCF} is the gradient flow for the volume on $X(B)$.  %(The difference in sign for the gradient flows arises from the negative sign before the induced volume on $X(B)$ in \eqref{eq:phi}).

\begin{remark}
Formula \eqref{eq:d*phi} shows that the \emph{torsion} of the closed $\GG_2$-structure $\varphi$ in \eqref{eq:phi} is essentially the mean curvature $H$ of $X(B)$ in $\bb{R}^{3,3}$.  The (necessarily non-positive) scalar curvature of $g_{\varphi}$ is thus proportional to $\|H\|^2$, and so a bound on the scalar curvature (or equivalently the torsion) along the $\GG_2$-Laplacian flow in this setting directly corresponds to a bound on the mean curvature in spacelike MCF. 
\end{remark}

Notice that for the closed $\GG_2$-structure $\varphi$ in \eqref{eq:phi} on $Z^7$ we have that $\varphi$ vanishes on the $T^4$ fibres.  It follows, by the choice of orientation, that the restriction of the 4-form $*_{\varphi}\varphi$ to a $T^4$ fibre is equal to the volume form of the induced metric.  This means the fibres are \emph{coassociative}, i.e.~they are calibrated by $*_{\varphi}\varphi$.  Moreover, the fibres are obviously flat orbits of an isometric $T^4$-action for the metric $g_{\varphi}$, so the fibration is called \emph{semi-flat}.  

Suppose we have a 7-manifold $Z^7$ with a closed $\GG_2$-structure $\varphi$ that is a semi-flat coassociative $T^4$-fibration over a simply connected domain $B$ in $\bb{R}^3$, i.e.~$\varphi$ is $T^4$-invariant and the fibres are flat orbits of the action. Then the discussion above shows the following.

\begin{proposition}\label{prop:correspondence}  Suppose we have a 7-manifold $Z^7$ with a closed $\GG_2$-structure $\varphi_0$ that is a semi-flat coassociative $T^4$-fibration over a simply connected domain $B$ in $\bb{R}^3$. The $\GG_2$-Laplacian flow on $Z^7$ starting at $\varphi_0$ is equivalent to spacelike MCF of $B$ in $\bb{R}^{3,3}$.
\end{proposition}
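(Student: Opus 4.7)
The plan is to separate the proof into a structural identification of the initial data, and then a dynamical matching of the two flows.

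For the structural step, I would first show that $\varphi_0$ must take the form \eqref{eq:phi} for some spacelike immersion $X_0:B\to\bb{R}^{3,3}$. Coassociativity of the fibres means $\varphi_0$ vanishes when restricted to any fibre; combined with $T^4$-invariance, this forces a decomposition
\[
\varphi_0 = -\alpha + \sum_{i=1}^3 dx_i \wedge \omega_i(x),
\]
where $\alpha$ is a 3-form on $B$ and each $\omega_i(x)$ is a constant 2-form on $T^4$ smoothly depending on $x \in B$. The closedness $d\varphi_0 = 0$ is equivalent to the $H^2(T^4)$-valued 1-form $\sum_i dx_i \otimes [\omega_i(x)]$ on $B$ being closed, and simple-connectedness of $B$ then produces $X_0 : B \to H^2(T^4) \cong \bb{R}^{3,3}$ with $dX_0 = \sum_i dx_i \wedge \omega_i(x)$, unique up to translation. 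Positivity of $\varphi_0$ forces $X_0$ to be spacelike, as recalled before the proposition, and matching induced metrics and orientations identifies $\alpha = X_0^* \mathrm{vol}_{X_0(B)}$.

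For the dynamical step, I would solve spacelike MCF starting at $X_0$ using Theorem \ref{thm:entire}, \ref{thm:Neumann} or \ref{thm:Dirichlet} as appropriate to obtain $X(t)$, and define $\varphi(t)$ by substituting $X(t)$ into \eqref{eq:phi}. Then $d\varphi(t) = 0$ is immediate, and differentiating gives
\[
\ddt{\varphi} = -\ddt{X^* \mathrm{vol}_{X(B)}} + d\ddt{X} = -\ddt{X^* \mathrm{vol}_{X(B)}} + dH,
\]
where by \eqref{eq:d*phi} the second term equals $dd^*_\varphi dX$. To conclude $\ddt{\varphi} = dd^*_\varphi \varphi = \Delta_\varphi \varphi$, the cleanest route is the variational observation made immediately before the proposition: the $\GG_2$-Laplacian flow is the gradient flow of Hitchin's volume functional on $Z$ in the cohomology class $[\varphi_0]$, spacelike MCF is the gradient flow of the induced area of $X(B)$, and under \eqref{eq:phi} these two functionals differ only by the constant $T^4$-volume, so their gradient flows coincide under the correspondence.

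For the converse and full equivalence, I would appeal to short-time uniqueness of the $\GG_2$-Laplacian flow at a smooth closed $\GG_2$-structure: equivariance of \eqref{eq:Lflow} under the $T^4$-action forces any solution starting at the $T^4$-invariant $\varphi_0$ to remain $T^4$-invariant, and closedness together with the vanishing on fibres is preserved, so the evolved $\varphi(t)$ remains of the form \eqref{eq:phi} and arises from some spacelike MCF $X(t)$. The main obstacle I anticipate is the direct verification that $\ddt{X^* \mathrm{vol}_{X(B)}}$ matches $dd^*_\varphi(X^* \mathrm{vol}_{X(B)})$ in local coordinates on $Z$; the variational viewpoint is precisely what circumvents this computation.
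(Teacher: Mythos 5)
Your proposal takes the same route as the paper, which treats the discussion preceding the proposition (the formula \eqref{eq:d*phi} together with the gradient-flow observation) as its proof. You are more explicit on two points the paper leaves tacit: the structural identification, using $T^4$-invariance, coassociativity, closedness and simple-connectedness of $B$ to produce $X_0$ with $\varphi_0 = -X_0^*\mathrm{vol}_{X_0(B)} + dX_0$ (the paper only passes from $X$ to $\varphi$, not the reverse), and the converse direction via $T^4$-equivariance and short-time uniqueness of the Laplacian flow, which pins down that the evolved $\varphi_t$ remains within the semi-flat ansatz so that the equivalence really is a bijection of solutions. Both are sensible additions.

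One caveat: the gradient-flow argument you fall back on, which is also the paper's second justification, does not quite ``circumvent'' the computation you flag. Proportionality of the Hitchin volume on $Z$ and the induced area of $X(B)$ only yields matching gradient flows once the inner products defining the two gradients are shown to correspond under the linearization $\delta X \mapsto -\delta(X^*\mathrm{vol}_{X(B)}) + d(\delta X)$; Bryant's Laplacian flow is the gradient flow for a specific non-$L^2$ metric on closed $3$-forms, and identifying this with the $L^2$ metric on normal sections used for spacelike MCF is exactly the computational content of the missing step relating $\ddt{X^*\mathrm{vol}_{X(B)}}$ to $dd^*_\varphi(X^*\mathrm{vol}_{X(B)})$. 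Likewise, the assertion that the $T^4$-volume is constant along the correspondence is true but needs a short argument from the normalization of the $\GG_2$-metric on the fibres. The paper is equally brief here, implicitly deferring to the detailed calculations in \cite{DonaldsonAdiabatic}; your proposal is honest about where the remaining work sits.
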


\noindent This extends the correspondence in \cite{Baraglia} between torsion-free $\GG_2$-structures on semi-flat coassociative $T^4$-fibrations and maximal submanifolds in $\bb{R}^{3,3}$.

Proposition \ref{prop:correspondence}, together with our Theorems \ref{thm:entire} and \ref{thm:Dirichlet}, provide immediate long-time existence and convergence results for the $\GG_2$-Laplacian flow. 

\begin{theorem}  Let $(Z^7,\varphi_0)$ be a semi-flat coassociative $T^4$-fibration over $B$ as in Proposition \ref{prop:correspondence}.  
\begin{enumerate}[(a)]
\item If $B=\bb{R}^3$, there is a solution $\varphi_t$ to $\GG_2$-Laplacian flow \eqref{eq:Lflow} starting at $\varphi_0$ which is smooth and exists for all time. 
\item If $B$ is a bounded domain in $\bb{R}^3$ with smooth boundary, suppose that $\varphi_0$ is such that the boundary data for the corresponding initial immersion of $B$ in $\bb{R}^{3,3}$ is acausal.  
There is a solution $\varphi_t$ to the $\GG_2$-Laplacian flow \eqref{eq:Lflow} starting at $\varphi_0$ satisfying $\varphi_t|_{\partial Z}=\varphi_0|_{\partial Z}$, which is smooth, exists for all time and converges to a torsion-free $\GG_2$-structure $\varphi_\infty$ on $Z$ with 
$\varphi_{\infty}|_{\partial Z}=\varphi_0|_{\partial Z}$.
\end{enumerate}
\end{theorem}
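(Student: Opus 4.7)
The plan is to reduce both statements to the spacelike mean curvature flow results Theorems \ref{thm:entire} and \ref{thm:Dirichlet} via Proposition \ref{prop:correspondence}. Write the initial closed $\GG_2$-structure $\varphi_0$ on $Z^7=B\times T^4$ in the form \eqref{eq:phi}, so that $\varphi_0=-X_0^*\text{vol}_{X_0(B)}+dX_0$ for a smooth spacelike immersion $X_0:B\to\bb{R}^{3,3}$. Conversely, any smooth spacelike solution $X_t$ of \eqref{MCF} yields via \eqref{eq:phi} a $T^4$-invariant closed $\GG_2$-structure $\varphi_t$ on $Z$ which is a semi-flat coassociative fibration, and by Proposition \ref{prop:correspondence} $\varphi_t$ solves the $\GG_2$-Laplacian flow \eqref{eq:Lflow} starting from $\varphi_0$. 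Thus it suffices to produce a smooth long-time spacelike MCF $X_t$ starting at $X_0$, respecting the appropriate boundary data in case (b), and then read off $\varphi_t$.

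For part (a), $B=\bb{R}^3$ so $X_0$ is an entire spacelike graph in $\bb{R}^{3,3}$; Theorem \ref{thm:entire} produces a smooth spacelike MCF $X_t$ for all $t>0$, and $\varphi_t:=-X_t^*\text{vol}_{X_t(B)}+dX_t$ is then the desired smooth long-time solution to \eqref{eq:Lflow}.

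For part (b), the first task is to translate the $\GG_2$ boundary condition $\varphi_t|_{\partial Z}=\varphi_0|_{\partial Z}$ into an equivalent condition on $X_t$. Vectors tangent to $\partial Z=\partial B\times T^4$ only involve directions tangent to $\partial B$, so the restriction of $\varphi=-X^*\text{vol}+dX$ to $\partial Z$ depends only on the tangential 1-jet of $X$ along $\partial B$; fixing this restriction is therefore equivalent to imposing the Dirichlet condition $X_t|_{\partial B}=X_0|_{\partial B}$ (the additive ambiguity of a constant in $\bb{R}^{3,3}$ is irrelevant to $\varphi$). Since $\bar B$ is compact and $X_0$ is a smooth spacelike immersion, $X_0$ is uniformly spacelike, and the acausality of the boundary data is precisely the standing hypothesis on $\varphi_0$. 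Theorem \ref{thm:Dirichlet} then yields a smooth spacelike MCF $X_t$ for all $t>0$, converging smoothly as $t\to\infty$ to the unique maximal submanifold $X_\infty$ with $X_\infty|_{\partial B}=X_0|_{\partial B}$. Setting $\varphi_t:=-X_t^*\text{vol}_{X_t(B)}+dX_t$, the smooth convergence $X_t\to X_\infty$ transfers to smooth convergence $\varphi_t\to\varphi_\infty$; by construction $d\varphi_\infty=0$ and $\varphi_\infty|_{\partial Z}=\varphi_0|_{\partial Z}$, and maximality of $X_\infty$ forces its mean curvature to vanish, so by \eqref{eq:d*phi} also $d^*_{\varphi_\infty}\varphi_\infty=0$, making $\varphi_\infty$ torsion-free.

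I expect the main subtlety to lie in the boundary correspondence just described: one must verify carefully that $\varphi_t|_{\partial Z}=\varphi_0|_{\partial Z}$ really does coincide with the Dirichlet condition on $X$, accounting for the $T^4$-invariant form of $\varphi$ and the respective roles of $X^*\text{vol}$ and $dX$ in \eqref{eq:phi}. Once this translation is in place, the remainder is essentially a direct application of Proposition \ref{prop:correspondence} together with Theorems \ref{thm:entire} and \ref{thm:Dirichlet}.
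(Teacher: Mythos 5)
Your proposal is correct and follows the same route the paper takes (the paper states this theorem without proof, treating it as an immediate consequence of Proposition \ref{prop:correspondence} together with Theorems \ref{thm:entire} and \ref{thm:Dirichlet}). The only content you add is the verification that $\varphi_t|_{\partial Z}=\varphi_0|_{\partial Z}$ is equivalent to the Dirichlet condition on $X$, which is a useful detail the paper leaves implicit.
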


\noindent Further discussion of the boundary value problem for semi-flat coassociative $T^4$-fibrations, as well as coassociative K3 fibrations, can be found in \cite{DonaldsonRemarks}.  In particular, it is expected that notions of convexity for the boundary value of a closed $\GG_2$ structure suggested in \cite{DonaldsonRemarks} will imply the acausal boundary condition for the corresponding immersion of $B$ in $\bb{R}^{3,3}$. 

\subsubsection*{Adiabatic limits}  Another important area of study in $\GG_2$ geometry is that of coassociative K3 fibrations.  Here, the curvature of the K3 fibres makes the torsion-free condition more difficult to analyse.  However, one can still make a similar ansatz for a closed $\GG_2$-structure as in \eqref{eq:phi}, now using a spacelike immersion $X:B\ra \bb{R}^{3,19}\cong H^2(K3)$.   In \cite{DonaldsonAdiabatic}, Donaldson studies the torsion-free condition in the \emph{adiabatic limit} as the volume of the fibres tends to zero.  If the volume of the fibres is $\epsilon$, then the $\GG_2$-Laplacian flow corresponds to spacelike MCF in the limit as $\epsilon\to 0$.  Thus, our long-time existence and convergence results for spacelike MCF in $\bb{R}^{3,19}$ have significant implications for the study of the $\GG_2$-Laplacian flow for coassociative K3 fibrations.

\subsection{Summary}

We briefly summarise the contents of this article.

\smallskip

\noindent Sections \ref{s:prelim}--\ref{s:local} consist of background material and the derivation of the key evolution inequalities we shall require for our study. 
\begin{itemize}
\item  In Section \ref{s:prelim} we introduce the basic notation and quantities.  
\item In Section \ref{s:evol} we derive  essential evolution equations and inequality for key quantities.  
\item We then localise these evolution inequalities in Section \ref{s:local}.
\end{itemize}
\noindent Sections \ref{s:entire}--\ref{s:Dirichlet} contain the proofs of our main results. 
\begin{itemize}
\item In Section \ref{s:entire} we establish  long-time existence of entire graphical  solutions to spacelike MCF, 
assuming the long-time existence of solutions on bounded convex domains satisfying Neumann boundary conditions, which we prove in Section  \ref{s:Neumann}.
\item   We also prove the convergence of spacelike MCF
with Neumann conditions in Section \ref{s:Neumann}.
\item  In Section \ref{s:Dirichlet}, we  prove long-time existence and convergence of spacelike MCF on bounded  domains satisfy Dirichlet boundary conditions; namely, that the fixed boundary data is acausal (see Definition \ref{acausal}). Here, we make use of work in \cite{YangLi}.
\end{itemize}

\noindent Sections \ref{s:sensible}--\ref{s:sensibleconvergence} concern the properties of entire solutions.
\begin{itemize}
\item Since we have non-uniqueness for entire spacelike MCF, in Section \ref{s:sensible} we introduce the notion of {\sensible} solutions which satisfy mild natural assumptions.  We show that {\sensible} solutions satisfy estimates similar to expanders, and deduce some immediate consequences.
\item In Section \ref{s:sensibleconvergence} we show that under the assumption that $M_0$ converges to a spacelike cone at infinity (in $C^0$), then the renormalised flow converges subsequentially in $C^\infty_\text{loc}$ to a self-expanding solution to the flow.
\end{itemize}
\noindent Appendices \ref{MCFasGraphs}--\ref{app:evol} consist of further technical results.
\begin{itemize}
\item In Appendix \ref{MCFasGraphs} we derive the expression of spacelike MCF and describe the Neumann boundary condition in terms of graphs.  
\item In Appendix \ref{app:unique} we demonstrate uniqueness for the flow over compact domains.
\item In Appendix \ref{app:evol} we derive the evolution equation for an ambient symmetric 2-tensor along the flow.
\item In Appendix \ref{app:extension} we demonstrate that boundary quantities may be suitably extended.
\end{itemize}

\begin{ack}
This research was supported by Leverhulme Trust Research Project Grant RPG-2016-174.
\end{ack}

\section{Preliminaries}\label{s:prelim}

In this section we introduce some notation we shall employ throughout the article, and we describe the basic quantities that are needed for our study.

\subsection{Basic notation}
Throughout this paper we employ summation convention between raised and lowered indices, where lower-case Latin indices range over $1\leq i,j,k, \ldots \leq n$, upper-case Latin indices range over $1\leq A,B,C, \ldots \leq m$, and  Greek indices range over $1\leq \a,\beta ,\gamma, \ldots \leq n+m$. 

We take the standard orthonormal basis $\{f_1, \ldots, f_n, e_1, \ldots, e_m\}$ of $\bb{R}^{n,m}$ such that each $f_i$ is spacelike and each $e_A$ is  timelike.  Therefore, if $x=x^if_i+x^{n+A}e_A$ and $y=y^if_i+y^{n+A}e_A$ then  the $\bb{R}^{n,m}$ scalar product is given by
\[\ip{x}{y}=x^i\delta_{ij}y^j-x^{n+A}\delta_{AB}y^{n+B} .\]
We recall that for any vector $x\in \bb{R}^{n,m}$, we let
\[|x|^2 = \ip{x}{x},\]
 and reiterate that (despite appearances) this quantity can be negative.

We let $M$ be an $n$-dimensional spacelike submanifold of $\bb{R}^{n,m}$.
We let $X:\Omega\ra \bb{R}^{n,m}$, for some $\Omega\subset\bb{R}^n$, denote the position vector of $M$.  We also let $x^i$ be coordinates on $\Omega$ and let
\[X_i:=\frac{\partial X}{\partial x^i}.\]

We write the usual Levi-Civita connection on $\bb{R}^{n,m}$ by $\ov \n$.  For any tangent vector fields $U, V$ on $M$ and normal vector field $\nu$ we write the induced and normal connection  as
\[ \n_U V = \left(\ov\n_U V\right)^\top,\qquad \np_U \nu =\left(\ov\n_U \nu\right)^\perp\] 
respectively. We will often use the abbreviated notation $\ov\n_i=\ov\n_{X_i}$, $\n_i=\n_{X_i}$ and $\n_i^{\perp}=\n_{X_i}^{\perp}$. We may now use the usual definition to extend tensor derivatives to tensors taking values in the normal bundle, for example for a tensor (1+1)-covariant tensor $T = T(V, \nu)$ (for $U$, $V$ and $\nu$ as above) then
\begin{equation}
 \n_U T(V, \nu) = U(T(V, \nu)) - T(\n_U V, \nu) - T(U, \n^\perp_U \nu). \label{tensorderiv}
\end{equation}

Throughout we will assume that at any point $p\in M$, $\nu_1, \ldots, \nu_m$ are an orthonormal frame of $N_pM$.
To avoid sign confusion for timelike quantities, for any timelike $z\in\bb{R}^{n,m}$ we write
\[0\leq \|z\|^2 = -|z|^2\ .\]
This defines a norm on $NM$.

When dealing with a graph defined by $\U^A:\Omega \ra\bb{R}$ for $1\leq A\leq m$, we will write
\[\U = \sum_{A=1}^m \U^Ae_A\ .\]

\subsection{Gradients} As in \cite{Bartnik} we will require a quantity on a spacelike manifold $M$ that measures how close to lightlike the manifold is at a point. To this end we define the projection matrix
\[W_{AB}:= \ip{e_A}{\nu_B}\]
and the partial gradients
\[w^2_A := \|e_A^\perp\|^2=\sum_{B=1}^m W_{AB}W_{AB}\ .\]
We define the full gradient to be
\begin{equation}\label{eq:v}
v^2 = \sum_A w_A^2=\sum_A \|e_A^\perp\|^2  ,
\end{equation}
which is essentially a matrix norm of $W_{AB}$. We note that this is different to the ``determinant-type'' gradient in \cite{LiSalavessa} (although, due to the arithmetic-geometric inequality, they are equivalent).  We see that a bound on $v(p)$ gives a measure of how close $T_pM$ is to the lightcone and so is a key quantity.  In particular, it leads to the following definition.

\begin{defses}\label{def:uspacelike} An $n$-dimensional submanifold $M$ of $\bb{R}^{n,m}$ is 
 \emph{uniformly spacelike} if there is $C>0$ such that for all $p\in M$, $v(p)<C$, where $v$ is defined in \eqref{eq:v}.
\end{defses}

The gradient also plays a vital role in the estimate of ambient tensors on the flowing submanifold. Suppose $T$ is an $(a+b)$-covariant tensor on $\bb{R}^{n,m}$. Defining
\[T_{i_1, \ldots, i_a, A_1, \ldots, A_b} := T(X_{i_1}, \ldots, X_{i_a}, \nu_{A_1}, \ldots, \nu_{A_b}),\]
 we have that
\begin{equation}|T_{i_1, \ldots, i_a, A_1, \ldots, A_b}|\leq v^{a+b}|T|_{\bb{R}^{n+m}}\label{Generaltensorest}
\end{equation}

It will also be useful to consider the evolution of $\|X^\perp\|^2$.
This will be used to calculate the gradient of cutoff functions, ultimately allowing us to obtain local gradient estimates that only depend on $C^0$ bounds. We will repeatedly use that, since $\ov \n |X|^2 = 2X$, we have
\begin{equation}|\n |X|^2|^2 =4|X - X^\perp |^2 =4(|X|^2 - |X^\perp |^2) =  4(|X|^2 + \|X^\perp \|^2).
 \label{JgradX}
\end{equation}

\subsection{Flow quantities}
Suppose $V$ is any time-dependent vector field on $M$. As in \cite{Smoczyk}, for any given local coordinates $y^{\alpha}$ on $\bb{R}^{n,m}$ in a neighbourhood of a point in $M$ we define
\[\ov\n_\ddt{} V=\ov\n_\ddt{} \left[V^\alpha \pard{}{y^\alpha}\right] = \left[\pard{V^\alpha}{t} + V^\beta H^\gamma\ov\Gamma_{\beta\gamma}^\alpha\right]\pard{}{y^\alpha},\]
where $\ov\Gamma_{\beta\gamma}^\alpha$ are the Christoffel symbols of $\ov\n$ with respect to the coordinates $y^{\alpha}$.   
 This is compatible with the metric:
\[\ddt{}\ip{V}{W} = \ip{\ov\n_\ddt{} V}{W}+ \ip{V}{\ov\n_\ddt{}W}\ .\]
Furthermore, it is easy to see that
\[\nt X_i = \ov \n_{X_i} \ddt{X}=\ov\n_iH,\]
and if $V$ is a vector field on $\bb{R}^{n,m}$ then along MCF we have
\[\nt V = \ov \n_H V  .\]

For example,
since $\ip{\nu_A}{X_i}=0$, we have
\begin{flalign*}
\ip{\nt \nu_A}{X_i}& 
=-\ip{\nu_A}{\ov\n_{X_i} H} .
\end{flalign*}
On the other hand $\ip{\nt \nu_A}{\nu_B}$ is not determined by the flow.  However, we may make the following choice.
\begin{lemma} \label{ddtnu}
 Suppose that $V$ is a set with compact closure in the preimage of $X$ and at some time $t_0>0$, for all $p\in V$ there exists a smoothly varying orthonormal basis $\nu_1(p,t_0), \ldots, \nu_m(p,t_0)$ of $N_{X(p)}M_{t_0}$. Then there exists an $\e>0$ such that for all $t\in (t_0-\e, t_0+\e)$ and $p\in V$ there exists an orthonormal basis $\nu_1(p,t), \ldots, \nu_m(p,t)$ of $N_pM_t$ such that
 \[\nt \nu_A = -\ip{\nu_A}{\n^\perp_i H}g^{ij}X_j  .\]
\end{lemma}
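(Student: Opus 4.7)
The plan is to observe first that the tangential part of $\nt\nu_A$ is forced by the flow independently of the choice of orthonormal normal frame, so the lemma reduces to producing a frame for which $(\nt\nu_A)^\perp=0$. Such a frame will be obtained by perturbing any smooth extension of the given initial frame by a time-dependent orthogonal transformation that solves a linear ODE.

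For the first step, I would differentiate $\ip{\nu_A}{X_i}=0$ along the flow using compatibility of $\ov\n$ with the pseudo-Riemannian metric (together with $\nt X_i = \ov\n_i H$) to obtain
\[\ip{\nt\nu_A}{X_i} = -\ip{\nu_A}{\ov\n_i H} = -\ip{\nu_A}{\np_i H},\]
the last equality following because $\nu_A$ is normal. Since the tangential component of $\nt\nu_A$ is $\ip{\nt\nu_A}{X_k}g^{kj}X_j$, it always equals $-\ip{\nu_A}{\np_i H}g^{ij}X_j$, regardless of the particular orthonormal normal frame chosen. Hence the lemma is equivalent to producing a frame for which the normal component of $\nt\nu_A$ vanishes.

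For the second step, smoothness of the flow on $V$, which has compact closure, yields some $\e_0>0$ and a smooth orthonormal normal frame $\{\ti\nu_A(p,t)\}$ on $V\times(t_0-\e_0,t_0+\e_0)$ with $\ti\nu_A(p,t_0)=\nu_A(p,t_0)$; concretely, one may orthogonally project the initial frame onto $N_{X(p,t)}M_t$ (which is invertible for small $|t-t_0|$ by continuity) and apply Gram--Schmidt. Differentiating $\ip{\ti\nu_A}{\ti\nu_B}=-\delta_{AB}$ along the flow shows $a_{AB}:=\ip{\nt\ti\nu_A}{\ti\nu_B}$ is antisymmetric in $A,B$. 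I then seek $\nu_A=O_A^B\ti\nu_B$ with $O(p,t_0)=\mathrm{Id}$; expanding $\nt\nu_A$ and extracting the normal part reduces the condition $(\nt\nu_A)^\perp=0$ to the pointwise linear ODE $\dot O_A^C = O_A^B a_{BC}$ in $t$. Standard ODE theory provides a smooth solution on $V\times(t_0-\e,t_0+\e)$ for some $\e\leq\e_0$, and the antisymmetry of $a$ gives $\tfrac{d}{dt}(O^\top O)=0$, so $O$ remains orthogonal and $\{\nu_A\}$ remains orthonormal. Combining with the first step yields the claimed formula.

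I do not anticipate a significant obstacle. The only care needed is to track the Lorentzian sign convention $\ip{\nu_A}{\nu_B}=-\delta_{AB}$ when identifying the ``normal part'' of a vector (namely $w^\perp = -\ip{w}{\ti\nu_C}\ti\nu_C$), and to verify that the antisymmetry of $a_{AB}$ (which arises from this same convention) is precisely what ensures the ODE for $O$ preserves orthogonality.
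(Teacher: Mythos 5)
Your proposal is correct and follows essentially the same route as the paper: both reduce the lemma to the observation that the tangential part of $\nt\nu_A$ is already forced by $\ip{\nu_A}{X_i}=0$, then cancel the normal part by solving a linear ODE for an orthogonal matrix acting on a reference frame, using the antisymmetry of the connection coefficients (your $a_{AB}$, the paper's $C_A^B$, related by $C_A^B=-a_{AB}$) to see that orthogonality is preserved via Picard--Lindel\"of.
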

\begin{proof}
 Due to the above calculations there exists an orthonormal frame $\tilde \nu_1, \ldots, \tilde\nu_m$ such that
 \[\nt \tilde{\nu}_A = -\ip{\nu_A}{\n^\perp_i H}g^{ij}X_j +C_A^B\tilde{\nu}_B ,\]
 where $C_A^B$ is smooth in both time and space and satisfies $C_A^B = -C_B^A$ (as $\ip{\tilde{\nu}_A}{\tilde{\nu}_B}$ is constant). We suppose that $Z(p,t)$ is a time-dependent $m\times m$ matrix and define
 \[\nu_A = Z_A^B\tilde{\nu}_B .\]
 We see that 
 \[\nt \nu_A = -\ip{\nu_A}{\n^\perp_i H}g^{ij}X_j +\left[\ddt{Z_A^D}+Z_A^BC_B^D\right]\tilde{\nu}_D .\]
 By the Picard--Lindel\"of Theorem, there exists a unique solution on $V$ to $\ddt{Z_A^D}=-Z_A^BC_B^D$ for $t\in(t_0-\e,t_0+\e)$ starting from $Z_A^B(p,t_0)=\delta_A^B$ for all $p\in V$. We see that for such a solution, writing $Z^t$ for the transpose of $Z$,
 \[\ddt{}(Z_A^B Z^t{}_B^C) = -Z_A^DC_D^BZ^t{}_B^C -Z{}_A^BC^t{}_B^DZ^t{}_D^C=0 ,\]
due to the skew-symmetry of $C$. Since  $Z_A^B$ is orthogonal at time $t_0$,  $Z_A^B$ is orthogonal for all $t\in(t_0-\e,t_0+\e)$. The $\nu_A$ therefore form an orthonormal frame with the claimed property.
\end{proof}

%For any fixed $t_0$ such that $\nu_A$ is given at $t_0$, we may extend the $\nu_A$ for all nonsingular times using
%\[\nt \nu_A = -\ip{\nu_A}{\n^\perp_i H}g^{ij}X_j  .\]
%We observe that under this assumption the condition $\ip{\nu_A}{\nu_B}=\delta_{AB}$ is preserved along the flow. 
All normal quantities in evolution equations below will be calculated locally in such a basis. Furthermore, to avoid sign changes we raise and lower normal indices by minus the normal metric:
\[T^A=T_B\delta^{BA}  .\]

\subsection{Curvature}
For $U,V\in T_pM$, we define the second fundamental form by
\[\II(U,V)=\left(\ov\n_U V\right)^\perp\]
and we use the notation
\[\II_{ij} = \II(X_i,X_j) = (\ov \n_{X_i} X_j)^\perp\ .\]
Therefore,  for $\nu \in NM$, 
\[\left(\ov \n_i \nu\right)^\top = -\ip{\nu}{\II_{ij}}g^{jk} X_k\]

We write 
\[h_{ij}^A = -\ip{\II_{ij}}{\nu_A}\quad\text{
so that}\quad
\II_{ij} = h_{ij}^A\nu_A\ .\]
Similarly we define
\[H^A = -\ip{H}{\nu_A} = g^{ij}h_{ij}^A\ .\]
We observe that
\[\np_k \II_{ij} = \n_kh_{ij}^A\nu_A\ .\]
The Codazzi--Mainardi equations imply that 
 \[\n^\perp_U\II(V,W) = \n_U^\perp\II(W,V)=\n^\perp_V\II(U,W).\]

\section{Evolution equations}\label{s:evol}

In this section we derive equations and inequalities that are satisfied by key quantities along the spacelike MCF.  We will use the notation $\Delta=g^{ij}\nabla_{ij}^2$ for the Laplacian on $M$ and recall that $M_t$ denotes the spacelike submanifold at time $t$ along the mean curvature flow starting at $M$.  

\subsection{$C^0$ quantities}
We begin with the following standard observation.
\begin{lemma}\label{GeneralC0evol}
 Let $f: \bb{R}^{n,m}\ra \bb{R}$ be a $C^2$ function. Then under MCF  we have
  \[\ho f = -g^{ij} \ov \n^2_{ij} f.\]
\end{lemma}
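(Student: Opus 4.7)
The plan is to compute the two terms $\frac{d}{dt}f$ and $\Delta f$ separately along the flow and observe that the ``tangential'' part of $\bar\nabla^2 f$ conspires to cancel a term on each side, leaving exactly the normal/Hessian-in-ambient expression.

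First, because $f$ depends only on the point in $\bb{R}^{n,m}$ and $X$ evolves by $\partial_t X = H$, the chain rule immediately gives
\[
\ddt{} f(X(\cdot,t)) = df(H) = \bar\nabla_H f .
\]
Next, I would compute the intrinsic Laplacian by writing $\nabla^2_{ij} f = X_i(X_j(f)) - (\nabla_{X_i} X_j)(f)$ and relating $\nabla_{X_i}X_j$ to the ambient connection via the Gauss formula $\bar\nabla_{X_i}X_j = \nabla_{X_i}X_j + \II_{ij}$. Since $f$ is a scalar, $X_j(f) = df(X_j)$ and
\[
X_i(X_j(f)) = \bar\nabla^2 f(X_i,X_j) + (\bar\nabla_{X_i}X_j)(f),
\]
so combining these gives
\[
\nabla^2_{ij} f = \bar\nabla^2_{ij} f + \II_{ij}(f).
\]
Tracing with $g^{ij}$ produces $\Delta f = g^{ij}\bar\nabla^2_{ij} f + H(f)$.

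Finally, subtracting the two expressions yields
\[
\ho f = df(H) - g^{ij}\bar\nabla^2_{ij} f - H(f) = -g^{ij}\bar\nabla^2_{ij} f,
\]
since $H(f)=df(H)$. There is no real obstacle here: the only points to be careful about are (i) the sign conventions for $\II$ and $H$ in the pseudo-Euclidean setting (which are already fixed in Section \ref{s:prelim}), and (ii) the fact that $f$ being a scalar means its covariant and ordinary derivatives on $\bb{R}^{n,m}$ agree, so no Christoffel symbols of the indefinite ambient metric actually intervene in the computation.
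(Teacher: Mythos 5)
Your proposal is correct and follows essentially the same route as the paper: compute $\ddt{}f = \ip{\ov\n f}{H}$ by the chain rule, use the Gauss formula to write $\n^2_{ij}f = \ov\n^2_{ij}f + \ip{\ov\n f}{\II_{ij}}$, trace to get $\Delta f = g^{ij}\ov\n^2_{ij}f + \ip{\ov\n f}{H}$, and subtract. The only difference is notational (you write $df(\cdot)$ and $\II_{ij}(f)$ where the paper writes $\ip{\ov\n f}{\cdot}$); the substance is identical.
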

\begin{proof}
 We have:
\begin{align*}
\n _{ i}f& = \ip{\ov \n f}{X_i},\\
  \n^2_{ij} f &= \ip{\ov \n _{X_j}(\ov \n f)}{X_i} + \ip{\ov \n f}{\ov \n_{X_j} X_i - \n_{X_j} X_i} = \ov \n^2_{X_i X_j} f +\ip{\ov \n f}{\II_{ij}},\\
\Delta f &=g^{ij} \ov \n^2_{X_i X_j} f +\ip{\ov \n f}{H},\\
 \ddt{} f
  & = \ip{\ov \n f}{H}.
\end{align*} 
 The result follows immediately from these formulae.
\end{proof}

We define the following quantities at $x\in\bb{R}^{n,m}$:
\begin{align*}
 u_A&:=-\ip{e_A}{x},\\
 r^2 &:= |x|^2+\sum_A u_A^2.
\end{align*}
 Since $\ov \n^2_{UV} |x|^2 = 2\ip{U}{V}$ we have the following corollary to Lemma \ref{GeneralC0evol}.
\begin{cor}\label{C0evol} Under mean curvature flow,
\begin{align*}
&\ho |X|^2 = -2n, & \ho (R^2 - 2nt -|X|^2) = 0,&\\
&\ho u_A = 0, & \ho r^2 = -2n -2\sum_A (w_A^2-1) .&
\end{align*}
\end{cor}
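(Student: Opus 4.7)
The strategy is to apply Lemma~\ref{GeneralC0evol} to the ambient functions $|x|^2$ and $u_A$, and then combine the outputs to obtain the remaining two identities. The only computations required are the ambient Hessians of these functions restricted to $M_t$, together with an elementary orthogonal decomposition of $e_A$.

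For the first identity, the text already records $\ov\n^2_{UV}|x|^2 = 2\ip{U}{V}$. Tracing against $g^{ij}$ on the tangent frame $X_i$ gives $g^{ij}\ov\n^2_{ij}|x|^2 = 2g^{ij}g_{ij} = 2n$, so Lemma~\ref{GeneralC0evol} yields $\ho |X|^2 = -2n$ directly. Since $R^2 - 2nt$ is spatially constant with $t$-derivative $-2n$, the identity $\ho(R^2 - 2nt - |X|^2) = 0$ follows by subtraction.

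For $u_A = -\ip{e_A}{x}$, the constancy of $e_A$ as an ambient vector gives $\ov\n u_A \equiv -e_A$ and $\ov\n^2 u_A \equiv 0$, so Lemma~\ref{GeneralC0evol} immediately yields $\ho u_A = 0$.

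For $r^2 = |x|^2 + \sum_A u_A^2$ I combine the above with the product rule $\ho u_A^2 = 2 u_A\, \ho u_A - 2|\n u_A|^2 = -2|\n u_A|^2$. The one small computation is $|\n u_A|^2$: since $\n_i u_A = -\ip{e_A}{X_i}$, one has $|\n u_A|^2 = g^{ij}\ip{e_A}{X_i}\ip{e_A}{X_j} = |e_A^\top|^2$, where $e_A^\top$ denotes the tangential projection of $e_A$ to $M_t$. Decomposing $e_A = e_A^\top + e_A^\perp$ and using $|e_A|^2 = -1$ together with $|e_A^\perp|^2 = -\|e_A^\perp\|^2 = -w_A^2$ (the sign encoding the timelike nature of the normal bundle) gives $|e_A^\top|^2 = w_A^2 - 1$. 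Summing over $A$ and combining with $\ho|X|^2 = -2n$ yields the claimed expression for $\ho r^2$. There is no genuine obstacle here: the corollary is essentially bookkeeping from Lemma~\ref{GeneralC0evol} once the Lorentzian orthogonal decomposition of $e_A$ is noted.
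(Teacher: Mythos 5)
Your proof is correct and follows exactly the route the paper intends: apply Lemma~\ref{GeneralC0evol} to the ambient functions $|x|^2$ and $u_A$ (using $\ov\n^2_{UV}|x|^2 = 2\ip{U}{V}$ and $\ov\n^2 u_A = 0$), then combine via the heat-operator product rule together with the Lorentzian decomposition $|e_A^\top|^2 = w_A^2 - 1$, which the paper itself invokes in the proof of Lemma~\ref{evolgoodwk}. There is no meaningful difference in approach.
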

\noindent The second equation in Corollary \ref{C0evol} shows that we have a good cutoff function with support on shrinking quasi-spheres.

\subsection{Gradient quantities}
We first write down a  general observation.
\begin{lemma}\label{Generalnormaloneform}
 Suppose that $V$ is a smooth $1$-form on $\bb{R}^{n,m}$. We write the components of the restriction of this tensor to 
 $NM_t$ as
 \[
 V_A = V(\nu_A) .\]
 Then $V_A$ satisfies
 \[\n_kV_A = \ov \n_k V_A + V(X_l)h^{lA}_k\ ,\]
 and, choosing $\nu_1, \ldots, \nu_m$ locally as in Lemma \ref{ddtnu}, we have that
 \[\ho V_A = -g^{ij}\ov\n_i\ov\n_j V_A -2\ov\n_i V(X_l) h^{il}_A-V_Bh^{B}_{ij}h^{ij}_A\ .\]

\end{lemma}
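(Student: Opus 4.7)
The first identity follows from the definition of the induced normal connection together with the Weingarten relation.  Treating $V_A := V(\nu_A)$ as the component in the $\nu_A$ direction of $V$ restricted to the normal bundle, formula \eqref{tensorderiv} gives
\[
\n_k V_A = X_k(V(\nu_A)) - V(\n_k^\perp \nu_A).
\]
Applying the ambient product rule to $X_k(V(\nu_A)) = (\ov\n_{X_k} V)(\nu_A) + V(\ov\n_{X_k}\nu_A)$ and decomposing $\ov\n_{X_k}\nu_A = \n^\perp_k\nu_A + (\ov\n_{X_k}\nu_A)^\top$ cancels the normal piece, while Weingarten yields $(\ov\n_{X_k}\nu_A)^\top = h_{kj}^A g^{jl}X_l$.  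Writing $\ov\n_k V_A := (\ov\n_{X_k} V)(\nu_A)$ then gives the first identity.

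For the evolution, my plan is to compute $\Delta V_A = g^{ij}\n_i\n_j V_A$ by differentiating the first identity, compute $\ddt V_A$ separately via Lemma \ref{ddtnu}, and combine.  Differentiating $\n_j V_A = \ov\n_j V_A + V(X_l) h^{lA}_j$ (treated as a $(1,1)$-tensor) using \eqref{tensorderiv} and the ambient product rule, I expect six contributions to $\n_i\n_j V_A$: the ambient Hessian $(\ov\n^2 V)(X_i,X_j,\nu_A)$; a curvature correction $h_{ij}^B(\ov\n_{\nu_B} V)(\nu_A)$ coming from the normal part of $\ov\n_{X_i}\nu_A$; a mixed term $h_i^{lA}(\ov\n_j V)(X_l)$ from its tangential part; a second copy of this mixed term coming from the tangential analogue of the first identity applied to $V(X_l)$; a quadratic term $V_B h_{il}^B h^{lA}_j$; and a Codazzi piece $V(X_l)\n_i h^{lA}_j$.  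Contracting with $g^{ij}$, the two mixed terms sum to $2 g^{ij}\ov\n_i V(X_l) h^{lA}_j$, the $h_{ij}^B g^{ij} = H^B$ contraction produces $H^B(\ov\n_{\nu_B}V)(\nu_A)$, and Codazzi--Mainardi together with $g^{ij}h^A_{ij} = H^A$ reduces the last piece to $V(X_l)\n^l H^A$.

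For the time derivative I would write
\[
\ddt V_A = (\ov\n_H V)(\nu_A) + V(\nt\nu_A),
\]
with $(\ov\n_H V)(\nu_A) = H^B(\ov\n_{\nu_B}V)(\nu_A)$.  By Lemma \ref{ddtnu}, $\nt\nu_A = -\ip{\nu_A}{\n^\perp_k H} g^{kl} X_l$, and using $\n^\perp_k H = (\n_k H^B)\nu_B$ together with $\ip{\nu_A}{\nu_B}=-\delta_{AB}$ yields $\ip{\nu_A}{\n^\perp_k H} = -\n_k H^A$, hence $V(\nt\nu_A) = V(X_l)\n^l H^A$.  Assembling $\ho V_A = \ddt V_A - \Delta V_A$, the $H^B(\ov\n_{\nu_B}V)(\nu_A)$ contribution from the time derivative exactly cancels the corresponding term in $\Delta V_A$, and the $V(X_l)\n^l H^A$ term cancels the Codazzi piece, leaving
\[
\ho V_A = -g^{ij}\ov\n_i\ov\n_j V_A - 2\ov\n_i V(X_l)\, h^{il}_A - V_B h^B_{ij} h^{ij}_A,
\]
after using symmetry of $h$ to identify $g^{ij} h_{il}^B h^{lA}_j$ with $h^B_{ij} h^{ij}_A$.

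The main obstacle will be the index and sign bookkeeping: keeping straight which derivatives are ambient versus induced or normal, and the paper's convention that normal indices are raised by $\delta^{AB}$ while the induced metric on $NM$ carries a minus sign (so $\ip{\nu_A}{\nu_B} = -\delta_{AB}$).  These conventions are precisely what force the two ``extra'' contributions in $\Delta V_A$ to cancel exactly against the two contributions in $\ddt V_A$, rather than to double them; any sign slip would leave spurious terms in the final formula.
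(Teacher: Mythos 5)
Your plan is correct and follows the same route as the paper: compute $\nabla_k V_A$ from \eqref{tensorderiv} and the Weingarten relation, differentiate again to get $\Delta V_A$, compute $\frac{d}{dt}V_A$ via Lemma~\ref{ddtnu}, and cancel the $\overline\nabla_H V_A$ and $V(X_l)\nabla^l H^A$ pairs. Two small points: the curvature-correction term $h_{ij}^B(\overline\nabla_{\nu_B}V)(\nu_A)=\overline\nabla_{\II_{ij}}V_A$ arises from the normal part of $\overline\nabla_{X_i}X_j$, not of $\overline\nabla_{X_i}\nu_A$; and the intermediate expansion also produces terms involving $\nabla_j^\perp\nu_A$ (from the difference between $\overline\nabla V$ and $\nabla V$ in the \eqref{tensorderiv} corrections) that cancel among themselves, so your ``six contributions'' are the surviving terms after that cancellation rather than the raw output of the product rule.
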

\begin{proof}
 We see that
 \begin{flalign*}
 \ddt{} V_A &= \ov \n_H V(\nu_A) +V(\nt \nu_A)
 =\ov \n_H V(\nu_A) -V(X_j)\ip{\nabla_j^{\perp} H}{\nu_A}.
 \end{flalign*}
 We calculate
\[\n_kV_A = \ov \n_k V_A +V((\ov \n_k \nu_A)^\top) = \ov \n_k V_A -V(X_l)\ip{\II^l_k}{\nu_A}.\]%= \ov \n_k V_A +V(X_l)h^l_{kA}.\]
Furthermore, recalling (\ref{tensorderiv}) we have that,
\begin{flalign*}
 \n_j\n_i V_A &=\pard{}{x^j}(\n_i V_A) - \n_{\n_{X_j} X_i} V(\nu_A) - \n_{X_i} V(\n^\perp_{X_j}\nu_A)\\
 &=\ov\n_{X_j} \ov \n_{X_i} V (\nu_A)+\ov\n_{\ov \n_{X_j} X_i} V(\nu_A)+\ov\n_{X_i} V(\ov \n_{X_j}\nu_A)\\
 &\qquad - \ov \n_{X_j} V(X_l)\ip{\II^l_i}{\nu_A}- V(\II_{jl})\ip{\II^l_i}{\nu_A}\\
 &\qquad-V(X_l)\left[\ip{\n^\perp_j \II^l_i+\II(X^l, \n_{X_j}X_i)}{\nu_A}+\ip{\II^l_i}{\n^\perp_{X_j}\nu_A}\right]\\
 &\qquad - \n_{\n_{X_j} X_i} V(\nu_A) - \n_{X_i} V(\n^\perp_{X_j}\nu_A)\\
 &= \ov \n_{X_j}\ov\n_{X_i} V(\nu_A) + \ov \n_{\II_{ij}} V(\nu_A) +\ov \n_{X_i} V (\left(\ov \n_{X_j} \nu_A\right)^\top)+\ov \n_{X_j}V(X_l)h^l_{iA}\\
 &\qquad-V(\II_{jl})\ip{\II^l_i}{\nu_A}-V(X_l)\ip{\np_j\II^l_i}{\nu_I}\\
 %&= \ov \n_{X_i}\ov\n_{X_j} V_A + \ov \n_{\II_{ij}} V_A +\ov \n_{X_i} V (X_k)h_{jA}^k+\ov \n_{X_j}V(X_l)h_{iA}^l+V(\II_{jl})h^l_{iA}\\
 %&\qquad-V(X_l)\ip{\np_j\II^l_i}{\nu_A}\\
 &= \ov \n_{j}\ov\n_{i} V_A + \ov \n_{\II_{ij}} V_A +\ov \n_{i} V_kh_{jA}^k+\ov \n_jV(X_l)h_{iA}^l+V_Bh^B_{jl}h^l_{iA}\\
 &\qquad-V(X_l)\ip{\np_j\II^l_i}{\nu_A}.
\end{flalign*}
Finally, using Codazzi--Mainardi gives the claimed result.
\end{proof}

\begin{lemma}\label{wkJevol}  Along MCF we have that
 \begin{align} 
 \ho\! w^2_A &=-2\ip{e_A}{\II_{ik}}g^{ij}g^{kl}\ip{\II_{jl}}{e_A}\! +\!2g^{ij}\ip{\II(X_i, e_A^\top)}{\II(X_j, e_A^\top)}, \nonumber\\
 \ho\! \|X^\perp\|^2 &=-2\ip{X}{\II_{ij}}\ip{\II^{ij}}{X}\!-\!4\ip{H}{X} \!+ 2\ip{\II(X_i, X^\top)}{\II(X^i, X^\top)},\nonumber
 \end{align}

\vspace{-16pt} 
 
 \begin{align*}
 \n_i w^2_A =2\ip{e_A}{\II(X_i, e_A^\top)}\quad\text{and}\quad
  \n_i \|X^\perp\|^2 = 2\ip{X}{\II(X_i,X^\top)} .
 \end{align*}
\end{lemma}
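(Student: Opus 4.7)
The plan is to derive all four identities by applying Lemma \ref{Generalnormaloneform} to two specific ambient 1-forms: $V(Y)=\ip{e_A}{Y}$ for the $w_A^2$ identities, and $V(Y)=\ip{X}{Y}$ (with $X$ the position vector) for the $\|X^\perp\|^2$ identities. In both cases the second ambient derivative $\ov\n^2 V$ vanishes on $\bb{R}^{n,m}$, so the Laplacian-type term in Lemma \ref{Generalnormaloneform} will drop out entirely; the only difference between the two cases is that $\ov\n V=0$ for the first while $\ov\n_i V(X_l)=\ip{X_i}{X_l}=g_{il}$ for the second, which is precisely the source of the extra $-4\ip{H}{X}$ term in the $\|X^\perp\|^2$ evolution.

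For $w_A^2$, set $V_B=V(\nu_B)=W_{AB}$. Lemma \ref{Generalnormaloneform} yields
\[
\n_k W_{AB}=\ip{e_A}{X_l}h^{lB}_k,\qquad \ho W_{AB}=-W_{AC}h^C_{ij}h^{ij}_B.
\]
Writing $w_A^2=W_{AB}W_{AB}$ (summing over $B$) and using the product rule
$\ho(f^2)=2f\ho f -2|\n f|^2$, I convert the $B$-sums into ambient quantities via the identity $e_A^\perp=-W_{AB}\nu_B$, which gives $W_{AB}h^B_{ij}=\ip{\II_{ij}}{e_A}$ and $\ip{e_A}{X_l}g^{lj}X_j=e_A^\top$. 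The two terms in Lemma \ref{wkJevol} then appear immediately: $2W_{AB}\ho W_{AB}=-2\ip{e_A}{\II_{ik}}g^{ij}g^{kl}\ip{\II_{jl}}{e_A}$ and $-2g^{kl}\n_k W_{AB}\n_l W_{AB}=+2g^{ij}\ip{\II(X_i,e_A^\top)}{\II(X_j,e_A^\top)}$, while the gradient formula $\n_k w_A^2=2\ip{e_A}{\II(X_k,e_A^\top)}$ is direct from $\n_k W_{AB}$.

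For $\|X^\perp\|^2$, the same scheme applies with $V_A=\ip{X}{\nu_A}$, but now the contribution from $\ov\n V$ must be tracked: $-2\ov\n_i V(X_l)h^{il}_A=-2g_{il}h^{il}_A=-2H^A$, producing
\[
\ho V_A=-2H^A-\ip{X}{\nu_C}h^C_{ij}h^{ij}_A,\qquad \n_k V_A=\ip{X}{X_l}h^{lA}_k.
\]
Since $X^\perp=-\ip{X}{\nu_A}\nu_A$ and $H=H^A\nu_A$ give $\sum_A\ip{X}{\nu_A}H^A=\ip{X}{H}$, applying $\ho(V_AV_A)=2V_A\ho V_A-2|\n V|^2$ and summing over $A$ produces exactly $-4\ip{H}{X}-2\ip{X}{\II_{ij}}\ip{\II^{ij}}{X}+2g^{ij}\ip{\II(X_i,X^\top)}{\II(X_j,X^\top)}$. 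The gradient formula $\n_k\|X^\perp\|^2=2\ip{X}{\II(X_k,X^\top)}$ follows in parallel to the $w_A^2$ case.

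There is no real conceptual obstacle; the only delicate point is the sign bookkeeping coming from the convention that the normal metric is negative definite and normal indices are raised by $-\delta^{AB}$. In particular, one must carefully verify identities like $W_{AB}h^{ij}_B=\ip{\II^{ij}}{e_A}$ and $\ip{X}{\nu_A}h^{ij}_A=\ip{X}{\II^{ij}}$ (each summation over the timelike normal index flipping a sign), which is exactly what converts the raw output of Lemma \ref{Generalnormaloneform} into the intrinsic geometric form stated in Lemma \ref{wkJevol}.
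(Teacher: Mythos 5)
Your proof is correct and follows essentially the same route as the paper's: applying Lemma \ref{Generalnormaloneform} to the ambient 1-forms $\ip{e_A}{\cdot}$ (with $\ov\n V=0$) and $\ip{X}{\cdot}$ (with $\ov\n_Y V(Z)=\ip{Y}{Z}$, the source of the extra $-4\ip{H}{X}$), then assembling $w_A^2=\sum_B V_B^2$ and $\|X^\perp\|^2=\sum_A U_A^2$ via the quadratic product rule. The sign bookkeeping with the negative-definite normal metric (e.g.\ $e_A^\perp=-W_{AB}\nu_B$, $\sum_B\ip{\alpha}{\nu_B}\ip{\beta}{\nu_B}=-\ip{\alpha}{\beta}$ for normal $\alpha,\beta$) is handled correctly.
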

\begin{proof}
We define the 1-form $V(Z)=\ip{e_A}{Z}$ on $\bb{R}^{n,m}$, and note that $\ov \n V =0$, and $\ov\n^2 V=0$. We deduce from Lemma \ref{Generalnormaloneform} that:
\begin{align*}
\ho V_B &= -V_Ch^C_{ij}h^{ij}_B=\ip{e_A}{\II_{ij}}\ip{\II_{ij}}{\nu_B},\\
\n_k V_B &= V(X_l)h^{l}_{kB}=-\ip{\II(e_A^\top,X_k)}{\nu_B}.
\end{align*}
As $w_A^2=\|e_A^\perp\|^2 = \sum_{B=1}^m (\ip{e_A}{\nu_B})^2=\sum_{B=1}^m V_B^2$, we see that
 \begin{flalign*}
 &\ho w^2_A = \sum_B \left[2V_B \ho V_B - 2 \n_i V_B\n^i V_B\right]\\
 &\qquad=\sum_B [2\ip{\nu_B}{e_A} \ip{\nu_B}{\II_{ik}}g^{ij}g^{kl}\ip{\II_{jl}}{e_A}\\
 &\qquad\qquad\quad 
 - 2 \ip{\nu_B}{\II(X_i, e_A^\top)}g^{ij}\ip{\nu_B}{\II(X_j, e_A^\top)}]\\
 &\qquad=-2\ip{e_A}{\II_{ik}}g^{ij}g^{kl}\ip{\II_{jl}}{e_A}-2\sum_B\ip{\nu_B}{\II(X_i, e_A^\top)}g^{ij}\ip{\nu_B}{\II(X_j, e_A^\top)}.
 \end{flalign*}
We also have
\[\n_i w^2_A =2 \sum_B V_{B}\n_i V_{B} = -2\sum_B\ip{\nu_B}{e_A}\ip{\nu_B}{\II(X_i, e_A^\top)}=2\ip{e_A}{\II(X_i, e_A^\top)}.\]

Similarly, we define the 1-form $U=\ip{X}{Z}$, where $X$ is the position vector, and we see that $\ov \n_Y U(X) = \ip{Y}{X}$ and $\ov \n^2 U=0$. An identical argument to the above yields the claimed equations for $\|X^{\perp}\|^2$. 
\end{proof}

As is often the case with MCF in indefinite spaces, the key to a local gradient estimate is to estimate the first term in the evolution of $w_A^2$ in Lemma \ref{wkJevol}  by slightly more than twice the gradient of $w_A$ using an eigenvalue estimate, originally employed by Bartnik \cite[Theorem 3.1]{Bartnik} (see also \cite{EckerEntire,EckerNull,EckerHuiskenSpacelike} for similar arguments).

\begin{cor}\label{wkevolest}
We may estimate
\[\ho w_A^2 \leq -\frac{|\n w^2_A|^2}{w_A^2}\]
or
 \[\ho w_A^2 \leq -\left(1+\frac{1}{2n}\right)\frac{|\n w^2_A|^2}{w_A^2}+2w_A^2\|H\|^2.\]
\end{cor}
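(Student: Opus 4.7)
The plan is to work pointwise, choosing orthonormal tangent and normal frames adapted to $e_A$ at the point under consideration. Since $M$ is spacelike the identity $|e_A^\top|^2 = w_A^2-1$ forces $w_A\geq 1$, and if $w_A=1$ then $e_A^\top=0$ so $\n w_A^2=0$; both claimed inequalities then reduce to $\ho w_A^2\leq 2w_A^2\|H\|^2$, which is immediate from Lemma \ref{wkJevol} since the second term there is manifestly non-positive (the normal bundle is timelike). Assume $w_A>1$. Rotate the normal frame so that $\nu_1=w_A^{-1}e_A^\perp$, which yields $W_{A1}=-w_A$, $W_{AB}=0$ for $B\geq 2$, and hence $\ip{e_A}{\II_{ij}}=-w_A h^1_{ij}$. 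Rotate the tangent frame so that $E_1$ points along $e_A^\top$, giving $(e_A^\top)^i=\sqrt{w_A^2-1}\,\delta_{i1}$. Setting $\pi:=\sum_i(h^1_{i1})^2$, the expressions in Lemma \ref{wkJevol} simplify to
\[
\ho w_A^2 = -2w_A^2|h^1|^2 - 2(w_A^2-1)\sum_{B=1}^m\sum_i(h^B_{i1})^2,\qquad \frac{|\n w_A^2|^2}{w_A^2} = 4(w_A^2-1)\pi,
\]
and the $B\geq 2$ contributions in the first identity may be discarded for the upper bound.

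For the first inequality it then suffices to establish $w_A^2|h^1|^2\geq (w_A^2-1)\pi$, which is immediate from $|h^1|^2\geq\pi$ and $w_A^2\geq w_A^2-1$. For the second, after using $\|H\|^2\geq (H^1)^2$ and rearranging, the problem reduces to proving the Bartnik-type eigenvalue estimate
\[
|h^1|^2 + (H^1)^2 \geq \tfrac{n+1}{n}\pi.
\]

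This last inequality is the only genuine calculation. Writing $s=h^1_{11}$, $u=\sum_{i\geq 2}(h^1_{i1})^2$, and letting $t$ and $|B|^2$ denote respectively the trace and squared norm of the bottom-right $(n-1)\times(n-1)$ block of $h^1$, Cauchy--Schwarz on that block gives $|B|^2\geq t^2/(n-1)$, and completing the square in $t$ produces
\[
|h^1|^2 + (H^1)^2 - \tfrac{n+1}{n}\pi \geq \tfrac{n-1}{n}u + \tfrac{n}{n-1}\Bigl(t + \tfrac{n-1}{n}s\Bigr)^2 \geq 0,
\]
with equality exactly when $h^1=\text{diag}(s,-s/n,\dots,-s/n)$. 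The sharp constant $\tfrac{n+1}{n}$ is precisely what produces the improvement $1+\tfrac{1}{2n}$ in the second inequality at the cost of the correction $2w_A^2\|H\|^2$. The main conceptual step, which reduces the higher-codimension problem to Bartnik's codimension-one argument, is the opening choice $\nu_1=e_A^\perp/w_A$: in this frame the vector-valued second fundamental form interacts with $e_A$ only through the single symmetric matrix $h^1$.
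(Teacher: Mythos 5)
Your proof is correct and follows the same overall strategy as the paper, namely estimating both terms in Lemma \ref{wkJevol} via a Bartnik-type algebraic inequality for the symmetric matrix $\ip{e_A}{\II_{ij}}$, but the key algebraic step is packaged differently. The paper diagonalises $t_{ij}:=\ip{e_A}{\II_{ij}}$, passes to the upper bound $|\n w_A^2|^2 \leq 4\lambda_1^2(w_A^2-1)$ with $\lambda_1$ the largest eigenvalue, and proves the spectral estimate $|t|^2 \geq (1+\tfrac{1}{n})\lambda_1^2-(\mathrm{tr}\,t)^2$ via Young's inequality. You instead fix $E_1\parallel e_A^\top$, use the exact identity $|\n w_A^2|^2 = 4w_A^2(w_A^2-1)\pi$ with $\pi=\sum_i(h^1_{i1})^2$, and prove the weaker ``row-norm'' inequality $|h^1|^2+(\mathrm{tr}\,h^1)^2\geq(1+\tfrac{1}{n})\pi$ by completing the square. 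Since $\pi\leq\lambda_1^2$ always, your lemma is strictly weaker than the paper's, yet it suffices precisely because you avoided the slack in the estimate $|\n w_A^2|^2\leq 4\lambda_1^2(w_A^2-1)$: the two pieces of slack cancel. Making the codimension reduction explicit via $\nu_1=e_A^\perp/w_A$ (so that $t_{ij}=-w_Ah^1_{ij}$) is a nice presentational choice that the paper leaves implicit. One small wording point: when $w_A=1$ the first inequality reduces to $\ho w_A^2\leq 0$, not to $\ho w_A^2\leq 2w_A^2\|H\|^2$; but as you observe both terms in Lemma \ref{wkJevol} are manifestly non-positive (the first as well as the second), so both conclusions hold trivially.
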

\begin{proof}
 For the second term in the evolution of $w_A^2$ in Lemma \ref{wkJevol}, we have that  
 \begin{flalign*}
 |\n w^2_A|^2
 &= 4\|e_A^\perp\|^2 \ip{\frac{e_A^\perp}{\|e_A^\perp\|}}{\II(X_i, e_A^\top)}g^{ij}\ip{\frac{e_A^\perp}{\|e_A^\perp\|}}{\II(X_j, e_A^\top)} \\
 &\leq -4w_A^2 \ip{\II(X_i, e_A^\top)}{\II(X_j, e_A^\top)}g^{ij}.
 \end{flalign*}

We now estimate the first term in the evolution of $w_A^2$ in Lemma \ref{wkJevol}. Write $t_{ij} = \ip{e_A}{\II_{ij}}$ and the eigenvalues of $t_{ij}$ as $\l_1, \ldots, \l_n$ where $\l_1$ is the largest in absolute value.
Using $-1=|e_A^\top|^2-w_A^2$, we have that 
\[|\n w_A^2|^2 \leq 4\l_1^2|e_A^\top|^2=4 \l_1^2(w_A^2-1)\leq 4\l_1^2w_A^2.\]
The first estimate in the statement follows from the fact that $|t|^2\geq\l_1^2$. 

Since for any symmetric tensor $b_{ij}$, $n|b|^2\geq(\text{tr}b)^2$, we have that
\[|t|^2 = \l_1^2+\ldots+\l_n^2\geq \l_1^2 +\frac{1}{n-1}(\sum_{i=2}^n \l_i)^2\geq (1+\frac{1}{n})\l_1^2 - g^{ij}t_{ij},\]
where we used Young's inequality for the last estimate. We now see that
\begin{flalign*}
-2\ip{e_A}{\II_{ik}}g^{ij}g^{kl}\ip{\II_{jl}}{e_A}&\leq -2\left(1+\frac{1}{n}\right)\l_1^2+2(\ip{e_A}{H})^2\\
&\leq -\left(1+\frac{1}{n}\right)\frac{|\n w^2_A|^2}{2w_A^2}+2w_A^2\|H\|^2.
\end{flalign*}
The second estimate in the statement now follows.
\end{proof}
\begin{remark} The second estimate in Corollary \ref{wkevolest} is  the same as the evolution inequality satisfied by the codimension 1 gradient (see \cite[Corollary 2.5]{EckerEntire}).
\end{remark}

We now derive an evolution inequality for $\|X^\perp\|$ in a similar way.
\begin{cor}\label{Jevolest}
For $\e\in(0,1)$, at any point such that $|X|^2<\e\|X^\perp\|^2$ we have that
\[\ho \|X^\perp\|^2\leq  -\left(\frac{1}{2} + \frac{1+\frac{1}{n}}{2(1+\e)}\right)\frac{|\n\|X^\perp\|^2|^2}{\|X^\perp\|^2} -4 \ip{X}{H}+2\ip{X}{H}^2 .\]
\end{cor}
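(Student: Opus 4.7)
The plan is to mirror the argument of Corollary~\ref{wkevolest}, now applied to $\|X^\perp\|^2$ rather than $w_A^2$. From Lemma~\ref{wkJevol}, using that $\ip{X}{\II_{ij}}=\ip{X^\perp}{\II_{ij}}$ since $\II_{ij}\in NM$, set $t_{ij}:=\ip{X^\perp}{\II_{ij}}$, so that $g^{ij}t_{ij}=\ip{X}{H}$. The evolution decomposes as
\[\ho \|X^\perp\|^2 = -2|t|^2 + 2g^{ij}\ip{\II(X_i,X^\top)}{\II(X_j,X^\top)}-4\ip{X}{H},\]
and I shall extract the gradient quantity $|\n\|X^\perp\|^2|^2/\|X^\perp\|^2$ out of the first two summands with the claimed coefficient $\tfrac{1}{2}+\tfrac{1+1/n}{2(1+\e)}$.

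For the ``good'' tangential term, the second identity of Lemma~\ref{wkJevol} reads $\n_i\|X^\perp\|^2=2\ip{X^\perp}{\II(X_i,X^\top)}$. Cauchy--Schwarz applied in the positive-definite inner product $(\cdot,\cdot):=-\ip{\cdot}{\cdot}$ on $NM$ gives
\[\ip{X^\perp}{\II(X_i,X^\top)}^2\leq -\|X^\perp\|^2\,\ip{\II(X_i,X^\top)}{\II(X_i,X^\top)}\]
for each $i$ in an orthonormal tangential frame. Summing in $i$ and inserting into $|\n\|X^\perp\|^2|^2=4g^{ij}\ip{X^\perp}{\II(X_i,X^\top)}\ip{X^\perp}{\II(X_j,X^\top)}$ yields
\[2g^{ij}\ip{\II(X_i,X^\top)}{\II(X_j,X^\top)}\leq -\frac{|\n\|X^\perp\|^2|^2}{2\|X^\perp\|^2},\]
supplying the $\tfrac{1}{2}$ part of the target coefficient.

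For $-2|t|^2$ I would repeat the eigenvalue trick used in Corollary~\ref{wkevolest}. Diagonalise $t_{ij}$ in an orthonormal tangent frame, labelling the eigenvalues $\l_1,\ldots,\l_n$ with $|\l_1|$ largest, and write $X^\top=a^iX_i$. Then $\n_i\|X^\perp\|^2=2a^i\l_i$ (no sum on $i$), so
\[|\n\|X^\perp\|^2|^2\leq 4\l_1^2\,|X^\top|^2=4\l_1^2\bigl(|X|^2+\|X^\perp\|^2\bigr).\]
Here is where the hypothesis enters: $|X|^2<\e\|X^\perp\|^2$ forces $\l_1^2\geq |\n\|X^\perp\|^2|^2/\bigl(4(1+\e)\|X^\perp\|^2\bigr)$. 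The same Young plus Cauchy--Schwarz algebra used in Corollary~\ref{wkevolest} gives $|t|^2\geq (1+\tfrac{1}{n})\l_1^2-(g^{ij}t_{ij})^2=(1+\tfrac{1}{n})\l_1^2-\ip{X}{H}^2$, whence
\[-2|t|^2\leq -\frac{1+\tfrac{1}{n}}{2(1+\e)}\frac{|\n\|X^\perp\|^2|^2}{\|X^\perp\|^2}+2\ip{X}{H}^2.\]

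Adding the two gradient estimates to the unchanged $-4\ip{X}{H}$ term produces the claim. The only real deviation from the proof of Corollary~\ref{wkevolest} is that the vector playing the role of $e_A^\top$ is now $X^\top$, which is not a unit vector: its squared length is $|X|^2+\|X^\perp\|^2$, and the hypothesis $|X|^2<\e\|X^\perp\|^2$ is precisely what is needed to convert this into $(1+\e)\|X^\perp\|^2$, producing the $(1+\e)$ denominator in the final coefficient.
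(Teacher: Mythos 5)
Your proof is correct and follows essentially the same route as the paper: both use the decomposition from Lemma~\ref{wkJevol}, bound the tangential term by Cauchy--Schwarz on $NM$ to extract the $\tfrac12$ coefficient, and apply the eigenvalue/Young trick of Corollary~\ref{wkevolest} to $t_{ij}=\ip{X}{\II_{ij}}$ together with $|X^\top|^2=|X|^2+\|X^\perp\|^2$ for the remaining coefficient. The only cosmetic difference is that you invoke the hypothesis $|X|^2<\e\|X^\perp\|^2$ inside the eigenvalue estimate, while the paper keeps the factor $\|X^\perp\|^2/(|X|^2+\|X^\perp\|^2)$ general and applies the hypothesis in a final step.
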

\begin{proof}
Since
\[|\n\|X^\perp\|^2|^2=4\ip{X}{\II(X^\top,X_i)}\ip{\II(X^i,X^\top)}{X} ,\]
we immediately see that 
\[- 2\left|\ip{\II(X_i, X^\top)}{\II(X^i, X^\top)}\right|\leq -\frac{|\n\|X^\perp\|^2|^2}{2\|X^\perp\|^2}.\]
Using equation (\ref{JgradX}), and estimating as in Corollary \ref{wkevolest}, we have
\[-2\ip{X}{\II_{ij}}\ip{\II^{ij}}{X}\leq-\left(1+\frac{1}{n}\right)\frac{|\n\|X^\perp\|^2|^2}{2(|X|^2+\|X^\perp\|^2)}+2\ip{H}{X}^2. \]
Altogether these estimates imply
\begin{align*}
\ho \|X^\perp\|^2 &\leq -\frac{1}{2}\left(1+\left(1+ \frac 1 n \right)\frac{\|X^\perp\|^2}{|X|^2+\|X^\perp\|^2}\right)\frac{|\n \|X^\perp\|^2|^2}{\|X^\perp\|^2}\\
&\qquad -4\ip{H}{X}+2\ip{H}{X}^2.
\end{align*}
The upper bound on $|X|^2$ now yields the claim.
\end{proof}

We also observe   we may use the height function to get a large negative evolution for the gradient (depending on local bounds on $u$). Similar ideas were used in 
\cite{Gerhardt}.
\begin{lemma}
 We have that \label{evolgoodwk}
\begin{flalign*}
 \ho w_A^2 e^{u_A^2}&\leq -2e^{u_A^2} w_A^2 (w_A^2-1) .
 \end{flalign*}
\end{lemma}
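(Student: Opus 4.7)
The strategy is to apply the parabolic product rule to $w_A^2 \cdot e^{u_A^2}$, use the evolution equations for each factor that follow from Corollaries \ref{C0evol} and \ref{wkevolest}, and complete the square to absorb the cross term.

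First I would record the auxiliary computation $|\nabla u_A|^2 = w_A^2 - 1$. Since $\bar\nabla u_A = -e_A$, the tangential projection gives $\nabla_i u_A = -\langle e_A, X_i\rangle$, hence $|\nabla u_A|^2 = |e_A^\top|^2 = w_A^2 - 1$ using $-1 = |e_A|^2 = |e_A^\top|^2 - \|e_A^\perp\|^2$. Combined with $\ho u_A = 0$ from Corollary \ref{C0evol}, this gives $\ho u_A^2 = 2u_A\ho u_A - 2|\nabla u_A|^2 = -2(w_A^2-1)$, and then the identity $\ho e^h = e^h(\ho h - |\nabla h|^2)$ applied with $h = u_A^2$ yields
\[
\ho e^{u_A^2} = -2e^{u_A^2}(w_A^2-1)\bigl(1+2u_A^2\bigr).
\]

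Next, using the product rule $\ho(fg) = f\ho g + g\ho f - 2\langle\nabla f,\nabla g\rangle$ together with the first estimate of Corollary \ref{wkevolest} and $\nabla e^{u_A^2} = 2u_A e^{u_A^2}\nabla u_A$, I would obtain
\[
\ho\bigl(w_A^2 e^{u_A^2}\bigr) \le e^{u_A^2}\!\left[-\frac{|\nabla w_A^2|^2}{w_A^2} - 2w_A^2(w_A^2-1)(1+2u_A^2) - 4u_A\langle\nabla w_A^2,\nabla u_A\rangle\right].
\]
Comparing with the target $-2e^{u_A^2}w_A^2(w_A^2-1)$, it therefore suffices to show
\[
\frac{|\nabla w_A^2|^2}{w_A^2} + 4u_A^2 w_A^2(w_A^2-1) + 4u_A\langle\nabla w_A^2,\nabla u_A\rangle \geq 0.
\]

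The final step is a weighted Cauchy--Schwarz / AM--GM estimate on the cross term:
\[
\bigl|4u_A\langle\nabla w_A^2,\nabla u_A\rangle\bigr| \le \frac{|\nabla w_A^2|^2}{w_A^2} + 4u_A^2 w_A^2 |\nabla u_A|^2 = \frac{|\nabla w_A^2|^2}{w_A^2} + 4u_A^2 w_A^2(w_A^2-1),
\]
where I substituted $|\nabla u_A|^2 = w_A^2-1$ in the last equality. Subtracting yields exactly $0$, completing the proof. The only subtle point, and the place where one must be careful, is matching the weights so that the gradient term produced by $e^{u_A^2}$ and the $\nabla w_A^2$ gradient cancel against the good negative term $-|\nabla w_A^2|^2/w_A^2$ without needing the sharper second estimate of Corollary \ref{wkevolest}; this works precisely because $|\nabla u_A|^2 = w_A^2-1$ pairs perfectly with the $4u_A^2w_A^2(w_A^2-1)$ term produced by $\ho e^{u_A^2}$.
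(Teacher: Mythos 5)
Your proof is correct and follows essentially the same approach as the paper: both use $\ho u_A = 0$, the identity $|\nabla u_A|^2 = w_A^2 - 1$, the first (weaker) estimate of Corollary \ref{wkevolest}, the parabolic product rule, and a weighted Young/AM--GM inequality with weight $w_A^2$ to absorb the cross term against the good negative term $-|\nabla w_A^2|^2/w_A^2$. The only difference is presentational: the paper works with a general $\phi(u_A) = e^{\psi(u_A)}$, derives $\ho(w_A^2\phi) \le -\psi''\phi\, w_A^2(w_A^2-1)$, and then specializes $\psi = u_A^2$, whereas you plug in $e^{u_A^2}$ from the start; the underlying cancellation is identical.
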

\begin{proof}
Recall Corollary \ref{C0evol}, Corollary \ref{wkevolest} and 
$|\n u_A|^2 = (w_A^2-1)$.  We may estimate using Young's inequality, for any smooth positive function $\phi:\bb{R}\ra\bb{R}$:
\begin{flalign*}
 \ho w_A^2 \phi(u_A) &\leq -\phi\frac{|\n w_A^2|^2}{w_A^2}-2 \phi'\ip{\n w_A^2}{\n u_A}-\phi''w_A^2|\n u_A|^2\\
 &\leq w_A^2\left(\frac{(\phi')^2}{\phi}|\n u_A |^2 -\phi'' |\n u_A|^2\right)\ \ .
\end{flalign*}
We choose to write $\phi = e^{\psi}$. Then
\[\phi' = \psi' \phi, \qquad \phi'' = \psi'' \phi + (\psi')^2 \phi\]
and
\begin{flalign*}
 \ho w_A^2 \phi(u_A) &\leq -\psi''\phi w_A^2 |\n u_A|^2= -\psi''\phi w_A^2 (w_A^2-1).
\end{flalign*}
Setting $\psi=u_A^2$ yields the claim.
\end{proof}

\subsection{Curvature quantities}

We recall the following evolution equations which may be found in \cite[Proposition 4.1, equation (5.7)]{LiSalavessa}.

\begin{lemma}\label{evolcurv}  The following evolution equations hold:
 \begin{flalign*}
\ho h_{ij}^A &=-h^A_{kr}h^{kr}_Bh^B_{ij}+2h_{kl}^Ah_i^{Bl}h_{Bj}^k - h_{il}^Ah^{Blk}h_{Bkj}- h_{jl}^Ah^{Blk}h_{Bki}\\
&\qquad +h_{ik}^Ah_j^{Bk}H_B+h_{jk}^Ah_i^{Bk}H_B;\\
\ho H^A &=-h^A_{kr}h^{kr}_BH^B;\\
\ho \|H\|^2 &= -2\ip{H}{\II_{ij}}\ip{H}{\II^{ij}} - 2\|\np H\|^2;\\
\ho \| \II\|^2 &=-2h^{ilA}h_{il}^Bh_A^{kr}h_{krB} -2|h_{iA}^{k}h_{kjB}-h_{iB}^lh_{jlA}|^2 - 2|\n_ph_{ik}^A|^2.
 \end{flalign*}
\end{lemma}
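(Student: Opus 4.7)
The plan is to follow the standard derivation of evolution equations for the second fundamental form along MCF, adapted to the pseudo-Euclidean ambient space $\bb{R}^{n,m}$. Since $\bb{R}^{n,m}$ is flat, the ambient curvature terms in the Simons-type identity vanish, which simplifies the computation; what remains is to keep careful track of the signs arising from raising and lowering normal indices via $-\delta^{AB}$, and from the Gauss equation (whose sign relative to the Riemannian case is reversed because the normal directions are timelike).

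First I would compute $\nt h_{ij}^A$ by differentiating $h_{ij}^A = -\ip{\II_{ij}}{\nu_A} = \ip{\ov\n_i X_j}{\nu_A}$, using $\nt X_i = \ov\n_i H$ together with Lemma \ref{ddtnu} to control $\nt \nu_A$. Modulo terms involving $\II$ and $H$, the result is essentially $\ov\n_i\ov\n_j H$ projected on $\nu_A$, and this is where the nonlinear structure of the formula appears.

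Next I would compute $g^{kl}\n_k\n_l h_{ij}^A$ using the Codazzi--Mainardi equations to replace $\n^\perp$-derivatives of $\II$ by symmetrised derivatives, then commute two covariant derivatives. Because $\ov R\equiv 0$, the commutator contributes only the intrinsic Riemann tensor of $M_t$, which is expressed via the Gauss equation purely in terms of $\II$; combined with the identification $\n^\perp_i\n^\perp_j H = g^{kl}\n^\perp_k\n^\perp_l \II_{ij} + \text{commutators}$, this produces the Simons-type identity for $\Delta h_{ij}^A$. Subtracting the two quantities and carefully tracking the sign conventions for normal indices gives the stated formula for $\ho h_{ij}^A$.

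For the remaining identities, I would then trace: since $\nt g^{ij} = 2g^{ik}g^{jl}\ip{H}{\II_{kl}}$, contracting the evolution of $h_{ij}^A$ with $g^{ij}$ yields $\ho H^A$, and the cross terms cancel cleanly to leave $-h^A_{kr}h^{kr}_B H^B$. The evolution of $\|H\|^2 = -H^A H_A$ then follows directly by computing
\[
\ho \|H\|^2 = 2H_A\,\ho H^A - 2\|\np H\|^2,
\]
with the first term reorganising into $-2\ip{H}{\II_{ij}}\ip{H}{\II^{ij}}$. Finally, contracting the evolution of $h_{ij}^A$ with $h^{ij}_A$ gives $\ho\|\II\|^2$; the quartic-in-$h$ terms rearrange into the squared commutator $|h_{iA}^k h_{kjB}-h_{iB}^l h_{jlA}|^2$, and the gradient terms collapse to $-2|\n_p h_{ik}^A|^2$.

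The main obstacle is bookkeeping rather than conceptual: ensuring that every sign produced by the timelike normal inner product, by raising indices with $-\delta^{AB}$, and by the Gauss equation is propagated consistently through the Simons-type identity. Once these conventions are fixed at the outset, the derivation is formally identical to the Riemannian MCF computation, and one recovers the formulae of \cite{LiSalavessa} verbatim.
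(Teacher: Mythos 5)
The paper does not actually prove this lemma: it simply states that the four evolution equations ``may be found in [Li--Salavessa, Proposition 4.1, equation (5.7)]'' and gives no derivation. So you are supplying a proof where the paper supplies only a citation. Your outline is the standard Simons-type derivation --- compute $\nt h_{ij}^A$ from $\nt X_i=\ov\n_i H$ and Lemma~\ref{ddtnu}, compute $\Delta h_{ij}^A$ via Codazzi--Mainardi and the Ricci identity, use flatness of the ambient space so that only the intrinsic curvature (expressed through Gauss) survives, then trace and contract --- and this is in substance exactly what the cited reference does. Your observation that $\nt g^{ij}=2g^{ik}g^{jl}\ip{H}{\II_{kl}}$ and that $\ho\|H\|^2=2H_A\ho H^A-2\|\np H\|^2$ both check out against the paper's sign conventions ($h_{ij}^A=-\ip{\II_{ij}}{\nu_A}$, normal indices raised with $+\delta$, $\|H\|^2=H^AH_A$). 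So the approach is correct and matches what the paper's source does; what the paper buys by citing is brevity, while your version would make the section self-contained.

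One small slip worth flagging: you wrote $h_{ij}^A=-\ip{\II_{ij}}{\nu_A}=\ip{\ov\n_iX_j}{\nu_A}$, but since $\II_{ij}=(\ov\n_iX_j)^\perp$ and $\nu_A$ is normal, the second expression should carry a minus sign, $h_{ij}^A=-\ip{\ov\n_iX_j}{\nu_A}$. This is exactly the kind of sign bookkeeping you (rightly) identify as the main hazard, so do fix it before carrying out the full computation. The remaining steps of your plan are routine but lengthy; as written the proposal is a sketch rather than a complete verification of the four displayed identities, though the plan itself has no gaps.
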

\begin{cor}\label{evolcurv2} The following evolution inequalities hold:
 \begin{align*}\ho \|H\|^2 &\leq -\frac{2}{n} \|H\|^4 - 2|\np H|^2;\\
  \ho \|\II\|^2 &\leq -\frac{2}{m}\|\II\|^4 - 2\|\np\II\|^2.
 \end{align*}
\end{cor}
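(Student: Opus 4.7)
My plan is to start from the exact evolution equalities in Lemma \ref{evolcurv} and, in each case, estimate one of the curvature-quartic terms below by a suitable multiple of $\|H\|^4$ or $\|\II\|^4$ via a Cauchy--Schwarz / trace-versus-Frobenius-norm inequality. The remaining terms in Lemma \ref{evolcurv} are either manifestly non-positive or already appear in the desired form, so they can be kept or discarded for free.

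For the first inequality, I would set $b_{ij}=\ip{H}{\II_{ij}}$ and note that, because $H=g^{ij}\II_{ij}$ is timelike-normal, one has $g^{ij}b_{ij}=\ip{H}{H}=-\|H\|^2$. Applying the Cauchy--Schwarz inequality for the $g$-trace of the symmetric 2-tensor $b_{ij}$ gives
\[
\|H\|^4=(g^{ij}b_{ij})^2\leq n\,g^{ij}g^{kl}b_{ik}b_{jl}=n\,\ip{H}{\II_{ij}}\ip{H}{\II^{ij}},
\]
which when plugged into the evolution equation for $\|H\|^2$ in Lemma \ref{evolcurv} yields the first claim.

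For the second inequality, the key observation is that the term $-2h^{ilA}h_{il}^{B}h_{A}^{kr}h_{krB}$ is (minus twice) the squared Frobenius norm of the symmetric $m\times m$ matrix
\[
T_{AB}:=g^{ik}g^{jl}h_{klA}h_{ijB}.
\]
Here $T$ is positive semi-definite (for any $v^A$, $T_{AB}v^Av^B=|h_{ij}|^2$ where $h_{ij}=h_{ijC}v^C$), so if $\l_1,\dots,\l_m\geq 0$ are its eigenvalues then
\[
|T|^2=\sum_A\l_A^2\geq\frac{1}{m}\Bigl(\sum_A\l_A\Bigr)^2=\frac{1}{m}(\operatorname{tr} T)^2=\frac{1}{m}\|\II\|^4,
\]
using that $\operatorname{tr} T=h^{ijA}h_{ijA}=\|\II\|^2$. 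The middle term $-2|h_{iA}^{k}h_{kjB}-h_{iB}^{l}h_{jlA}|^2$ in Lemma \ref{evolcurv} is manifestly non-positive so may be discarded, and $-2|\n_ph_{ik}^A|^2=-2\|\np\II\|^2$ is already in the required form.

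The main (minor) subtlety is to keep the index conventions straight: because normal indices are raised via $-\delta^{AB}$, a contraction $h^{\cdots A}h_{\cdots A}$ really is the manifestly non-negative quantity I am using, and $\|H\|^2$ and $\|\II\|^2$ really are non-negative. Once this bookkeeping is fixed, no further curvature estimates are required; everything reduces to the elementary linear-algebra fact that for a real symmetric $k\times k$ matrix $S$, $(\operatorname{tr} S)^2\leq k\,|S|^2$, applied once to a $n\times n$ tensor and once to an $m\times m$ tensor.
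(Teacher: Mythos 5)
Your proof is correct and follows essentially the same route as the paper: apply $(\operatorname{tr}S)^2\leq k|S|^2$ to $r_{ij}=\ip{H}{\II_{ij}}$ for the first inequality and to $S^{AB}=h^{ilA}h_{il}^B$ (your $T_{AB}$, the same object up to the trivial raising of normal indices) for the second, discard the manifestly non-positive commutator term, and identify the remaining derivative terms. Your write-up is a little more explicit about the sign conventions and the positive semi-definiteness of $T$ (neither of which is strictly needed since the trace inequality holds for any symmetric matrix), but no new idea beyond the paper's is introduced.
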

\begin{proof}
Setting $r_{ij} = \ip{H}{\II_{ij}}$, we have $|r|^2\geq n^{-1} (\text{tr} \,r )^2 = n^{-1}\|H\|^2$, which gives the first inequality. The second follows by the same trick applied to $S^{AB} = h^{ilA}h_{il}^B$.
\end{proof}

\section{Local estimates}\label{s:local}

In this section we localise the estimates for our key quantities using an appropriate cut-off function.  To this end, we introduce the following notation.

\begin{defses}\label{def:cyl.sqsphere.cutoff}
We define the solid cylinder of radius $R$ centred at $p$ to be
\[\mathcal{C}_R(p):=\{x\in\bb{R}^{n,m}| r^2(x-p)< R^2\}\subset\bb{R}^{n,m},\]
and  the solid quasi-sphere centred at $p\in \bb{R}^{n,m}$ by 
\[\mathcal{Q}_R(p): = \{x\in\bb{R}^{n,m}| |x-p|^2<R^2\}\subset\bb{R}^{n,m},\]
where if the centre is omitted,   it is assumed to be $0\in\bb{R}^{n,m}$, that is
\begin{align*}
\mathcal{C}_R&:=\{x\in\bb{R}^{n,m}| r^2(x)< R^2\}\quad\text{and}\quad
\mathcal{Q}_R:=\{x\in\bb{R}^{n,m}| |x|^2< R^2\} . 
\end{align*}

We now define 
\[\eta_R := (R^2-|X|^2-2nt)_+\]
and note that $\eta_R(X)>0$ for $t<(2n)^{-1}R^2$ if and only if $X\in \mathcal{Q}_{\sqrt{R^2 -2nt}}$. 
\end{defses}

We remark that the above quasi-spheres have \emph{positive} square radius, and the support of the cutoff function collapses on to the interior of the light cone as $t$ goes to $\frac{R^2}{2n}$. These should not be confused with the (negative square radius) expanding quasi-spheres of Definition \ref{quasisphere}. 

We now include a lemma which is essentially \cite[Lemma 3.3]{EckerEntire}.
\begin{lemma} \label{evolutioncutoff}
Suppose that, along MCF, a $C^2$-function $f:\bb{R}^{n,m}\to\bb{R}^+$ satisfies
\[\ho f \leq g-(1+\d)\frac{|\n f|^2}{f}\]
for some function $g$ and $\delta>0$.  Then there exists  $p=p(\d)>0$ such that, writing $\eta_R = \eta_R(X)$, we have
 \begin{flalign*}
 \ho f\eta_R^p &\leq g\eta_R^{p} .
 \end{flalign*} 
 Furthermore for any $\Lambda>0$, there exists $q=q(\d,\Lambda)>0$ such that
 \begin{flalign*}
 \ho f\eta_R^q &\leq g\eta_R^{q} - \Lambda f\eta^{q-2}_R|\n|X|^2|^2\ .
 \end{flalign*} 
\end{lemma}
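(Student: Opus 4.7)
The plan rests on the key observation that, by the second identity in Corollary \ref{C0evol}, $\eta_R$ satisfies $\ho\eta_R=0$ on the interior of its support. Combined with $\n\eta_R=-\n|X|^2$, this means $\eta_R^p$ behaves almost like an inert cutoff along the flow, the only cost being a cross term that I will absorb using the favourable gradient piece in the hypothesis on $f$.

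The calculation would proceed as follows. The chain rule gives $\ho\eta_R^p=-p(p-1)\eta_R^{p-2}|\n\eta_R|^2$ wherever $\eta_R>0$, so the product rule $\ho(AB)=A\,\ho B+B\,\ho A-2\ip{\n A}{\n B}$ and the assumed bound on $\ho f$ produce
\[
\ho\bigl(f\eta_R^p\bigr)\leq g\eta_R^p - (1+\d)\eta_R^p\frac{|\n f|^2}{f} - 2p\eta_R^{p-1}\ip{\n f}{\n\eta_R} - p(p-1)f\eta_R^{p-2}|\n\eta_R|^2.
\]
Young's inequality applied to the cross term with parameter $\e=1+\d$ absorbs the $|\n f|^2/f$ piece exactly, leaving
\[
\ho\bigl(f\eta_R^p\bigr)\leq g\eta_R^p + \Bigl(\tfrac{p^2}{1+\d}-p(p-1)\Bigr)f\eta_R^{p-2}|\n\eta_R|^2.
\]

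For the first claim I would then pick $p$ so the bracket is non-positive, which reduces to $p\geq (1+\d)/\d$ and so determines $p=p(\d)$. For the second claim the same bracket equals $p-\d p^2/(1+\d)\to-\infty$ as $p\to\infty$, so given any $\Lambda>0$ one simply chooses $q=q(\d,\Lambda)$ large enough that this quantity drops below $-\Lambda$; substituting $|\n\eta_R|^2=|\n|X|^2|^2$ yields the stated estimate. The main (mild) obstacle is really just bookkeeping: ensuring that the choice of $\e$ simultaneously zeros the $|\n f|^2$ coefficient while still leaving enough room in the $|\n\eta_R|^2$ coefficient. A secondary technicality is that $\eta_R$ fails to be $C^2$ on $\{\eta_R=0\}$, but all computations are pointwise on the open set $\{\eta_R>0\}$ and $f\eta_R^p$ extends continuously by zero across the boundary (since our chosen exponent satisfies $p\geq(1+\d)/\d>1$), which is sufficient for the maximum-principle applications that use this lemma later in the paper.
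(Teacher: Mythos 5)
Your proposal is correct and follows essentially the same route as the paper's proof: expand $\ho(f\eta_R^p)$ by the product rule using $\ho\eta_R=0$ from Corollary \ref{C0evol}, absorb the cross term via Young's inequality with weight $1+\d$ to exactly cancel the $(1+\d)|\n f|^2/f$ term, and then choose $p\geq(1+\d)/\d$ (resp.\ $q$ larger) so that the remaining $|\n\eta_R|^2$ coefficient is non-positive (resp.\ $\leq-\Lambda$). The only cosmetic difference is that the paper fixes $p=\max\{(1+\d)/\d,2\}$ from the outset, while you flag $p>1$ as sufficient for the regularity across $\{\eta_R=0\}$ -- both choices are fine.
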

\begin{proof}
Let $p\geq 2$.  If $t<(2n)^{-1}R^2$ and $x\in \mathcal{Q}_{\sqrt{R^2 -2nt}}$ then at $(x,t)$:
 \begin{flalign*}
 \ho f\eta_R^p &\leq f\eta_R^{p-2}\bigg[ \eta_R^{2}f^{-1}g-(1+\d)\eta_R^2\frac{|\n f|^2}{f^2}-2p\eta_R\ip{\frac{\n f}{f}}{\n \eta_R}\\
 &\qquad\qquad -p(p-1)|\n \eta_R|^2\bigg]\\
 &\leq f\eta_R^{p-2}\left[ \eta_R^{2}f^{-1}g +p\left(1-\frac{\d}{1+\d}p\right)|\n |X|^2|^2\right].
 \end{flalign*}
Setting $p=\max\{\frac{1+\d}{\d},2\}$ 
gives the first equation. The second equation follows simply by making $q$ sufficiently larger than $p$.
\end{proof}

We first observe  we may easily get a local estimate for $w_A^2$ if $\|H\|^2$ is bounded.
\begin{lemma}\label{localw_KHbound}
 Suppose that for all $t\in[0,\frac{R^2}{2n})$, $\text{supp}\, \eta_R\cap M_t$ is compact and for all $y\in \text{supp} \,\eta_R\cap M_t$, \[\|H\|^2(y,t)<C_H.\] Then there exists $p>0$ such that for all $t\in[0,\frac{R^2}{2n})$,
 \[\sup_{M_t}w_A^2 \eta_R^p\leq e^{2C_H t}\sup_{M_0}w_A^2\eta_R^p.\]
\end{lemma}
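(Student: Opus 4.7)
The plan is to combine the second estimate of Corollary \ref{wkevolest} with the cut-off argument of Lemma \ref{evolutioncutoff}, and then close the argument via the parabolic maximum principle on the compact support of $\eta_R$.

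The second inequality of Corollary \ref{wkevolest} has exactly the form required by Lemma \ref{evolutioncutoff}, namely
\[
\ho w_A^2 \le 2 w_A^2 \|H\|^2 - \Bigl(1+\tfrac{1}{2n}\Bigr)\frac{|\n w_A^2|^2}{w_A^2},
\]
with $f = w_A^2$, $g = 2w_A^2\|H\|^2$, and $\delta = \tfrac{1}{2n}$. Applying the first conclusion of Lemma \ref{evolutioncutoff} then yields some $p = p(n) > 0$ (depending only on $n$ through $\delta$) such that
\[
\ho\bigl(w_A^2 \eta_R^p\bigr) \le 2 w_A^2\|H\|^2 \eta_R^p
\]
pointwise on $M_t$, where the extra gradient terms generated by the cut-off are absorbed into the $-(1+\tfrac{1}{2n})|\n w_A^2|^2/w_A^2$ reserve.

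Next, I would use the hypothesis $\|H\|^2 < C_H$ on $\mathrm{supp}\,\eta_R \cap M_t$ to rewrite this as the scalar differential inequality
\[
\ho\bigl(w_A^2 \eta_R^p\bigr) \le 2 C_H \bigl(w_A^2 \eta_R^p\bigr)
\]
on the support of $\eta_R$. Since by assumption $\mathrm{supp}\,\eta_R \cap M_t$ is compact for each $t \in [0, R^2/(2n))$, the function $w_A^2 \eta_R^p$ attains its spatial maximum. Moreover $\eta_R$ vanishes at the spatial boundary of its support, so any positive maximum of $w_A^2 \eta_R^p$ must be an interior maximum, and hence at such a point the spatial Laplacian is $\le 0$.

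The standard parabolic maximum principle argument (differentiating $\sup_{M_t}(w_A^2\eta_R^p)$ with respect to $t$, or equivalently comparing with the ODE $\dot\phi = 2 C_H \phi$ via Hamilton's trick) then gives
\[
\sup_{M_t} w_A^2 \eta_R^p \le e^{2 C_H t} \sup_{M_0} w_A^2 \eta_R^p,
\]
which is the claimed estimate. The main (very minor) subtlety is the compact-support hypothesis: it is precisely what allows us to apply the maximum principle on each time slice without having to worry about the behaviour near where $\eta_R = 0$, so no boundary terms appear when propagating the estimate forward in time. There is no real obstacle here; the lemma is essentially a packaging of the cut-off argument of Lemma \ref{evolutioncutoff} with the Gr\"onwall-type consequence of the uniform mean curvature bound.
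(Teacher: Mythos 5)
Your proposal is correct and follows essentially the same route as the paper: you apply the second inequality of Corollary \ref{wkevolest} with $\delta = \tfrac{1}{2n}$, feed it into Lemma \ref{evolutioncutoff} to obtain $\ho(w_A^2\eta_R^p)\le 2w_A^2\|H\|^2\eta_R^p$, use the hypothesis $\|H\|^2 < C_H$, and finish with the maximum principle; the paper merely compresses these steps by writing the inequality as $\ho\bigl(w_A^2\eta_R^p e^{-2C_H t}\bigr)\le 0$ before invoking the maximum principle.
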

\begin{proof}
Using Corollary \ref{wkevolest} and
 Lemma \ref{evolutioncutoff}, there exists  $p>0$ such that
\[\ho w_A^2\eta_R^pe^{-2C_H t} \leq 0.\]
The maximum principle now yields the result.
\end{proof}
\begin{remark}\label{rem:AlternativeLocalEst}
 As $w_A$ and $\|H\|^2$ have the same evolution as the gradient and square of the mean curvature in the codimension 1 case, we may follow an identical proof to \cite[Theorem 3.1]{EckerEntire}. However, note that to apply the maximum principle to \[g=\frac{w_A^2}{(\Lambda-\|H\|^2)^\frac{1}{q}}(R^2-|X|^2-2nt)^p\]
 we require that it is (at least) continuous; i.e.~we require a bound on $\|H\|^2$ so that the denominator is never zero (which is seemingly absent from the hypotheses of \cite[Theorem 3.1]{EckerEntire}). 
 Lemma \ref{localw_KHbound} therefore yields an equivalent statement.
\end{remark}

In general, we may not have a uniform bound on $\|H\|^2$ as assumed in Lemma \ref{localw_KHbound}. However, in the applications in Section \ref{s:entire}, we will have bounds of the form 
\[t\|H\|^2 \leq \frac{n}{2},\]
and we now prove local estimates under this assumption.

\begin{lemma}\label{Jtest}
 Suppose that for all $t\in[0,\frac{R^2}{2n})$, $\text{supp} \,\eta_R\cap M_t$ is compact and there is some $L>0$ such that, for all  $y\in \text{supp} \,\eta_R\cap M_t$, 
 \[t\|H\|^2(y,t) \leq \frac{n}{2} \qquad \text{ and }\qquad |X|^2(y,t)>-L.\]
 There exists $q=q(n)$ and $C=C(n, R, L)$ such that for all $t\in[0,\frac{R^2}{2n})$,
 \[\sup_{M_t}t\|X^\perp\|^2\eta_R^q(y,t) \leq C.\]
\end{lemma}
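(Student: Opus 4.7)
The plan is to bound $F := t\|X^\perp\|^2\eta_R^q$ by parabolic maximum principle, combining Corollary \ref{Jevolest} with the cutoff manipulation of Lemma \ref{evolutioncutoff}. Since $\operatorname{supp}\eta_R\cap M_t$ is compact, $\eta_R$ vanishes on the boundary of its support, and $F(\cdot,0)\equiv 0$, for any $T<R^2/(2n)$ the supremum of $F$ on $M_t\times[0,T]$ is attained at an interior spacetime point $(y_0,t_0)$ with $t_0>0$, where $\ho F\geq 0$.

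I would then split into two cases based on the size of $\|X^\perp\|^2$ at $(y_0,t_0)$. If $\|X^\perp\|^2(y_0,t_0)\leq 2nL$, then directly $F\leq (R^2/2n)(2nL)R^{2q}=LR^{2q+2}$. Otherwise $\|X^\perp\|^2>2nL$, so the hypothesis $|X|^2>-L$ forces $|X|^2<\tfrac{1}{2n}\|X^\perp\|^2$. Setting $\e=1/(2n)$ in Corollary \ref{Jevolest} gives $\d=1/(4n+2)>0$, so applying Lemma \ref{evolutioncutoff} with, say, $\Lambda=1$ produces some $q=q(n)$ such that
\[\ho(\|X^\perp\|^2\eta_R^q)\leq(-4\ip{X}{H}+2\ip{X}{H}^2)\eta_R^q-\|X^\perp\|^2\eta_R^{q-2}|\n|X|^2|^2.\]
Multiplying by $t$ and adding $\|X^\perp\|^2\eta_R^q=(\ho t)\|X^\perp\|^2\eta_R^q$ yields
\[\ho F\leq\|X^\perp\|^2\eta_R^q+t(-4\ip{X}{H}+2\ip{X}{H}^2)\eta_R^q-t\|X^\perp\|^2\eta_R^{q-2}|\n|X|^2|^2.\]

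To close the estimate at $(y_0,t_0)$ I would assemble three ingredients. First, by \eqref{JgradX} and the case assumptions, $|\n|X|^2|^2=4(|X|^2+\|X^\perp\|^2)\geq 2\|X^\perp\|^2$. Second, Cauchy--Schwarz in the normal (timelike) subspace gives $|\ip{X}{H}|=|\ip{X^\perp}{H}|\leq\|X^\perp\|\|H\|$. Third, combining this with $t\|H\|^2\leq n/2$ and $t\leq R^2/(2n)$, Young's inequality produces $t\bigl|-4\ip{X}{H}+2\ip{X}{H}^2\bigr|\leq C_1(n,R)\|X^\perp\|^2+C_2(n)$ (specifically $2t\ip{X}{H}^2\leq n\|X^\perp\|^2$ and $4t\|X^\perp\|\|H\|\leq 4t\|X^\perp\|^2+t\|H\|^2\cdot 1$, etc.). Inserting these into $\ho F\geq 0$, dividing by $2\|X^\perp\|^2>0$, and multiplying by $\eta_R^2\leq R^2$, gives, using $\|X^\perp\|^2>2nL$ to absorb the constant term, $F(y_0,t_0)\leq C(n,R,L)$.

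The main obstacle is the linear term $-4\ip{X}{H}$ in the evolution of $\|X^\perp\|^2$: the hypothesis gives only $t\|H\|^2\leq n/2$ rather than $\|H\|^2\leq C$, and this is precisely why one must bound $t\|X^\perp\|^2\eta_R^q$ rather than $\|X^\perp\|^2\eta_R^q$, in order to trade each factor of $\|H\|$ for $(t\|H\|^2)^{1/2}/t^{1/2}$ via Young's inequality. The hypothesis $|X|^2>-L$ plays a dual role: it underwrites the case split that lets Corollary \ref{Jevolest} apply, and simultaneously ensures the lower bound $|\n|X|^2|^2\geq 2\|X^\perp\|^2$ needed to convert the negative gradient term into a coercive multiple of $\|X^\perp\|^2$.
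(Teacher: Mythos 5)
Your overall strategy---Corollary \ref{Jevolest}, Lemma \ref{evolutioncutoff} with $\Lambda=1$, a case split at a maximum point, Young's inequality to trade $\|H\|$ for $(t\|H\|^2)^{1/2}/\sqrt t$, then dividing by $\|X^\perp\|^2$ and multiplying by $\eta_R^2$---matches the paper's, and the algebra you sketch (including $\delta=1/(4n+2)$ for $\e=1/(2n)$) is correct. However, the case split contains a genuine error. You claim that $\|X^\perp\|^2>2nL$ together with $|X|^2>-L$ ``forces $|X|^2<\tfrac{1}{2n}\|X^\perp\|^2$.'' This is false: $|X|^2>-L$ is only a \emph{lower} bound on $|X|^2$, and combined with $\|X^\perp\|^2>2nL$ it gives $|X|^2>-\|X^\perp\|^2/(2n)$, the opposite direction. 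A point in $\text{supp}\,\eta_R\cap M_t$ with $\|X^\perp\|^2$ just above $2nL$ and $|X|^2$ close to $R^2$ satisfies all your hypotheses but violates $|X|^2<\|X^\perp\|^2/(2n)$, so Corollary \ref{Jevolest} does not apply there and the evolution inequality driving your second case is unavailable.

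The fix is to tie the threshold to $R$, not $L$. Inside $\text{supp}\,\eta_R$ one has $|X|^2<R^2-2nt\le R^2$, so $\|X^\perp\|^2>(1+2n)R^2$ (equivalently, the paper's condition $\|X^\perp\|^2>(1+2n)|X|^2$, noting that $|X|^2\le 0$ forces $\|X^\perp\|^2=0$) guarantees $|X|^2<\|X^\perp\|^2/(1+2n)$ and Corollary \ref{Jevolest} applies; while if $\|X^\perp\|^2\le(1+2n)R^2$ one gets $F\le C(n,R)$ directly from $t<R^2/(2n)$ and $\eta_R\le R^2$. The hypothesis $|X|^2>-L$ is used \emph{only} for the gradient lower bound $|\n|X|^2|^2=4(|X|^2+\|X^\perp\|^2)\ge 4(\|X^\perp\|^2-L)$, which supplies the coercive $-\|X^\perp\|^4$ term and is where $L$ enters the final constant. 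With the threshold corrected, the remainder of your computation closes as intended.
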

\begin{proof}
Due to Corollary \ref{Jevolest} (setting $\e = (1+2n)^{-1}$), we have that at any point where $|X|^2<\frac{\|X^\perp\|^2}{1+2n}$:
\[\ho \|X^\perp\|^2\leq  -\left(1+ \frac{1}{4n}\right)\frac{|\n \|X^\perp\|^2|^2}{\|X^\perp\|^2} +\frac{2\sqrt{2n} \|X^\perp\|}{\sqrt t}+n\frac{\|X^\perp\|^2} t .\]
We consider $f=t\|X^\perp\|^2\eta_R^q$ where $q$ is chosen as in Lemma \ref{evolutioncutoff} with $\Lambda=1$, which we want to show is uniformly bounded to prove the statement. 

Suppose that at time $t_0$, $y_0\in M_{t_0}$ is an increasing maximum of $f$ (that is, $f$ has a maximum in space at $y_0$ and $\ddt{f}(y_0,t_0)\geq 0$). Then we have that either 
$\|X^\perp\|^2\leq (1+2n)|X|^2$ (which implies that $f\leq C(n,R)$), or at $(y_0,t_0)$
\begin{flalign*}0&\leq\ho t\|X^\perp\|^2\eta_R^q \\&\leq  -t\|X^\perp\|^2 |\n|X|^2|^2\eta_R^{q-2}+2\sqrt{2nt} \|X^\perp\|\eta_R^q+n\|X^\perp\|^2\eta_R^q+\|X^\perp\|^2\eta_R^q\\
 &\leq -4t\|X^\perp\|^4\eta_R^{q-2} +4tL\|X^\perp\|^2\eta_R^{q-2}+2\sqrt{2nt} \|X^\perp\|\eta_R^q+(n+1)\|X^\perp\|^2\eta_R^q\ .
\end{flalign*}
Therefore at any increasing maximum of $f=t\|X^\perp\|^2\eta_R^q$ such that $R^2(1+2n)<\|X^\perp\|^2$, we have
\[f^2\leq 2n^{-1}LR^2f\eta_R^{q}+t\sqrt{8nf} \eta_R^{\frac{3}{2}q+2}+(n+1)f\eta_R^{q+2} ,\]
which implies
$f \leq C(R, n, L)$ due to our chosen range of $t$. 
\end{proof}

We now use Lemma \ref{Jtest} to show a full local gradient bound.
\begin{lemma}\label{localtgradest}
 Suppose that for all $t\in[0,\frac{R^2}{2n})$,  $\text{supp}\, \eta_R \cap M_t$ is compact and there is some $C_u>0$ such that, for all $y\in \text{supp} \,\eta_{R}\cap M_t$, 
 \[t\|H\|^2(y,t) \leq \frac{n}{2}\qquad \text{ and }\qquad \|u\|^2(y,t)<C_u.\] 
There exists  $p=p(n)$ and $C=C(n, R, C_u)$ such that for all $t\in[0,\frac{R^2}{2n})$,
 \[\sup_{M_t}tv^2\eta_R^p(y,t) \leq C .\]
\end{lemma}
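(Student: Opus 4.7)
The plan is to bound each summand $tw_A^2\eta_R^p$ separately, for $A=1,\ldots,m$; summing over the fixed $m$ indices then yields the bound on $tv^2\eta_R^p$. The hypothesis $\|u\|^2\leq C_u$ is exploited via the height function trick, so I work with
\[
F_A:=tw_A^2 e^{u_A^2}\eta_R^p,
\]
which differs from $tw_A^2\eta_R^p$ only by the bounded factor $e^{u_A^2}\leq e^{C_u}$. A preliminary observation is that Lemma \ref{Jtest} applies here, since on $\mathrm{supp}\,\eta_R\cap M_t$ we have $|X|^2\geq -\|u\|^2\geq -C_u$; taking $L=C_u$ in Lemma \ref{Jtest} yields an exponent $q=q(n)$ and a constant $C(n,R,C_u)$ with $t\|X^\perp\|^2\eta_R^q\leq C$ on this support.

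The required pointwise evolution of $tw_A^2 e^{u_A^2}$ combines Corollary \ref{wkevolest} (second inequality, with constant $\frac{1}{2n}$) and the height function manipulation from the proof of Lemma \ref{evolgoodwk}, in which Young's inequality applied to the cross term $-2\phi'\langle\n w_A^2,\n u_A\rangle$ produces the negative quartic term $-2e^{u_A^2}w_A^2(w_A^2-1)$ while preserving a good gradient term. Using $\ho(tg)=g+t\ho g$ and the hypothesis $t\|H\|^2\leq n/2$, one obtains
\[
\ho(tw_A^2 e^{u_A^2})\leq (n+1+2t)w_A^2 e^{u_A^2}-2tw_A^4 e^{u_A^2}-\frac{t}{2n}e^{u_A^2}\frac{|\n w_A^2|^2}{w_A^2}.
\]

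The final step is a direct maximum-principle argument applied to $F_A$. At any interior positive maximum $(y_0,t_0)$, the condition $\n F_A=0$ gives $\n(tw_A^2 e^{u_A^2})=-tw_A^2 e^{u_A^2}\n\eta_R^p/\eta_R^p$, whence the cross term $-2\langle \n(tw_A^2 e^{u_A^2}),\n\eta_R^p\rangle$ combines with $\ho\eta_R^p=-p(p-1)\eta_R^{p-2}|\n|X|^2|^2$ to leave a single positive remainder $p(p+1)tw_A^2 e^{u_A^2}\eta_R^{p-2}|\n|X|^2|^2$. Discarding the negative gradient term, the inequality $0\leq\ho F_A$, divided by $w_A^2 e^{u_A^2}\eta_R^{p-2}$, reduces to
\[
2tw_A^2\eta_R^2 \leq (n+1+2t)\eta_R^2 + p(p+1)t|\n|X|^2|^2.
\]
Using $|\n|X|^2|^2=4(|X|^2+\|X^\perp\|^2)$, $|X|^2<R^2$ on $\mathrm{supp}\,\eta_R$, and $t\|X^\perp\|^2\leq C\eta_R^{-q}$ from Lemma \ref{Jtest}, the right hand side is dominated by $C_1+C_2\eta_R^{-q}$, with constants depending on $n$, $R$, $C_u$, $p$. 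Multiplying back by $e^{u_A^2}\eta_R^{p-2}$ yields $F_A\leq C(n,R,C_u,p)(\eta_R^{p-2}+\eta_R^{p-2-q})$ at the maximum, and choosing $p\geq q+2$ produces the claimed uniform bound.

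I expect the main subtlety to be the bookkeeping between the cutoff exponent $p$ and the exponent $q$ coming from Lemma \ref{Jtest}; this forces $p\geq q+2$. I deliberately avoid applying Lemma \ref{evolutioncutoff} to introduce the cutoff, because the factor $e^{u_A^2}$ in $F_A$ causes $|\n F_A|^2/F_A$ to exceed $\frac{1}{2n}e^{u_A^2}|\n w_A^2|^2/w_A^2$ by an extra term of order $u_A^2 w_A^4$, violating that lemma's $(1+\delta)$ hypothesis. Using $\n F_A=0$ directly at the maximum bypasses this cleanly.
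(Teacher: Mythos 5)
Your proof is correct and takes essentially the same approach as the paper: apply Lemma \ref{Jtest} with $L=C_u$, derive an evolution inequality for $tw_A^2e^{u_A^2}$ via the height-function trick, and conclude by a direct maximum-principle argument at an increasing interior maximum of $tw_A^2e^{u_A^2}\eta_R^p$, using $\nabla(tw_A^2e^{u_A^2}\eta_R^p)=0$ and $p\geq q+2$. The only cosmetic difference is that you build the evolution inequality from the second estimate in Corollary \ref{wkevolest}, retaining a $\frac{1}{2n}$-fraction of the gradient term (which you then discard) and picking up an extra $2w_A^2\|H\|^2$ term controlled by $t\|H\|^2\leq n/2$, whereas the paper simply invokes Lemma \ref{evolgoodwk} (built on the first estimate in Corollary \ref{wkevolest}) to obtain the cleaner $\ho f\leq -cf^2+Cf$ with no residual gradient or mean-curvature terms.
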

\begin{proof}
The bound on $\|u\|^2$ implies that $|X|^2>-C_u$. As a result we may apply Lemma \ref{Jtest} to give that
 \[t\|X^\perp\|\eta_{R}^q<C(n,R,C_u) .\]

Setting $f = w_A^2e^{u_A^2}$, by Lemma \ref{evolgoodwk} we have   
\[\ho f \leq -cf^2+Cf\]
where $c$ and $C$ depend only on $C_u$. Therefore at an increasing maximum of $tf\eta_R^p$, for $p\geq q+2$, we may calculate:
 \begin{flalign*}
 0\leq\ho tf\eta_R^p 
 &=-c t\eta_R^p f^2+Cft\eta_R^p +f\eta_R^p -2\ip{\n f}{\n \eta_R^p} \\&\quad+tf \left( - p(p-1)\eta_R^{p-2} |\n |X|^2|^2\right)\\
 &\leq-c t\eta_R^p f^2+Cft\eta_R^p +f\eta_R^p +tfp(p+1)\eta_R^{p-2}|\n |X|^2|^2,
\end{flalign*}
where we used that $\n(tf\eta_R^p)=0$ on the final line. Using the estimate on $\|X^\perp\|^2$ and $|X|^2$, 
\[c t f^2\eta_R^p\leq Cft\eta_R^p +f\eta_R^p +tCf\eta_R^{p-2}+Cf,\]
We therefore obtain that $tw_A^2e^{u_A^2}\eta_R^p=tf\eta_R^p<C$, where $C$ depends only on $n, R, C_u$. Summing the estimates on the $w_A^2$ and using the bound on $\|u\|^2$ gives the lemma.
\end{proof}

We now prove local estimates on the second fundamental form.

\begin{lemma}$\ $\label{LocalCurvEst}\vspace{-10pt}
\begin{itemize}\item[]
\item[(a)] Suppose the hypotheses of Lemma \ref{localw_KHbound} hold, and additionally we have a uniform estimate 
\[v^2(y,t)\leq C_v\]
for all $y\in M_t$. There exists  $C_1 = C_1(n,R,C_v)$ such that
 \[\|\II\|^2\eta_R^p\leq C_1\underset{M_0}\sup \|\II\|^2\eta_R^p  .\]
\item[(b)] Suppose the hypotheses of Lemma \ref{localtgradest} hold. There exists $C_2=C_2(c,R,C_u)$ such that
 \[t\|\II\|^2\eta_R^p\leq C_2  .\]
\end{itemize}
\end{lemma}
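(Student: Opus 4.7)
The plan is to apply the parabolic maximum principle to $f=\|\II\|^2\eta_R^p$ for part (a) and to $f=t\|\II\|^2\eta_R^p$ for part (b), with $p$ chosen sufficiently large, exploiting the favourable negative quartic term $-\tfrac{2}{m}\|\II\|^4$ in the evolution inequality from Corollary \ref{evolcurv2}. Note that, in contrast with Euclidean MCF, this term has the \emph{right} sign and is what makes the argument possible.

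First I would compute $\ho f$ via the product rule. The key observation from Corollary \ref{C0evol} is that $\ho\eta_R=0$ on the support of $\eta_R$, so $\ho\eta_R^p=-p(p-1)\eta_R^{p-2}|\n|X|^2|^2$. Combining with Corollary \ref{evolcurv2}, the cross term $\ip{\n\|\II\|^2}{\n\eta_R^p}$ is controlled by Kato's inequality $|\n\|\II\|^2|\leq 2\|\II\|\|\np\II\|$ together with Young's inequality, so that it is absorbed into the good term $-2\|\np\II\|^2\eta_R^p$ at the cost of a multiple of $\|\II\|^2\eta_R^{p-2}|\n|X|^2|^2$.

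To bound $|\n|X|^2|^2=4(|X|^2+\|X^\perp\|^2)$ from equation \eqref{JgradX}, I would use that in both parts the cutoff localises to $|X|^2<R^2$. In (a), the compactness of $\mathrm{supp}\,\eta_R\cap M_t$ together with $v^2\leq C_v$ yields a uniform bound on $\|X^\perp\|^2$ (via the tensor estimate \eqref{Generaltensorest}). In (b), Lemma \ref{Jtest} provides $t\|X^\perp\|^2\eta_R^q\leq C$, and the gradient estimate of Lemma \ref{localtgradest} is available. The resulting inequalities take the schematic form
\begin{align*}
\ho f &\leq -\tfrac{2}{m}\|\II\|^4\eta_R^p + C\|\II\|^2\eta_R^{p-2} & &\text{for (a),}\\
\ho f &\leq \|\II\|^2\eta_R^p - \tfrac{2t}{m}\|\II\|^4\eta_R^p + Ct\|\II\|^2\eta_R^{p-2} + C\|\II\|^2\eta_R^{p-2-q} & &\text{for (b).}
\end{align*}

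At an interior increasing spatial maximum of $f$ the left-hand side is non-negative. In part (a) this gives $\|\II\|^2\eta_R^2\leq C'$ at the maximum point, and since $\eta_R\leq R^2$ on its support, combining with the bound at $t=0$ yields the stated estimate. In part (b), multiplying through by $t$ produces (after setting $F=t\|\II\|^2\eta_R^p$ and using $t<R^2/(2n)$) a quadratic inequality of the form $cF^2\leq F\eta_R^p+CtF\eta_R^{p-2}+CF\eta_R^{p-2-q}$, which gives an absolute bound on $F$ provided $p\geq q+2$. The main obstacle I anticipate is the bookkeeping of the cutoff powers: $p$ must be chosen large enough to absorb both the bad $\eta_R^{-2}$ factor coming from the gradient of the cutoff and the $\eta_R^{-q}$ factor inherited from the control on $\|X^\perp\|^2$ via Lemma \ref{Jtest}, while still preserving the dominance of the negative quartic term under the maximum principle.
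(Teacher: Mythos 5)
Your proposal is essentially correct and follows the same strategy as the paper: apply the maximum principle to $\|\II\|^2\eta_R^p$ (or $t\|\II\|^2\eta_R^p$), use the favourable quartic term $-\tfrac{2}{m}\|\II\|^4$ from Corollary \ref{evolcurv2} to obtain a bound at increasing maxima, and control $|\n|X|^2|^2$ via \eqref{JgradX} together with the relevant estimate on $\|X^\perp\|^2$ (uniform bound in part (a), Lemma \ref{Jtest} in part (b)). The one cosmetic difference is in handling the cross term $-2\ip{\n\|\II\|^2}{\n\eta_R^p}$: you invoke Kato's inequality and Young to absorb it into the $-2\|\np\II\|^2\eta_R^p$ term, whereas the paper (following its proof of Lemma \ref{localtgradest}) uses the vanishing of $\n(t\|\II\|^2\eta_R^p)$ at the maximum point to rewrite the cross term directly as $+2p^2\|\II\|^2\eta_R^{p-2}|\n|X|^2|^2$, then combines with $\ho\eta_R^p$. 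Both yield the same schematic inequality with a bad term of order $\|\II\|^2\eta_R^{p-2}|\n|X|^2|^2$, so the two routes are interchangeable.
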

\begin{proof}
Part (a) follows by a calculation similar to the proof of Lemma \ref{localtgradest}, but estimating $|\n |X|^2|^2$ and using the estimates of Lemma \ref{localw_KHbound} instead of  Lemma \ref{Jtest}.

Part (b) is identical to the proof of Lemma \ref{localtgradest}, replacing $f$ with $f=\|\II\|^2$, and using that $\ho \|\II\|^2 \leq -\frac{2}{m}\|\II\|^4$. 
\end{proof}
\begin{remark}\label{localcurvestremark}
Once we have a uniform bound $v^2<C_v$, we may use an identical proof to Lemma \ref{LocalCurvEst} but replacing $\eta_R^p$ with $\tilde{\eta}^2_R(r)=(R^2-r^2)^2_+$. This is exactly as in \cite[Proposition 3.6]{EckerEntire} and  yields estimates in cylinders of the form
 \[\sup_{M_t}\|\II\|^2\tilde{\eta}^2_R(r) \leq \sup_{M_0}\|\II\|^2\tilde{\eta}^2_R(r).\]
\end{remark}

We conclude this section with local higher order estimates on the second fundamental form.

\begin{lemma}
 \label{higherorderlocalest}
 Suppose that for all $t\in[0,T)$, $y\in\mathcal{C}_R\cap M_t$,
 \[v^2(y,t)<C_v, \qquad \|\II\|^2(y,t)<C_\II,\]
 for some constants $C_v, C_\II$. Then there exists a constant \[C_k := C_k(C_v, C_\II,n, m, k,\max_{1\leq l\leq k} \underset{M_0\cap\mathcal{C}_R}\sup \|\n^l\II\|^2)\] such that for all $t\in[0,T)$, 
 \[\sup_{M_t\cap\mathcal{C}_\frac{R}{2}}\|\n^k\II\|^2\leq C_k.\]
\end{lemma}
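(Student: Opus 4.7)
The plan is to proceed by induction on $k$, following the Ecker--Huisken-style bootstrap argument for MCF. The key point justifying the use of the standard machinery here is that the ambient curvature of $\ov\n$ vanishes identically, so the Simons-type commutation identities on $M_t$ take the same schematic form as in Euclidean MCF; the only difference is that normal indices are raised using minus the normal metric (as set up in Section \ref{s:prelim}). In particular, an inductive application of $\ov R\equiv 0$, Codazzi--Mainardi, and Lemma \ref{evolcurv} yields a schematic evolution inequality of the form
\begin{equation*}
\ho \|\n^k\II\|^2 \leq -2\|\n^{k+1}\II\|^2 + C_{n,m,k}\!\!\sum_{\substack{i+j+l=k\\ i,j,l\geq 0}}\!\!\|\n^i\II\|\,\|\n^j\II\|\,\|\n^l\II\|\,\|\n^k\II\|,
\end{equation*}
which the first step of the proof is to record. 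Because the ambient connection is flat, no $\ov R$-terms appear and only the intrinsic commutators contribute; hence the derivation is formally identical to the Riemannian codimension-$m$ case treated in the references already cited (e.g.\ \cite{LiSalavessa}).

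For the base case $k=1$, by Corollary \ref{evolcurv2} we have $\ho\|\II\|^2\leq -2\|\np\II\|^2+C\|\II\|^4$, and from the displayed inequality with $k=1$ we get $\ho\|\n\II\|^2\leq -2\|\n^2\II\|^2+C\|\II\|^2\|\n\II\|^2$. Using the cylindrical cutoff $\tilde\eta^2_R(r)=(R^2-r^2)^2_+$ as in Remark \ref{localcurvestremark}, which is admissible precisely because the $v$-bound has been promoted to a full gradient bound in the cylinder, I would apply the maximum principle to
\begin{equation*}
f_1 := \|\n\II\|^2\,\tilde\eta^2_{R_1} + A_1\,\|\II\|^2\,\tilde\eta^2_{R_1},
\end{equation*}
on a slightly smaller cylinder of radius $R_1<R$, choosing $A_1=A_1(n,m,C_\II)$ large enough that the favourable $-2A_1\|\n\II\|^2$ term generated by the $\|\II\|^2$ factor absorbs both $+C\|\II\|^2\|\n\II\|^2$ and all cross-terms produced by differentiating $\tilde\eta_{R_1}$ (which, thanks to the $v$-bound, satisfy $|\n\tilde\eta_{R_1}|^2\leq C(R,C_v)$). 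This yields a bound on $\|\n\II\|^2$ depending only on $C_v$, $C_\II$, $n$, $m$ and $\sup_{M_0\cap\mathcal{C}_R}\|\n\II\|^2$, in a cylinder $\mathcal{C}_{R_1}$.

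The inductive step is parallel. Assuming $\|\n^l\II\|^2\leq C_l$ in $\mathcal{C}_{R_{k-1}}$ for $l\leq k-1$, the right-hand side of the evolution inequality for $\|\n^k\II\|^2$ is bounded by $C\|\n^k\II\|^2 + C'$ with constants depending on the inductive data. I would then apply the maximum principle to
\begin{equation*}
f_k := \|\n^k\II\|^2\,\tilde\eta^2_{R_k} + A_k\,\|\n^{k-1}\II\|^2\,\tilde\eta^2_{R_k},
\end{equation*}
on $\mathcal{C}_{R_k}$ with $R_k<R_{k-1}$ and $A_k$ chosen so that the $-2A_k\|\n^k\II\|^2$ term from the lower-order factor dominates all bad terms (both the polynomial ones from the evolution and the cross-terms involving $\n\tilde\eta_{R_k}$). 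Iterating, choose e.g.\ $R_k=\frac{R}{2}(1+2^{-k})$ so that after finitely many steps the cutoffs all contain $\mathcal{C}_{R/2}$, giving the claimed estimate.

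The main obstacle is the derivation of the interpolation-type evolution inequality for $\|\n^k\II\|^2$ with the correct signs in the pseudo-Euclidean setting, since $\II$ takes values in a bundle with an indefinite metric and our convention raises normal indices by $-\delta^{AB}$. The verification is a bookkeeping exercise: once one checks that the quartic terms on the right-hand side appear with the \emph{same} sign as in the Riemannian case (because each such term involves an even number of normal contractions and thus two sign cancellations), the induction is mechanical. All the required cutoff lemmas are already in Section \ref{s:local}, and the $v$-bound is precisely what makes $\tilde\eta_R$ an effective cutoff on the submanifold.
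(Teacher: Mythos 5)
Your proof is correct and is in substance the same as the paper's: the authors simply cite \cite[Proposition~3.7]{EckerEntire} and observe that, because the ambient space is flat, the higher-codimension evolution inequalities for $\|\n^k\II\|^2$ can be estimated exactly as in codimension one, which is precisely the Ecker--Huisken bootstrap (interpolation-type evolution inequality plus the cylindrical cutoff $\tilde\eta_R$ and the $A_k\|\n^{k-1}\II\|^2$ absorption trick) that you carry out in detail. The only small inaccuracy is your remark that ``only the intrinsic commutators contribute'': in higher codimension the normal bundle curvature $R^\perp$ also enters the commutation identities, but by the Ricci equation $R^\perp$ is itself quadratic in $\II$, so it produces only the same schematic $\II*\II*\n^i\II*\n^j\II$ terms and your interpolation inequality is unaffected.
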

\begin{proof}
All required evolution equations may be estimated as in the codimension one case,  so the proof of \cite[Proposition 3.7]{EckerEntire} applies without alteration. 
\end{proof}

\section{Entire solutions}\label{s:entire}

We now demonstrate the long-time existence of entire graphical solutions to spacelike MCF, assuming the long-time existence of solutions on bounded convex domains satisfying Neumann boundary conditions, which we defer to Section \ref{s:Neumann}. 

The following lemma will be used to give the compactness hypothesis required in our local estimates, and is a higher codimension version of \cite[Proposition 1]{ChengYauMaximal}.
\begin{lemma}\label{goodgood}
 Suppose $0\in M_0$, $M_0$ is  spacelike and is given by the graph of $\U_0:\bb{R}^n\to\bb{R}^m$.
 Then
\begin{equation}\label{eq:goodgood1}\lim_{R\ra \infty}\inf_{x\in \bb{R}^n\setminus B_R}\left[ |x|^2 -\|\U_0\|^2\right]=\infty,
\end{equation}
or equivalently (recalling the notation of Definition \ref{def:cyl.sqsphere.cutoff}),
\begin{equation*}\lim_{R\ra \infty}\inf_{M_0\setminus \mathcal{C}_R}|X|^2=\infty.
\end{equation*}
Moreover, there exists  $\e>0$ such that
\begin{equation}\label{eq:goodgood2}\|\U_0\|\leq \chi_\e
\end{equation}
where
\[
\chi_\e(x):= 
\begin{cases}
 1-\e &\text{ for } x\in \ov{B_1(0)},\\
 |x|-\e&\text{ for } x\in \bb{R}^n\setminus B_1(0).
\end{cases}
\]
 \end{lemma}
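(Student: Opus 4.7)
The plan is to establish the pointwise bound \eqref{eq:goodgood2} first, and then derive \eqref{eq:goodgood1} from it by a direct algebraic manipulation.

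First, I would translate the spacelike hypothesis into an operator-norm bound on $D\hat{u}_0$. Writing $X(x)=x^if_i+\hat{u}_0^A(x)e_A$, the induced metric is $g_{ij}=\delta_{ij}-\partial_i\hat{u}_0^A\,\partial_j\hat{u}_0^A$, so positivity of $g$ at $x$ is equivalent to $\|D\hat{u}_0(x)\|_{\mathrm{op}}<1$ (operator norm from $\bb{R}^n$ to $\bb{R}^m$). Since $0\in M_0$ gives $\hat{u}_0(0)=0$, integrating $D\hat{u}_0$ along the segment $t\mapsto tx$ yields
\[
\|\hat{u}_0(x)\|\leq\int_0^1\|D\hat{u}_0(tx)\cdot x\|\,dt<|x|\qquad\text{for every }x\neq 0.
\]

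The main subtle step is to upgrade this pointwise strict inequality to a uniform gap on compact sets. By continuity of $\|\hat{u}_0\|$ and compactness of $\partial B_1$, the maximum of $\|\hat{u}_0\|$ on $\partial B_1$ is attained and is strictly less than $1$; hence there exists $\epsilon>0$ with $\|\hat{u}_0\|\leq 1-\epsilon$ on $\partial B_1$. Applying the same radial integration argument to $tx'$ for $|x'|=1$, $t\in[0,1]$, propagates the bound to all of $\bar{B_1(0)}$. For $|x|\geq 1$ I would integrate along the radial segment $s\mapsto s\,x/|x|$ for $s\in[1,|x|]$, using $\|D\hat{u}_0\cdot(x/|x|)\|\leq 1$, to obtain
\[
\|\hat{u}_0(x)\|\leq\|\hat{u}_0(x/|x|)\|+(|x|-1)\leq(1-\epsilon)+(|x|-1)=|x|-\epsilon,
\]
which together with the $\bar{B_1(0)}$ estimate gives \eqref{eq:goodgood2} with the same $\epsilon$.

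Finally, \eqref{eq:goodgood1} follows immediately: for a graph $|X|^2=|x|^2-\|\hat{u}_0(x)\|^2$, and for $|x|\geq 1$ the bound from \eqref{eq:goodgood2} gives
\[
|x|^2-\|\hat{u}_0(x)\|^2\geq|x|^2-(|x|-\epsilon)^2=2\epsilon|x|-\epsilon^2,
\]
so $\inf_{\bb{R}^n\setminus B_R}[|x|^2-\|\hat{u}_0\|^2]\geq 2\epsilon R-\epsilon^2\to\infty$ as $R\to\infty$.

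The only place anything nontrivial happens is the compactness/continuity step producing $\epsilon$; this is exactly where one uses that the pointwise strict inequality $\|D\hat{u}_0\|_{\mathrm{op}}<1$ \emph{on} the compact sphere can be promoted to a uniform gap. Without this upgrade one could not rule out asymptotically lightlike behaviour and the conclusion would fail, which is why the hypothesis $0\in M_0$ (anchoring the graph) is essential.
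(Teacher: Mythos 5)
Your route is genuinely different from the paper's, and arguably cleaner: you prove \eqref{eq:goodgood2} directly and deduce \eqref{eq:goodgood1} via $|x|^2-(|x|-\e)^2=2\e|x|-\e^2$, whereas the paper proves \eqref{eq:goodgood1} by contradiction (comparing $y_i=x_i+\U_0(x_i)$ to its radial scaling $\tilde{y}_i$ over $\partial B_1$) and then merely states that \eqref{eq:goodgood2} follows. Both hinge on the same two facts---pointwise $\|D\U_0\|_{\mathrm{op}}<1$ plus compactness of the unit sphere---and your closing inequality $2\e|x|-\e^2$ is exactly the lower bound the paper derives for $|y_i|^2$.

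One step has a genuine gap, though. You obtain $\|\U_0\|\le1-\e$ on $\partial B_1$ by compactness and then claim the ``same radial integration argument \dots propagates the bound to all of $\ov{B_1(0)}$.'' It does not: integrating $D\U_0$ from $0$ to $tx'$ only gives $\|\U_0(tx')\|<t\le1$, which is weaker than $1-\e$ when $t$ is close to $1$. Moreover the specific $\e$ extracted from $\partial B_1$ can actually fail in the interior, since spacelikeness only forces $\|D\U_0\|_{\mathrm{op}}<1$; the graph can hug the light cone at some $x_0$ with $|x_0|$ slightly below $1$ (so $|x_0|-\|\U_0(x_0)\|$ tiny) and then, with $D\U_0$ near $-\mathrm{id}$ on the short segment, drop to $\|\U_0\|\le1-\e$ by the time it reaches $\partial B_1$, all while remaining spacelike. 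The fix is small: apply the compactness argument to $\|D\U_0\|_{\mathrm{op}}$ on the closed ball $\ov{B_1(0)}$ to get $\|D\U_0\|_{\mathrm{op}}\le1-\e$ there; radial integration from $0$ then yields $\|\U_0(x)\|\le(1-\e)|x|\le1-\e$ for all $|x|\le1$, and your estimate for $|x|\ge1$ runs unchanged. (Alternatively, observe $\max_{\ov{B_1(0)}}\|\U_0\|<1$ by compactness, since $\|\U_0(x)\|<|x|\le 1$ for $x\ne0$ and $\U_0(0)=0$, and shrink $\e$ accordingly.)
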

\begin{proof}
 Suppose \eqref{eq:goodgood1} does not hold. Then there exists a constant $C>0$ and a sequence of points $x_i\in \bb{R}^n$ such that $1<|x_i|\ra \infty$ but $y_i:=x_i+\U_0(x_i)\in M_0$ has 
\begin{equation}
 |y_i|^2=|x_i|^2-\|\U_0(x_i)\|^2<C .\label{lies}
\end{equation}
Since $M_0$ is spacelike there exists $\e>0$ such that for all $x\in \partial B_1(0)$, 
\begin{equation}\e \leq |x|^2-\|\U_0(x)\|^2=1-\|\U_0(x)\|^2.
 \label{ineq1}
\end{equation}

We define $\tilde{x}_i:=\frac{x_i}{|x_i|}\in \partial B_1(0)$ and $\tilde{y}_i:=\tilde{x_i}+ \U_0(\tilde{x_i}) \in M_0$. Since $M_0$ is spacelike,
\begin{equation}0\leq|\tilde{y}_i-y_i|^2= (|x_i|-1)^2-\|\U_0(\tilde{x_i})-\U_0(x_i)\|^2.
 \label{ineq2}
\end{equation}
Equations (\ref{ineq1}) and (\ref{ineq2}) and the triangle inequality for the norm $\|\cdot\|$ now imply
\begin{flalign*}
|y_i|^2 &\geq |x_i|^2 -(\|\U_0(x_i) -\U_0(\tilde{x}_i)\|+\|\U_0(\tilde{x}_i)\|)^2\\
&\geq|x_i|^2 -(|x_i|-1 +1-\e)^2 =2\e|x_i|-\e^2.
\end{flalign*}
This contradicts (\ref{lies}) as $i\ra \infty$.

Equation \eqref{eq:goodgood2} follows from (\ref{ineq1}) and the fact that $M_0$ is spacelike.
\end{proof}

We  now prove our claimed long-time existence result in the entire setting.

\begin{theorem} \label{EntireExistence}Suppose that $M_0$ is smooth,  spacelike  and  given by the graph of $\U_0:\bb{R}^n\to\bb{R}^m$. 
There exists a solution $$\U \in C^\infty_\text{loc}(\bb{R}^n\times(0,\infty))\cap C^{0,\a}_\text{loc}(\bb{R}^n\times[0,\infty))$$ to graphical spacelike MCF \eqref{EntireMCF} satisfying  
 \[|\U(x,t)-\U_0(x)|\leq \sqrt{2nt}.\] 
 Furthermore, if there exists a constant $C_H>0$ such that 
\begin{equation}\sup_{M_0}\|H\|^2<C_H ,
 \label{initialHbound}
\end{equation}
then $\U \in C^\infty_\text{loc}(\bb{R}^n\times[0,\infty))$ 
and
\[\|H\|^2 \leq \frac{1}{(C_H+1)^{-1}+\frac{2} n t}\ .\]
\end{theorem}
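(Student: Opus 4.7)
The strategy is to approximate the entire flow by a sequence of Neumann-boundary problems on increasingly large balls and extract a smooth subsequential limit, with the quasi-sphere expanders from Theorem \ref{thm:Neumann} providing the crucial \emph{a priori} $C^0$ control.

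\textbf{Step 1: Neumann approximations.} For each large $R>0$ I would modify $\hat{U}_0|_{B_R(0)}$ near $\partial B_R$ to obtain initial data $\hat{U}_0^R$ that is smooth, spacelike, agrees with $\hat{U}_0$ on (say) $B_{R/2}$, and satisfies the Neumann condition on $\partial B_R$. Theorem \ref{thm:Neumann} supplies a smooth long-time solution $\hat{U}^R:B_R\times[0,\infty)\to \bb{R}^m$ for which quasi-sphere expanders with centres in $B_R\times \bb{R}^m$ act as outer barriers.

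\textbf{Step 2: Barrier estimate.} For any fixed $x\in B_R$, the quasi-sphere expander with centre $\hat{X}_0^R(x)=(x,\hat{U}_0^R(x))$ and starting square radius $0$ initially contains $M_0^R$, because $M_0^R$ is spacelike so any two of its points are spacelike- or null-related to the centre. Theorem \ref{thm:Neumann} therefore yields $M_t^R\subset I_t$, which at the slice $x$ becomes
\[
\|\hat{U}^R(x,t)-\hat{U}_0^R(x)\|^2\leq 2nt.
\]

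\textbf{Step 3: Uniform local estimates.} The barrier gives a uniform-in-$R$ bound $\|u\|^2\leq 2(\|\hat{U}_0\|^2+2nt)$ on compact sets. Corollary \ref{evolcurv2} yields $\ho\|H\|^2\leq -\tfrac{2}{n}\|H\|^4$, and comparing with the ODE $f'=-\tfrac{2}{n}f^2$ started at $+\infty$ gives the universal bound $t\|H\|^2\leq n/2$ on each $M_t^R$ (the Neumann boundary condition preserving this via a standard Hopf argument, built into Theorem \ref{thm:Neumann}). Lemma \ref{localtgradest} now delivers a local gradient bound $tv^2\eta_{R_0}^p\leq C(n,R_0,C_u)$ independent of $R$, Lemma \ref{LocalCurvEst}(b) a local bound on $\|\II\|^2$, and Lemma \ref{higherorderlocalest} local bounds on all higher covariant derivatives of $\II$, all uniform in $R$ on interior subsets of $\bb{R}^n\times(0,\infty)$.

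\textbf{Step 4: Passing to the limit and regularity at $t=0$.} Arzel\`a--Ascoli yields a subsequential limit $\hat{U}\in C^\infty_\text{loc}(\bb{R}^n\times(0,\infty))$ solving \eqref{EntireMCF}, and the barrier estimate passes to the limit as $|\hat{U}(x,t)-\hat{U}_0(x)|\leq\sqrt{2nt}$. This gives $C^{0,1/2}$ continuity in time up to $t=0$; combined with the Hölder bound in space from the interior gradient estimate, one obtains $\hat{U}\in C^{0,\a}_\text{loc}(\bb{R}^n\times[0,\infty))$ for any $\a\in(0,1/2]$. Under the extra hypothesis $\sup_{M_0}\|H\|^2\leq C_H$, I would arrange the modifications $\hat{U}_0^R$ so that $\sup_{M_0^R}\|H\|^2\leq C_H+1$ for $R$ large; the ODE comparison for $f'\leq -\tfrac{2}{n}f^2$ with $f(0)\leq C_H+1$ then gives
\[
\|H\|^2\leq \frac{1}{(C_H+1)^{-1}+\tfrac{2}{n}t},
\]
which survives the limit. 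With this uniform $\|H\|^2$ bound, Lemma \ref{localw_KHbound} and Lemma \ref{LocalCurvEst}(a), bootstrapped with Lemma \ref{higherorderlocalest}, upgrade the regularity to $C^\infty_\text{loc}(\bb{R}^n\times[0,\infty))$.

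\textbf{Main obstacles.} The most delicate point is constructing the Neumann-compatible modifications $\hat{U}_0^R$ which (i) preserve the spacelike condition, (ii) are chosen so that Lemma \ref{goodgood} yields uniform $\|u\|^2$-control on each compact set for all $R$ large, and (iii) under the mean curvature hypothesis, respect $\sup\|H\|^2\leq C_H+1$ (the extra $+1$ being a cushion absorbing the $C^2$-cost of the boundary modification). A secondary subtlety is verifying that the bound $t\|H\|^2\leq n/2$ propagates through the Neumann boundary via the maximum principle, which is folded into the hypotheses of Theorem \ref{thm:Neumann}.
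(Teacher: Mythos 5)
Your proposal is correct and takes essentially the same approach as the paper's proof: solve Neumann problems on large balls via Theorem \ref{Neumanntheorem}, use quasi-sphere expanders as outer barriers to get $|\U(x,t)-\U_0(x)|\leq\sqrt{2nt}$, feed the resulting $C^0$ control (together with the $t\|H\|^2\leq n/2$ decay from the ODE comparison) into the interior estimates of Section \ref{s:local}, and extract a $C^\infty_\text{loc}$ limit by Arzel\`a--Ascoli, handling the $\|H\|^2$-bounded case separately via Lemma \ref{localw_KHbound} and Lemma \ref{LocalCurvEst}(a) for smoothness at $t=0$. The only point to make precise — which you correctly flag as an obstacle — is that a local $\|u\|^2$ bound alone does not give the compactness of $\text{supp}\,\eta_R\cap M_t^i$ required by Lemma \ref{localtgradest}; the paper obtains it by combining the coercivity in Lemma \ref{goodgood} (specifically $\|\U_0\|\leq\chi_\e$, which propagates to $\|\U_i(\cdot,t)\|<\chi_{\e,t}$ and forces $\{(x,z):\|z\|<\chi_{\e,t}(x)\}\cap\mathcal{Q}_R$ to be bounded) with the preservation of height bounds from the Neumann problem.
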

\begin{proof}
\textbf{Case 1: $\|H\|^2$ bounded initially.} We first suppose  (\ref{initialHbound}) holds. Without loss of generality we assume that $\U_0(0)=0$.

By Lemma \ref{goodgood} there exist radii $R_i$ such that the graph of $\U_0$ over $\bb{R}^n\setminus B_{R_i}$ satisfies 
\begin{equation}
 \inf_{x\in \bb{R}^n\setminus B_{R_i}}\left[ |x|^2 - \|\U_0(x)\|^2 \right]\geq 2i+1 \label{wbound}
\end{equation}
and therefore  $M_0\cap\partial\mathcal{C}_{R_i}$ lies outside $\mathcal{Q}_{R_i}$, in the notation of Definition \ref{def:cyl.sqsphere.cutoff}.

We will now solve a sequence of auxiliary problems for spacelike MCF with Neumann boundary conditions and use our interior estimates to show that these converge to a solution of \eqref{EntireMCF}. Unfortunately, to get a solution to our auxiliary problem which is smooth to $t=0$, we need our initial data to satisfy compatibility conditions. To this end we now describe a way to modify $\U_0$ so  the initial data satisfies compatibility conditions of all orders. 

For some $\Lambda>0$, we define
\[\tilde{u}_{0,i}(x) = 
\begin{cases}
\U_0(x) &\text{for } x\in B_{R_i+\Lambda},\\
\U_0((R_i+\Lambda-|R_i+\Lambda - |x||)\frac{x}{|x|}) &\text{for } x\in  B_{2R_i+2\Lambda}\setminus B_{R_i+\Lambda},\\
0&\text{for } x\in  \bb{R}^n\setminus B_{2R_i+2\Lambda}.\\
\end{cases}
\]
Clearly $\tilde{u}_{0,i}$ is continuous (as $\U_0(0)=0$) and smooth away from the boundary   $(\partial B_{R_i+\Lambda})\cup(\partial B_{2R_i+2\Lambda})$ of the annulus. We now let $\U_{0,i}$ be a smoothing of $\tilde{u}_{0,i}$ such that $\U_{0,i} =\tilde{u}_{0,i}$ on $\ov{B_{R_i}}$ and $\U_{0,i} \equiv 0$ on $\bb{R}^n\setminus B_{2R_i+3\Lambda}$. By choosing $\Lambda$ large enough depending only on $C_H$, we may assume that the mean curvature of $\U_{0,i}$ satisfies $\|H\|^2<C_H +1$ and $|\U_{0,i}|^2<|\U_0|^2+1$. 

We now consider the spacelike MCF problems with Neumann boundary conditions given by
\begin{equation}
 \begin{cases}
  \displaystyle \ddt{\U_{i}} -g^{kl}(D\U)D^2_{kl} \U_i=0 & \text{ on } B_{2R_i+3\Lambda+1}\times[0,T),\\[4pt]
  D_\frac{x}{|x|} \U_i=0 & \text{ on } \partial B_{2R_i+3\Lambda+1} \times[0,T),\\
  \U_i(\cdot,0) =\U_{0,i}(\cdot) & \text{ on } B_{2R_i+3\Lambda+1}.
 \end{cases}
\end{equation}
Since compatibility conditions of all orders are satisfied, Theorem \ref{Neumanntheorem} below implies there exists a solution $\U_i\in C^\infty(B_{2R_i+2\Lambda}\times[0,\infty))$. 
Furthermore, as $\U_i$ is sufficiently regular, we have the bound  \[\|H_i\|^2\leq((1+C_H)^{-1} +2n^{-1}t)^{-1},\] which is uniform in $i$.   

We let $M_{t,i}:=\text{graph} \,\U_{i}$.
 By Lemma \ref{goodgood} and the preservation of height bounds for $\|u_i\|$, for all $t>0$ and $j>i$ we have 
$\partial M_{t,j}\cap\mathcal{Q}_{R_i}=\emptyset$. Therefore, each  $M_{t,j}\cap\mathcal{Q}_{R_i}$ has 
compact closure. 
 We may now apply the interior estimates of Lemma \ref{localw_KHbound} to obtain that for all $x\in M_{t,j}\cap \mathcal{Q}_{\sqrt{R_i^2-2nt}}$, we have a uniform bound on $v$. 
In particular, for all $t<\frac{R_i^2}{4n}$ and $j>i$,  we may apply Lemmas \ref{LocalCurvEst} and \ref{higherorderlocalest} on $M_{t,j}\cap \mathcal{C}_{\frac{R_i}{2}}$ 
to imply uniform $C^{k;\frac{k}{2}}$ bounds for all $k$.

We now use the Arzel\`a--Ascoli theorem to take a diagonal sequence which converges in $C^\infty_\text{loc}$ to the claimed solution $\U \in C^\infty_{\text{loc}}(\bb{R}^n\times[0,\infty))$ satisfying (\ref{EntireMCF}).
%%%%%%%%%%%%%%%%%%%%%%%%%%%%%%%%%%%%%%%%%%%%%%%%%%%%%%%%%%%%%%%%%%%%%%%%%%%%%%%%%%%%%%%%%%%%%%%%%%%%%%%%%%%%%%%%%%

\medskip
\paragraph{\textbf{Case 2: No initial $\|H\|^2$ bound.}} If $\|H\|^2$ is not bounded initially, we proceed solving auxiliary problems as above, but this time on the solutions $\U_i$, we only have  
 \begin{equation} |\U_{0,i}(x) - \U_i(x,t)|\leq \sqrt{2nt}\qquad \text{ and }\qquad \|H\|^2\leq \frac{n}{2t} .
  \label{freeestimates}
 \end{equation}
 
 However since $\|\U_0\|<\chi_\e$ by Lemma \ref{goodgood}, 
we see that for any $x\in \partial B_1(0)$ the quasi-sphere centred at $\e x$ of radius $-(1-\e)^2$ contains  
 $\U_0$ and therefore $\U_{0,i}$ is contained within this quasi-sphere for any $i$. We therefore see, by evolving such solutions that
 \[\|\U_i(\cdot,t)\|<\chi_{\e,t},\]
 where
 \begin{equation*}
  \chi_{\e,t}(x)=\sqrt{\left|x-\e\frac{x}{|x|}\right|^2+1-\e^2+2nt}=\sqrt{|x|^2-2\e|x|+1+2nt}.
 \end{equation*}
We also observe that (writing points in $\bb{R}^{n,m}$ as pairs $(x,z)$ for $x\in\bb{R}^n$ and $z\in\bb{R}^m$) 
\[V=\{(x,z)\in\bb{R}^{n,m} | \|z\|<\chi_{\e,t}(x) \}\]
has compact intersection with $\mathcal{Q}_R$ 
 for any finite $R$.  Indeed, at any point $(x,z)\in V\cap \mathcal{Q}_R$,
\[|x|^2-R^2<\|z\|^2<|x|^2-2\e|x|+1+2nt,\]
which implies that $|x|\leq(2\e)^{-1}(1+2nt+R^2)$. As a result of the above, writing $M_t^i = \text{graph}\, \U_i(\cdot, t)$, for any time $t\in[0,T)$ and any point $(x, \U(x,t))\in M^i_t \cap\mathcal{Q}_R$, we have the estimate
\[\|\U_i(x,t)\|<C(\e,T,R),\]
which is uniform in $i$.

We may now apply Lemmas \ref{localtgradest} and \ref{LocalCurvEst} to obtain estimates on gradient and curvature on $M_t^i\cap \mathcal{Q}_\frac{R}{4}$ for all $t\in (0,\frac{R^2}{4n})$ which are uniform in $i$. Uniform higher order estimates also follow from Lemma \ref{higherorderlocalest}.
 
Taking a diagonal sequence as before now yields $C^\infty_\text{loc}$ convergence to a solution $\U$ which is smooth for $t>0$. Since for each $\U_i$ the estimates (\ref{freeestimates}) hold, 
these estimates pass to the limit, providing the claimed regularity of $\U$ to time $t=0$. 
\end{proof}

 \section{Neumann boundary conditions}\label{s:Neumann}
In this section we suppose that our spacelike MCF is over a compact domain $\Omega\subset\bb{R}^n \subset \bb{R}^{n,m}$ 
with smooth boundary $\partial \Omega$.  We shall prove long-time existence and convergence of spacelike MCF under the assumption of 
Neumann boundary conditions.  This, in particular, completes the proof of long-time existence in the entire setting of Section \ref{s:entire}.
 
\begin{defses}
 We define the \emph{boundary manifold} to be the hypersurface \[\Sigma:=\partial \Omega \times \bb{R}^m \subset \bb{R}^n\times \bb{R}^m = \bb{R}^{n,m}.\] 
 We denote the unit outwards (spacelike) normal to $\Sigma$ by $\mu$ and, by abuse of notation, we will also write the outward pointing unit normal to $\partial \Omega \subset \bb{R}^n$ as $\mu$. 
We denote the second fundamental form of $\Sigma$ by \[\AS(X,Y)=-\ip{\ov \n_X Y}{\mu},\] and observe that this tensor has $m$ zero eigenvectors in the directions $e_1, \ldots, e_m$. 
  The sign has been chosen so that the remaining eigenvalues are nonnegative if  $\Omega$ is convex. 
 \end{defses}
  
 We consider a Neumann boundary condition by requiring that at $\Sigma$, the normal space to 
$M_t$ must be contained in $T\Sigma$; that is, for any basis $\nu_1, \ldots, \nu_m$, 
 \begin{equation}\ip{\nu_A}{\mu}=0\label{boundarycondition}
 \end{equation}
 for $A=1, \ldots, m$. MCF with a Neumann boundary condition is therefore a one-parameter family of immersions of a disk, $X:D^n\times [0,T)\ra \bb{R}^{n,m}$, such that
 \begin{equation}
 \begin{cases}
 \left(\ddt{X}\right)^\perp  = H\ &\text{ on } D^n\times [0,T), \\
 X(\cdot,t) = X_0(\cdot) & \text{ on } D^n,\\
 X(\partial D^n, t) \subset \Sigma & \text{ for all } t\in[0,T),\\
 \ip{\nu_A}{\mu}=0 &\text{ on } \partial D^n\times [0,T)  .
 \end{cases}
 \label{MCFNeumann}
\end{equation}
Equivalently this may be rewritten in graphical coordinates. 
We say that $\U:\Omega\times[0,T)\ra \bb{R}$ satisfies MCF with a Neumann boundary condition if 
\begin{equation}\label{MCFgraphNeumann}
 \begin{cases}
  \displaystyle\ddt \U -g^{ij}(D\U)D^2_{ij} \U=0 & \text{ on } \Omega \times[0,T),\\[4pt]
  D_\mu \U=0 & \text{ on } \partial \Omega \times[0,T),\\
  \U(\cdot,0) =\U_0(\cdot) & \text{ on } \Omega.
 \end{cases}
\end{equation}
 See Appendix \ref{MCFasGraphs} for details.

Since we have boundary conditions, if we want a solution which does not ``jump'' at time $t=0$, we need some compatibility conditions (as mentioned in Section \ref{s:entire}). 
  Clearly we will require the zero order compatibility condition
\[D_\mu \U_0(x)=0\]
for all $x\in \partial \Omega$ and more generally the $l^\text{th}$ order compatibility condition 
\[D_\mu \frac{d^l}{dt^l}\U_0(x)=0\]
 for all $x\in \partial \Omega$, where $\frac{d}{dt} \U_0$ is defined recursively using the first line of (\ref{MCFgraphNeumann}). 
Higher order regularity is important as otherwise we cannot apply the maximum principle to quantities such as curvature to get estimates that depend on the initial data.
 
 We now state our long-time existence and convergence theorem in this Neumann setting.  The proof is somewhat lengthy and technical, and forms the remainder of this section.  Here we give an outline of the proof assuming 
the key technical results we shall prove below.  Recall the notion of uniformly spacelike from Definition \ref{def:uspacelike} and what it means for a quasi-sphere to be an outer barrier  in Definition \ref{quasisphere}.

 \begin{theorem}\label{Neumanntheorem}
 Suppose $\Omega$ is a bounded convex domain with smooth boundary $\partial \Omega$ and $\U_0$ is smooth, uniformly spacelike and satisfies compatibility conditions to 
 $l^\text{th}$ order for some $l\geq 0$. There exists 
 a solution $$\U \in C^{1+2l+\alpha;\frac{1+2l+\alpha}{2}}(\Omega \times [0,\infty))\cap C^\infty(\Omega \times (0,\infty))$$ of \eqref{MCFgraphNeumann} 
which is unique if $l\geq1$ and converges smoothly to a constant function as $t\ra \infty$. Furthermore, expanding quasi-spheres centred in $\Omega\times\bb{R}^m$ act as outer barriers to the flow, and we have the uniform bounds 
 \begin{flalign*}
  |\U_0(x)-\U(x,t)| &\leq \sqrt{2nt}\quad\text{and}\quad\|\U\|^2\leq \sup_{M_0} \|\U_0\|^2 ,
 \end{flalign*}
 and, if $l\geq 1$,
 \[ \|H\|^2 \leq \frac{1}{(\sup_{M_0}\|H\|^2)^{-1}+\frac{2}{n}t} .\]
\end{theorem}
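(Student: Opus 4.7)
\textbf{Proof strategy for Theorem \ref{Neumanntheorem}.} The plan is to combine standard parabolic short-time theory with a sequence of a priori estimates derived from the evolution equations of Section \ref{s:evol} together with the Neumann reflection, then pass to the long-time limit via continuation. Short-time existence of a solution in $C^{1+2l+\alpha;(1+2l+\alpha)/2}$ follows from the uniformly oblique (in fact pure Neumann) nature of the boundary condition and the uniform ellipticity on the spacelike solution set, once the $l$-th order compatibility conditions are imposed; uniqueness for $l\geq 1$ is Appendix \ref{app:unique}. The rest of the proof is driven by estimates that remain controlled as long as the flow is uniformly spacelike.

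First I would establish the barrier property and the $C^0$ bounds. For a quasi-sphere $S_t$ centred at $p\in\Omega\times\bb{R}^m$, Corollary \ref{C0evol} gives $\ho(|X-p|^2+R^2+2nt)=0$, so I only need to check that a new interior touching cannot occur at $\partial M_t$. The Neumann condition forces the ambient outer normal $\mu$ to $\Sigma$ to be tangent to $M_t$ at $\partial M_t$, hence the conormal to $\partial M_t$ in $M_t$ equals $\mu$; since $p\in\Omega\times\bb{R}^m$ and $X\in\Sigma=\partial\Omega\times\bb{R}^m$, the spacelike part of $X-p$ points strictly outward in $\mu$, so $\nabla_\mu|X-p|^2=2\ip{X-p}{\mu}>0$, which by Hopf rules out a boundary minimum of $|X-p|^2+R^2+2nt$ and proves the outer barrier claim. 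Specialising the centre to $(x_0,\U_0(x_0))$ and letting $R\to 0$ yields $\|\U(x_0,t)-\U_0(x_0)\|\leq\sqrt{2nt}$. The bound $\|\U\|^2\leq\sup_{M_0}\|\U_0\|^2$ is obtained by applying the maximum principle to $u_A$, which by Corollary \ref{C0evol} is caloric and (as shown in Appendix \ref{MCFasGraphs}) satisfies the homogeneous Neumann condition $D_\mu u_A=0$ equivalent to \eqref{boundarycondition}.

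Next I would establish the gradient estimate, i.e.\ the preservation of uniform spacelikeness. This is the main obstacle, and the key technical point is to handle the Neumann boundary terms for $w_A^2$. Using Corollary \ref{wkevolest} gives the good interior inequality $\ho w_A^2\leq -(1+\tfrac{1}{2n})|\nabla w_A^2|^2/w_A^2+2w_A^2\|H\|^2$; at $\partial M_t$ one computes $\nabla_\mu w_A^2=2\ip{e_A}{\II(\mu,e_A^\top)}$ and expands $\II(\mu,\cdot)$ using that $\mu\in TM_t$ is tangent and $e_A^\top$ decomposes into a $\mu$-component plus a component tangent to $\partial M_t\subset\Sigma$. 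Using $\AS(\mu,\cdot)=0$ together with the convexity of $\Omega$ (so $\AS$ is nonnegative on the spacelike tangent directions to $\Sigma$, as recorded in the definition of $\Sigma$), one obtains $\nabla_\mu w_A^2\leq 0$ on $\partial M_t$; the Hopf lemma, combined with the interior ODE inequality closed by either the initial bound on $\|H\|^2$ (when $l\geq 1$) or the time-factored version from Lemma \ref{localtgradest} (when $l=0$), then yields a uniform bound on $v$. With $v$ bounded, I would feed this into Corollary \ref{evolcurv2} to get $\ho\|H\|^2\leq -\tfrac{2}{n}\|H\|^4$, whose Neumann-boundary analogue (again via the decomposition using $\AS\geq 0$ and $\AS(\mu,\cdot)=0$) gives $\nabla_\mu\|H\|^2\leq 0$ and so the ODE comparison $\|H\|^2\leq 1/((\sup\|H_0\|^2)^{-1}+\tfrac{2}{n}t)$, and analogously $\|\II\|^2\leq C$ from Corollary \ref{evolcurv2} and Appendix \ref{app:evol}. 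Higher derivative bounds follow from Lemma \ref{higherorderlocalest} adapted to the boundary via Appendix \ref{app:extension}.

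With these estimates, long-time existence follows by standard continuation: as long as $v$ and $\|\II\|$ are bounded the graph equation is uniformly parabolic with bounded coefficients, so Schauder theory extends the solution past any finite $T$. For convergence, the decay $\|H\|^2=O(1/t)$ combined with the monotone height bound and the compactness in $C^k_{\mathrm{loc}}$ implies subsequential smooth convergence to a maximal submanifold $M_\infty$ with Neumann boundary on the convex domain $\Omega$; integrating $H=0$ against the $u_A$ and using the divergence theorem with $D_\mu u_A=0$ shows each $u_A^\infty$ is constant, so $M_\infty$ is a translate of $\Omega$. Full (as opposed to subsequential) convergence, together with exponential rate, can then be obtained from linearisation around the stationary solution or more directly from $\ho u_A=0$, which identifies $u_A(\cdot,t)-u_A^\infty$ with a solution of the heat equation on $M_t$ with homogeneous Neumann data whose $L^2$ norm decays exponentially by a uniform spectral gap on the nearly-flat, uniformly spacelike graphs.
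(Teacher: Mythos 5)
Your strategy for the $C^0$ estimates, the barrier property, the gradient estimate via $\nabla_\mu w_A^2\leq 0$, and the mean curvature decay all line up with the paper. (One small inaccuracy: at a boundary point $e_A^\top$ is already in $TM_t\cap T\Sigma$ and has no $\mu$-component — since $e_A\in T\Sigma$ and, by the Neumann condition, $e_A^\perp\in NM_t\subset T\Sigma$ — so you do not need to invoke any claim like ``$\AS(\mu,\cdot)=0$'', which in any case is not meaningful since $\mu\notin T\Sigma$; the identity is simply $\nabla_\mu w_A^2=-2\AS(e_A^\top,e_A^\top)$.) The convergence argument you sketch via a spectral gap and exponential decay differs from the paper, which instead integrates $\int_0^\infty\int_{M_t}(w_A^2-1)\,dV\,dt<\infty$ via $\ddt{}\int u_A^2$ and the volume monotonicity; the paper's route is more elementary and avoids having to justify a uniform spectral gap on the evolving graphs, though it gives no rate.

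The serious gap is in the curvature estimate, which you dispose of with ``analogously $\|\II\|^2\leq C$ from Corollary~\ref{evolcurv2}''. This does not work: the boundary identities (Lemmas~\ref{boundaryderivatives} and~\ref{secondbdryderivs}) give control of $\nabla_\mu^\perp\II(E_{\hat\imath},E_{\hat\jmath})$ and $\nabla_\mu^\perp\II(\mu,\mu)$, but \emph{not} of the mixed components $\nabla_\mu^\perp\II(\mu,E_{\hat\imath})$, and those terms appear in $\nabla_\mu\|\II\|^2$ with coefficients $\II(\mu,E_{\hat\imath})$ which are generically nonzero. So $\nabla_\mu\|\II\|^2$ has no sign at the boundary and the Hopf lemma cannot be applied directly. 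This is exactly the obstruction the paper highlights at the start of the curvature subsection, and closing it requires the whole perturbation machinery (the tensor $\ov\II=\II+cS$ built from extensions of $\AS$, $\mu$, and the auxiliary $\ov T$ as in Lemma~\ref{extensionexists}) so that $\mu$ becomes an eigenvector of $\ov\II$ at the boundary, killing the uncontrolled mixed terms, and then comparing $\|\ov\II\|$ with $\|\II\|$ via \eqref{LbII}--\eqref{overlinecurvineq}. This is the central technical content of the Neumann section (following Edelen), and a proof that omits it is incomplete: without a bound on $\|\II\|^2$ one cannot apply Schauder theory, so the long-time continuation step is not justified.

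A second, smaller issue: in the case $l=0$ you say you close the gradient estimate ``by the time-factored version from Lemma~\ref{localtgradest}''. That lemma is a local interior estimate with a cutoff and does not incorporate boundary information; the paper instead notes that when $l=0$ one only has $\U\in C^{1+\alpha;\frac{1+\alpha}{2}}$ up to $t=0$ and then uses that the maximum principle arguments still apply (because $\nabla_\mu w_A^2\leq 0$ holds pointwise at the boundary for $t>0$) — one cannot simply import the local interior cutoff estimate near $\partial\Omega$.
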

\begin{proof}
 Although (\ref{MCFgraphNeumann}) is a system, it has linear boundary conditions and is in the form of $m$ parabolic PDEs.  Therefore, standard application of fixed point theory and Schauder estimates for parabolic PDEs, for example by minor modifications of \cite[Theorem 8.2]{Lieberman}, one obtains short time existence: there exists $T>0$ such that a solution to \eqref{MCFgraphNeumann} exists with $\U\in C^{l+1+\a;\frac{l+1+\a}{2}}(\Omega\times[0,T))\cap C^\infty(\Omega\times(0,T))$). 
 
 As stated in Appendix \ref{MCFasGraphs}, the components $\U^A$ of $\U$ satisfy a uniformly parabolic PDE (given by the first line of \eqref{MCFgraphNeumann}) if and only if $v^2$ is bounded. Furthermore, we may apply standard Schauder estimates as soon as we know that 
 $\U^A\in C^{1+\alpha;\frac{1+\alpha}{2}}(\Omega\times [0,T))$ \emph{for all }$A\in\{1,\ldots,m\}$. 
 
 In Lemma \ref{C0EstimatesNeumann} we demonstrate uniform $C^0$ estimates for solutions to (\ref{MCFNeumann}). In Lemma \ref{EstimatesNeumann} we give uniform estimates on $v^2$, which imply both uniform parabolicity and $C^1$ estimates on $\U$. As the Nash--Moser--De Giorgi estimates do not hold for systems we then derive uniform curvature estimates (which imply $C^2$ estimates) in Proposition \ref{curvestNeumann}. As Schauder estimates now apply, by bootstrapping we have the long time existence claimed.
 
 Lemma \ref{ConvergenceNeumann} then implies that the solution converges smoothly to a constant. Uniqueness of the solution is proven in Proposition \ref{Uniqueness}.
\end{proof}

\subsection{Boundary derivatives}

We first study derivative conditions at the boundary, particularly those which are consequences of the Neumann boundary condition.  We begin with two elementary observations.

\begin{lemma}\label{boundaryu} On $\partial M_t$ we have that
$$\n_\mu u^A =0.$$ 
\end{lemma}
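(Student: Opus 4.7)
The plan is to reduce the claim to the ambient orthogonality between $\mu$ and $e_A$. First, I would note that since $u_A=-\ip{e_A}{X}$ is the restriction to $M_t$ of an affine function on $\bb{R}^{n,m}$, its ambient gradient is $\ov\n u_A=-e_A$, and therefore its intrinsic gradient on $M_t$ is the tangential projection $\n u_A=-e_A^\top$.

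Next, I would exploit the Neumann boundary condition. The geometric form \eqref{boundarycondition} states $\ip{\nu_A}{\mu}=0$ for $A=1,\dots,m$, so at boundary points the normal space $NM_t$ lies in $T\Sigma$, and consequently $\mu$ (being $\bb{R}^{n,m}$-normal to $\Sigma$) is $\bb{R}^{n,m}$-orthogonal to $NM_t$. Since $M_t$ is spacelike, $TM_t$ and $NM_t$ form a non-degenerate orthogonal splitting, so this orthogonality forces $\mu\in TM_t$ along $\partial M_t$.

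Finally, combining these observations,
\[
 \n_\mu u^A=\ip{\n u^A}{\mu}=-\ip{e_A^\top}{\mu}=-\ip{e_A}{\mu},
\]
where the last equality uses $\mu\in TM_t$ (so the normal component of $e_A$ pairs to zero against $\mu$). But $\mu$, being normal to $\partial\Omega$ inside the spacelike slice $\bb{R}^n\subset\bb{R}^{n,m}$, is a purely spacelike vector in the $\bb{R}^n$-factor, whereas $e_A$ is one of the standard timelike basis vectors in the $\bb{R}^m$-factor. Hence $\ip{e_A}{\mu}=0$, and the claim follows.

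No step here looks like a real obstacle: the whole argument is essentially a two-line linear algebra computation once one recognises that the Neumann condition is exactly what moves $\mu$ from being $\bb{R}^{n,m}$-normal to $\Sigma$ into being tangent to $M_t$. The only point that deserves a little care is checking that $\mu\in TM_t$ genuinely follows from $\nu_A\perp\mu$ for all $A$, which uses non-degeneracy of the induced metric on $M_t$ (equivalently, the spacelike hypothesis).
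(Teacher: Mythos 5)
Your proof is correct and follows the same underlying idea as the paper, which compresses the whole argument into the single line $\n_\mu u^A = \ip{\mu}{e_A} = 0$ (up to an inconsequential sign). You have simply unpacked the two facts this line relies on: that the Neumann condition forces $\mu\in TM_t$ (so the intrinsic directional derivative agrees with the ambient one), and that $\mu$ lies in the spacelike $\bb{R}^n$-factor while $e_A$ lies in the timelike $\bb{R}^m$-factor, making them ambient-orthogonal.
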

\begin{proof} We see that $\n_\mu u^A =\ip{\mu}{e_A}=0$.
\end{proof}

\begin{lemma}\label{boundaryz}
Suppose that $\Omega$ is convex and $0\in \Omega$. Then, at any $y\in \partial M_t$ 
we have
 $$\n_\mu |X|^2(y) >0.$$
\end{lemma}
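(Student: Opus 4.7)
The plan is straightforward, hinging on unpacking the Neumann boundary condition and the product structure of $\Sigma=\partial\Omega\times\bb{R}^m$.

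First, I would observe that the Neumann boundary condition \eqref{boundarycondition} asserts $N_yM_t\subset T_y\Sigma$, and since both subspaces have complementary dimensions in $T_y\bb{R}^{n,m}$ at $y\in\partial M_t$, this is equivalent to $\mu\in T_yM_t$. In particular, $\mu$ is a tangent direction to $M_t$ at boundary points, so we can legitimately evaluate the intrinsic gradient $\n_\mu|X|^2$ and it equals the ambient directional derivative:
\[
\n_\mu |X|^2 = \ov\n_\mu |X|^2 = 2\ip{X}{\mu} .
\]

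Next, I would exploit the structure of $\Sigma=\partial\Omega\times\bb{R}^m$, which forces the unit normal $\mu$ to lie in the spacelike factor $\bb{R}^n$ (with no component in the timelike $\bb{R}^m$ directions). Writing $X(y)=x+\sum_A \U^A e_A$ with $x\in\partial\Omega$, the pseudo-Euclidean inner product reduces to
\[
\ip{X}{\mu} = \ip{x}{\mu}_{\bb{R}^n},
\]
since the $e_A$-components of $\mu$ vanish.

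Finally, convexity of $\Omega$ together with $0\in\Omega$ (taken in the interior sense, as $\Omega$ is a domain) implies $\ip{x}{\mu}_{\bb{R}^n}>0$ for every $x\in\partial\Omega$: indeed, the supporting hyperplane at $x$ with outward normal $\mu$ separates $\Omega$ from the open half-space $\{y\,:\,\ip{y-x}{\mu}>0\}$, so $\ip{0-x}{\mu}<0$, yielding the claim. Combining the three steps gives $\n_\mu|X|^2(y)=2\ip{x}{\mu}_{\bb{R}^n}>0$.

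There is no real obstacle here; the only subtle point is the first step, where one must explicitly note that the Neumann condition—formulated as normals to $M_t$ lying in $T\Sigma$—is equivalent to $\mu$ being tangent to $M_t$, which is what legitimizes the identification of $\n_\mu|X|^2$ with $\ov\n_\mu|X|^2$.
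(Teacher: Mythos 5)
Your proof is correct and takes essentially the same route as the paper's one-line argument, which simply states $\n_\mu |X|^2 = 2\ip{\mu}{X} > 0$ by convexity; you have filled in the implicit steps (tangency of $\mu$ via the Neumann condition, reduction of $\ip{X}{\mu}$ to the $\bb{R}^n$ factor, and the supporting-hyperplane argument).
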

\begin{proof}
 We have that
$\n_\mu |X|^2 
=  2\ip{\mu}{X} > 0$
 due to the convexity of $\Omega$.
\end{proof}

 By differentiating the Neumann boundary condition \eqref{boundarycondition}, we immediately have the following consequences. As these estimates will be applied to curvature quantities, we need a sufficiently differentiable solution for the curvature evolution equations to be valid. From now on we will assume that $\U\in C^{4+\alpha;\frac{4+\alpha}{2}}(\Omega\times[0,T))$, but remark that often this is overkill, for example for estimates on the gradient we only really require $\U\in C^{1+\alpha;\frac{1+\alpha}{2}}(\Omega\times[0,T))\cap C^{3+\alpha;\frac{3+\alpha}{2}}(\Omega\times(0,T))$. Some of the boundary identities below hold in even weaker function spaces.
 \begin{lemma}\label{boundaryderivatives}
  Suppose that we have a solution to \eqref{MCFgraphNeumann} in 
  $C^{4+\alpha;\frac{4+\alpha}{2}}(\Omega\times[0,T))$. Then for any $t\in[0,T)$, $y\in \partial M_t$ and $U \in T_yM \cap T_y \Sigma$,
    \begin{align*}
    \II(U,\mu)+\sum_A \AS(\nu_A, U)\nu_A &=0\quad\text{and}\quad
    \np_\mu H +\sum_{A}\AS(\nu_A, H)\nu_A=0.
    \end{align*}
 \end{lemma}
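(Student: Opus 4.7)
The plan is to derive both identities by differentiating the Neumann boundary condition $\langle\nu_A,\mu\rangle=0$: the first by a spatial derivative in a direction $U\in T_yM_t\cap T_y\Sigma$, the second by a time derivative along the flow of a boundary point. Two preliminary ingredients will be used repeatedly. First, the Neumann condition forces $\mu\in T_yM_t$ at boundary points (since $\mu$ is orthogonal to each $\nu_A$ and the $\nu_A$ span $N_yM_t$), so both $\II(U,\mu)$ and $\np_\mu H$ are intrinsically defined, with $\mu$ admitting a decomposition $\mu=\mu^iX_i$ in $T_yM_t$. Second, for $X,Y\in T\Sigma$, Leibniz applied to $\langle Y,\mu\rangle=0$ yields
\[\AS(X,Y)=-\langle\ov\n_X Y,\mu\rangle=\langle Y,\ov\n_X\mu\rangle.\]

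For the first identity I observe $\II(U,\mu)=(\ov\n_U\mu)^\perp$ and compute
\[\langle\II(U,\mu),\nu_A\rangle=\langle\ov\n_U\mu,\nu_A\rangle=\AS(U,\nu_A)\]
directly from the displayed $\AS$-formula with $X=U$, $Y=\nu_A$. Expanding $\II(U,\mu)\in NM_t$ in the timelike basis $\{\nu_A\}$ requires the sign-flipped projection $V=-\sum_A\langle V,\nu_A\rangle\nu_A$ (since $\langle\nu_A,\nu_B\rangle=-\delta_{AB}$); combining this with symmetry of $\AS$ then produces the first identity.

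For the second identity, the key observation is that under MCF a boundary point moves with velocity $H$, which by the Neumann condition lies in $T\Sigma$ (as $H=H^B\nu_B$), so $\tfrac{d}{dt}\mu=\ov\n_H\mu$ is unambiguous. I will time-differentiate $\langle\nu_A,\mu\rangle=0$: substituting $\nt\nu_A=-\langle\nu_A,\np_i H\rangle g^{ij}X_j$ from Lemma \ref{ddtnu} together with the decomposition $\mu=\mu^iX_i$ yields $\langle\nt\nu_A,\mu\rangle=-\langle\nu_A,\np_\mu H\rangle$, while the displayed $\AS$-formula with $X=H$, $Y=\nu_A$ gives $\langle\nu_A,\ov\n_H\mu\rangle=\AS(\nu_A,H)$. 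The resulting relation $\langle\nu_A,\np_\mu H\rangle=\AS(\nu_A,H)$, combined with the same timelike projection convention, produces the second identity.

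The main technical subtlety is careful bookkeeping: tracking which extensions of $\mu$ (as the unit normal to $\Sigma$) and $\nu_A$ (as the normal frame along $M_t$) are legitimate in each derivative, ensuring the derivatives remain intrinsic whenever the differentiation direction lies in the shared tangent space $TM_t\cap T\Sigma$ (for the first identity) or in $T\Sigma$ with $H$ expressible in the $\nu_B$ basis (for the second), and applying the timelike projection sign consistently throughout.
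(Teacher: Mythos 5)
Your proof is correct and follows essentially the same route as the paper: differentiate the Neumann condition $\langle\nu_A,\mu\rangle=0$ once in a boundary-tangential spatial direction $U$ for the first identity, and once in time (using $\nt\mu=\ov\n_H\mu$ and Lemma \ref{ddtnu}) for the second, then expand the resulting normal vectors in the timelike frame $\{\nu_A\}$ with the sign $V=-\sum_A\langle V,\nu_A\rangle\nu_A$. The only difference is cosmetic: you extract $\langle\II(U,\mu),\nu_A\rangle$ via $\II(U,\mu)=(\ov\n_U\mu)^\perp$ rather than computing $\langle\ov\n_U\nu_A,\mu\rangle$ directly, and you spell out explicitly why $\mu\in T_yM_t$, why $H\in T\Sigma$, and why the various covariant derivatives are intrinsic — details the paper treats as implicit.
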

\begin{proof}
 We differentiate (\ref{boundarycondition}) in direction $U$ to obtain
 \[0=U\ip{\nu_A}{\mu} =\ip{\ov \n_U \nu_A}{\mu}+\ip{\nu_A}{\ov \n_U\mu} = -\ip{\nu_A}{\II(U,\mu)}+\AS(\nu_A, U),\]
which yields the first claimed equation.
 
  We differentiate (\ref{boundarycondition}) in time to get
 \[0=\ip{\nt \nu_A}{\mu}+\ip{\nu_A}{\nt \mu} = -\ip{\nu_A}{\np_\mu H} + \AS(\nu_A, H),\]
 giving the second claimed equation.
\end{proof}

 \begin{cor} \label{boundaryident}   Suppose that we have a solution to \eqref{MCFgraphNeumann} in  $C^{4+\alpha;\frac{4+\alpha}{2}}(\Omega\times[0,T))$.  Then for any $t\in[0,T)$ on $\partial M_t$ we may calculate that 
 \[\n_\mu w_A^2 =-2\AS(e_A^\top, e_A^\top)\quad\text{and}\quad \n_\mu \|H\|^2 = -2\AS(H,H).\]
 \end{cor}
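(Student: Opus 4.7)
The plan is to apply the gradient formulas from Lemma \ref{wkJevol} and Lemma \ref{evolcurv}, combined with the boundary derivative identities from Lemma \ref{boundaryderivatives}, to convert normal-direction derivatives on $M_t$ into tangential quantities on $\Sigma$.

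First, for $w_A^2$, Lemma \ref{wkJevol} gives $\n_i w_A^2 = 2\ip{e_A}{\II(X_i, e_A^\top)}$, so setting $X_i=\mu$ yields $\n_\mu w_A^2 = 2\ip{e_A}{\II(\mu, e_A^\top)}$. To apply the first identity of Lemma \ref{boundaryderivatives} to $\II(e_A^\top,\mu)$, I would first verify that $e_A^\top \in T_y\Sigma$ at any boundary point $y \in \partial M_t$. This follows because $e_A$ itself lies in $T\Sigma$ (as $\Sigma=\partial\Omega\times\bb{R}^m$ contains the timelike directions) and each $\nu_B$ lies in $T\Sigma$ by the Neumann condition $\ip{\nu_B}{\mu}=0$, so $e_A^\perp = -\sum_B W_{AB}\nu_B \in T\Sigma$ and thus $e_A^\top = e_A - e_A^\perp \in T\Sigma$.

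Having verified this, Lemma \ref{boundaryderivatives} gives $\II(\mu,e_A^\top) = -\sum_B \AS(\nu_B, e_A^\top)\nu_B$, so pairing with $e_A$ yields
\[
\ip{e_A}{\II(\mu,e_A^\top)} = -\sum_B W_{AB}\AS(\nu_B, e_A^\top).
\]
The final step uses that $e_A$ is a null eigenvector of $\AS$ (as explicitly noted in the definition of $\Sigma$). Writing $e_A^\top = e_A + \sum_B W_{AB}\nu_B$ and using bilinearity of $\AS$ together with $\AS(e_A,\cdot) = 0$, I obtain $\sum_B W_{AB}\AS(\nu_B, e_A^\top) = \AS(e_A^\top, e_A^\top)$, which combined with the previous identity gives the first claim.

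For $\|H\|^2 = -\ip{H}{H}$, differentiating directly gives $\n_\mu \|H\|^2 = -2\ip{\np_\mu H}{H}$. The second identity in Lemma \ref{boundaryderivatives} yields $\np_\mu H = -\sum_A \AS(\nu_A, H)\nu_A$, so
\[
\ip{\np_\mu H}{H} = -\sum_A \AS(\nu_A,H)\ip{\nu_A}{H} = \sum_A \AS(\nu_A,H)H^A = \AS(H,H),
\]
where the last equality uses $H = H^A\nu_A$ together with the sign convention for raising normal indices. This gives $\n_\mu \|H\|^2 = -2\AS(H,H)$. The only mildly subtle point in the whole argument is the verification that $e_A^\top \in T\Sigma$, which critically relies on the Neumann condition; everything else is bookkeeping with the timelike sign conventions and using that the timelike directions are null for $\AS$.
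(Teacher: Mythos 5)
Your proof is correct and follows essentially the same route as the paper: start from the gradient formulas of Lemma \ref{wkJevol} (and the derivative of $\|H\|^2 = -\ip{H}{H}$), apply the boundary derivative identities from Lemma \ref{boundaryderivatives}, and reduce using the fact that $e_A$ is a null direction for $\AS$. Your explicit verification that $e_A^\top\in T_y\Sigma$ (via the Neumann condition) is a detail the paper leaves implicit, but both arguments are otherwise the same computation.
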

\begin{proof}
We have 
\[\n_\mu w_A^2 = -2\sum_B \ip{\nu_B}{e_A}\ip{\nu_B}{\II(\mu, e_A^\top)}=2\ip{e_A^\perp}{\II(\mu, e_A^\top)} .\]
Lemma \ref{boundaryderivatives} implies
\[\n_\mu w_A^2 =2 \AS(e_A^\perp, e_A^\top)=2 \AS(e_A-e_A^\top, e_A^\top)=-2\AS(e_A^\top, e_A^\top).\]

 We have that $\n_\mu \| H\|^2 = -\n_\mu|H|^2 =-2\ip{\np_\mu H}{H}$. From Lemma \ref{boundaryderivatives} we have  
 \[\ip{\np_\mu H}{H}-\AS(H,H)=0,\]
 so the result follows.
\end{proof}

We now calculate the second derivatives in space of the boundary condition. At any point $p\in\partial M_t$ we choose an orthonormal basis $E_1, \ldots, E_{n-1}$ of $T_pM_t\cap T_p\Sigma$. For the rest of this section, indices written with a hat such as $\hat{\imath}, \hat{\jmath}, \hat{k}, \ldots$ will be assumed to have values in $1, \ldots, n-1$.
\begin{lemma}\label{secondbdryderivs}
Suppose that we have a solution to \eqref{MCFgraphNeumann} in  $C^{4+\alpha;\frac{4+\alpha}{2}}(\Omega\times[0,T))$. Then for any $t\in[0,T)$, $y\in \partial M_t$ and $U,V \in T_yM \cap T_y \Sigma$ we calculate that at $y$,
 \begin{flalign*}
  \np_\mu \II(U,V)&= \!\sum_{A=1}^{m}\left[-\n^\Si_{\nu_A}\AS(U,V)-\AS(\nu_A, \II(U,V))\right]\nu_A +\AS(U,V)\II(\mu,\mu)\\
 &\qquad-\sum_{{\hat{\imath}}=1}^{n-1} \left[\II(V,E_{\hat{\imath}})\AS(E_{\hat{\imath}},U)+\II(U,E_{\hat{\imath}})\AS(E_{\hat{\imath}},V)\right].
 \end{flalign*}
\end{lemma}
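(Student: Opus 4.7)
The strategy is to convert $\np_\mu$ into a tangential derivative along $\partial M_t$, where the first identity of Lemma~\ref{boundaryderivatives} can be applied, and then simplify. Extend $\mu$ and $V$ to tangent vector fields on $M$ near $y$ with $V|_{\partial M_t}$ tangent to $\partial M_t$. By the Codazzi--Mainardi equation for $\II$,
\[
\np_\mu\II(U,V) = (\np_U\II)(\mu,V) = \np_U[\II(\mu,V)] - \II(\n_U\mu,V) - \II(\mu,\n_U V).
\]
Since $U\in T_y\partial M_t$, the first term may be computed along $\partial M_t$, where Lemma~\ref{boundaryderivatives} gives $\II(\mu,V) = -\sum_A\AS(\nu_A,V)\nu_A$.

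Apply the product rule and expand $U(\AS(\nu_A,V))$ using the $\Sigma$-connection. The Codazzi equation for $\AS$ (valid because $\bb{R}^{n+m}$ is flat and $U,\nu_A,V\in T_y\Sigma$) gives $(\n^\Si_U\AS)(\nu_A,V) = (\n^\Si_{\nu_A}\AS)(U,V)$, producing the first summand in the claim. For $\n_U\mu$ and $\n_U V$, use the orthogonal splitting $T_y\Sigma = T_y\partial M_t \oplus N_yM$ (nondegenerate, since the two summands are spacelike and timelike respectively and mutually orthogonal). From $\ip{\ov\n_U\mu}{W} = \AS(U,W)$ for $W\in T_y\Sigma$, together with the identity $h^A_{U\mu} = -\AS(\nu_A,U)$ extracted from Lemma~\ref{boundaryderivatives}, one finds
\[
\n_U\mu = \sum_{\hat\imath=1}^{n-1}\AS(U,E_{\hat\imath})E_{\hat\imath}, \qquad \n_U V = (\n^\Si_U V)^{T\partial M_t} - \AS(U,V)\mu,
\]
the latter using $\ov\n_U V = \n^\Si_U V - \AS(U,V)\mu$. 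The $-\AS(U,V)\mu$ piece immediately yields the $\AS(U,V)\II(\mu,\mu)$ summand, and $\II(\n_U\mu,V) = \sum_{\hat\imath}\AS(U,E_{\hat\imath})\II(E_{\hat\imath},V)$ produces one of the two $\AS(E_{\hat\imath},\cdot)\II(E_{\hat\imath},\cdot)$ sums.

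The remaining extension-dependent quantities cancel since $(\np_\mu\II)(U,V)$ is tensorial. Expanding $\II(\mu,(\n^\Si_U V)^{T\partial M_t})$ via the boundary identity produces $\AS(\nu_A,\n^\Si_U V)\nu_A$ terms that cancel those arising from $U(\AS(\nu_A,V))$; using $\II(U,V) = (\n^\Si_U V)^{N_yM}$ then recovers the $\AS(\nu_A,\II(U,V))\nu_A$ summand. Finally, writing $\np_U\nu_A = \sum_B\Gamma^B_A\nu_B$, the antisymmetry $\Gamma^B_A = -\Gamma^A_B$ (from constancy of $\ip{\nu_A}{\nu_B}$) kills all pure-normal cross terms, while the tangential components of $\n^\Si_U\nu_A$, rewritten using $\II(U,E_{\hat\imath}) = \sum_A h^A_{UE_{\hat\imath}}\nu_A$, give the last summand $-\sum_{\hat\imath}\II(U,E_{\hat\imath})\AS(E_{\hat\imath},V)$. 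The main obstacle is the combinatorial bookkeeping of these cancellations; working throughout in the frame $\{\mu,E_1,\ldots,E_{n-1}\}$ of $T_yM$ and tracking contributions block by block in the splitting $T_y\Sigma = T_y\partial M_t\oplus N_yM$ is the cleanest approach.
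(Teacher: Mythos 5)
Your proposal is correct and takes essentially the same approach as the paper: both differentiate the Neumann boundary identity of Lemma~\ref{boundaryderivatives} in a tangential direction, invoke the Codazzi equations for $\II$ and for $\AS$, and organise the bookkeeping via the orthogonal splitting $T_y\Sigma = T_y\partial M_t \oplus N_yM$. The only difference is presentational: the paper differentiates the scalar relation $\ip{\nu_A}{\II(U,\mu)}-\AS(\nu_A,U)=0$ in direction $V$ and collects the $\nu_A$-components, whereas you first apply Codazzi to write $\np_\mu\II(U,V)=(\np_U\II)(\mu,V)$ and then substitute the vector form $\II(\mu,V)=-\sum_A\AS(\nu_A,V)\nu_A$ before expanding, which makes the cancellation of the normal-connection coefficients $\Gamma^B_A$ slightly more transparent.
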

\begin{proof}
We have that
\begin{flalign*}
 0&=V[\ip{\nu_A}{\II(U,\mu)} - \AS(\nu_A,U)]\\
 &=\ip{\np_V \nu_A}{\II(U,\mu)} +\ip{\nu_A}{\np_V \II(U,\mu)}+\ip{\nu_A}{\II(\n_V U,\mu)}+\ip{\nu_A}{\II(U,\n_V\mu)}\\
 &\qquad- \n^\Si_Y\AS(\nu_A,U) - \AS(\n^\Si_V \nu_A, U) - \AS(\nu_A,\n^\Si_V  U)\\
  &=\ip{\nu_A}{\np_V \II(U,\mu)}- \n^\Si_V\AS(\nu_A,U) +\ip{\np_V \nu_A}{\II(U,\mu)}- \AS(\n^\Si_V \nu_A, U) \\
 &\qquad +\ip{\nu_A}{\II(\n_V U,\mu)} - \AS(\nu_A,\n^\Si_V  U)+\ip{\nu_A}{\II(U,\n_V\mu)}\\
 &=\ip{\nu_A}{\np_\mu \II(U,V)}- \n^\Si_{\nu_A}\AS(U,V) -\AS(U,V)\ip{\nu_A}{\II(\mu,\mu)}\\
 & -\!\AS(\nu_A, \II(U,V))\!+\!\sum_{\hat{\imath}=1}^{n-1} \left[\ip{\nu_A}{\II(V,E_{\hat{\imath}})}\AS(E_{\hat{\imath}},U)\!+\!\ip{\nu_I}{\II(U,E_{\hat{\imath}})}\AS(E_{\hat{\imath}},V)\right]
\end{flalign*}
where we used Lemma \ref{boundaryderivatives} repeatedly to obtain the third equality.
\end{proof}
\begin{cor}  Suppose that we have a solution to \eqref{MCFgraphNeumann} in  $C^{4+\alpha;\frac{4+\alpha}{2}}(\Omega\times[0,T))$.
Then, for all $t\in[0,T)$, on $\partial M_t$ we calculate that,
 \begin{flalign*}
\n^\perp_\mu \II(\mu,\mu) &=\sum_{A=1}^{m}\sum_{\hat{\imath}=1}^{n-1}\left[\n^\Si_{\nu_A}\AS(E_{\hat{\imath}},E_{\hat{\imath}})\nu_A\right]+2\sum_{\hat{\imath}, \hat{\jmath}=1}^{n-1}\left[\AS(E_{\hat{\imath}},E_{\hat{\jmath}})\II(E_{\hat{\imath}},E_{\hat{\jmath}})\right]\\
&\qquad -\sum_{A=1}^{m}\AS(\nu_A, \II(\mu,\mu))\nu_A-\sum_{\hat{\imath}=1}^{n-1}\AS(E_{\hat{\imath}},E_{\hat{\imath}})\II(\mu,\mu).
\end{flalign*}
\end{cor}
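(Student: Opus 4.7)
The plan is to reduce $\n^\perp_\mu \II(\mu,\mu)$ to quantities already handled in Lemmas \ref{boundaryderivatives} and \ref{secondbdryderivs} via a trace argument. At the boundary point $y$, the vector $\mu$ is unit spacelike and tangent to $M_t$, so I complete it to an orthonormal basis $\{\mu, E_1,\ldots,E_{n-1}\}$ of $T_yM_t$ with each $E_{\hat{\imath}} \in T_yM_t\cap T_y\Si$. Tracing the second fundamental form in this frame gives
\[H = \II(\mu,\mu) + \sum_{\hat{\imath}=1}^{n-1} \II(E_{\hat{\imath}},E_{\hat{\imath}}),\]
so applying $\n^\perp_\mu$ yields the identity
\[\n^\perp_\mu \II(\mu,\mu) = \n^\perp_\mu H - \sum_{\hat{\imath}=1}^{n-1} \n^\perp_\mu \II(E_{\hat{\imath}},E_{\hat{\imath}}).\]

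For the first summand I substitute the formula $\n^\perp_\mu H = -\sum_A \AS(\nu_A, H) \nu_A$ from Lemma \ref{boundaryderivatives}. For the second, each $E_{\hat{\imath}}$ lies in $T_yM\cap T_y\Si$, so Lemma \ref{secondbdryderivs} applies with $U=V=E_{\hat{\imath}}$, giving an expression for $\n^\perp_\mu \II(E_{\hat{\imath}},E_{\hat{\imath}})$ in terms of $\n^\Si_{\nu_A}\AS(E_{\hat{\imath}},E_{\hat{\imath}})$, $\AS(\nu_A, \II(E_{\hat{\imath}},E_{\hat{\imath}}))$, $\AS(E_{\hat{\imath}},E_{\hat{\imath}})\II(\mu,\mu)$, and a symmetrised term $2\sum_{\hat{\jmath}} \II(E_{\hat{\imath}},E_{\hat{\jmath}}) \AS(E_{\hat{\imath}},E_{\hat{\jmath}})$. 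I then sum over $\hat{\imath}$.

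The crucial algebraic step is to invoke the trace identity a second time, now in the form $\sum_{\hat{\imath}} \II(E_{\hat{\imath}},E_{\hat{\imath}}) = H - \II(\mu,\mu)$, inside the group involving $\AS(\nu_A, \II(E_{\hat{\imath}},E_{\hat{\imath}}))$; by linearity of $\AS(\nu_A,\cdot)$ this sum becomes $\sum_A \AS(\nu_A, H)\nu_A - \sum_A \AS(\nu_A, \II(\mu,\mu))\nu_A$. After applying the outer minus sign coming from $-\sum_{\hat{\imath}} \n^\perp_\mu \II(E_{\hat{\imath}},E_{\hat{\imath}})$, the $\AS(\nu_A, H)$ piece cancels exactly against the contribution of $\n^\perp_\mu H$, while $-\sum_A \AS(\nu_A,\II(\mu,\mu))\nu_A$ survives with the sign stated in the corollary. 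The remaining three groups pass through directly, with the factor $2$ arising from the $U\leftrightarrow V$ symmetrisation in Lemma \ref{secondbdryderivs} when $U=V$.

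The main obstacle is purely bookkeeping: tracking the sign flips induced by the outer minus and ensuring the trace reduction on $\II$ is applied consistently, so that every appearance of $H$ is eliminated in favour of terms involving $\II(\mu,\mu)$ and the full traces of $\AS$. No new geometric or analytic input beyond the two preceding lemmas and the orthonormal decomposition of $T_yM_t$ is required.
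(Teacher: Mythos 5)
Your proposal is correct and takes exactly the same approach as the paper: the paper's proof is simply the one-liner "Noting that $\II(\mu,\mu)=H-\sum_{\hat\imath}\II(E_{\hat{\imath}},E_{\hat{\imath}})$, the result follows immediately from Lemmas \ref{boundaryderivatives} and \ref{secondbdryderivs}," and you have spelled out the same trace decomposition, the substitution of both lemmas, and the second application of the trace identity to cancel the $\AS(\nu_A,H)$ terms. Your bookkeeping of signs and the factor of $2$ from the $U\leftrightarrow V$ symmetrisation is also accurate.
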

\begin{proof}
Noting that $\II(\mu,\mu)=H-\sum_{\hat\imath}\II(E_{\hat{\imath}},E_{\hat{\imath}})$, the result follows immediately from Lemmas \ref{boundaryderivatives} and \ref{secondbdryderivs}.
\end{proof}

\subsection{$C^0$ and gradient estimates}
We now derive height, gradient and mean curvature bounds for solutions to spacelike MCF with Neumann boundary conditions.

\begin{lemma}\label{C0EstimatesNeumann}
Suppose we have a spacelike solution to \eqref{MCFgraphNeumann} for $t\in[0,T)$ and $\Omega$ is convex.

\begin{enumerate}[(a)]
 \item Expanding quasi-spheres centred in $\Omega \times \bb{R}^m$ act as outer barriers to the flow.
 \item For all $y\in M_t$ and $1\leq A \leq m$,
\[\inf_{M_0} u^A \leq u^A(y,t) \leq \sup_{M_0} u^A.\]
\item As a graph over $x_0\in \Omega$,
\[\|\U(x_0, 0)-\U(x_0,t)\|\leq \sqrt{2nt} .\]
\end{enumerate}
\end{lemma}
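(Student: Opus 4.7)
The overall strategy is to apply the parabolic maximum principle, with a Hopf-type boundary argument, to three scalar quantities whose evolutions along MCF are given by Corollary \ref{C0evol}. The common boundary ingredient is that the outward unit normal $\mu$ to $\Sigma=\partial\Omega\times\bb{R}^m$ lies in the spacelike $\bb{R}^n$-factor, so that directional derivatives of rotationally symmetric ambient functions along $\mu$ reduce to Euclidean inner products on the spacelike slice, where convexity of $\Omega$ can be exploited.

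For (a) I would set $f(X,t):=|X-p|^2+R^2+2nt$, so that $M_t\subset I_t$ is equivalent to $f\geq 0$ on $M_t$. Corollary \ref{C0evol} immediately gives $\ho f=0$. On $\Sigma$ one has $\n_\mu f=2\ip{\mu}{X-p}$; since $\mu$ is purely spacelike and the spacelike parts of $X$ and $p$ lie in $\partial\Omega$ and $\Omega$ respectively, convexity of $\Omega$ forces $\n_\mu f\geq 0$ on $\Sigma$. The hypothesis $M_0\subset I_0$ reads $f(\cdot,0)\geq 0$ on $M_0$. A standard parabolic maximum principle argument, applied to $f+\varepsilon t$ and combined with the Hopf lemma on $\Sigma$, then propagates $f\geq 0$ to all $t\in[0,T)$, establishing the outer barrier property.

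Part (b) is the analogous Neumann maximum principle applied to $u^A$: by Corollary \ref{C0evol} we have $\ho u^A=0$, and by Lemma \ref{boundaryu} we have $\n_\mu u^A=0$ on $\partial M_t$, so applying the same argument to $\pm(u^A-\sup_{M_0}u^A)$ and $\pm(u^A-\inf_{M_0}u^A)$ gives the two-sided bound. Part (c) then follows from (a) applied with $p:=(x_0,\U_0(x_0))\in\overline{\Omega}\times\bb{R}^m$ and the formal choice $R=0$: the convexity argument of (a) still gives $\n_\mu(|X-p|^2+2nt)\geq 0$ on $\Sigma$ for any $p$ whose spacelike part lies in $\overline{\Omega}$, and spacelikeness of $M_0$ forces $|X-p|^2\geq 0$ initially on $M_0$. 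The maximum principle yields $|X-p|^2\geq -2nt$ on $M_t$; evaluating at the graph point $X(x_0,t)=(x_0,\U(x_0,t))$, where $|X(x_0,t)-p|^2=-\|\U(x_0,t)-\U_0(x_0)\|^2$, gives exactly the stated bound. The only real subtlety is the clean justification of the Hopf step on the moving boundary $\partial M_t\subset\Sigma$, but the regularity built into the hypotheses of Theorem \ref{Neumanntheorem} (continuity of first spatial derivatives up to $\Sigma$) is more than sufficient for this standard argument.
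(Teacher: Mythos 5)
Your proof is correct and follows essentially the same route as the paper: apply the Neumann maximum principle (the paper cites \cite[Theorem~3.1]{Stahlfirst}) to the quantities $|X-p|^2+2nt$ and $u^A$, using Corollary~\ref{C0evol} for the interior evolutions and the boundary sign conditions from Lemma~\ref{boundaryz} (after translating the centre to the origin) and Lemma~\ref{boundaryu}, then specialise to the degenerate light-cone case $R=0$ centred on the initial graph to deduce (c). The only difference is cosmetic: you carry out the Hopf-type boundary step explicitly where the paper defers to Stahl, and you keep the general centre $p$ rather than translating to $0$.
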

\begin{proof}
Lemma \ref{boundaryz} implies that $\n_\mu(|X|^2+2nt)\geq 0$. Corollary  \ref{C0evol} and the maximum principle (see \cite[Theorem 3.1]{Stahlfirst}) therefore imply that if $|X|^2+2nt\geq -R^2$ initially, then this is preserved, giving (a). 

The claim in (b) follows similarly from Corollary \ref{C0evol} and  Lemma \ref{boundaryu}.

The final statement follows by attaching an expanding quasi-sphere of radius $0$ at   $x\in \text{graph}\, \U_0$. At $t=0$, this is exactly the lightcone. Since $\U_0$ is spacelike the lightcone cannot touch the graph anywhere except this point and so we may apply (a) to see that $M_t$ stays inside the expanding quasi-sphere. This implies (c).
\end{proof}

\begin{lemma}\label{EstimatesNeumann}
 Suppose that $\Omega$ is convex. Then for all $t\in[0,T)$ and $y\in M_t$,
 \begin{flalign*}
 w_A^2(y,t)&\leq \sup_{M_0} w_A^2,\qquad
 v^2(y,t)\leq \sup_{ M_0} v^2,\qquad
  \|H\|(y,t)\leq \sup_{M_0} \|H\|^2.
 \end{flalign*}
\end{lemma}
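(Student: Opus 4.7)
The plan is to treat all three quantities uniformly via a weak maximum principle with favourable Neumann boundary data. Each is a nonnegative scalar satisfying a parabolic differential inequality of the form $\ho f \leq 0$ in the interior, and convexity of $\Omega$ combined with the Neumann condition will yield $\n_\mu f \leq 0$ on $\partial M_t$. This is exactly the setup handled by the maximum principle of Stahl applied in the proof of Lemma \ref{C0EstimatesNeumann}, so in each case we will deduce that $\sup_{M_t} f \leq \sup_{M_0} f$.

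First I would record the interior evolution inequalities. For $w_A^2$, Corollary \ref{wkevolest} gives $\ho w_A^2 \leq -|\n w_A^2|^2/w_A^2 \leq 0$. Summing over $A$ yields $\ho v^2 \leq 0$ since $v^2 = \sum_A w_A^2$. For the mean curvature, Corollary \ref{evolcurv2} gives $\ho \|H\|^2 \leq -\tfrac{2}{n}\|H\|^4 - 2|\np H|^2 \leq 0$ (these evolution equations require the $C^{4+\alpha}$ regularity already assumed in the section, which is why this estimate is stated in a context where that regularity holds).

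The key step is the boundary derivative. The Neumann condition $\ip{\nu_A}{\mu}=0$ for an orthonormal basis $\nu_1,\dots,\nu_m$ of $NM$ is equivalent to $NM\subset T\Sigma$ on $\partial M_t$, since $T\Sigma=\mu^\perp$ in $\bb{R}^{n,m}$. Consequently $e_A^\perp \in NM \subset T\Sigma$, and since $e_A\in T\Sigma$ trivially, $e_A^\top = e_A - e_A^\perp \in T\Sigma$; likewise $H\in NM \subset T\Sigma$. Convexity of $\Omega$ makes $\AS$ positive semi-definite on $T\Sigma$ (it annihilates the timelike $e_1,\dots,e_m$ directions and is nonnegative on the remaining principal directions). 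Corollary \ref{boundaryident} then gives
\[
\n_\mu w_A^2 = -2\AS(e_A^\top, e_A^\top)\leq 0,\qquad \n_\mu \|H\|^2 = -2\AS(H,H)\leq 0
\]
on $\partial M_t$, and summing the first over $A$ yields $\n_\mu v^2 \leq 0$.

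Having $\ho f \leq 0$ in the interior together with $\n_\mu f \leq 0$ on $\partial M_t$, I would invoke the maximum principle \cite[Theorem 3.1]{Stahlfirst} (as in Lemma \ref{C0EstimatesNeumann}) to conclude each of $w_A^2$, $v^2$, $\|H\|^2$ attains its supremum at $t=0$, giving the three claimed inequalities. There is no real obstacle here: the substantive content of the lemma is precisely the observation that the Neumann boundary condition places the relevant vectors ($e_A^\top$, $H$) into $T\Sigma$, so that convexity of $\Omega$ turns the boundary derivative formulas of Corollary \ref{boundaryident} into sign conditions with the correct sign.
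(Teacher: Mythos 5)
Your proof is correct and follows essentially the same route as the paper: show $\ho f\leq 0$ in the interior via the evolution inequalities, use Corollary \ref{boundaryident} plus convexity (noting $e_A^\top,H\in T\Sigma$, which you spell out cleanly) to get $\n_\mu f\leq 0$ at the boundary, and apply Stahl's maximum principle. Your citations are marginally more direct than the paper's—Corollary \ref{wkevolest} and Corollary \ref{evolcurv2} in place of Lemma \ref{evolgoodwk} and Lemma \ref{evolcurv}—but the argument is the same.
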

\begin{proof}
 Convexity implies that $\AS$ is nonnegative definite and so Corollary \ref{boundaryident} yields
 \[\n_\mu w_A^2\leq 0, \qquad \n_\mu v^2\leq 0, \qquad \n_\mu \|H\|^2\leq 0.\]
 The maximum principle (see \cite[Theorem 3.1]{Stahlfirst}), Lemmas \ref{evolgoodwk} and \ref{evolcurv} then give the estimates.
\end{proof}

\subsection{Curvature estimates}
The main difficulty in estimating $\|\II\|^2$ for MCF with a Neumann boundary condition is that we cannot obtain useful estimates on $\n_\mu\|\II\|^2$ directly (in general), as the boundary derivatives we studied above give us no information on $\n_\mu \II(\mu,E_{\hat{\imath}})$. However, since we have a uniform gradient estimate we may use methods similar to those of Edelen \cite{Edelen} to obtain curvature estimates. The idea is to perturb $\II$ so that $\mu$ is an eigenvector, and then get estimates on the perturbed second fundamental form.  This is the core of the technical work in this Neumann problem. 

The curvature estimates will rely on our earlier gradient and mean curvature estimates, and thoughout this subsection we will assume that
\begin{equation}
 v^2\leq C_v, \qquad\qquad \|H\|^2\leq C_H \ . \label{NeumannCurvassump}
\end{equation}

We take smooth uniformly bounded extensions of the tensor $\AS$ and the vector $\mu$ to $\bb{R}^{n,m}$ which, by abuse of notation we will also write as $\AS$ and $\mu$ respectively. For simplicity we also assume that at $\Sigma$, 
\[\ov \n_\mu \mu=0, \qquad \ov \n_\mu \AS = 0.\]
Such smooth extensions exist, see Lemma \ref{extensionexists}. By assumption we may estimate using \eqref{Generaltensorest} and \eqref{NeumannCurvassump} to obtain that on $M_t$, in normal coordinates we have that there exists a constant $C$ depending only on the extension $\AS$ and $C_v$ such that
\[|\ov\n_{k}\ov\n_{i} \AS(X_j, \nu_A)|\leq C\ .\]
Similar estimates hold for all other derivatives of $\AS$ and $\mu$, and this will be used liberally in the following lemmas. 

We now define the $NM$-valued tensor
\[\ov \II(U,V) = \II(U,V)+ c\ip{U}V e_1^\perp+\sum_{A=1}^m\left[\ov{T}(U,V, \nu_A)\right]\nu_A  ,\]
for some $c>0$ to be defined later, where 
\[\ov{T}(U,V,W) = \AS(U,W)\ip{V}{\mu}+\AS(V,W)\ip{U}{\mu}.\]
We note that due to our assumptions on the extensions of $\AS$ and $\mu$,  on $\Sigma$, $\ov\n_\mu T(\cdot,\cdot,\cdot)=0$.

One key property of $\ov \II$ is that, by Lemma \ref{boundaryderivatives}, at the boundary we have that
\[\ov\II(\mu,E_{\hat{\imath}}) = \II(E_{\hat{\imath}},\mu)+\sum_{A=1}^m\AS(E_{\hat{\imath}}, \nu_A)\nu_A=0\]
holds for all $E_{\hat{\imath}}$; that is, $\mu$ is an eigenvector of $\ip{\ov \II (U,V)}{\nu_A}$ for all $A\in\{1,\ldots,m\}$. A second important property is that we may choose $c$, depending on our bounds on $v^2$, $\|H\|^2$ and the tensor $\ov T$, to be sufficiently large so that 
\[\|\ov H\|^2 := \|g^{ij} \ov \II_{ij}\|^2 \geq n .\]
This implies an important lower bound, namely
\begin{equation}\|\ov\II\|^2 \geq 1  .
 \label{LbII}
\end{equation}
From now on we assume $c=c(C_v, C_H, \ov{T})>0$ is sufficiently large so that (\ref{LbII}) holds. As a result, we also have that there exists a constant $C=C(C_v,C_H, \ov{T})$ such that
\begin{equation}\|\II\|^2\leq C(\|\ov\II\|^2+1)\leq 2C\|\ov\II\|^2\ .\label{overlinecurvineq}
\end{equation}

We now estimate the boundary derivative of the size of the perturbed curvature tensor $\ov\II$ in this Neumann setting.
\begin{lemma}\label{perturbedcurvatureatboundary}
 Suppose we have a solution of \eqref{MCFgraphNeumann} satisfying \eqref{NeumannCurvassump}. Then, there exists a constant $\kappa>0$ depending only on $n$, $m$, $\AS$, $\n^\Si \AS$, $\ov{T}$, $C_v$ and $C_H$ 
 such that at any point $p\in\partial M_t$, 
 %\[|\n^\perp_\mu \ov\II(E_{\hat{\imath}},E_{\hat{\jmath}})|\leq \kappa\|\ov\II\|, \qquad |\n^\perp_\mu \ov\II(\mu,\mu)|\leq \kappa\|\ov\II\| .\]
% Furthermore,
  \[\n_\mu \|\ov\II\|^2 \leq \kappa\|\ov \II\|^2 .\]
\end{lemma}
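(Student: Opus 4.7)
The plan is to leverage the defining property of $\ov\II$---namely that $\mu$ is a null eigenvector of each $\ip{\ov\II(\cdot,\cdot)}{\nu_A}$ at $\partial M_t$---in order to kill off the mixed $(\mu, E_{\hat\imath})$ terms when expanding $\n_\mu\|\ov\II\|^2$, and then control what remains using the boundary identities from Lemmas \ref{boundaryderivatives} and \ref{secondbdryderivs}. Concretely, at $p\in\partial M_t$ I would work in an orthonormal frame $\{\mu, E_1,\ldots,E_{n-1}\}$ of $T_pM_t$ (with $E_{\hat\imath}\in T_p\Sigma$) made parallel at $p$, together with an orthonormal normal frame $\{\nu_A\}$ supplied by Lemma \ref{ddtnu}. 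Compatibility of $\np$ with the (normal) metric then expands $\n_\mu\|\ov\II\|^2$ as a sum of contractions of $\np_\mu\ov\II(X_i,X_j)$ against $\ov\II(X_i,X_j)$. A direct check from the definition of $\ov\II$, using symmetry of $\AS$ and the first identity in Lemma \ref{boundaryderivatives}, gives $\ov\II(\mu, E_{\hat\imath})=0$ at $p$. Hence only the $(\mu,\mu)$ and $(E_{\hat\imath},E_{\hat\jmath})$ blocks contribute, reducing the task to estimating $\np_\mu\ov\II(\mu,\mu)$ and $\np_\mu\ov\II(E_{\hat\imath},E_{\hat\jmath})$.

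I would then split $\ov\II=\II+P$, where $P(U,V)=c\ip{U}{V}e_1^\perp+\sum_A\ov T(U,V,\nu_A)\nu_A$ is the perturbation. For $\II$, the corollary to Lemma \ref{secondbdryderivs} expresses $\np_\mu\II(\mu,\mu)$ in terms of $\n^\Sigma\AS$, $\AS$ and tangential components of $\II$, yielding $\|\np_\mu\II(\mu,\mu)\|\leq C(1+\|\II\|)$; Lemma \ref{secondbdryderivs} itself gives the analogous bound $\|\np_\mu\II(E_{\hat\imath},E_{\hat\jmath})\|\leq C(1+\|\II\|)$. For the perturbation $P$, a derivative in the $\mu$-direction either falls on the position vector (producing terms linear in $\II$) or on the extensions of $\mu$ and $\AS$; the latter vanish on $\Sigma$ by the imposed conditions $\ov\n_\mu\mu=0$ and $\ov\n_\mu\AS=0$, so combined with the bounds \eqref{NeumannCurvassump} one finds $\|\np_\mu P(U,V)\|\leq C(1+\|\II\|)|U||V|$ with $C$ depending only on $n,m,\AS,\n^\Sigma\AS,\ov T, C_v, C_H$.

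Feeding these into the reduced formula, Cauchy--Schwarz together with $\|\ov\II(U,V)\|\leq\|\ov\II\|$ for unit $U,V$ in the adapted frame yields
\[\n_\mu\|\ov\II\|^2 \leq C(1+\|\II\|)\|\ov\II\| \leq C(1+\|\ov\II\|)\|\ov\II\|,\]
where the second step uses \eqref{overlinecurvineq}. The lower bound \eqref{LbII}, $\|\ov\II\|^2\geq 1$, forces $1\leq\|\ov\II\|$, and we conclude $\n_\mu\|\ov\II\|^2\leq \kappa\|\ov\II\|^2$ with $\kappa$ a sum of the constants above.

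The main obstacle is essentially careful bookkeeping in the indefinite setting. One has to track sign conventions on the normal bundle---in particular the raising convention $T^A=T_B\delta^{BA}$ and the relation $\|\nu\|^2=-|\nu|^2$---when expanding $\|\ov\II\|^2$ and its derivative in the adapted frame, and one has to verify that every differentiation of $P$ genuinely produces only $O(1)$ or $O(\|\II\|)$ contributions. This last point is precisely where the freedom to choose extensions with $\ov\n_\mu\mu=\ov\n_\mu\AS=0$ on $\Sigma$ is essential: without it $\np_\mu P$ would contain uncontrolled $O(\|\II\|)$ terms that could not be absorbed into $\kappa\|\ov\II\|^2$.
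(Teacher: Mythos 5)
Your proposal follows the paper's proof essentially step for step: the eigenvector property $\ov\II(\mu, E_{\hat\imath})=0$ eliminates the mixed block, the estimates for $\np_\mu\ov\II(\mu,\mu)$ and $\np_\mu\ov\II(E_{\hat\imath},E_{\hat\jmath})$ come from Lemma \ref{secondbdryderivs} together with the vanishing of $\ov\n_\mu\mu$ and $\ov\n_\mu\AS$ on $\Sigma$, and the lower bound $\|\ov\II\|^2\geq 1$ absorbs the constant. This is correct and matches the paper's argument.
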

\begin{proof}
 We see that (using Lemma \ref{Generalnormaloneform})
 \begin{flalign*}
 \np_\mu &\ov \II(E_{\hat{\imath}},E_{\hat{\jmath}})\\ &= \np_\mu \II(E_{\hat{\imath}},E_{\hat{\jmath}})+c\delta_{\hat{\imath}\hat{\jmath}}\II(e_1^\top, \mu)\\
 &\quad
 +\sum_{I=1}^m\left[\ov\n_\mu \ov T(E_{\hat{\imath}},E_{\hat{\jmath}}, \nu_I) + \ov T(\II(\mu, E_{\hat{\imath}}), E_{\hat{\jmath}}, \nu_I) \right.\\
 &\qquad\qquad\qquad\left.+\ov T(E_{\hat{\imath}},\II(\mu ,E_{\hat{\jmath}}), \nu_I))-\ov{T}(E_{\hat\imath}, E_{\hat{\jmath}}, X^j)\ip{\II(X_j, \mu)}{\nu_I}\right]\nu_I\\
 &=\np_\mu \II(E_{\hat{\imath}},E_{\hat{\jmath}})+c\delta_{\hat{\imath}\hat{\jmath}}\II(e_1^\top, \mu)\\
 &\quad +\sum_{I=1}^m\left[ \ov T(\II(\mu, E_{\hat{\imath}}), E_{\hat{\jmath}}, \nu_I) +\ov T(E_{\hat{\imath}},\II(\mu ,E_{\hat{\jmath}}), \nu_I))\right]\nu_I\ .
 \end{flalign*}
Using the inequalities \eqref{NeumannCurvassump}, \eqref{LbII} and \eqref{overlinecurvineq} and Lemma \ref{secondbdryderivs}, we may therefore estimate that
 \[  \|\np_\mu \ov \II(E_{\hat{\imath}},E_{\hat{\jmath}})\| \leq \kappa_1\|\ov \II\|\ ,\]
where $\kappa_1$ depends on $n$, $m$, $\AS$, $\n^\Si \AS$, $\ov{T}$, $C_v$ and $C_H$.
Similarly, 
 \begin{flalign*}
 \np_\mu &\ov \II(\mu,\mu) \\&= \np_\mu \II(\mu,\mu)+c\II(e_1^\top, \mu)\\
 &+\sum_{I=1}^m\left[\ov T(\II(\mu, \mu), \mu, \nu_I) +\ov T(\mu,\II(\mu ,\mu), \nu_I))-\ov T(\mu, \mu, X^i)\ip{\II(\mu,X_i)}{\nu_I}\right]\nu_I\ ,
 \end{flalign*} 
 and again we may estimate
  \[  \|\np_\mu \ov \II(\mu,\mu)\| \leq \kappa_2\|\ov \II\|\ ,\]
 where again $\kappa_2$ depends on $n$, $m$, $\AS$, $\n^\Si \AS$, $\ov{T}$, $C_v$ and $C_H$.
Hence, due to the eigenvector property of $\mu$ in $\ov\II$, we see that
 \begin{flalign*}
 \n_\mu \|\ov\II\|^2 &= -2\ip{\ov\II_{ij}}{\n^\perp_\mu \ov\II^{ij}} \\
 &= -2\ip{\ov\II(\mu, \mu)}{\n^\perp_\mu \ov \II(\mu,\mu)}-2\sum_{\hat\imath,\hat\jmath=1}^{n-1}\ip{\ov \II(E_{\hat{\imath}},E_{\hat{\jmath}})}{\n^\perp_\mu \ov\II(E_{\hat{\imath}},E_{\hat{\jmath}})}\leq \kappa\|\ov \II\|^2
 \end{flalign*}
 as required.
\end{proof}

We now estimate the evolution of $\|\ov\II\|^2$.
\begin{lemma}
Suppose we have a solution of \eqref{MCFgraphNeumann} satisfying \eqref{NeumannCurvassump}. Then there exists a constant $C>0$ depending only on $n$, $m$, $\AS$, $\n^\Si \AS$, $\mu$, $\ov\n \mu$,  $\ov T$, $\ov\n \,\ov T$, $C_v$ and $C_H$
  such that
 \begin{flalign*}
 \ho \|\ov\II\|^2 &\leq -\frac{2} m \|\ov \II\|^4 +C\|\ov \II\|^3.
\end{flalign*}
\end{lemma}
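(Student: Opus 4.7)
The plan is to expand $\ov \II = \II + P$ where the perturbation
\[
 P(U,V) := c\ip{U}{V}e_1^\perp + \sum_A \ov T(U,V,\nu_A)\nu_A
\]
is a symmetric normal-bundle-valued 2-tensor built from ambient data together with the normal projection $e_1^\perp$.  Expanding the square gives
\[
 \|\ov \II\|^2 = \|\II\|^2 + 2\ip{\II}{P} + \|P\|^2,
\]
so by linearity of $\ho$ and the parabolic product rule $\ho(\phi\psi) = \phi \ho \psi + \psi\ho\phi - 2\ip{\n \phi}{\n \psi}$, the task reduces to estimating each of $\ho\|\II\|^2$, $\ho\ip{\II}{P}$, and $\ho \|P\|^2$.

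First I would control $P$ and its parabolic derivatives using the ambient nature of its ingredients.  Using \eqref{Generaltensorest} and the gradient bound \eqref{NeumannCurvassump}, one sees immediately that $\|P\|\leq C$.  Applying Lemma \ref{Generalnormaloneform} to the ambient $1$-form $Z\mapsto \ip{e_1}{Z}$ (with vanishing ambient derivatives) and to the $1$-forms $Z \mapsto \ov T(U,V,Z)$ (with bounded ambient derivatives, by the hypotheses on the extensions) yields $|\np P|\leq C(1+\|\II\|)$, and a further application together with the formulas from Appendix \ref{app:evol} gives $|\ho P|\leq C(1+\|\II\| + \|\II\|^2)$, where the quadratic $\II$-term arises when $\Delta$ hits the normal projection of ambient vectors.

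For $\ho \|\II\|^2$, Corollary \ref{evolcurv2} supplies the good term $-\tfrac{2}{m}\|\II\|^4 - 2\|\np \II\|^2$.  For the cross term I would use
\[\ho \ip{\II}{P} = \ip{\ho\II}{P} + \ip{\II}{\ho P} - 2\ip{\np\II}{\np P};\]
bounding the first two summands via Lemma \ref{evolcurv} (which is cubic in $\II$) and the $P$-estimates above gives a bound by $C(\|\II\|^3+\|\II\|^2+\|\II\|)$, and Young's inequality on the gradient-cross-term absorbs it into $\tfrac12\|\np \II\|^2 + C(\|\II\|^2+1)$.  The term $\ho \|P\|^2$ is handled identically, contributing at most $C(\|\II\|^2+\|\II\|+1) + \tfrac12\|\np \II\|^2$; the surviving $-\|\np \II\|^2 \leq 0$ can then be discarded.

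The main obstacle is the bookkeeping when computing $\ho P$: one has to verify that this operator really produces at most $\|\II\|^2$ (rather than $\|\II\|^3$) from the ambient pieces, otherwise the dominant $-\tfrac{2}{m}\|\II\|^4$ term would be overwhelmed.  Once this is done, combining the estimates yields
\[\ho \|\ov\II\|^2 \leq -\tfrac{2}{m}\|\II\|^4 + C(\|\II\|^3 + \|\II\|^2 + 1).\]
To pass from $\|\II\|$ to $\|\ov \II\|$ we use that $\|P\|$ is bounded, so $\|\II\|^4 = \|\ov \II\|^4 + O(\|\ov\II\|^3)$, and the lower bound $\|\ov\II\|^2 \geq 1$ from \eqref{LbII} (together with \eqref{overlinecurvineq}) absorbs every term of order $\|\II\|^k$ with $k\leq 2$ into $C\|\ov\II\|^3$, without altering the coefficient $\tfrac{2}{m}$ of the leading negative quartic term.
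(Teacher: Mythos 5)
Your proposal is correct and follows essentially the same route as the paper: decompose $\ov\II=\II+S$, expand $\|\ov\II\|^2$, estimate the ambient-built perturbation and its parabolic derivative as $O(1+\|\II\|^2)$ via the ambient-tensor evolution lemmas, absorb the cross-gradient term by Young's inequality into $\|\np\II\|^2$, and convert $\|\II\|^4$ to $\|\ov\II\|^4$ using the boundedness of $S$ together with $\|\ov\II\|^2\geq 1$. The only cosmetic gap is in the treatment of $\ho P$: the paper explicitly decomposes $\ov T(X_i,X_j,\nu_A)$ into concatenations of ambient 2-tensors on tangent slots (handled by Lemma \ref{Generaltensorevol}) and ambient 1-forms on the normal slot (handled by Lemma \ref{Generalnormaloneform}), whereas your direct application of Lemma \ref{Generalnormaloneform} to $Z\mapsto\ov T(U,V,Z)$ is a bit loose since $U,V$ are position-dependent, but this does not change the resulting bound.
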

\begin{proof}
We write $C$ for any  constant depending only on the quantities in the statement of the lemma, where $C$ is allowed to change from line to line. We write $\ov \II = \II + S$ and note that
 \[\|\ov\II\|^2 = \|\II\|^2 -2h_{ij}^AS_A^{ij}+|S|^2,\]
where $S_{ij A} = \ov{T}(X_i, X_j, \nu_A)+g_{ij}\ip{e_1}{\nu_A}$. We have that 
\begin{flalign*}
\ov{T}(X_i, X_j, \nu_A) &= \sum_{I=1}^m \left[\AS(X_i, e_I)\ip{X_j}{\mu}+\AS(X_j, e_I)\ip{X_i}{\mu}\right]\ip{e_I}{\nu_A}\\
&\qquad+\sum_{j=1}^m \left[\AS(X_i, f_k)\ip{X_j}{\mu}+\AS(X_j, f_k)\ip{X_i}{\mu}\right]\ip{f_k}{\nu_A}\ .
\end{flalign*}
As we have written $\ov{T}$ (and therefore $S$) as a concatenation of tensors of the forms considered in Lemma \ref{Generaltensorevol} (for the square brackets) and Lemma \ref{Generalnormaloneform} (for the normal inner product), we may see that there exists a $C$ such that (in orthonormal coordinates),
\[\left|\ho S_{ijA} \right| \leq C(\|\II\|^2 +1)\leq C\|\ov\II\|^2\ .\]
Since the same Lemmas also imply that $|\n_kS_{ijA}|\leq C\|\ov\II\|$, we have that
\[\ho |S|^2\leq C\|\ov\II\|^2\ .\]

Similarly we see that using Lemma \ref{evolcurv} 
\begin{flalign*}
\ho (-h_{ij}^AS_{A}^{ij})&=-S_{A}^{ij}\ho h_{ij}^A -h_{ij}^A\ho S_{A}^{ij}+2\n_k h_{ij}^A\n^kS_{A}^{ij}\\
&\leq C\|\ov\II\|^3 +2\n_k h_{ij}^A\n^kS_{A}^{ij}\ .
\end{flalign*}
Therefore,
\begin{flalign*}
 \ho \|\ov\II\|^2 &\leq -\frac{2} m \|\II\|^4 - 2\|\np \II\|^2 +4\n_kh_{ij}^A\n^k\ov S^{ij}_A+C\|\ov \II\|^3\\
 &\leq -\frac{2} m \|\ov \II\|^4 +C\|\ov \II\|^3,
\end{flalign*}
where we used Young's inequality to estimate the third term on the right hand side of the first line.
\end{proof}

Putting all of the results of this subsection together provides the following curvature estimate.
\begin{proposition}\label{curvestNeumann}
 Suppose we have a solution of \eqref{MCFgraphNeumann} satisfying \eqref{NeumannCurvassump}. Then there exists a constant $C$ depending on $n$, $m$, $\AS$, $\n^\Si \AS$, $\mu$, $\ov\n \mu$,  $\ov{T}$, $\ov\n \, \ov{T}$, $M_0$, $C_v$ and $C_H$ 
 such that
 \[\|\II\|^2 \leq C.\]
\end{proposition}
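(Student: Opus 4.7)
The strategy is to bound $\|\ov\II\|^2$ via a parabolic maximum principle applied to the auxiliary quantity
\[
f := \|\ov\II\|^2\, e^{-\alpha |X|^2}
\]
for a suitably large $\alpha>0$. The role of the exponential factor is to absorb the non-negative boundary term from Lemma \ref{perturbedcurvatureatboundary}, at the cost of introducing only lower-order interior terms that are dominated by the $-\tfrac{2}{m}\|\ov\II\|^4$ term in the evolution inequality for $\|\ov\II\|^2$. Place the origin in the interior of $\Omega$; since $\Omega$ is convex and bounded, Lemma \ref{boundaryz} supplies a constant $c_0>0$ with $\n_\mu |X|^2 = 2\ip{X}{\mu} \geq 2c_0$ on $\partial M_t$ uniformly in $t$, and the $C^0$ estimates of Lemma \ref{C0EstimatesNeumann} together with \eqref{NeumannCurvassump} keep both $|X|^2$ and $\|X^\perp\|^2$ (hence $|\n |X|^2|^2 = 4(|X|^2+\|X^\perp\|^2)$) uniformly bounded.

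Now choose $\alpha \geq \kappa/(2c_0)$, where $\kappa$ is as in Lemma \ref{perturbedcurvatureatboundary}. On the lateral boundary $\partial M_t$,
\[
\n_\mu f = e^{-\alpha|X|^2}\bigl(\n_\mu \|\ov\II\|^2 - \alpha \|\ov\II\|^2\, \n_\mu |X|^2\bigr) \leq e^{-\alpha|X|^2}(\kappa - 2\alpha c_0)\|\ov\II\|^2 \leq 0.
\]
For the interior, from Corollary \ref{C0evol} and the scalar identity $\ho e^F = e^F(\ho F - |\n F|^2)$ we obtain
\[
\ho e^{-\alpha|X|^2} = e^{-\alpha|X|^2}\bigl(2n\alpha - \alpha^2 |\n |X|^2|^2\bigr).
\]
Using $\ho(FG) = F\ho G + G\ho F - 2\n F\cdot \n G$, at any interior space-time maximum of $f$ the condition $\n f = 0$ gives $\n\|\ov\II\|^2 = \alpha\|\ov\II\|^2 \n|X|^2$, and the cross-term $-2\n\|\ov\II\|^2\cdot\n e^{-\alpha|X|^2}$ exactly cancels the $-\alpha^2|\n|X|^2|^2$ contribution, leaving
\[
0 \leq \ho f \leq e^{-\alpha|X|^2}\Bigl[-\tfrac{2}{m}\|\ov\II\|^4 + C\|\ov\II\|^3 + C_\alpha \|\ov\II\|^2 \Bigr],
\]
where $C_\alpha := 2n\alpha + \alpha^2 \sup_{M_t}|\n |X|^2|^2$ is finite by the preceding remarks. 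The quartic term dominates, forcing $\|\ov\II\|^2 \leq K$ at any such interior maximum, with $K$ depending only on the data listed in the statement.

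The sign $\n_\mu f \leq 0$ on the lateral boundary means that Stahl's parabolic maximum principle \cite{Stahlfirst} rules out a strict maximum of $f$ there, so
\[
\sup_{M_t\times[0,T)} f \leq \max\Bigl\{K,\; \sup_{M_0} f\Bigr\},
\]
and the latter term is finite by smoothness of $M_0$. Since $e^{-\alpha|X|^2}$ is bounded below by the $C^0$ control on $|X|^2$, we conclude $\|\ov\II\|^2 \leq C$ uniformly, and then \eqref{overlinecurvineq} gives $\|\II\|^2 \leq 2C\|\ov\II\|^2 \leq C'$, as required. The main technical content has already been done in constructing $\ov\II$ with $\mu$ as a boundary eigenvector (Lemma \ref{boundaryderivatives}) and verifying the two governing inequalities; the remaining obstacle addressed here is purely the absorption of the inhomogeneous Neumann sign, which crucially relies on the strict positivity of $\n_\mu |X|^2$ supplied by convexity of $\Omega$.
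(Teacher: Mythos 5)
Your proof is correct, and the overall strategy is the same as the paper's: perturb $\II$ to $\ov\II$ so that $\mu$ is a boundary eigenvector, derive the evolution inequality $\ho\|\ov\II\|^2\leq -\tfrac{2}{m}\|\ov\II\|^4+C\|\ov\II\|^3$ and the boundary inequality $\n_\mu\|\ov\II\|^2\leq\kappa\|\ov\II\|^2$, then multiply by an exponential weight that reverses the boundary sign and apply the maximum principle. The one genuine difference is the choice of weight: the paper uses $e^{\lambda d}$ with $d$ the Euclidean distance to $\partial\Omega$, so that $\n_\mu d = -1$ identically on the boundary, whereas you use $e^{-\alpha|X|^2}$ and rely on $\n_\mu|X|^2 = 2\ip{X}{\mu}\geq 2c_0>0$, which requires convexity of $\Omega$ and a choice of origin in its interior. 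Your weight has the advantage that $\ho|X|^2=-2n$ exactly (Corollary \ref{C0evol}), giving a cleaner interior computation than $\ho d\leq C$; the paper's weight has the advantage of not needing convexity, which is what permits the remark after the proposition about generalizing to perpendicular Neumann boundary conditions on arbitrary smooth $\Sigma$. One small clean-up: take $\alpha$ strictly greater than $\kappa/(2c_0)$ so that $\n_\mu f<0$ is strict on $\partial M_t$ (using $\|\ov\II\|^2\geq 1$), since a boundary spatial maximum only forces $\n_\mu f\geq 0$ and you want an outright contradiction rather than a Hopf-type argument; also, the parenthetical "exactly cancels" is misleading — the cross term contributes $+2\alpha^2|\n|X|^2|^2$ and $\ho e^{-\alpha|X|^2}$ contributes $-\alpha^2|\n|X|^2|^2$, so the net is $+\alpha^2|\n|X|^2|^2$, which is correctly accounted for in your $C_\alpha$ but not by the word "cancels".
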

\begin{proof}
Let $C$ be as in the previous Lemma. We take the standard Euclidean distance to $\partial \Omega$ in $\bb{R}^n$ and extend it to $\bb{R}^{n,m}$ by pullback under the standard projection. We call this function $d$ and see that due to the gradient estimate for all $t\in[0,T)$
 \[\ho d \leq C \text{ on } M_t\qquad \text{ and }\qquad\n_\mu d = -1 \text{ on }\partial M_t\ .\]
 Lemma \ref{perturbedcurvatureatboundary} implies that the function $f=\|\ov\II\|^2e^{\l d}$ satisfies
 \[\n_\mu f = (\kappa - \l)f \]
on $\partial M_t$. We choose $\l$ sufficiently large so that $\n_\mu f$
is negative, meaning that no boundary maxima may occur. At any interior increasing maximum,
 \begin{flalign*}
  0&\leq \ho f\\& \leq e^{\l d}\left[-\frac 2 m \|\ov \II\|^4 +C\|\ov \II\|^3 -2\l\ip{\n\|\ov \II\|^2 }{\n d} + C\l\|\ov \II\|^2-\l^2|\n d|^2\|\ov \II\|^2 \right]\\ 
  &=e^{\l d}\left[-\frac 2 m \|\ov \II\|^4 +C\|\ov \II\|^3 + C\l\|\ov \II\|^2+\l^2|\n d|^2\|\ov \II\|^2 \right]\\
  &\leq e^{\l d}\left[-\frac 2 m \|\ov \II\|^4 +C\|\ov \II\|^3  \right].
 \end{flalign*}
Hence at any increasing stationary point, $\|\ov\II\|^2$ is bounded, and so $f$ is bounded. The maximum principle indicates that $f$ is therefore bounded (as $d$ is bounded) and hence $\|\ov\II\|^2$ is bounded everywhere. The result now follows.
\end{proof}

\begin{remark}
 The above proof holds for much more general boundary manifolds $\Sigma$. In fact, for any mean curvature flow in $\bb{R}^{n,m}$ with a perpendicular boundary condition on a smooth manifold $\Sigma$, an identical proof will show that gradient and mean curvature estimates imply full boundary curvature estimates. This therefore replaces the missing Nash--Moser--De Giorgi estimates for this parabolic system.
\end{remark}

\subsection{Convergence} We now complete the proof of Theorem \ref{Neumanntheorem} by proving convergence of spacelike MCF under Neumann boundary conditions. 
\begin{lemma}\label{ConvergenceNeumann}
 Suppose that we have a solution $\U$ to \eqref{MCFgraphNeumann} for $T=\infty$ with uniform $C^{k;\frac{k}{2}}(\Omega\times[0,\infty))$ estimates for all $k\geq0$. Then $\U$ converges uniformly in $C^\infty$ to a constant function as $t \ra \infty$.
\end{lemma}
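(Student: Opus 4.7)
My plan is to exploit the monotonicity of volume under spacelike MCF, together with the uniform higher order estimates on $\U$, to show that $\|H\|^2$ decays to zero uniformly on $\Omega$, and then use the maximum principle for the Neumann parabolic equation satisfied by $\U^A$ to promote subsequential convergence into $C^\infty$-convergence to a single constant.

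First, the standard first variation for spacelike MCF yields $\ddt d\mu_{M_t}=\|H\|^2\,d\mu_{M_t}$, so $|M_t|$ is non-decreasing; the uniform $C^1$ bound on $\U$ together with boundedness of $\Omega$ keep $|M_t|$ uniformly bounded above, giving
\[\int_0^\infty\!\!\int_{M_t}\|H\|^2\,d\mu\,dt<\infty.\]
The uniform $C^{k;\frac{k}{2}}$ estimates make both $t\mapsto\int_{M_t}\|H\|^2\,d\mu$ Lipschitz in $t$ and $\|H\|^2$ uniformly Lipschitz in $x\in\Omega$. A standard argument (if $\|H(x_k,t_k)\|^2\geq\epsilon$ along some $t_k\to\infty$, the spatial Lipschitz bound forces $\|H\|^2\geq\epsilon/2$ on a fixed-size neighbourhood of $x_k$, yielding a uniform positive lower bound on $\int_{M_{t_k}}\|H\|^2\,d\mu$ and contradicting time-integrability) then produces $\|H\|^2\to 0$ uniformly on $\Omega$ as $t\to\infty$.

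Given any sequence $t_k\to\infty$, Arzel\`a--Ascoli applied to the uniform $C^k$-bounds extracts a subsequence along which $\U(\cdot,t_k)\to\U_\infty$ in $C^\infty(\Omega)$, with $\U_\infty$ uniformly spacelike, satisfying $D_\mu\U_\infty=0$, and having vanishing mean curvature; the equation $g^{ij}(D\U_\infty)D^2_{ij}\U_\infty^A=0$ is then a uniformly elliptic linear equation for each component $\U_\infty^A$, and the elliptic Neumann maximum principle forces $\U_\infty^A$ to be constant. To upgrade this subsequential statement to full convergence, note that $\U^A$ itself solves the uniformly parabolic equation $\partial_t\U^A=g^{ij}(D\U)D^2_{ij}\U^A$ on $\Omega$ with $D_\mu\U^A=0$ on $\partial\Omega$, so by the parabolic maximum principle with the Hopf lemma, $\sup_\Omega\U^A(\cdot,t)$ is non-increasing and $\inf_\Omega\U^A(\cdot,t)$ is non-decreasing, making the oscillation $\mathrm{osc}_\Omega\U^A(\cdot,t)$ monotone in $t$. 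Since this oscillation tends to zero along $t_k$, monotonicity forces $\mathrm{osc}_\Omega\U^A(\cdot,t)\to 0$ as $t\to\infty$, so $\U(\cdot,t)\to c$ uniformly for a unique $c\in\bb{R}^m$; the uniform $C^k$-bounds then bootstrap this to $C^\infty$-convergence.

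The main obstacle is the first step, namely passing from $L^1$-in-time integrability of $\int_{M_t}\|H\|^2\,d\mu$ to pointwise uniform decay of $\|H\|^2$. This crucially uses both the time-regularity of the spatial integral (to preclude in-time concentration of curvature) and the uniform spatial Lipschitz control on $\|H\|^2$ (to preclude spatial concentration); everything else is a standard application of the maximum principle together with Arzel\`a--Ascoli compactness.
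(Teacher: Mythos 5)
Your proof is correct and shares the same broad architecture as the paper's: both start from volume monotonicity $\frac{d}{dt}|M_t|=\int_{M_t}\|H\|^2\,d\mu$ with $|M_t|\leq|\Omega|$ (the induced volume element never exceeds the Euclidean one for a spacelike graph) to obtain $\int_0^\infty\int_{M_t}\|H\|^2\,d\mu\,dt<\infty$, both use the uniform $C^{k;\frac{k}{2}}$ estimates as a no-concentration device, and both use the parabolic Neumann maximum principle (with Hopf at the boundary) to make $\sup_\Omega\U^A$ and $\inf_\Omega\U^A$ monotone and hence to pass from subsequential to full convergence. The middle step differs. The paper does one further integration by parts,
\[
\frac{d}{dt}\int_{M_t}u_A^2\,dV=-2\int_{M_t}|\nabla u_A|^2\,dV+\int_{M_t}u_A^2\|H\|^2\,dV
\]
(using $\ho u_A=0$ and $\nabla_\mu u_A=0$), which with boundedness of $u_A$, $|M_t|$, and the $L^1$-in-time bound on $\int\|H\|^2$ gives $\int_0^\infty\int_\Omega|D\U_A|^2\,dx\,dt<\infty$; the uniform estimates then force $|D\U_A|\to 0$ uniformly, and range monotonicity finishes. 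You instead upgrade the $L^1$-in-time bound on $\int\|H\|^2$ to uniform pointwise decay of $\|H\|^2$, extract a subsequential $C^\infty$-limit with $H\equiv 0$, and invoke the elliptic Neumann maximum principle as a Liouville statement to force that sublimit to be constant. Your route is slightly more circuitous — the paper's extra integration by parts aims directly at the gradient — but it is clean and avoids computing the $u_A^2$ identity, and it ports more readily to settings where a direct $L^2$-in-time gradient bound is awkward. One small exposition gap: in your no-concentration argument, you note that $t\mapsto\int_{M_t}\|H\|^2\,d\mu$ is Lipschitz, but the parenthetical only deploys the spatial Lipschitz bound and concludes a lower bound on $\int_{M_{t_k}}\|H\|^2$ along a discrete sequence $t_k\to\infty$, which by itself does not contradict finiteness of $\int_0^\infty\int_{M_t}\|H\|^2\,d\mu\,dt$. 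You need the time-Lipschitz control (which you do have) to spread each such lower bound over a uniform-length time interval about $t_k$; only then does summing produce the contradiction.
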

\begin{proof}
 The proof is in fact identical to the codimension one case \cite{Blatentselfreference}, which we include here for the convenience of the reader.
 
By Lemma \ref{C0EstimatesNeumann} we have that $\|\U\|^2$ is uniformly bounded for all time by its initial value. By considering the metric in terms of the graph function $\U$, we see that 
 \[\int_{M_t} dV = \int_\Omega \sqrt{\det g_{ij}}dx \leq |\Omega|.\]
 We therefore see that since 
 \[\ddt{}\int_{M_t}dV = \int_{M_t} \|H\|^2dV\]
 (which follows from \cite[equation (4.1)]{LiSalavessa} and the Neumann boundary condition) we have that
 \[\int_0^T \int_{M_t}\|H\|^2  \leq |\Omega| -|M_0|<|\Omega|\ .\]
By Corollary \ref{C0evol}, Lemma \ref{boundaryu}, the $L^2$ estimate on $H$, and divergence theorem we see that
\begin{align*}\ddt{}\int_{M_t} u_A^2dV  &= -2\int_{M_t} |\n u_A|^2 + \int_{M_t}u_A^2\|H\|^2dV\\ &  = -2\int_{M_t} (w_A^2-1)dV+ \int_{M_t}u_A^2\|H\|^2dV 
\end{align*}
or, due to the uniform estimate on $u_A$ and $|M_t|$, and calculations in Appendix \ref{MCFasGraphs},
\[\int_0^\infty\int_{M_t}(w_A^2-1) dVdt\leq C(M_0, \Omega)  .\]
Rewriting this over $\Omega$, using the uniform gradient bound and  \eqref{eIidentity} gives
\[\int_0^\infty \int_{\Omega} |D\U_A|^2 dx dt\leq \tilde{C}(M_0, \Omega)\ .\]
The uniform $C^{k;\frac{k}{2}}$ estimates imply  $|D\U_A|\ra 0$ as $t\ra \infty$. The range of $\U_A$ is also monotonically decreasing with time due to estimates as in Lemma \ref{C0EstimatesNeumann}. Therefore, each $\U_A$ converges uniformly to a constant as $t\ra \infty$. The uniform $C^{k;\frac k 2}$ estimates and Ehrling's lemma now imply that the convergence is in fact smooth. 
\end{proof}

\section{Dirichlet boundary conditions}\label{s:Dirichlet}

In this section we wish to consider evolving a topological disk by spacelike mean curvature flow, where the boundary is held on some fixed $(n-1)$-dimensional spacelike submanifold of $\bb{R}^{n,m}$. 

We state this boundary condition graphically. Suppose that $\Omega\subset\bb{R}^{n}$ is a compact domain with smooth boundary $\partial \Omega$ .  We denote the outward unit normal to $\partial \Omega$ by $\mu$. The boundary data for the Dirichlet problem is given by smooth functions $\phi:\ov \Omega \ra \bb{R}^m$. 

To ensure that the Dirichlet problem is well-posed we require a constraint on our choice of boundary data as follows.  

\begin{defses}
We say that $\phi:\ov\Omega\ra\bb{R}^m$ is \emph{acausal} if for all $x, y\in \partial \Omega$
\begin{equation}\|\phi(x)-\phi(y)\|\leq |x-y|.
 \label{acausal}
\end{equation}
Clearly, in a convex domain this is a necessary condition if we are to have a spacelike graph, due to the mean value theorem. Due to compactness, the boundary data is in fact \emph{strictly acausal}: there exists a $\delta>0$ such that \[\|\phi(x)-\phi(y)\|\leq (1-\d)|x-y|\ .\] 
\end{defses}

\noindent The acausal condition for the Dirichlet problem for maximal spacelike submanifolds in $\bb{R}^{n,m}$ arises in the recent work of Yang Li \cite{YangLi}.  We shall assume that our chosen Dirichlet data $\phi$ is acausal.

Mean curvature flow with a Dirichlet boundary condition starting at an initial graph $\U_0$ is now defined by $\U:\Omega\times[0,T)\ra \bb{R}^{m}$ 
where 
\begin{equation}
 \begin{cases}
  \displaystyle\ddt{\U} -g^{ij}(D\U)D_{ij}\U =0 &\text{ on } \Omega\times[0,T),\\[4pt]
  \U =\phi &\text{ on } \partial \Omega\times[0,T),\\
  \U(\cdot, 0)=\U_0(\cdot) &\text{ on } \Omega .
 \end{cases}
 \label{MCFgraphDirichlet}
\end{equation}
As previously, we define $M_t:= \text{graph} \,\U(\cdot,t)$. As in the Neumann case, to have higher order regularity initially, we require some assumptions on $\U_0$.

The $\text{zero}^\text{th}$ order compatibility condition is defined to be that 
\[\U_0(x) = \phi(x) \qquad \text{for all } x \in \partial \Omega\]
We define the $k^\text{th}$ order compatibility condition is given by
\[\frac{d^k}{dt^k}\U_0(x) =0  \qquad \text{for all } x \in \partial \Omega,\]
where $\frac{d}{dt}\U_0$ is defined recursively by the first line of (\ref{MCFgraphDirichlet}).

The key difficulty in proving long time existence for \ref{MCFgraphDirichlet} is in obtaining suitable boundary gradient estimates. Fortunately, Yang Li \cite{YangLi} has recently produced suitable barriers for the Dirichlet problem for the higher codimensional maximal submanifold system (i.e.~the elliptic equivalent of (\ref{MCFgraphDirichlet})): these are the higher codimensional equivalents of the barriers in \cite{BartnikSimon}. We will show below that (unsurprisingly) these also act as barriers to (\ref{MCFgraphDirichlet}). 

We now state our long-time existence and convergence theorem in the Dirichlet setting.  The proof is again quite long and technical, and forms the remainder of the section.  We shall, as in the Neumann case, give an outline of the proof where we assume the key technical results proved below.  We again recall the definitions for an expanding quasi-sphere to be an outer barrier and of uniformly spacelike, from Definitions \ref{quasisphere} and \ref{def:uspacelike} respectively.

 \begin{theorem}\label{Dirichlettheorem}
 Suppose $\Omega$ is a bounded domain with smooth boundary $\partial \Omega$, $\phi$ is acausal boundary data and $\U_0:\Omega\ra\bb{R}^m$ is uniformly spacelike satisfying compatibility conditions to the $l^\text{th}$ order for some $l\geq 0$. There exists a solution $$\U\in C^{2l+\alpha;\frac{2l+\alpha}{2}}(\Omega \times [0,\infty))\cap C^\infty(\Omega \times (0,\infty))$$ of \eqref{MCFgraphDirichlet} which is unique if $l\geq1$ and converges smoothly to the unique maximal submanifold with boundary data $\phi$ as $t\ra \infty$. Furthermore, expanding quasi-spheres centred in $\Omega\times \bb{R}^m$ act as barriers to the flow and we have the uniform bounds 
 \begin{flalign*}
  |\U_0(x)-\U(x,t)| &\leq \sqrt{2nt}\quad\text{and}\quad
  \|u\|^2\leq \sup_{M_0} \|u\|^2,
 \end{flalign*}
 and, if $l\geq 1$,
\[    \|H\|^2 \leq \frac{1}{(\sup_{M_0}\|H\|^2)^{-1}+\frac{2}{n}t}.\]
\end{theorem}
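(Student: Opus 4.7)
The strategy is a direct analogue of the Neumann proof (Theorem~\ref{Neumanntheorem}): short-time existence from parabolic Schauder theory, a sequence of a~priori estimates ($C^0$, gradient, mean curvature, full curvature), Schauder bootstrapping to uniform $C^{k;k/2}$ bounds, and finally convergence as $t\to\infty$. Since \eqref{MCFgraphDirichlet} is componentwise a quasilinear parabolic system with linear Dirichlet boundary conditions, short-time existence in $C^{2l+\alpha;(2l+\alpha)/2}(\Omega\times[0,T))\cap C^\infty(\Omega\times(0,T))$ follows from a fixed-point argument as in \cite[Theorem~8.2]{Lieberman}, using the assumed $l^{\text{th}}$ order compatibility. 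The $C^0$ bounds---the expanding quasi-sphere outer barrier property, $\|u\|^2\leq\sup_{M_0}\|u\|^2$, and $|\U_0(x)-\U(x,t)|\leq\sqrt{2nt}$---follow as in Lemma~\ref{C0EstimatesNeumann}; the only change is that, in place of the Neumann calculation $\n_\mu|X|^2\geq 0$ from Lemma~\ref{boundaryz}, one uses that the boundary is fixed at $\phi$, which acausality places inside every expanding quasi-sphere already touching $M_0$.

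The crux of the proof, and the place where the Neumann argument must be genuinely replaced, is the boundary gradient estimate. Here I would invoke \cite{YangLi}: by the barrier construction there, strict acausality of $\phi$ produces, near each $p\in\partial\Omega$, one-sided smooth spacelike graphs $\Psi^\pm$ over a neighbourhood of $p$ that match $\phi$ on $\partial\Omega$, have gradient bounded up to $\partial\Omega$, and are maximal (hence stationary sub/supersolutions of spacelike MCF). A parabolic comparison argument, combining these barriers with the quasi-sphere outer barriers established above, then sandwiches $M_t$ between $\Psi^-$ and $\Psi^+$ near each boundary point, yielding a uniform boundary gradient bound on $\partial\Omega\times[0,\infty)$. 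An interior gradient estimate then follows from the maximum principle applied to $w_A^2 e^{u_A^2}$ via Lemma~\ref{evolgoodwk}: an interior maximum forces $w_A^2\leq 1$, while a boundary maximum is already controlled, so $v^2\leq C_v$ uniformly. The mean curvature bound follows from Corollary~\ref{evolcurv2} combined with its boundary value controlled by $\phi$ and $v$ (when $l\geq 1$), or else from the $t\|H\|^2\leq n/2$ device of Theorem~\ref{EntireExistence} (when $l=0$). Full curvature bounds proceed as in Proposition~\ref{curvestNeumann}, with the boundary handled by a perturbed-tensor argument that is simpler than in the Neumann case because all tangential derivatives of $\U$ on $\partial\Omega$ are fixed by $\phi$; Schauder bootstrapping then yields uniform $C^{k;k/2}$ bounds for all $k$, and uniqueness for $l\geq 1$ is by Appendix~\ref{app:unique}.

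For convergence as $t\to\infty$, the fixed boundary gives $\frac{d}{dt}|M_t|=-\int_{M_t}\|H\|^2\,dV$ with no boundary contribution, so $\int_0^\infty\!\int_{M_t}\|H\|^2\,dV\,dt<\infty$. Combined with the uniform higher-order estimates and Arzel\`a--Ascoli, any sequence $t_j\to\infty$ has a subsequence converging in $C^\infty_{\text{loc}}$ to a smooth spacelike maximal submanifold with boundary $\partial M_0$; by the uniqueness statement of \cite[Theorem~2.1]{YangLi} this limit is the same $M_\infty$ for every subsequence, giving smooth convergence of the full flow. The main obstacle throughout is the boundary gradient estimate: without acausality there would be no spacelike graph with trace $\phi$ to compare against, and the flow would be forced to develop lightlike directions at $\partial\Omega$ so that $v^2\to\infty$. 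The role of \cite{YangLi} is precisely to remove this obstruction by providing the barriers needed for the comparison, making acausality the essential, rather than merely convenient, hypothesis.
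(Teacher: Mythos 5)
Your high-level skeleton matches the paper's: short-time existence, uniform $C^0$ bounds, gradient bound via barriers from \cite{YangLi}, curvature/regularity bounds, then convergence using uniqueness of the maximal submanifold from \cite[Theorem~2.1]{YangLi}. However, the key technical step---the boundary gradient estimate---is misdescribed in a way that would not actually close, and your proposed route to curvature regularity differs from the one the paper uses.

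On the gradient estimate: you propose to sandwich $M_t$ between ``one-sided smooth spacelike graphs $\Psi^\pm$ over a neighbourhood of $p$ that match $\phi$ on $\partial\Omega$ \dots and are maximal.'' For $m>1$ there is no order relation between $\bb{R}^m$-valued graph functions, so a sandwich between two vector-valued graphs has no meaning as a comparison principle, and a maximal barrier would in any case render the contact-point inequality degenerate. The barriers actually used (Definition~\ref{def:barriers}, after \cite{YangLi}) are not graphs of $\bb{R}^m$-valued functions and are not maximal: they are $SO(n-1)\times SO(m-1)$-invariant hypersurfaces $\tilde\Gamma_{K,\Lambda}\subset\bb{R}^{n,m}$ given implicitly by the scalar equation $w=f_{K,\Lambda}(r)$, where $w=|y-\eta|$, and they satisfy the strict sign condition $H_\Pi\geq -\Lambda>0$ for every spacelike $n$-plane $\Pi$ in their tangent space. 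This strictness is exactly what produces a contradiction in Lemma~\ref{Staybarriers}: at a first contact point of the scalar function $h=w-f_{\tilde K,\Lambda}$ one gets $0\leq \ho h\leq \sqrt{1-(f')^2}\,\Lambda<0$. Moreover, each barrier is attached so as to touch $M_0$ at a \emph{single} boundary point $(\hat x,\U_0(\hat x))$ with a suitable tangency and to contain $M_0$ globally (Lemma~\ref{AttachBarriers}); they do not ``match $\phi$ on $\partial\Omega$.'' Your proposal, as written, would not supply a valid comparison object.

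On the curvature estimate: the paper deliberately avoids the perturbed second fundamental form device of Proposition~\ref{curvestNeumann} in the Dirichlet case. The Dirichlet boundary data fixes tangential derivatives of $\U$, but this does not control $\II(\mu,\mu)$ or $\n_\mu\II$, so ``simpler than the Neumann case'' is not an argument. Instead, the paper uses the local interior curvature estimate $\|\II\|\cdot\mathrm{dist}(x,\partial\Omega)\leq C$ (Remark~\ref{localcurvestremark}), combines it with Krylov's boundary H\"older estimate \cite[Lemma 7.47]{Lieberman} to obtain uniform $C^{1+\alpha;(1+\alpha)/2}$ bounds up to $\partial\Omega$ (Lemma~\ref{C1alpha}), and then bootstraps by Schauder. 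A smaller difference: the paper gets $\|H\|\to 0$ pointwise by showing $H\equiv 0$ on $\partial M_t$ (Lemma~\ref{HConvergenceDirichlet}) and applying the maximum principle, rather than via the integral identity; your integral route is also viable but is not what appears.
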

\begin{proof}
The system (\ref{MCFgraphDirichlet}) is in the form of $m$ parabolic PDEs with linear boundary conditions. This implies that short time existence (the existence of $T>0$  such that there is a solution $\U\in C^{l+1+\a;\frac{l+1+\a}{2}}(\Omega\times[0,T))\cap C^\infty(\Omega\times(0,T))$ to (\ref{MCFgraphDirichlet})) follows from a standard application of fixed point theory and Schauder estimates for parabolic PDEs, for example by very minor modifications of \cite[Theorem 8.2]{Lieberman}. 
 
 As stated in Appendix \ref{MCFasGraphs}, each component $\U^A$ of $\U$ satisfies a uniformly parabolic PDE (given by the first line of (\ref{MCFgraphDirichlet})) if and only if $v^2$ is bounded. Furthermore, we may apply standard Schauder estimates as soon as we know that our solution satisfies $\U^A\in C^{1+\alpha;\frac{1+\alpha}{2}}(\Omega\times [0,T))$ \emph{for all }$A$. 
 
 In Lemma \ref{C0EstimatesDirichlet} we demonstrate uniform $C^0$ estimates for solutions to (\ref{MCFgraphDirichlet}). In Proposition \ref{C1Dirichlet} we give uniform estimates on $v^2$, which imply both uniform parabolicity and $C^1$ estimates on $\U$. Lemma \ref{C1alpha} then implies that we have uniform estimates in $C^{1+\a;\frac{1+\a}{2}}$. As Schauder estimates now apply, by bootstrapping we have uniform higher order estimates and the long time existence claimed.
 
 Lemma \ref{DirichletConvergence} finally implies that the solution converges smoothly to the unique maximal submanifold with boundary data $\phi$.  The fact that the maximal submanifold is unique is a consequence of \cite[Theorem 2.1]{YangLi}. Uniqueness of the flow solution is proven in Proposition \ref{Uniqueness}.
\end{proof}

\begin{remark}
The existence and uniqueness of a solution to the Dirichlet problem for maximal submanifolds in $\bb{R}^{n,m}$ with acausal boundary data is given in \cite[Theorem 2.1]{YangLi}.  Theorem \ref{Dirichlettheorem} can be viewed as an extension of this result.
\end{remark}

Throughout this section we will write a point $(x,y)\in \bb{R}^{n,m}=\bb{R}^{n}\oplus\bb{R}^{m}$. For any two vectors $y,z \in \bb{R}^m$ we will write the inner product associated to the norm $\|\cdot\|$ as $y\cdot z$: this is just the standard Euclidean inner product on $\bb{R}^m$.

\subsection{$C^0$ estimates} We first derive some simple $C^0$ bounds on solutions to spacelike MCF with Dirichlet boundary conditions, just as in the Neumann case.
\begin{lemma}\label{C0EstimatesDirichlet}
 For any spacelike solution of \eqref{MCFgraphDirichlet} we have the following.
 \begin{enumerate}[(a)]
  \item Expanding quasi-spheres centred in $\Omega \times\bb{R}^m$ act as outer barriers.
  \item For all $(x,t)\in\Omega\times[0,T)$,  \[|\U(x,t)|\leq \sup_{y\in\Omega} |\U_0(y)|.\]
  \item For all $x\in\Omega$,
  \[\|\U_0(x) - \U(x,t)\|\leq \sqrt{2nt}.\]
 \end{enumerate}
\end{lemma}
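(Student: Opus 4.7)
All three assertions follow from Corollary~\ref{C0evol} combined with the parabolic maximum principle applied to $M_t$, viewed as a smooth family of graphs over $\Omega$. The key simplification in the Dirichlet case (compared with the analogous Neumann statement in Lemma~\ref{C0EstimatesNeumann}) is that, because $\U|_{\partial \Omega}=\phi$ is prescribed, $X$ is independent of $t$ along $\partial M_t$, so the boundary term in every maximum-principle argument is either trivial or monotone in $t$, and no Hopf-type argument at the boundary is needed.

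For part (a), set $f(X,t):=|X-p|^2+R^2+2nt$, so that Corollary~\ref{C0evol} gives $\ho f=0$ on $M_t$. The hypothesis $M_0\subset I_0$ says exactly that $f(\cdot,0)\ge 0$, while on the parabolic boundary $X$ is independent of $t$ and so $f$ is just its initial value there plus $2nt$, which is still non-negative. The parabolic minimum principle then gives $f\ge 0$ on all of $\Omega\times[0,T)$, i.e.\ $M_t\subset I_t$. For part (b), the basis conventions $\ip{e_A}{f_i}=0$ and $\ip{e_A}{e_B}=-\delta_{AB}$ identify $u_A=\U^A$, so Corollary~\ref{C0evol} shows each graph component satisfies the heat equation $\ho \U^A=0$. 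Hence
\begin{equation*}
\ho |\U|^2 \;=\; -2\sum_A |\n \U^A|^2 \;\le\; 0,
\end{equation*}
and $|\U|^2$ is a heat-equation subsolution; the maximum principle bounds it by the larger of $\sup_\Omega|\U_0|^2$ and $\sup_{\partial \Omega}|\phi|^2$, and the zeroth-order compatibility condition $\U_0|_{\partial \Omega}=\phi$ makes the latter redundant.

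Part (c) is the main content: apply part (a) with centre $p:=X_0(x_0)=(x_0,\U_0(x_0))\in \Omega\times\bb{R}^m$ and radius $R=0$, so that $I_0$ is the complement of the open lightcone at $p$. One then concludes $|X-X_0(x_0)|^2\ge -2nt$ for every $X\in M_t$, and evaluating at the point $(x_0,\U(x_0,t))\in M_t$ lying directly above $x_0$ gives precisely $\|\U(x_0,t)-\U_0(x_0)\|^2\le 2nt$. The one step that is not quite routine is verifying the hypothesis of (a), namely $|X_0(y)-X_0(x_0)|^2\ge 0$ for every $y\in\Omega$: this is the global (as opposed to infinitesimal) spacelikeness of $M_0$, and on a convex $\Omega$ it follows at once by integrating the strict pointwise inequality $\|D\U_0(v)\|<|v|$ along the straight segment from $x_0$ to $y$. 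I expect this global spacelikeness of $M_0$ to be the only step worth flagging; once it is in place, all three assertions reduce cleanly to the evolution equations of Corollary~\ref{C0evol} and the parabolic maximum principle.
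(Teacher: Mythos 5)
Your proof follows the same route as the paper's for all three parts and is correct: parts (a) and (b) are direct applications of Corollary~\ref{C0evol} together with the weak maximum principle (with the fixed Dirichlet boundary trivialising the boundary term), and (c) reduces to (a) by centring an expanding quasi-sphere of zero radius at $(x_0,\U_0(x_0))$. The subtlety you rightly flag in (c)---that applying (a) with $R=0$ requires $M_0$ to lie outside the light cone at $(x_0,\U_0(x_0))$, i.e.\ \emph{global} rather than merely pointwise spacelikeness---is resolved by your mean-value argument only on convex $\Omega$, whereas Theorem~\ref{Dirichlettheorem} allows arbitrary bounded domains; the paper's own proof of (c) is equally terse and leaves this implicit (in the intended setting it should follow from uniform spacelikeness together with the acausal boundary hypothesis, or from the barrier construction in \cite{YangLi}, but neither you nor the paper spells this out for non-convex $\Omega$).
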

\begin{proof}
 Let $p\in \bb{R}^{n,m}$. Suppose that for all $y\in M_0$, $|y-p|^2\geq -R^2$. Clearly, as the boundary  of $M_t$ is fixed, this implies that for all $z\in \partial M_t$, $|z-p|^2\geq -R^2\geq -R^2-2nt$.  The weak maximum principle applied to $f=|X-p|^2+2nt$ and Corollary \ref{C0evol} now imply (a).

 Similarly, (b) follows from Corollary \ref{C0evol} and the weak maximum principle.
 
Part (c) follows from considering an expanding quasi-sphere starting from a light cone centred at $(x,\U(x))$.
\end{proof}

\subsection{$C^1$ estimates} Our goal now is to obtain bounds on the gradient and mean curvature of solutions to spacelike MCF with Dirichlet boundary conditions.  This forms the main technical work required in this Dirichlet problem.

We first recall the barriers constructed in \cite{YangLi}. We consider a 2-parameter family of curves $\Gamma_{K,\Lambda}\subset \bb{R}^{1,1}\subset\bb{R}^{n,m}$ from which we will produce a hypersurface $\tilde{\Gamma}_{K,\Lambda}\subset\bb{R}^{n,m}$ by assuming  an $SO(n-1)\times SO(m-1)$ symmetry. 

\begin{defses}\label{def:barriers}
Let $K>0$ and let $\Lambda\leq 0$.  
Taking orthogonal coordinates $r$, $w$ of $\bb{R}^{1,1}$ (where $\pard{}{r}$ is spacelike) we write $\Gamma_{K,\Lambda}$ graphically as
\[\Gamma_{K, \Lambda}:=\{(r, w)\in \bb{R}^{1,1}|w=f_{K,\Lambda}(|r|)\} ,\]
where 
\[f_{K,\Lambda}(r) = \int_0^r \frac{K+n^{-1} \Lambda t^n}{\sqrt{t^{2n-2}+(K+n^{-1}\Lambda t^n)^2}} dt .\]
Let $\xi\in\bb{R}^n$ and $\eta\in\bb{R}^m$. We define the functions 
\[r(x,y):=|x-\xi|, \qquad w(x,y):=|y-\eta|\]
for $(x,y)\in\bb{R}^{n,m}$.
We then define the \emph{barrier hypersurface} $\tilde{\Gamma}_{K,\Lambda}$, centred at $(\xi,\eta)$ by 
\[\tilde{\Gamma}_{K,\Lambda}:= \{(x,y)\in \bb{R}^{n,m}| w(x,y) = f_{K,\Lambda}\big(r(x,y)\big)\}.\]
When $f'_{K,\Lambda}<1$ we write the unit normal to $\tilde \Gamma_{K,\Lambda}$ at $(x,y)$ by
\[\tilde{n}(x,y) = \frac{1}{\sqrt{1-(f'_{K,\Lambda})^2}}\left(-\ov\n w+f'_{K,\Lambda}\ov\n r \right).\]
\end{defses}

Several observations in \cite[Section 3.1]{YangLi} will be of use to us.
 We note that
 \[f'_{K,\Lambda} = \frac{Kr^{1-n}+\Lambda n^{-1}r}{\sqrt{1+(Kr^{1-n}+\Lambda n^{-1}r)^2}} \quad\text{and}\quad \frac{f'_{K,\Lambda}}{\sqrt{1-(f'_{K,\Lambda})^2}} = \Lambda n^{-1} r + Kr^{1-n}  .\]
We will therefore always assume that 
\[0<r<\left(\frac{nK}{|\Lambda|}\right)^\frac{1}{n}  .\]
 Within this range $\tilde{n}$ is timelike, $\tilde{\Gamma}_{K,\Lambda}$ has a nondegenerate semi-Riemannian metric, and $\Gamma_{K,\Lambda}$ is a spacelike curve. As $r\ra 0$, both $\Gamma_{K,\Lambda}$ and $\tilde{\Gamma}_{K,\Lambda}$ are tangent to the lightcone. 

 We may estimate that if $K=\e^{-1}$, $\Lambda<0$ such that $\e<\left(\frac{n}{2|\Lambda|}\right)^\frac 1 {n+1}$ then 
 \begin{flalign}f_{K,\Lambda}(\e)&=\int_0^\e\sqrt{1-\frac{t^{2n-2}}{t^{2n-2}+(K-n^{-1}|\Lambda| t^n)^2}}dt
 >\e\sqrt{1-4\e^{2n}}.
  \label{fest}
 \end{flalign}

 At any point $p\in \tilde{\Gamma}_{K,\Lambda}\cap\mathcal{C}_{\left(\frac{nK}{|\Lambda|}\right)^\frac{1}{n}}(\xi)$ and any $n$-dimensional spacelike hyperplane $\Pi\subset T_p\tilde \Gamma_{K,\Lambda}$, we define
 \[H_\Pi := -\sum_{i}\ip{b_i}{\ov\n_{b_i}\tilde{n}}\]
 where $b_1, \ldots, b_n$ is an orthonormal basis of $\Pi$. The following observation, proven in \cite[Lemma 3.1]{YangLi}, will be vital in demonstrating that the $\tilde{\Gamma}_{K,\Lambda}$ are barriers.
 \begin{lemma}
Let $p\in \tilde{\Gamma}_{K,\Lambda}\cap\mathcal{C}_{\left(\frac{nK}{|\Lambda|}\right)^\frac{1}{n}}(\xi)$ and $\Pi\subset T_p\tilde \Gamma_{K,\Lambda}$ be an $n$-dimensional spacelike hyperplane. Then
\[H_\Pi\geq -\Lambda\ .\]
\end{lemma}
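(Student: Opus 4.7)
The plan is to exploit the rotational symmetry of $\tilde\Gamma_{K,\Lambda}$ about $(\xi,\eta)$ to diagonalise the shape operator, then show that $H_\Pi$ is minimised at a canonical maximal spacelike subspace where it equals $-\Lambda$. Fix $p=(x,y)\in\tilde\Gamma_{K,\Lambda}$ with $r=|x-\xi|>0$. The $SO(n-1)\times SO(m-1)$ isotropy subgroup of $p$, inside the ambient $SO(n)\times SO(m)$-symmetry of $\tilde\Gamma_{K,\Lambda}$, orthogonally decomposes $T_p\tilde\Gamma_{K,\Lambda}$ with respect to the Minkowski form into three pieces: the spacelike line $\bb R T$ tangent to the generating curve $\Gamma_{K,\Lambda}$, an $(n-1)$-dimensional spacelike subspace $E$ of vectors in the $\bb R^n$-factor perpendicular to $x-\xi$, and an $(m-1)$-dimensional timelike subspace $\mathcal E$ of vectors in the $\bb R^m$-factor perpendicular to $y-\eta$.

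Since the shape operator $\mathcal S(V):=-\ov\n_V\tilde n$ commutes with the isotropy action, it acts by a scalar on each piece; a direct computation of $\ov\n_V\ov\n r$ and $\ov\n_V\ov\n w$ gives
\[
\mathcal S|_{\bb R T}=-aI,\qquad \mathcal S|_E=-bI,\qquad \mathcal S|_{\mathcal E}=-cI,
\]
with $a=f''/(1-(f')^2)^{3/2}$, $b=f'/(r\sqrt{1-(f')^2})$ and $c=1/(w\sqrt{1-(f')^2})$. Differentiating the paper's identity $f'/\sqrt{1-(f')^2}=\Lambda n^{-1}r+Kr^{1-n}$ yields $a=\Lambda/n+(1-n)Kr^{-n}$ and $b=\Lambda/n+Kr^{-n}$, giving the two crucial identities
\[
b-a=nKr^{-n}>0\qquad\text{and}\qquad -a-(n-1)b=-\Lambda,
\]
the latter of which is precisely $H_{\Pi_0}=-\Lambda$ for the reference spacelike $n$-plane $\Pi_0:=\bb R T\oplus E$.

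For an arbitrary spacelike $n$-plane $\Pi\subset T_p\tilde\Gamma_{K,\Lambda}$, I would write $\Pi$ as the graph of a linear contraction $L\colon\Pi_0\to\mathcal E$, so that spacelikeness of $\Pi$ is equivalent to $Q:=I-L^TL$ being positive definite on $\Pi_0$. Choosing a basis $\{v_i\}\subset\Pi_0$ with $v_i^T Q v_j=\delta_{ij}$ makes $b_i:=v_i+Lv_i$ a Minkowski-orthonormal basis of $\Pi$ satisfying $\sum_i v_i v_i^T=Q^{-1}$. Expanding $H_\Pi=\sum_i\ip{\mathcal S(b_i)}{b_i}$ via the diagonalisation, using that the cross terms vanish by orthogonality of spacelike and timelike eigenspaces, and that $\mathrm{tr}(L^TL\,Q^{-1})=\mathrm{tr}\,Q^{-1}-n$ since $L^TL=I-Q$, yields
\[
H_\Pi=-a(Q^{-1})_{TT}-b\bigl(\mathrm{tr}\,Q^{-1}-(Q^{-1})_{TT}\bigr)+c\bigl(\mathrm{tr}\,Q^{-1}-n\bigr).
\]
Subtracting the reference identity $-\Lambda=-a-(n-1)b$ collapses this cleanly into
\[
H_\Pi+\Lambda=(b-a)\bigl((Q^{-1})_{TT}-1\bigr)+(c-b)\bigl(\mathrm{tr}\,Q^{-1}-n\bigr).
\]

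Both factors on the right are non-negative: $L^TL\geq 0$ gives $Q\leq I$ and hence $Q^{-1}\geq I$, so $(Q^{-1})_{TT}\geq 1$ and $\mathrm{tr}\,Q^{-1}\geq n$; the coefficient $b-a=nKr^{-n}$ is positive; and for $c-b=\bigl(w^{-1}-r^{-1}f'\bigr)/\sqrt{1-(f')^2}$, the strict bound $f'<1$ on $(0,r]$ gives $w=\int_0^r f'\,ds<r$ and $wf'<w<r$, whence $w^{-1}>r^{-1}f'$ and $c>b$. Hence $H_\Pi+\Lambda\geq 0$. The main obstacle will be the bookkeeping of the indefinite linear algebra in the third paragraph: making the dependence on the contraction $L$ collapse cleanly into the two-term identity above requires careful tracking of the minus signs coming from the timelike directions in $\mathcal E$, after which the final positivity statement reduces to the elementary inequality $w<r$ that follows from $f'<1$.
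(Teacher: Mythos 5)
The paper does not prove this lemma: it is quoted verbatim and attributed to \cite[Lemma 3.1]{YangLi}, so there is no in-paper proof to compare against. Your argument is, however, correct and self-contained. I verified the diagonalisation of the shape operator ($\mathcal S|_{\bb R T}=-aI$, $\mathcal S|_E=-bI$, $\mathcal S|_{\mathcal E}=-cI$ with $a=f''/(1-(f')^2)^{3/2}$, $b=f'/(r\sqrt{1-(f')^2})$, $c=1/(w\sqrt{1-(f')^2})$, and the normal component of $\ov\n_T\tilde n$ does indeed cancel); the identities $a=\Lambda/n+(1-n)Kr^{-n}$, $b=\Lambda/n+Kr^{-n}$, hence $b-a=nKr^{-n}>0$ and $-a-(n-1)b=-\Lambda$; the graph representation of an arbitrary spacelike $\Pi$ over $\Pi_0$ via a contraction $L$, with spacelikeness equivalent to $Q=I-L^TL>0$; the trace computation $H_\Pi=-a(Q^{-1})_{TT}-b(\operatorname{tr}Q^{-1}-(Q^{-1})_{TT})+c(\operatorname{tr}Q^{-1}-n)$; the reduction to $H_\Pi+\Lambda=(b-a)((Q^{-1})_{TT}-1)+(c-b)(\operatorname{tr}Q^{-1}-n)$; and the non-negativity of each factor, with $c>b$ following from $0\leq f'<1$ and $w=\int_0^r f'\,ds<r$ for $0<r<(nK/|\Lambda|)^{1/n}$. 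The only cosmetic remark: it is worth stating explicitly (as you implicitly use) that $\mathcal S(V)=-\ov\n_V\tilde n$ is automatically tangent to $\tilde\Gamma_{K,\Lambda}$ because $\ip{\tilde n}{\tilde n}=-1$ is constant, and that $\Pi$ must project isomorphically onto $\Pi_0$ since any kernel vector would lie in the timelike subspace $\mathcal E$, contradicting spacelikeness of $\Pi$.
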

The following demonstrates that the solutions $\tilde{\Gamma}_{K,\Lambda}$ act as barriers and is a parabolic version of \cite[Lemma 3.2]{YangLi}. 
\begin{lemma}\label{Staybarriers}
 Suppose that $M_t$ is a spacelike solution to \eqref{MCFgraphDirichlet} and $\xi\in\bb{R}^n\setminus \ov\Omega$, $\eta\in\bb{R}^m$,  $K>0$ and $\Lambda<0$ are chosen such that 
 \[r(z)<\left(\frac{nK}{|\Lambda|}\right)^\frac 1 n \text{ for all } z\in \Omega \text{ and } M_0\subset\big\{(x,y)\in\bb{R}^{n,m}|w(x,y)\leq f_{K,\Lambda}\big(r(x,y)\big)\big\}. \]
 For all $t\in[0,T)$,  
 \[M_t\subset\big\{(x,y)\in\bb{R}^{n,m}|w(x,y)\leq f_{K,\Lambda}\big(r(x,y)\big)\big\}.\]
\end{lemma}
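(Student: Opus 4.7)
The strategy is a parabolic maximum principle argument applied to the scalar function $\Phi : \bb{R}^{n,m} \to \bb{R}$ defined by $\Phi(x,y) := w(x,y) - f_{K,\Lambda}(r(x,y))$, restricted to the flowing submanifold $M_t$; the hypothesis reads $\Phi|_{M_0} \leq 0$ and we want $\Phi|_{M_t} \leq 0$ for $t \in [0, T)$. Since the Dirichlet condition pins $\partial M_t = \partial M_0$ to values where $\Phi \leq 0$, any first crossing must occur at an interior point. Assuming for contradiction that $t_0 \in (0, T)$ is the first such time and $p \in M_{t_0}$ an interior touching point, the function $\Phi|_{M_{t_0}}$ attains an interior spatial maximum of zero at $p$, giving $\n \Phi(p) = 0$, $\Delta_{M_{t_0}}\Phi(p) \leq 0$, and $\frac{d}{dt}\Phi(X(p,t))|_{t=t_0} \geq 0$, so that $\ho\Phi(p,t_0) \geq 0$.

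To obtain a contradiction, I will compute $\ho\Phi$ via Lemma \ref{GeneralC0evol}, which gives $\ho\Phi = -g^{ij}\ov\n^2_{ij}\Phi$. A direct computation from the definitions yields $\ov\n\Phi = -\alpha\,\tilde n$, with $\alpha := \sqrt{1-(f_{K,\Lambda}')^2} > 0$, in a neighbourhood of $\tilde{\Gamma}_{K,\Lambda}$ (the timelike vector field $\tilde n$ extends naturally off $\tilde{\Gamma}_{K,\Lambda}$ in the valid range of $r$). At the touching point this means $\tilde n \in N_p M_{t_0}$ and $T_p M_{t_0}$ is a spacelike $n$-plane $\Pi \subset T_p\tilde{\Gamma}_{K,\Lambda}$. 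Choosing an orthonormal basis $b_1,\ldots,b_n$ of $\Pi$ and using $\ip{\tilde n}{b_i}=0$ (since $\tilde n \perp T\tilde{\Gamma}_{K,\Lambda}$), the product rule yields
\[
g^{ij}\ov\n^2_{ij}\Phi(p) = \sum_{i=1}^n \ip{\ov\n_{b_i}(-\alpha\,\tilde n)}{b_i} = -\alpha\sum_{i=1}^n \ip{\ov\n_{b_i}\tilde n}{b_i} = \alpha H_\Pi
\]
by the definition $H_\Pi := -\sum_i \ip{b_i}{\ov\n_{b_i}\tilde n}$. The cited Yang Li lemma supplies $H_\Pi \geq -\Lambda > 0$, hence $\ho\Phi(p,t_0) = -\alpha H_\Pi \leq \alpha\Lambda < 0$, contradicting $\ho\Phi \geq 0$.

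The main subtlety I expect is the degenerate case in which $M_0$ already touches $\tilde{\Gamma}_{K,\Lambda}$, i.e.\ $t_0 = 0$, so that $\frac{d}{dt}\Phi \geq 0$ cannot be inferred from comparison with earlier times. I would handle this by approximation: replace $\tilde{\Gamma}_{K,\Lambda}$ with $\tilde{\Gamma}_{K+\epsilon,\Lambda}$ for small $\epsilon > 0$ so that $M_0$ lies in its strict interior, apply the argument above to conclude that $M_t$ stays strictly inside $\tilde{\Gamma}_{K+\epsilon,\Lambda}$ for all $t$, and then send $\epsilon \to 0^+$. The only other care required is verifying the signature-dependent conventions used in computing $\ov\n\Phi$ and $g^{ij}\ov\n^2_{ij}\Phi$, so as to pin down the sign in the key identity $g^{ij}\ov\n^2_{ij}\Phi = \alpha H_\Pi$.
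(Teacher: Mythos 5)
Your proposal is correct and is essentially the paper's proof: both run a parabolic maximum principle argument on the scalar $w - f_{K,\Lambda}$, compute $\ho$ of this function via Lemma~\ref{GeneralC0evol}, and invoke Yang Li's bound $H_\Pi \geq -\Lambda$ to derive the contradiction, with a strict perturbation of $K$ followed by a limiting argument (the paper uses $\tilde K > K$; you use $K+\epsilon$). One small point of exposition: the perturbation is needed not only for your $t_0 = 0$ degeneracy but also because the hypothesis $M_0 \subset \{w \leq f_{K,\Lambda}\}$ permits equality on $\partial M_0$, so without the strict perturbation the Dirichlet-pinned boundary $\partial M_t = \partial M_0$ could itself lie on the barrier and the touching point need not be interior.
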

\begin{proof}
Let $\tilde{K}>K$, and observe that 
\begin{align*}
 M_0&\subset\big\{(x,y)\in\bb{R}^{n,m}|w(x,y)\leq f_{K,\Lambda}\big(r(x,y)\big)\big\}\\
&\subset \big\{(x,y)\in\bb{R}^{n,m}|w(x,y)< f_{\tilde{K},\Lambda}\big(r(x,y)\big)\big\} .
\end{align*}
We consider the function \[h:=w-f_{\tilde{K},\Lambda}.\] Clearly this is negative on $M_0$. 
We suppose that $t_0$ is the first time when there exists $p_0\in M_{t_0}$ such that $h(p_0)=0$.  As $\partial M_t=\partial M_0$ for all $t$ and 
$h$ is negative on $\ov{M_0}$, $p_0$ cannot be a boundary point. Furthermore, $p_0\in \tilde{\Gamma}_{\tilde{K},\Lambda}$ and $\n h(p_0)=0$, so $T_{p_0}M_{t_0}\subset T_{p_0}\tilde{\Gamma}_{\tilde{K},\Lambda}$. 

Let $b_1, \ldots b_n$ be an orthonormal basis of $T_{p_0}M_{t_0}$. Since $\ov \n h =- \sqrt{1-(f'_{\tilde{K},\Lambda})^2}\tilde{n}$, 
\[g^{ij}\ov\n^2_{ij}h = -\ip{\ov\n_{b_i}(\sqrt{1-(f'_{\tilde{K},\Lambda})^2}\tilde{n})}{b_i} = \sqrt{1-(f'_{\tilde{K},\Lambda})^2}H_{T_{p_0}M_{t_0}} .\]

As $p_0$ is a nondecreasing interior maximum, Lemma \ref{GeneralC0evol} implies
\[0\leq\ho h (p_0) = -g^{ij}\ov\n^2_{ij}h = -\sqrt{1-(f'_{\tilde{K},\Lambda})^2}H_{T_{p_0}M_{t_0}}\leq \sqrt{1-(f'_{\tilde{K},\Lambda})^2}\Lambda .\]
The assumption $\Lambda<0$ yields a contradiction. Therefore, 
\[ M_t\subset \big\{(x,y)\in\bb{R}^{n,m}|w(x,y)< f_{\tilde{K},\Lambda}\big(r(x,y)\big)\big\}\]
for all $t\in[0,T)$ and all $\tilde{K}>K$. Limiting $\tilde{K}$ to $K$ yields the statement.
\end{proof}

We now demonstrate that suitable barriers may be attached to $\partial M_0$.
\begin{lemma}\label{AttachBarriers}
Let $\U_0$ be smooth uniformly spacelike initial data on $\Omega$ with acausal boundary values. Then for any $\hat{x}\in \partial \Omega$, $\theta\in\bb{R}^m$ and $\Lambda<0$ there exists $\xi\in \bb{R}^n$, $\eta\in \bb{R}^m$, $K>0$ and $\d\in(0,1)$ such that the following hold.
\begin{enumerate}[(a)]
 \item  $\Omega \subset B_{\left(\frac{nK}{|\Lambda|}\right)^\frac{1}{n}}(\xi)$.\label{three}
 \item $M_0\subset\big\{(x,y)\in\bb{R}^{n,m}|w(x,y)\leq f_{K,\Lambda}\big(r(x,y)\big)\big\}$.\label{one}
 \item $(\hat{x},\U_0(\hat{x}))\in \tilde{\Gamma}_{K,\Lambda}$.\label{two}
 \item Let $\widetilde{M}:=\text{\emph{graph}} \, \tilde{u}$ for some smooth $\tilde{u}:\Omega \ra \bb{R}^m$, such that $\partial \widetilde{M} = \partial M_0$ and $\widetilde{M}\subset\big\{(x,y)\in\bb{R}^{n,m}|w(x,y)\leq f_{K,\Lambda}\big(r(x,y)\big)\big\}$. Then\label{four}
 \[D_{-\mu}\tilde{u}\cdot \t \leq 1-\d\ .\]
\end{enumerate}
Furthermore, $K$ and $\d$ can be chosen to depend only on $n$, $\Lambda$, $\Omega$, $\underset{M_0} \sup \, v$ and $|\U_0|_{C^3(\Omega)}$.
\end{lemma}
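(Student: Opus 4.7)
The plan is to attach a translated copy of the barrier hypersurface $\tilde{\Gamma}_{K,\Lambda}$ with apex $(\xi,\eta)$ just outside $\Omega$ near $\hat{x}$, tilted in $\bb{R}^m$ so it passes through $(\hat{x},\U_0(\hat{x}))$ and its $\theta$-component slope at the contact point yields the gradient bound in (d). I assume $\|\theta\|=1$ (the case $\theta=0$ is trivial, and the general case follows by homogeneity of the bound). By the graphical reformulation of Appendix \ref{MCFasGraphs}, uniform spacelikeness of $M_0$ gives $|D\U_0(\hat{x})^T\theta|\leq 1-\delta_0$ for some $\delta_0>0$ depending only on $\sup_{M_0}v$, and in particular the Lipschitz bound $\|\U_0(x)-\U_0(\hat{x})\|\leq(1-\delta_0)|x-\hat{x}|$ on $\Omega$.

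Pick a small $\epsilon>0$ less than the reach of $\partial\Omega$, so the exterior ball condition gives $B_\epsilon(\hat{x}+\epsilon\mu(\hat{x}))\cap\Omega=\emptyset$, and set
\[\xi := \hat{x}+\epsilon\,\mu(\hat{x}), \qquad \eta := \U_0(\hat{x})-f_{K,\Lambda}(\epsilon)\,\theta.\]
Then (c) is immediate from $r(\hat{x},\U_0(\hat{x}))=\epsilon$ and $w(\hat{x},\U_0(\hat{x}))=f_{K,\Lambda}(\epsilon)$. Since $(nK/|\Lambda|)^{1/n}\to\infty$ as $K\to\infty$, taking $K$ large (depending on $n,\Lambda,\epsilon,\mathrm{diam}(\Omega)$) yields $\Omega\subset B_{(nK/|\Lambda|)^{1/n}}(\xi)$, so (a) holds. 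For (d), any admissible $\widetilde M=\mathrm{graph}\,\tilde{u}$ satisfies $\tilde{u}(\hat{x})=\U_0(\hat{x})$, so $(\hat{x},\U_0(\hat{x}))$ is a contact point of $\widetilde M$ with the barrier. For $s>0$ small, $|(\hat{x}-s\mu)-\xi|=\epsilon+s$, so expanding the barrier inequality $\|\tilde{u}(\hat{x}-s\mu)-\eta\|^2\leq f_{K,\Lambda}(\epsilon+s)^2$ to first order in $s$ and dividing by $2sf_{K,\Lambda}(\epsilon)>0$ gives
\[\theta\cdot D_{-\mu}\tilde{u}\ \leq\ f'_{K,\Lambda}(\epsilon)\ =:\ 1-\delta.\]
Since $f'_{K,\Lambda}(\epsilon)\to 1$ as $K\to\infty$ with $\epsilon$ fixed, arranging $K$ large enough so that $f'_{K,\Lambda}(\epsilon)\geq 1-\delta_0/2$ gives a uniform $\delta\in(0,1)$.

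The principal obstacle is condition (b). Combining the triangle inequality with the Lipschitz bound reduces (b) to the analytic inequality
\[f_{K,\Lambda}(|x-\xi|)-f_{K,\Lambda}(\epsilon)\ \geq\ (1-\delta_0)|x-\hat{x}|\qquad\forall\,x\in\Omega.\]
For $|x-\hat{x}|$ moderately large compared to $\epsilon/\delta_0$, this follows by integrating $f'_{K,\Lambda}\geq 1-\delta_0/2$ over $[\epsilon,|x-\xi|]$, which is valid once $K$ is large enough to keep $f'_{K,\Lambda}$ close to $1$ throughout $[\epsilon,\mathrm{diam}(\Omega)+\epsilon]$; the exterior-ball condition provides the needed lower bound $|x-\xi|\geq|x-\hat{x}|-\epsilon$. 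For $|x-\hat{x}|$ small, the Lipschitz bound is sharp in directions tangential to $\partial\Omega$ and the naive integration fails, so one must refine it via a quadratic Taylor expansion of $\U_0$ around $\hat{x}$, with the remainder controlled by $|\U_0|_{C^2}$ and absorbed against the strictly negative curvature $f''_{K,\Lambda}(\epsilon)$ of the barrier. Quantifying this remainder uniformly in $\hat{x}$ is the source of the $|\U_0|_{C^3}$-dependence in the stated constants; balancing $\epsilon$ (small, to get $\delta>0$) against $K$ (large, to make $f'_{K,\Lambda}$ close to $1$ and satisfy (a)) then produces constants $K$ and $\delta$ depending only on $n$, $\Lambda$, $\Omega$, $\sup_{M_0}v$, and $|\U_0|_{C^3(\Omega)}$.
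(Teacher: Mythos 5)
There is a genuine gap in your construction: you place the centre $\xi$ directly along the outward normal at $\hat x$, i.e.\ $\xi=\hat x+\e\,\mu(\hat x)$, together with $\eta=\U_0(\hat x)-f_{K,\Lambda}(\e)\theta$. With this choice the level-set function $r(x)=|x-\xi|$ has \emph{vanishing} tangential gradient along $\partial\Omega$ at $\hat x$, so the barrier function $f_{K,\Lambda}(r)$ is critical on $\partial\Omega$ at $\hat x$. On the other hand, the function $w(x,\U_0(x))=\|\U_0(x)-\eta\|$ restricted to $\partial\Omega$ has tangential gradient equal to $D^\partial(\U_0\cdot\theta)(\hat x)$, which is generically nonzero. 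Hence $g:=w-f_{K,\Lambda}(r)$ satisfies $g(\hat x)=0$ but $D^\partial g(\hat x)\neq 0$, so $g>0$ somewhere on $\partial\Omega$ arbitrarily close to $\hat x$ — and then (b) fails, since $\partial M_0\subset M_0$ leaves the barrier region. No amount of Taylor expansion at second order, absorption against $f''_{K,\Lambda}$, or balancing $\e$ against $K$ can repair a first-order sign violation, so the discussion at the end of your proposal does not save the argument.

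The paper's proof precisely anticipates this by \emph{tilting} the centre off the normal: in adapted coordinates with $D^\partial(\U_0\cdot\theta)|_{\hat x}=a e_1$, $a\in[0,1)$, one takes $\xi=-\tfrac{\e}{\sqrt{1+b^2}}(b,0,\ldots,0,1)$ (so $\xi$ has a tangential component in the $e_1$ direction) and then solves for $b$ so that $f'_{K,\Lambda}(\e)\,\tfrac{b}{\sqrt{1+b^2}}=a$, which forces $D^\partial g(\hat x)=0$. This tangency is what makes the second-order analysis (Step~4 of the paper's proof, which uses $D^2_{\hat\imath\hat\jmath}g(\hat x)<0$ and the strict spacelikeness estimate) meaningful, and it also changes the bound in (d): one gets $D_{-\mu}\tilde u\cdot\theta\leq f'_{K,\Lambda}(\e)/\sqrt{1+b^2}<1$, rather than $f'_{K,\Lambda}(\e)$. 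A secondary issue is that the Lipschitz bound $\|\U_0(x)-\U_0(\hat x)\|\leq(1-\delta_0)|x-\hat x|$ does not follow from the pointwise gradient bound on non-convex $\Omega$; the paper invokes acausality of the boundary data to handle non-convex regions when propagating containment away from $\hat x$, a point your proposal leaves unaddressed.
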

\begin{proof}
Our strategy is to find a suitable $\tilde{\Gamma}_{K,\Lambda}$which touches $M_0$ only at the point $(\hat{x}, \U_0(\hat{x}))\in\partial M_0$. We take $\e>0$ and begin by setting $K=\e^{-1}$.

\medskip

\paragraph{\textbf{Step 1:}} \emph{Pick $\tilde{\Gamma}_{K,\Lambda}$ so that (\ref{two}) holds.}
 We translate and rotate coordinates so that $\hat{x}=0$, $\U(0)=0$ and $\mu=-e_n$. Then, we rotate coordinates in $\bb{R}^{n-1}=T_0\partial \Omega$ so 
 \[D^\partial \left(\U_0\cdot\t\right)|_{0} = ae_1\]
 for some $a\in(0,1)$, where $D^\partial$ is the gradient operator on $\partial \Omega$. 
 
 We now show that we can choose a centre for $\tilde{\Gamma}_{K,\Lambda}$ so that (\ref{two}) holds and $\tilde{\Gamma}_{K,\Lambda}$ is tangent to $\partial M_0$.  Concretely, for any $\e>0$, we set 
 \[\xi = -\frac{\e}{\sqrt{1+b^2}}(b, 0,\ldots, 0,1), \qquad \eta = -f_{ \Lambda,K}(\e)\theta ,\]
 for some $b$ to be determined. We observe that for this choice, $\tilde{\Gamma}_{K,\Lambda}$ goes through the origin and so (\ref{two}) is satisfied.
 
 \medskip
 
\paragraph{\textbf{Step 2:}} \emph{Pick $\e$ so that (\ref{three}) holds.} We observe that for $\e<\e_1= \e_1(\Lambda,n,\text{diam}\, \Omega)<1$, 
 \[\text{diam}\,{\Omega}+2\leq\left(\frac{nK}{|\Lambda|}\right)^\frac{1}{n} .\]
 As $\xi$ is at most distance $\e$ from the origin, this implies that (\ref{three}) is satisfied.
 
 \medskip

\paragraph{\textbf{Step 3:}} \emph{Pick $b$ so that $\tilde{\Gamma}_{K,\Lambda}$ is tangential to $\partial M_0$ at $(\hat{x}, \U_0(\hat{x}))$.} For $\tilde{\Gamma}_{K,\Lambda}$ to be tangent to $\partial M_0$ at $0$, we require that
 \[D^\partial \big[w\big(x,\U_0(x)\big)  -f_{K,\Lambda}\big(r(x,\U_0(x)\big)\big](0)=0;\]
 that is,
 \[D^\partial\left(\U_0\cdot\t\right) - f'_{K,\Lambda}(\e)\frac{b}{\sqrt{1+b^2}}e_1=0 .\]
 We now want to choose $b=b(\e,a)$ so that
 \[f'_{K,\Lambda}(\e)\frac{b}{\sqrt{1+b^2}}=a ,\]
 which is only possible if $f'_{K,\Lambda}>a$. 
By our choice of $K$,
 \[f_{K,\Lambda}'(\e) = \frac{1+n^{-1}\Lambda \e^{1+n}}{\sqrt{\e^{2n}+(1+n^{-1}\Lambda \e^{n+1})^2}}.\]
Since $M_0$ is uniformly spacelike, we deduce that $f'_{K,\Lambda}>a$ is satisfied for all $\e<\e_2(a,n, \Lambda, \e_1)\leq\e_1$, and for such $\e$ we have that
 \[b=\frac{a}{\sqrt{(f'_{K,\Lambda}(\e))^2-a^2}} .\]

\paragraph{\textbf{Step 4:}} \emph{Show that for sufficiently small $\e>0$, (\ref{one}) holds.}
 We consider the function 
 \[g=w\big(x,\U(x)\big)-f_{K,\Lambda}\big(r\big(x,\U(x)\big)\big)= |\U_0(x)+f_{K,\Lambda}(\e)\t| - f_{K,\Lambda}(|x-\xi|)\]
 on $B_{R_1}(0)\subset\bb{R}^n\cap \ov\Omega$. By our construction so far we have that $g(0)=0$ and $D^\delta g(0)=0$, and our aim is to show that this is nonpositive everywhere. We first note that 
 \begin{flalign*}
 D_\mu g(0) = D_\mu \U_0\cdot\t + f'_{K,\Lambda}(\e)\frac{\xi\cdot \mu}{|\xi|} &\leq D_\mu \U_0\cdot\t - \frac{1}{\sqrt{1+b^2}}f'_{K,\Lambda}(\e)\\
  &=D_\mu \U_0\cdot\t -\sqrt{(f'_{K,\Lambda}(\e))^2-a^2}  .
 \end{flalign*}
As $M_0$ is spacelike at $0$, $|D (\U_0\cdot \t)|^2 = |D_\mu \U_0\cdot\t|^2+a^2<1$, and so there exists an $\e_2>\e_3=\e_3(\sup_{M_0} v, n, \Lambda)$ such that for all $\e<\e_3$, $D_\mu g(0)<0$.  

Furthermore, we may calculate that 
\begin{flalign*}
 D^2_{ij}g(0) &= \frac{D_i\U_0\cdot D_j\U_0 - D_i(\U_0\cdot\t)D_j(\U_0\cdot\t)}{f_{K,\Lambda}(\e)}+D_{ij}(\U_0\cdot\t)\\
 &\qquad-f''_{K,\Lambda}(\e)\frac{\xi_i\xi_j}{|\xi|^2}-\frac{f'_{K,\Lambda}(\e)}{\e}\left(\delta_{ij} -\frac{\xi_i\xi_j}{|\xi|^2}\right).
\end{flalign*}
We now suppose that $\e<\e_4=\e_4(\Lambda, n, \e_3)\leq \e_3$ is sufficiently small so that for $\e<\e_4$ the estimate (\ref{fest}) holds. We shall now restrict our attention to $1\leq i,j \leq n-1$; that is, to $T_0\partial \Omega$. On this range we have that
\begin{flalign*}
 D^2_{ij}g(0) &= \frac{D_i\U_0\cdot D_j\U_0 - a^2\delta_{i1}\delta_{j1}}{f_{K,\Lambda}(\e)}+D_{ij}(\U_0\cdot\t)\\
 &\qquad-\delta_{1i}\delta_{1j}f''_{K,\Lambda}(\e)\frac{b^2}{1+b^2}-\frac{f'_{K,\Lambda}(\e)}{\e}\left(\delta_{ij} -\delta_{1i}\delta_{1j}\frac{b^2}{1+b^2}\right)  .
\end{flalign*}

Again using the fact that $M_0$ is uniformly spacelike, there exists $\tau:=\tau(\underset{M_0}\sup\, v)\in[0,1)$ such that, as matrices, $D_i\U_0\cdot D_j\U_0\leq (1-\tau)\delta_{ij}$ (see Appendix \ref{MCFasGraphs}). Hence,
\begin{flalign*}
 D_i\U_0\cdot D_j\U_0& - a^2\delta_{i1}\delta_{j1}-f_{K,\Lambda}(\e)\frac{f'_{K,\Lambda}(\e)}{\e}\left(\delta_{ij} -\delta_{1i}\delta_{1j}\frac{b^2}{1+b^2}\right)\\
 &\leq (1-\tau)\delta_{ij} - a^2\delta_{i1}\delta_{j1}-\sqrt{1-2\e^{2n}}f'_{K,\Lambda}(\e)\left(\delta_{ij} -\delta_{1i}\delta_{1j}\frac{b^2}{1+b^2}\right)\\
 &=(1-\tau)\delta_{ij}+\left[\frac{\sqrt{1-2\e^{2n}}}{f'_{K,\Lambda}(\e)}-1\right]a^2\delta_{i1}\delta_{j1}-\sqrt{1-2\e^{2n}}f'_{K,\Lambda}(\e)\delta_{ij}\\
 &\leq(1-\tau)\delta_{ij}+\e^{2n}a^2\delta_{i1}\delta_{j1}-\sqrt{1-2\e^{2n}}f'_{K,\Lambda}(\e)\delta_{ij}\\
 &\leq -\frac{\tau}{2}\delta_{ij}
\end{flalign*}
for all $\e<\e_5=\e_5(\underset{M_0}\sup\, v,\e_4)\leq \e_4$. Estimating $|f''_{K,\Lambda}|<|\Lambda|+n-1$, $D_{ij}\U_0\cdot \t<C\delta_{ij}$ we finally see that for all $\e<\e_6=\e_6(n,\Lambda,|\U_0|_{C^2(\Omega)}, \e_5)\leq \e_5$ we have
\begin{flalign*}
 D^2_{ij}g(0)&\leq (C+|\Lambda|+n-1)\delta_{ij} - \frac{\tau}{2\e}\delta_{ij}<0.
\end{flalign*}

Therefore, for any $\e<\e_6$ there exists $R_1=R_1(\Lambda, n, \underset{M_0}\sup\, v, |\U_0|_{C^3})$ such that on $B_{R_1}\cap\partial \Omega$, the Hessian of $g$ is negative definite and $\n_\mu g<0$.  We deduce that $g\leq 0$.  Moreover, there exists $R_2<R_1$ depending on the same quantities such that $g\leq 0$ on $\Omega\cap B_{R_2}(0)$. 

For $\e$ small enough we have that $\text{dist}(\xi, \partial B_{R_2})>\frac{R_2}{2}$. As the gradient $f'_{K,\Lambda}$ monotonically increases as $\e\ra 0$, we may choose $\e<\e_7(\e_6, \underset{M_0}\sup\,v, R_2, \partial \Omega, \phi)\leq \e_6$ sufficiently small so that outside $B_{R_2}$, $f_{K,\Lambda}'>\max\{|Du^1|,\ldots, |Du^m|\}$. Integration now implies (\ref{one}) where we note that our estimate here also depends on $\partial \Omega$ and acausality to cross any nonconvex regions. 

\medskip

\paragraph{\textbf{Step 5:}} \emph{Show that (\ref{four}) now holds.} Suppose now that we have some other function $\tilde{u}$ such that $\tilde{u}=\U_0$ on $\partial \Omega$ and $|\tilde{u}-\eta|\leq f_{K,\Lambda}$ in $\Omega$. Then, for all $\l>0$, 
\[\frac{1}{\l}\left(|\tilde{u}(\hat{x}-\l\mu)-\eta|-|\eta|\right)\leq \frac{1}{\l}\left(f_{K,\Lambda}(\hat{x}-\l\mu)-f_{K,\Lambda}(\hat{x})\right)  .\]
Taking the limit as $\l\ra0$,
\[D_{-\mu} \tilde{u}\cdot \t = \frac{d}{d\l}\big|_{\l=0}|\tilde{u}(\hat{x}-\l\mu)-\eta|\leq f_{K,\Lambda}'(\e)\ip{\mu}{\frac{\xi}{|\xi|}}= \frac{f_{K,\Lambda}'(\e)}{1+b^2}\]
Since $f'_{K,\Lambda}(\e)<1$ depends only on $\e_7$, (\ref{four}) holds.
\end{proof}

\begin{proposition}\label{C1Dirichlet}
 Suppose we have a solution of \eqref{MCFgraphDirichlet} over a compact domain $\Omega$ with smooth uniformly spacelike initial data which is acausal at the boundary. There exists a constant $C_v$ depending only on $M_0$ such that for all $t\in[0,T)$ and $y\in M_t$,
 \[v(y,t)\leq C_v.\]
\end{proposition}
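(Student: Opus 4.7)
The plan is to apply the weak maximum principle to $f_A := w_A^2 e^{u_A^2}$ on $\overline\Omega \times [0,T')$, exploiting the evolution inequality in Lemma~\ref{evolgoodwk}, the $C^0$ bound on $u_A$ from Lemma~\ref{C0EstimatesDirichlet}(b), and boundary gradient control provided by the Yang Li barriers already developed.

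For the boundary estimate, I would fix any $\Lambda < 0$ and, for each $\hat x \in \partial\Omega$ and each unit $\theta \in \bb{R}^m$, apply Lemma~\ref{AttachBarriers} to obtain a barrier hypersurface $\tilde\Gamma_{K,\Lambda}$ lying above $M_0$ and touching the initial graph at $(\hat x, \U_0(\hat x))$. Since $\partial M_t = \partial M_0$ is preserved by the Dirichlet condition, Lemma~\ref{Staybarriers} keeps $M_t$ below $\tilde\Gamma_{K,\Lambda}$ for every $t \in [0,T)$, so Lemma~\ref{AttachBarriers}(d) applied to $\tilde u = \U(\cdot, t)$ yields
\[ D_{-\mu} \U(\hat x, t) \cdot \theta \leq 1 - \delta, \]
with $\delta > 0$ uniform in $\hat x$, $\theta$, $t$ (depending only on $M_0$ and $\Lambda$). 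Repeating with $-\theta$ gives $\|D_{-\mu} \U(\hat x, t)\| \leq 1 - \delta$. The tangential derivatives of $\U$ along $\partial\Omega$ agree with those of $\phi$, which are strictly acausal, so combining with the identity $w_A^2 = 1 + g^{ij} D_i \U^A D_j \U^A$ (Appendix~\ref{MCFasGraphs}) produces a uniform bound $w_A^2 \leq C_\partial$ on $\partial\Omega \times [0, T)$, with $C_\partial$ depending only on $M_0$.

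The interior step is then brief. By Lemma~\ref{C0EstimatesDirichlet}(b), $u_A^2 \leq C_0$ along the flow, with $C_0 = C_0(M_0)$. Lemma~\ref{evolgoodwk} gives
\[ \ho f_A \leq -2 e^{u_A^2} w_A^2 (w_A^2 - 1). \]
At any interior spacetime maximum of $f_A$ on $\overline\Omega \times [0, T']$ with $t > 0$, the left side is $\geq 0$, forcing $w_A^2 \leq 1$ and so $f_A \leq e^{C_0}$ there. The weak maximum principle therefore bounds $\sup f_A$ by the maximum of $e^{C_0}$, $\sup_{M_0} f_A$, and $C_\partial e^{C_0}$, uniformly in $T' < T$. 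Summing over $A$ produces the claimed bound on $v^2$.

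The main obstacle is the boundary estimate, but the hard technical work has already been done in Lemmas~\ref{Staybarriers}--\ref{AttachBarriers} using Yang Li's barriers \cite{YangLi}. The one subtlety is that one must apply these barriers in every direction $\theta$ with $\delta$ uniform, and then combine with strict acausality to upgrade the one-sided bound into a genuine two-sided gradient bound on $\U|_{\partial\Omega}$. Once this is in hand, the interior maximum principle closes cleanly thanks to the coercive $-2 w_A^2(w_A^2 - 1) e^{u_A^2}$ term in Lemma~\ref{evolgoodwk}, without any need for an auxiliary $\|H\|^2$ bound.
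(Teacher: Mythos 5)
Your proof is correct and follows essentially the same approach as the paper: attach the Yang Li barriers (Lemma~\ref{AttachBarriers}, preserved by Lemma~\ref{Staybarriers}) to get a uniform boundary normal-derivative bound, combine with the fixed tangential data on $\partial\Omega$ for a boundary bound on $v$, and then propagate into the interior by a parabolic maximum principle. The only cosmetic difference is that the paper applies the maximum principle directly to $w_A^2$ via Corollary~\ref{wkevolest}, whereas you use the weighted quantity $w_A^2 e^{u_A^2}$ from Lemma~\ref{evolgoodwk}; since $w_A^2\geq 1$ always, both evolutions are nonpositive and the conclusion is the same.
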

\begin{proof}
For any $\hat{x}\in\partial \Omega$  and unit vectors $\t\in \bb{R}^m$, we attach barriers to $M_0$ at $\hat x$ as constructed in Lemma \ref{AttachBarriers}. Lemma \ref{Staybarriers} implies that for all $t$ we have 
 \[M_t\subset\big\{(x,y)\in\bb{R}^{n,m}|w(x,y)\leq f_{K,\Lambda}\big(r(x,y)\big)\big\},\] 
 and so Lemma \ref{AttachBarriers} yields
 \[|D_{-\mu}u(\hat{x},t)\cdot \t| \leq 1-\d ,\]
 for all $\t$ as above. For any $v\perp \mu$, we also have that
 \[|D_{v}u(\hat{x},t)\cdot \t| \leq 1-\tilde{\d} ,\]
 for some $\tilde{\d}>0$, due to the uniform spacelikeness of $M_0$. These conditions now imply that at $\hat x$, $v<C(\delta, \tilde{\delta})$ 
  and so
 \[\sup_{y\in\partial M_t} v^2(y) \leq C_v=C_v(\d, \tilde{\d}).\]
 Applying the maximum principle (using Corollary \ref{wkevolest}) gives the result.
\end{proof}

We now observe that the above estimates give us decay for the mean curvature.
\begin{lemma}\label{HConvergenceDirichlet}
 Suppose that $\U\in C^{4;2}(\Omega\times[0,T))$ is a solution to \eqref{MCFgraphDirichlet}.  Suppose further that the estimates of Proposition \ref{C1Dirichlet} hold and
 \[\sup_{M_0} \|H\|^2 \leq C_H.\]
 For all $t\in[0,T)$, 
 \[\|H\|^2\leq \frac{1}{C_H^{-1}+\frac{2}{n}t}.\]
\end{lemma}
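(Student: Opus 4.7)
The plan is to apply a parabolic maximum principle, comparing $\|H\|^2$ against the explicit solution $g(t) := (C_H^{-1} + \frac{2}{n}t)^{-1}$ of the ODE $g' = -\frac{2}{n}g^2$. The key evolution inequality is already supplied by Corollary~\ref{evolcurv2}, namely
\[
\ho \|H\|^2 \leq -\frac{2}{n}\|H\|^4 - 2\|\np H\|^2 \leq -\frac{2}{n}\|H\|^4,
\]
so the only new ingredient required is control of $\|H\|^2$ on the Dirichlet boundary, and here things are particularly clean.

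First I would observe that $\|H\|^2 \equiv 0$ on $\partial\Omega \times [0,T)$. Indeed, in the graph parametrization $X(x,t) = (x, \U(x,t))$ the Dirichlet condition forces $\U(x,t) = \phi(x)$ for $x \in \partial\Omega$, independent of $t$. Hence $\partial_t X = (0, \partial_t \U) = 0$ along $\partial \Omega \times [0,T)$, and since the unparametrized flow \eqref{UnparametrisedMCF} dictates $(\partial_t X)^\perp = H$, we conclude that $H$ vanishes identically on $\partial M_t$ for all $t \in [0,T)$.

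Next, set $f := \|H\|^2$ and $\varphi := f - g$, and apply the weak maximum principle on $\Omega \times [0,T']$ for any $T' < T$. On the parabolic boundary we have $\varphi \leq 0$: at $t=0$, $f \leq C_H = g(0)$ by hypothesis, and on $\partial\Omega \times [0,T']$ we have $f = 0 < g(t)$ by the observation above. Suppose for contradiction that $\varphi$ attains a positive maximum at some $(x_0, t_0) \in \Omega \times (0,T']$. Then $\partial_t \varphi(x_0,t_0) \geq 0$ and $\Delta \varphi(x_0,t_0) \leq 0$, so $\ho \varphi(x_0,t_0) \geq 0$; but at $(x_0,t_0)$ we also have $f > g > 0$, and a direct computation using $g'(t) = -\frac{2}{n}g^2$ yields
\[
\ho \varphi \leq -\frac{2}{n}f^2 + \frac{2}{n}g^2 = -\frac{2}{n}(f+g)(f-g) < 0,
\]
a contradiction. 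Hence $\varphi \leq 0$ on $\Omega \times [0,T']$, and since $T' < T$ was arbitrary, the claimed bound $\|H\|^2 \leq g(t)$ holds on $[0,T)$.

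There is no substantive obstacle here; the only slightly delicate observation is the vanishing of $H$ on the Dirichlet boundary, which is immediate from the graph description together with the unparametrized form of the flow. The regularity assumption $\U \in C^{4;2}$ is exactly enough to ensure that $\|H\|^2$ is $C^{2;1}$ in space-time, so that the inequality of Corollary~\ref{evolcurv2} is classical and the weak maximum principle applies without issue.
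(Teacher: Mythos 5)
Your proof is correct and follows essentially the same approach as the paper: establish that $\|H\|^2$ vanishes on $\partial M_t$, then close with a parabolic maximum principle using the evolution inequality from Corollary~\ref{evolcurv2}. The paper derives $H=0$ on the boundary by writing $\ddt{\U^A} = \ip{H}{e_B}\hat{g}^{BA}$, invoking invertibility of $\hat{g}_{AB}$ to get $\ip{H}{e_B}=0$, and then arguing that a nonzero normal vector orthogonal to all $e_B$ would be spacelike; your derivation (that $\partial_t\hat X = 0$ on $\partial\Omega$ and hence $H=(\partial_t\hat X)^\perp=0$) is a more streamlined version of the same observation, relying implicitly on the equivalence in Appendix~\ref{MCFasGraphs} between the graph system and the unparametrized flow, together with the linear independence of the $e_A^\perp$. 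The paper applies the maximum principle to $(C_H^{-1}+\tfrac{2}{n}t)\|H\|^2$ while you compare $\|H\|^2$ against the explicit ODE solution $g(t)=(C_H^{-1}+\tfrac{2}{n}t)^{-1}$; these are interchangeable packagings of the same comparison argument.
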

\begin{proof}
 Clearly at  $\hat{x}\in\partial\Omega$ we have   $\ddt \U=0$. Recall $\hat{g}_{AB}=-\sum_{C=1}^m\ip{e_A}{\nu_C}\ip{\nu_C}{e_B}$ from Appendix \ref{MCFasGraphs}, and as $M_t$ is spacelike at $\partial \Omega$, $\hat{g}_{AB}$ is invertible with inverse $\hat{g}^{AB}$. We compute
 \[0=\ddt{\U^A} = \ip{H}{e_B}\hat{g}^{BA}=H^C\ip{\nu_C}{e_B}\hat{g}^{BA}  .\]
Invertibility of $\hat{g}_{AB}$ implies $\ip{H}{e_B}=0$ for all $1\leq B\leq m$, and this in turn implies $H=0$, as $\ip{\nu_D}{e_C}=0$ for all $1\leq C\leq m$ contradicts that $\nu_D$ is timelike. As a result, on $\partial M_t$, \[\|H\|^2=0 .\]
We may now apply the maximum principle to $f=(C_H^{-1}+\frac{2}{n}t)\|H\|^2$. 
\end{proof}

\subsection{$C^{1+\a}$ estimates} We now prove the final estimates required for the long-time existence of spacelike MCF with Dirichlet boundary conditions.
\begin{lemma}\label{C1alpha}
 Suppose that we have a solution to \eqref{MCFgraphDirichlet} such that there is a uniform constant $C_v>0$ so that \begin{equation}\label{eq:C1alpha.v}
 v^2\leq C_v .
 \end{equation}
 Then for any $\e>0$, there exists a constant $C$ depending only on $\e$, $\partial \Omega$, $\U_0$ and $C_v$ such that
 \[|\U|_{C^{1+\a;\frac{1+\a}{2}}(\Omega\times[\e,T))}\leq C\]
\end{lemma}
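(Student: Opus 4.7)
The plan is to exploit the bound $v^2\le C_v$ to make \eqref{MCFgraphDirichlet} uniformly parabolic, and then to invoke scalar quasilinear H\"older gradient estimates componentwise, combining interior estimates with the already-established boundary gradient bound.

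First, I would note that $v^2\le C_v$ forces the induced metric $g_{ij}(D\U)=\delta_{ij}-\sum_A D_i\U^A D_j\U^A$ to have eigenvalues contained in a compact subinterval $[\lambda,\Lambda]\subset(0,1]$ depending only on $C_v$. Hence $g^{ij}(D\U)$ is bounded, measurable and uniformly elliptic; $\U^A$ is uniformly Lipschitz; and each component solves the scalar linear uniformly parabolic equation
\[
\ddt{\U^A}-a^{ij}(x,t)D^2_{ij}\U^A=0,\qquad a^{ij}(x,t):=g^{ij}(D\U(x,t)),
\]
with uniformly bounded measurable coefficients.

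Second, I would apply the classical interior H\"older gradient estimate for scalar quasilinear uniformly parabolic equations with bounded gradient (in the spirit of Ladyzhenskaya--Solonnikov--Uraltseva or Lieberman) to each component, yielding $[D\U^A]_{\alpha;\alpha/2}\le C$ on parabolic cylinders compactly contained in $\Omega\times(0,T)$, with constants depending only on $n$, $m$ and $C_v$. For the spatial boundary, let $\bar\phi$ be a smooth extension of $\phi$ to $\ov\Omega$; then $\widetilde\U^A:=\U^A-\bar\phi^A$ satisfies the same kind of scalar linear uniformly parabolic PDE with a bounded source term and zero Dirichlet data. Proposition \ref{C1Dirichlet}, combined with the barriers of Lemma \ref{AttachBarriers}, provides a uniform gradient bound up to $\partial\Omega$, so the analogous boundary H\"older gradient estimate applies componentwise and gives $[D\U]_{\alpha;\alpha/2}\le C$ up to $\partial\Omega$, with constants depending only on $\partial\Omega$, $\U_0$ and $C_v$. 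The restriction to $t\ge\epsilon$ is handled by the interior-in-time nature of these estimates, e.g.\ by restarting the flow from time $\epsilon/2$ and using Lemma \ref{C0EstimatesDirichlet} and Proposition \ref{C1Dirichlet} to produce effective initial data.

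The main obstacle is that \eqref{MCFgraphDirichlet} is a \emph{system}, not a scalar equation, so classical H\"older gradient estimates of Ladyzhenskaya--Uraltseva type and Nash--Moser--De Giorgi estimates do not apply verbatim. The essential observation rescuing the plan is that the principal part is the \emph{same} scalar uniformly parabolic operator $\ddt{}-g^{ij}(D\U)D^2_{ij}$ applied componentwise, so after treating $g^{ij}(D\U)$ as bounded measurable uniformly elliptic coefficients, each component equation becomes a scalar linear uniformly parabolic PDE to which the scalar theory applies. The uniform parabolicity provided by $v^2\le C_v$ is exactly what makes this componentwise reduction possible, and tracking the dependence of the constants through the scalar estimate gives the advertised dependence on $\epsilon$, $\partial\Omega$, $\U_0$ and $C_v$.
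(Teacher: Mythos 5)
Your reduction to scalar theory has a genuine gap: you treat $a^{ij}(x,t)=g^{ij}(D\U(x,t))$ as bounded measurable, uniformly elliptic coefficients and then claim ``the scalar theory applies'' to each component, giving $[D\U^A]_{\alpha;\alpha/2}\le C$. But linear parabolic theory with merely bounded measurable coefficients (Krylov--Safonov / De Giorgi--Nash--Moser) gives H\"older continuity of $\U^A$, \emph{not} of $D\U^A$. To upgrade to $C^{1+\alpha}$ you would need the coefficients $a^{ij}$ to be at least H\"older (Schauder), which is precisely what you don't know a priori since $D\U$ is only Lipschitz. In the genuinely scalar quasilinear case ($m=1$) one differentiates the equation and obtains a divergence-form scalar PDE for $D_k u$ with bounded measurable coefficients, to which De Giorgi--Nash--Moser then applies; this works only because the coefficients depend on the gradient of the same scalar unknown. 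For the system here, differentiating couples the principal parts of all $m$ components, the resulting object is a system again, and the Nash--Moser--De Giorgi theory famously fails for systems. Your ``essential observation rescuing the plan'' is exactly the place where the argument breaks, and it is exactly the difficulty the paper explicitly flags (``As the Nash--Moser--De Giorgi estimates do not hold for systems'').

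The paper's actual proof avoids H\"older gradient theory altogether. It uses the geometric structure: the interior curvature estimates of Lemma~\ref{LocalCurvEst} in cylinder form (Remark~\ref{localcurvestremark}), obtained by a maximum-principle argument on $\|\II\|^2$, give $\|\II\|\cdot\mathrm{dist}(x,\partial\Omega)\le C_1$, and combined with the gradient bound this yields a pointwise estimate $|D^2\U|\cdot\mathrm{dist}(x,\partial\Omega)\le C_2$ in the interior --- a $C^2$ bound, which is stronger than $C^{1+\alpha}$ interiorly. Near $\partial\Omega$, instead of a De Giorgi--Moser argument, the paper applies Krylov's boundary oscillation estimate \cite[Lemma 7.47]{Lieberman}, which is a genuine $C^{1+\alpha}$ estimate at the boundary for uniformly parabolic scalar equations with Dirichlet data (and applies componentwise because it does not require H\"older continuity of the coefficients), and then interpolates between the two regimes exactly as in \cite[Proposition 4.3]{YangLi}. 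If you want to salvage your approach you would need to replace the componentwise scalar-theory step in the interior by these curvature estimates, and to cite a boundary estimate of Krylov type rather than a De Giorgi--Moser type gradient estimate.
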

\begin{proof}
 By Remark \ref{localcurvestremark}, we may take sufficiently small cylinders near the boundary $\partial \Omega$ to deduce there exists $C>0$, depending on $M_0$ and the maximum curvature of $\partial \Omega$, so that for all $t$ such that the flow exists, \[\sup_{M_t}\|\II\|\text{dist}(x, \partial \Omega)<C_1.\]
 The gradient bound \eqref{eq:C1alpha.v} now implies that $|D^2\U|\,\text{dist}(x, \partial \Omega)<C_2$. 
To conclude, we must now deal with boundary estimates. 

We may apply an observation of Krylov, see \cite[Lemma 7.47]{Lieberman}, to  (\ref{MCFgraphDirichlet}), which implies that for all $\l<R\leq R_0=R_0(\partial\Omega, C_v)$ we have 
 \[\text{osc}_{B_R\cap\Omega} \frac{\U^A(\hat x-\l\mu,t)-\phi^A(\hat x)}{\l}\leq C\left(\frac{R}{R_0}\right)^\alpha\]
for all $\hat x\in\partial \Omega$, where $C$ and $\alpha$ depend on $C_v$ and $n$. Interpolation estimates may now be applied, exactly as in \cite[Proposition 4.3]{YangLi} to yield the claim.
\end{proof}

\subsection{Convergence}
We finally demonstrate convergence of spacelike MCF in the Dirichlet setting, as in \cite[Theorem 4.1]{EckerEntire}, thus completing the proof of Theorem \ref{Dirichlettheorem}. We recall that \cite[Theorem 2.1]{YangLi} proves the uniqueness of maximal submanifolds with prescribed acausal boundary data.
\begin{lemma}\label{DirichletConvergence}
 Suppose $\U$ is a smooth solution of \eqref{MCFgraphDirichlet} with $T=\infty$ with uniform $C^{k;\frac{k}{2}}(\Omega\times[0,\infty))$ estimates for all $k\geq 0$, such that for all $t>0$ and $y\in M_t$
 \[v^2(y)<C_v .\]
 Then, $M_t$ converges smoothly to the unique maximal surface with boundary data given by $\partial M_0$. 
\end{lemma}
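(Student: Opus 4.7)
The plan is to combine volume monotonicity along spacelike MCF with the uniqueness of the maximal submanifold with prescribed boundary data, and then promote subsequential convergence to full $C^\infty$ convergence.

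I would first establish volume monotonicity, in close analogy with the Neumann convergence proof of Lemma \ref{ConvergenceNeumann}. Since the perpendicular velocity is $H$ and the boundary $\partial M_t = \partial M_0$ is fixed under the Dirichlet condition, the first variation of volume gives
\[\frac{d}{dt}|M_t| = \int_{M_t}\|H\|^2\,dV \geq 0.\]
On the other hand, writing the flow as a graph,
\[|M_t| = \int_\Omega \sqrt{\det g_{ij}}\,dx \leq |\Omega|,\]
since the induced metric satisfies $g_{ij} = \delta_{ij} - D_i\U\cdot D_j\U \leq \delta_{ij}$. Hence $t\mapsto|M_t|$ is monotonically increasing and bounded, so converges as $t\to\infty$, and
\[\int_0^\infty\int_{M_t}\|H\|^2\,dV\,dt \leq |\Omega| - |M_0| < \infty.\]

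I would then combine this with the uniform $C^{k;k/2}$ bounds to extract smooth subsequential limits. By Arzel\`a--Ascoli and a diagonal argument, the family $\{\U(\cdot,t)\}_{t\geq 0}$ is precompact in $C^\infty(\Omega)$, so any sequence $t_k\to\infty$ admits a subsequence along which $\U(\cdot,t_k)\to\U_\infty$ in $C^\infty(\Omega)$. Either the time integrability of $\int_{M_t}\|H\|^2\,dV$ together with its uniform equicontinuity in $t$ (provided by the higher-order bounds), or more directly Lemma \ref{HConvergenceDirichlet} applied from any positive initial time at which a uniform $H$ bound is available, yields $\|H\|_{L^\infty(M_t)}\to 0$ as $t\to\infty$. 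Thus $\U_\infty$ satisfies the maximal submanifold equation and, since the Dirichlet data are preserved along the flow, $\U_\infty|_{\partial\Omega} = \phi$.

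Finally, by \cite[Theorem 2.1]{YangLi} there is a unique maximal graph over $\Omega$ with acausal boundary data $\phi$, so every subsequential limit coincides with this $\U_\infty$. A standard contradiction argument combined with the $C^\infty$-precompactness then upgrades subsequential convergence to full convergence $\U(\cdot,t)\to\U_\infty$ in $C^\infty(\Omega)$ as $t\to\infty$. Given the strong hypotheses, no step presents a genuine obstacle; the only subtlety is justifying $\|H\|_{L^\infty(M_t)}\to 0$, which is handled cleanly by the quantitative pointwise decay in Lemma \ref{HConvergenceDirichlet}.
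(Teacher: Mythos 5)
Your proposal is correct and, in the end, takes essentially the same route as the paper: $\|H\|\to 0$ from the quantitative decay in Lemma \ref{HConvergenceDirichlet}, Arzel\`a--Ascoli with the uniform $C^{k;k/2}$ bounds to get smooth subsequential limits that are maximal with boundary data $\phi$, uniqueness of the maximal submanifold from \cite[Theorem 2.1]{YangLi}, and a contradiction argument plus Ehrling-type interpolation to upgrade to full $C^\infty$ convergence. The volume-monotonicity preamble (borrowed from the Neumann convergence proof) gives $\int_0^\infty\int_{M_t}\|H\|^2\,dV\,dt<\infty$, which is a valid alternative way to see $H$ must decay, but it is not needed once you invoke Lemma \ref{HConvergenceDirichlet}, and the paper goes directly to that lemma.
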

\begin{proof}
Lemma \ref{HConvergenceDirichlet} implies that
 \[\underset{M_t}\sup \|H\| \ra 0  .\]
Furthermore, Lemma \ref{C1alpha} and Schauder theory imply that we have uniform higher order estimates on $\U$. 
 
 Using the uniform $C^{k;\frac k 2}$ estimates and the Arzel\`a--Ascoli theorem, any sequence of times $t_i\ra \infty$ has a subsequence $t_{i(j)}$ such that $M_{t_{i(j)}}$ converges uniformly to a maximal submanfold $\widetilde{M}=\text{graph}\,\tilde{u}$ for some $\tilde{u}:\Omega \ra\bb{R}^m$.  \cite[Theorem 2.1]{YangLi} states that this limit is the unique maximal submanifold with the given boundary data
. Therefore, 
$\U(x,t)\ra \tilde{u}(x)$ uniformly as $t\ra \infty$ as otherwise we may construct a sequence of times contradicting subsequential convergence to $\tilde{u}$. Smooth convergence now follows using higher order regularity and Ehrling's Lemma. 
\end{proof}

\section{Global properties of entire solutions}\label{s:sensible}

In this section we consider graphs $M_0$ over $\bb{R}^n$ such that for some $C_v, C_H>0$
 \begin{equation}
 \qquad \underset{M_0}\sup \,v^2\leq C_v \quad \text{ and } \quad \underset{M_0}\sup \,\|H\|^2<C_H\ . 
 \label{safetyassume} 
 \end{equation}
Although we have constructed a solution for such initial data, since $M_0$ is noncompact we do not know that the solution from Theorem \ref{EntireExistence} is the only solution and it is plausible that quite wild behaviour is possible in general.   

To deal with this issue, we introduce the following natural class of entire solutions to spacelike MCF.

\begin{defses}\label{def:sensible} An entire solution of \eqref{EntireMCF} with initial data satisfying (\ref{safetyassume}) will be called \emph{\sensible} 
if there exists a continuous real-valued function $f=f(t)$ such that $f(0)=C_H$ and  \[\underset{M_t}\sup \, \|H\|^2\leq f(t).\]
\end{defses}  
\noindent We note {\sensible} solutions always exist due to Theorem \ref{EntireExistence}. We will show that {\sensible} solutions satisfy estimates which are similar to expander solutions.

First we demonstrate a noncompact maximum principle under the assumption of a uniform gradient bound. 
\begin{proposition}\label{GradMaxPrinc}
 Suppose that for all $t\in[0,T)$, $v^2\leq C_v$ uniformly on $M_t$. Suppose that $f\in C^\infty_\text{loc}(M^n\times[0,T))$ is a smooth function such that $f\geq 0$, \[C_f:=\underset{M_0} \sup f<\infty ,\] and there exists $\d>0$ such that
 \[\ho f \leq -\delta f^2 .\]
 Then for all $t\in[0,T)$ and $y\in M_t$ 
  \[f(y,t)\leq \frac 1 {C_f^{-1}+\d t} .\]
\end{proposition}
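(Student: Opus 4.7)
The plan is to compare $f$ to the ODE solution $\gamma(t) := (C_f^{-1}+\delta t)^{-1}$ by a cutoff maximum principle argument. Set $F(y,t):=f(y,t)-\gamma(t)$. Since $\gamma'(t)=-\delta\gamma^2$ and $\ho f\leq-\delta f^2$,
\begin{equation*}
\ho F \;\leq\; -\delta f^2+\delta\gamma^2 \;=\; -\delta(f+\gamma)F,
\end{equation*}
with $F(\cdot,0)\leq 0$ since $\sup_{M_0} f\leq C_f=\gamma(0)$. The goal is to show $F\leq 0$ on $M\times[0,T)$, which is exactly the claimed bound.

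The main obstacle is that $M$ is noncompact, so the maximum principle does not apply directly. I would exploit the uniform gradient bound $v^2\leq C_v$: from Appendix \ref{MCFasGraphs}, the induced metric $g_{ij}$ is uniformly equivalent to the Euclidean metric on the base $\bb{R}^n$ with constants depending only on $C_v$. Thus a standard compactly supported cutoff $\chi_R(x)=\chi(|x|^2/R^2)$ on $\bb{R}^n$, extended to the ambient space by composition with the projection onto the spacelike $\bb{R}^n$, pulls back to a function on $M_t$ with controlled derivatives. A short computation using Lemma \ref{GeneralC0evol} and the bounds on $g^{ij}$ yields
\begin{equation*}
|\nabla \chi_R|^2 \leq \frac{C}{R^2}, \qquad |\ho \chi_R|\leq \frac{C}{R^2},
\end{equation*}
where $C=C(C_v)$.

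Next I would apply a maximum principle to $G:=F\chi_R^2$, which has compact support on each $M_t$ and satisfies $G(\cdot,0)\leq 0$. At an interior positive maximum $(y_0,t_0)$, the identity $\nabla F=-F\nabla\chi_R^2/\chi_R^2$ allows one to eliminate the cross term in
\begin{equation*}
\ho G = \chi_R^2\,\ho F+F\,\ho\chi_R^2-2\langle\nabla F,\nabla\chi_R^2\rangle,
\end{equation*}
and combining with the evolution inequality for $F$ and the cutoff estimates gives
\begin{equation*}
0\leq \ho G \leq -\delta(f+\gamma)G+\frac{C F}{R^2}.
\end{equation*}
Using $f+\gamma\geq f-\gamma=F$ and $F=G/\chi_R^2$ (so $(f+\gamma)\chi_R^2\geq G$), this reduces at the maximum to $\delta G \leq C/R^2$, i.e.\ $G(y_0,t_0)\leq C(C_v)/(\delta R^2)$.

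Since $G$ vanishes on the spatial boundary of its support and $G(\cdot,0)\leq 0$, the parabolic maximum principle on the compact support yields $\sup G\leq C/(\delta R^2)$ throughout $M\times[0,T)$. Fixing any $(y,t)$ and sending $R\to\infty$ gives $\chi_R^2\to 1$ pointwise, so $F(y,t)\leq 0$, proving the desired estimate. The hard part is really the noncompact maximum principle step; once the cutoff estimates above are established, the algebraic manipulation at the maximum is routine, and the uniform gradient hypothesis is precisely what allows the Euclidean cutoff to have $R^{-2}$-scaling derivatives on $M_t$.
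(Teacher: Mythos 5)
Your proof is correct, and it takes a genuinely different route from the paper's.

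The paper applies the cutoff directly to $f$, forming $g = f\varphi_R^p(r)$ (where, incidentally, $r^2 = |x|^2 + \sum_A u_A^2 = \sum_i (x^i)^2$ is exactly the Euclidean distance in the base, so your choice of cutoff is the same quantity in different clothing). Because the evolution inequality for $f$ is quadratic, the paper needs a two-step argument: a first pass with $p=3$ to establish a crude uniform bound $\Lambda$ on $\varphi_R^3 f$, then a second pass with $p=5$ and an auxiliary parameter $\tau$ to show that $g$ itself satisfies a perturbed quadratic differential inequality $\ho g \leq -\delta(1-\tau)g^2$ outside a small region, finally sending $\tau = R^{-1/2}$ and $R\to\infty$. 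You instead subtract the ODE comparison solution $\gamma(t) = (C_f^{-1}+\delta t)^{-1}$ at the start, which turns the quadratic inequality into one that is \emph{linear} in $F = f-\gamma$ with a favourable coefficient $-\delta(f+\gamma)$. This is what lets you get away with a single cutoff pass: at a positive interior maximum of $G = F\chi_R^2$, the error terms from the cutoff are proportional to $F = G/\chi_R^2$, and the coefficient $(f+\gamma)\chi_R^2 \geq G$ exactly absorbs that $G$, leaving the clean estimate $\delta G \leq C/R^2$. The ingredient that makes both approaches work — using the uniform gradient bound $v^2 \leq C_v$ to control $|\nabla\chi_R|^2$ and $|\ho\chi_R|$ by $C(C_v)/R^2$ — is the same; your organization is arguably simpler because the ODE comparison is front-loaded rather than recovered at the end. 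One small point worth stating explicitly: the justification that at a first-time positive spatial maximum one may assume $\partial_t G \geq 0$ is the standard argument via the continuity of $t \mapsto \max_y G(y,t)$ on the compact support, which you gesture at with ``the parabolic maximum principle on the compact support''; making that explicit would close the only loose phrasing in the argument.
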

\begin{proof}
Let $\varphi_R:\bb{R}\ra[0,\infty)$ be a smooth cutoff function such that:
\begin{itemize}
 \item $|\varphi_R(x)|\leq 1$ and $\varphi_R(x)=1$ on $(-\infty,1]$, $\varphi_R(x)=0$ on $[1+R,\infty)$;
 \item $|\varphi_R'|\leq \frac{2}{R}$, $|\varphi_R''|<\frac{10}{R^2}$.
\end{itemize}
It is easy to see that such a cutoff function exists by considering cubic polynomials. 

For $r$ as in Corollary \ref{C0evol}, we see that when $r\geq1$, we have that
\[\left|\ho r\right| \leq C v^2, \qquad |\n r |\leq Cv^2 \]
where $C$ depends only on $n$. Assuming $R>>1$, $p\geq 3$ and writing $C$ for any bounded constant that depends only on $n$ and $p$ which may vary from line to line, we have that at any increasing maximum point of $g:=f \varphi_R^p(r)$,
\begin{flalign*}
 0&\leq\ho f\varphi_R^p  \\
 &\leq-\delta \varphi_R^p f^2-2\ip{\n f}{\n \varphi_R^p}\\
 &\qquad+f \left(p\varphi_R'\varphi_R^{p-1} \ho r - \left[p(p-1)(\varphi_R')^2\varphi_R^{p-2}+p\varphi_R^{p-1}\varphi_R'' \right]|\n r|^2\right)\\
 &=-\delta \varphi_R^p f^2 +f \left(p\varphi_R'\varphi_R^{p-1} \ho r +\varphi_R^{p-2}\left[ p(p+1)(\varphi_R')^2-p\varphi_R\varphi_R''\right] |\n r|^2\right)\\
 &\leq -\delta \varphi_R^p f^2 +fC(R^{-1}+R^{-2})\varphi_R^{p-2}v^2,
\end{flalign*}
where we used that $\n(f\varphi_R^p)=0$. We therefore see that for $p=3$,
\[\delta (\varphi_R^3f)^2\leq \varphi_R^4fC(R^{-1}+R^{-2})C_v,\]
which implies that
\[\varphi_R^3f\leq \max \left\{\frac{\sqrt{2}C(R^{-1}+R^{-2})C_v}{\sqrt{\d}}, C_f\right\}=:\Lambda.\]

Now setting $p=5$, the above evolution inequality for $g=f\varphi_R^p$ implies that for any $\tau\in(0,1)$, if \[g> \sqrt{\tau^{-1}\d^{-1} CC_v(R^{-1}+R^{-2})\Lambda}\] then
\[\ho g \leq -\d g^2 +CC_v(R^{-1}+R^{-2})\Lambda \leq -\d(1-\tau)g^2,\]
where we used that $f\geq g$ everywhere. Therefore, 
\[g\leq \max\left\{\frac{1}{(\sup_{M_0} g)^{-1} +\d(1-\tau)t}, \sqrt{\tau^{-1}\d^{-1} CC_v(R^{-1}+R^{-2})\Lambda}\right\}.\]
Setting $\tau=R^{-\frac{1}{2}}$ and sending $R\ra \infty$ now implies that on $M_t\cap\mathcal{C}_1$, 
\[g\leq \frac{1}{(\sup_{M_0} f)^{-1} +\d t}.\]
As the center of the cylinder was arbitrary, this estimate holds everywhere.
\end{proof}

The next proposition shows that {\sensible} solutions  satisfy estimates similar to expander solutions.
\begin{proposition}\label{prop:tame.est}
Suppose that on $M_0$, \eqref{safetyassume} holds and additionally that we have a solution to \eqref{EntireMCF} that is {\sensible}. Then for all $t\in[0,\infty)$,
\[v^2\leq C_v, \qquad 
\|H\|^2 \leq \frac{1}{C_H^{-1} +2n^{-1}t}\quad \text{ and }\quad \|\II\|^2 \leq \frac{m}{2t}\ .\] 
\end{proposition}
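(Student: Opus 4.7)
The plan is to prove the three estimates in turn, leveraging the evolution inequalities of Section \ref{s:evol} and Proposition \ref{GradMaxPrinc}. For the first assertion, $v^2 \leq C_v$, I first observe that the \sensible{} hypothesis combined with the local gradient estimate of Lemma \ref{localw_KHbound} (applied on cylinders $\mathcal{C}_R$, letting $R \to \infty$, with the continuous bound $\|H\|^2 \leq f(t)$) already ensures that $v^2$ is finite on each $M_t$, so in particular the induced metric is uniformly comparable to the Euclidean one and $M_t$ is complete. To obtain the sharp preservation, I sum the first estimate of Corollary \ref{wkevolest} over $A$ and use $|\n v^2|^2 \leq m \sum_A |\n w_A^2|^2$ together with $w_A^2 \leq v^2$ to deduce
\[
\ho v^2 \leq -\sum_A \frac{|\n w_A^2|^2}{w_A^2} \leq -\frac{|\n v^2|^2}{m v^2}.
\]
A short direct computation then shows that $\phi(v^2) := (v^2)^{(m-1)/m}$ (or $\log v^2$ when $m=1$) satisfies $\ho \phi(v^2) \leq 0$, so $\phi(v^2)$ is a bounded subsolution of the heat equation on the complete manifold $M_t$; the weak maximum principle for bounded heat subsolutions then gives $\sup_{M_t} \phi(v^2) \leq \sup_{M_0}\phi(v^2)$, and monotonicity of $\phi$ yields $v^2 \leq C_v$.

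Once the uniform gradient bound is in place, the remaining two assertions follow from (adaptations of) Proposition \ref{GradMaxPrinc}. For the mean curvature decay, I apply Proposition \ref{GradMaxPrinc} directly to $f = \|H\|^2$ with $\delta = 2/n$ and $C_f = C_H$, using the initial bound $\sup_{M_0}\|H\|^2 \leq C_H$ together with $\ho \|H\|^2 \leq -(2/n)\|H\|^4$ from Corollary \ref{evolcurv2}. Since no initial bound on $\|\II\|^2$ is assumed, the decay $\|\II\|^2 \leq m/(2t)$ instead requires a direct cutoff argument, adapting the proof of Proposition \ref{GradMaxPrinc} to the function $G := t\|\II\|^2 \varphi_R^p(r)$. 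At an interior increasing maximum of $G$, using $\ho\|\II\|^2 \leq -(2/m)\|\II\|^4$ from Corollary \ref{evolcurv2}, together with cutoff error terms of size $O(\varphi_R^{p-2}(R^{-1}+R^{-2}))$ controlled via $v^2 \leq C_v$, one obtains after rearrangement
\[
\tfrac{2}{m} G \leq \varphi_R^p + Ct(R^{-1}+R^{-2}),
\]
and letting $R \to \infty$ gives $t\|\II\|^2 \leq m/2$ at every point.

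The main technical obstacle lies in the sharp preservation of $v^2 \leq C_v$ in the first step: Proposition \ref{GradMaxPrinc} cannot be applied to $v^2$ itself, since the evolution inequality lacks the required $-\delta (v^2)^2$ term, and one must instead invoke a weak maximum principle for the bounded heat subsolution $\phi(v^2)$ on the noncompact complete manifolds $M_t$. Verifying the applicability of this principle relies crucially on the completeness of $M_t$ and the a priori gradient control, both of which in turn depend on the \sensible{} hypothesis via Lemma \ref{localw_KHbound}.
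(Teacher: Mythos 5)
Your treatment of the mean curvature decay coincides with the paper's (Proposition~\ref{GradMaxPrinc} applied to $\|H\|^2$ with $\delta=2/n$), but the other two steps differ substantially. For $v^2\leq C_v$, the paper runs a bootstrap: Lemma~\ref{localw_KHbound} and the \sensible{} bound on $\|H\|^2$ give $v^2<2C_v$ on a short interval $[0,\tilde T)$ with $\tilde T=\tilde T(n,C_H)$; on that interval Proposition~\ref{GradMaxPrinc} gives the decay of $\|H\|^2$; this is then fed into the interpolated inequality $\ho w_A^2\leq -(1+\delta)|\n w_A^2|^2/w_A^2 + 4n\delta\|H\|^2 w_A^2$ (an interpolation of the two estimates in Corollary~\ref{wkevolest}) together with Lemma~\ref{evolutioncutoff}, and a limiting argument in $\delta$ and $R$ recovers the sharp $v^2\leq C_v$; since $\tilde T$ depends only on $n,C_v$, one iterates. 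Your observation that summing the first estimate of Corollary~\ref{wkevolest} and Cauchy--Schwarz gives $\ho v^2\leq -|\n v^2|^2/(mv^2)$, so that $\phi(v^2)=(v^2)^{(m-1)/m}$ (or $\log v^2$ for $m=1$) is a heat subsolution, is a genuinely different and shorter route that sidesteps the interpolation and the bootstrap in $t$ entirely.

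The gap is exactly where you flag it: you invoke ``the weak maximum principle for bounded heat subsolutions'' on the noncompact, time-varying $M_t$, and completeness plus boundedness of $\phi(v^2)$ does not by itself give this. The paper avoids this by always localizing with explicit cutoffs. The cleanest way to close the gap in your setting is to redo the cutoff argument from Proposition~\ref{GradMaxPrinc} for the function $F_\e:=\phi(v^2)\varphi_R^p(r)-\e t$. Lemma~\ref{localw_KHbound} (applied to the \sensible{} bound $\|H\|^2\leq f(t)$, via $F_T:=\max_{[0,T]}f$) yields $v^2\leq C_v e^{2F_T t}$ on $[0,T]$, so $\phi(v^2)$ is bounded by some $C(T)$ there. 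At a spacetime interior increasing maximum of $F_\e$ the absence of a $-\delta\phi^2$ term is compensated by the boundedness of $\phi$: the cutoff errors are $O(C(T)(R^{-1}+R^{-2}))$, so choosing $R$ large makes $\ho F_\e<0$, forcing the maximum to $t=0$; letting $R\to\infty$ then $\e\to 0$ gives $\sup_{M_t}\phi(v^2)\leq\sup_{M_0}\phi(v^2)$. (Equivalently, since the gradient bound makes the graph metric uniformly comparable to the Euclidean metric, one could pull back to $\bb{R}^n\times[0,T]$ and cite a Phragm\'en--Lindel\"of type maximum principle for uniformly parabolic operators with bounded coefficients.) Without such an argument, the proof of the first estimate is incomplete.

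Your treatment of $\|\II\|^2\leq m/(2t)$ is actually slightly more careful than the paper's. The paper simply says ``apply Proposition~\ref{GradMaxPrinc} to $\|\II\|^2$'', but that proposition requires $\sup_{M_0}\|\II\|^2<\infty$, which is not part of \eqref{safetyassume}. Running the cutoff argument directly on $G=t\|\II\|^2\varphi_R^p$, as you do, removes the need for an initial curvature bound; the computation at an interior increasing maximum and the rearrangement are fine, and letting $R\to\infty$ gives $t\|\II\|^2\leq m/2$ cleanly.
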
 
\begin{proof}
 Since the function $f$ given by Definition \ref{def:sensible} of a {\sensible} solution $\U$  is continuous, there exists a maximal time $\hat T$ such that for all $t\in[0,\hat T)$, $\underset{M_t}\sup\|H\|^2<2C_H$. 
We may now apply Lemma \ref{localw_KHbound} to obtain  the existence of $p=p(n)$ such that at the point $y\in M_t$ with $r(y)=0$ (where $r$ is as in Corollary \ref{C0evol}), 
\[v^2(y,t)\leq C_v\left(\frac{R^2}{R^2 +\|\U\|^2 - 2nt}\right)^pe^{4C_Ht}\]
for all $t<\min\left\{\hat T, \frac{R^2}{2n}\right\}$. We now choose $R=1$ and $\tilde{T}$  to be sufficiently small so that for all $t<\tilde{T}$, $\left(\frac{1}{1 - 2nt}\right)^pe^{4C_H t}<2$. Clearly $\tilde{T}$ depends only on $C_H$ and $n$. Let $\ov{T}=\min\{\tilde{T}, \hat{T}\}$. As the origin may be chosen arbitrarily, we see that for $t<\ov T$, $v<2C_v$ everywhere on $M_t$, and we may apply Proposition \ref{GradMaxPrinc} to $\|H\|^2$ (using Corollary \ref{evolcurv2}) to obtain that on $M_t$, 
\begin{equation}\label{eq:H.Dirichlet.est}
\|H\|^2\leq \frac{1}{C_H^{-1} +\frac{2t}{n}}  .
\end{equation}
We therefore see that $\ov T = \tilde{T}$.

We now use \eqref{eq:H.Dirichlet.est} and Corollary \ref{wkevolest} 
to estimate for any $\d\in(0,(2n)^{-1}]$,
\[\ho w_A^2 \leq -\left(1+\d \right)\frac{|\n w_A^2 |^2}{w_A^2 } +4n\d\frac{1}{C_H^{-1}+\frac 2 nt}w_A^2 .\]

Applying Lemma \ref{evolutioncutoff} to $w_A^2(C_H^{-1}+\frac 2 n t)^{-4n\d }$ on $\mathcal{Q}_R$ 
we see that on $M_t\cap \{x\in \bb{R}^{n,m}| r(x)=0\}$, 
\[v^2\leq C_v\left(\frac{R^2}{R^2 - 2nt}\right)^\frac{1+\d}{\d}\left[1+\frac{2C_H}{n}t\right]^{4n\d } .\]
Suppose now that for some $0<t_0<\tilde T$, and  $x_0\in M_{t_0}$ with $r(x_0)=0$ that
\[v(x_0,t_0) = C_v+\e.\]
Then we may form a contradiction, for example by choosing $\d$ sufficiently small so that $\left[1+\frac{2C_H}{n}t_0\right]^{4n\d }\leq \sqrt{1+\frac \e 2}$, and then choosing $R$ sufficiently large so that $\frac{1}{(1-\frac{2nt_0}{R^2})^\frac{1+\d}{\d}}\leq \sqrt{1+\frac \e 2}$. Therefore, we see that for all $t\in [0,\tilde{T})$,
\[\underset{M_t} \sup \,v^2 \leq C_v.\]
As $\tilde{T}$ depended only on $n$ and $C_v$, we may iterate the above process to get the statement.  The final estimate comes from applying Proposition \ref{GradMaxPrinc} to $\|\II\|^2$, using the evolution equation in Corollary \ref{evolcurv2}.
\end{proof}
\begin{cor}\label{sensibleestimates}
 Suppose that on $M_0$ \eqref{safetyassume} holds and that we have a solution to \eqref{EntireMCF} that is {\sensible}. Then for all $k>1$ there exists a constant $c_k=c_k(n,m,k,C_v)$ such that for all $t\in[0,\infty)$, 
 \[\|\n^k\II\|^2t^{k+1}\leq c_m.\]
\end{cor}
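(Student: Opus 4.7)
The plan is to proceed by induction on $k$, using the standard time-weighted Bernstein-type quantities $\phi_j := t^{j+1}\|\n^j\II\|^2$. The base case $k=0$ (i.e.~$\phi_0 \leq m/2$) is the final estimate of Proposition \ref{prop:tame.est}, and in the inductive step I assume $\phi_j \leq c_j$ for all $j<k$.

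The key input will be an evolution inequality for $\|\n^k\II\|^2$ of the schematic form
\[\ho \|\n^k\II\|^2 \leq -2\|\n^{k+1}\II\|^2 + C\sum_{i+j+l=k}|\n^i\II|\,|\n^j\II|\,|\n^l\II|\,|\n^k\II|,\]
with $C=C(n,m,k)$; this is the analogue of the Euclidean codimension-one identity, whose structure is dictated by scaling and which should follow from iterating the commutators already used in Lemma \ref{evolcurv}. The terms in which one of $i,j,l$ equals $k$ contribute at most $C\|\II\|^2\|\n^k\II\|^2 \leq Ct^{-1}\|\n^k\II\|^2$ by Proposition \ref{prop:tame.est}, whilst the remaining terms are controlled by the inductive hypothesis and collectively give $Ct^{-(k+3)/2}\|\n^k\II\|$. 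Multiplying through by $t^{k+1}$ yields
\[\ho \phi_k \leq \frac{C}{t}\phi_k + \frac{C}{t}\sqrt{\phi_k} - 2t^{k+1}\|\n^{k+1}\II\|^2.\]
An analogous computation for $\phi_{k-1}$, together with the inductive bound $\phi_{k-1}\leq c_{k-1}$, produces
\[\ho \phi_{k-1} \leq \frac{C'}{t} - \frac{2}{t}\phi_k,\]
where the crucial $-(2/t)\phi_k$ term is the manifestation of the $-2\|\n^k\II\|^2$ term in the evolution of $\|\n^{k-1}\II\|^2$ after the $t^k$ weight.

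For a sufficiently large constant $A=A(n,m,k)$, the combined quantity $\psi_k := \phi_k + A\phi_{k-1}$ therefore satisfies
\[\ho \psi_k \leq \frac{1}{t}\bigl(-\phi_k + C\sqrt{\phi_k} + C''\bigr).\]
Since the flow is entire rather than compact, to conclude I would multiply $\psi_k$ by a power of the cutoff $\varphi_R(r)$ used in the proof of Proposition \ref{GradMaxPrinc} (with $r$ as in Corollary \ref{C0evol}), exploiting the uniform gradient estimate $v^2\leq C_v$ to control $\ho r$ and $|\n r|$. The same computation as in that proof, evaluated at an increasing maximum of $\psi_k\varphi_R^p$ and then taken in the limit $R\to\infty$, yields $\phi_k\leq c_k(n,m,k,C_v)$, closing the induction.

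The main obstacle I foresee is establishing the evolution inequality for $\|\n^k\II\|^2$ with proper care for the indefinite normal metric and the induced normal connection (as opposed to simply quoting the Euclidean codimension-one version); once the $\ast$-structure of the cubic/quartic curvature terms is in hand, the rest is the weighted maximum principle iteration sketched above, carried out exactly as in Proposition \ref{GradMaxPrinc}.
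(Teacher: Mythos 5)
Your proposal is correct and takes the same route as the paper: the paper's proof is a citation to \cite[Theorem 3.4]{EckerHuiskenInteriorEstimates} (together with \cite[Proposition 3.7]{EckerEntire} for the validity of the higher-codimension evolution inequalities), and what you have written out is precisely the induction with the weighted quantities $\phi_k=t^{k+1}\|\n^k\II\|^2$, the auxiliary combination $\psi_k=\phi_k+A\phi_{k-1}$ absorbing the bad terms via the good $-2\|\n^{k+1}\II\|^2$ contribution, and the cutoff maximum principle that constitutes the proof of that theorem.
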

\begin{proof}
 As noted in \cite[Proposition 3.7]{EckerEntire} the evolution equations for $\|\n^k\II\|^2$ may be estimated as in the Euclidean graphical case, and we have the same estimate on $\|\II\|^2$. The proof of \cite[Theorem 3.4]{EckerHuiskenInteriorEstimates} then carries through identically.
\end{proof}

The estimates in Proposition \ref{prop:tame.est} have three straightforward, interesting corollaries.

\begin{cor}\label{hyperbolicexpander}
The expanding quasi-sphere given by Definition \ref{quasisphere} acts as an outer  barrier to any {\sensible} solution to \eqref{EntireMCF}. 
\end{cor}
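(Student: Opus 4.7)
The plan is to show that $f(x,t) := |X(x,t) - p|^2 + R^2 + 2nt$ remains nonnegative along the flow, which is precisely the condition $M_t \subset I_t$. By Corollary~\ref{C0evol}, $\ho f = 0$, and the initial hypothesis $M_0 \subset I_0$ translates to $f(\cdot, 0) \geq 0$. So all that is needed is a noncompact parabolic minimum principle to propagate the inequality forward in time.

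The key additional input is Proposition~\ref{prop:tame.est}, which for {\sensible} solutions gives a global gradient bound $v^2 \leq C_v$ uniform in time. Standard graph computations (cf.~Appendix~\ref{MCFasGraphs}) show that this yields a uniform Lipschitz constant strictly less than $1$ for the timelike component of the graph, i.e.~$\|\U(x,t) - \U(y,t)\|^2 \leq (1-c)|x-y|^2$ for some $c = c(C_v) > 0$. Combined with the height bound $\|\U(x,t) - \U_0(x)\| \leq \sqrt{2nt}$, this produces an estimate of the form $|X(x,t) - p|^2 \geq c|x|^2 - C(T)$ on $\bb{R}^n \times [0,T]$, so $f \to \infty$ as $|x| \to \infty$ uniformly on compact time intervals.

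Once this properness is in hand, the conclusion follows by a standard perturbation argument. For each $\epsilon > 0$, the function $f_\epsilon := f + \epsilon(1+t)$ satisfies $\ho f_\epsilon = \epsilon > 0$ and $f_\epsilon(\cdot,0) > 0$. By the spatial growth, $f_\epsilon$ attains its infimum on $\bb{R}^n \times [0,T]$. If this infimum were nonpositive, it would be attained at an interior point $(x_0, t_0)$ with $t_0 > 0$, where $\partial_t f_\epsilon \leq 0$ and $\Delta f_\epsilon \geq 0$, contradicting $\ho f_\epsilon > 0$. Letting $\epsilon \to 0$ and $T \to \infty$ then gives $f \geq 0$ everywhere. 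The main technical point is extracting the Lipschitz estimate from $v^2 \leq C_v$; an alternative route, modeled on the cutoff argument of Proposition~\ref{GradMaxPrinc} using $\varphi_R(r)$ together with $\ho r^2 \leq -2n$ and $|\n r|^2 \leq Cv^2$, would bypass any explicit growth on $f$ and localize the maximum principle directly.
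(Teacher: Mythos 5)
Your strategy coincides with the paper's: use the estimates of Proposition~\ref{prop:tame.est} (the only handle available for an arbitrary {\sensible} solution) to show that $h_p := |X-p|^2 + R^2 + 2nt$ is eventually nonnegative outside a cylinder $\mathcal{C}_\rho(p)$, then run a maximum principle on the resulting compact region. The one genuine issue is a circularity in your first route. The height bound $\|\U(x,t)-\U_0(x)\|\leq\sqrt{2nt}$ that you invoke is precisely the statement that the expanding light cone (a quasi-sphere expander of starting square radius $0$) is an outer barrier --- a special case of the corollary you are trying to prove. In the paper it is obtained (Theorem~\ref{thm:entire}, Lemma~\ref{C0EstimatesNeumann}(c)) by exactly this barrier comparison, so it is not available a priori for an arbitrary {\sensible} solution.

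The fix is to replace it with a speed bound drawn directly from Proposition~\ref{prop:tame.est}. From Appendix~\ref{MCFasGraphs}, $\hat g_{AB}\,\partial_t\U^A = \ip{H}{e_B}$; the matrix $\hat g$ satisfies $\hat g\leq -I$, so $\hat g^{-1}$ has operator norm at most $1$, and $|\ip{H}{e_B}|=|\ip{H}{e_B^\perp}|\leq \|H\|\,w_B\leq v\|H\|$. Hence $\|\partial_t\U\|\leq v\|H\|\leq\sqrt{C_vC_H}$, giving $\|\U(x,t)-\U_0(x)\|\leq\sqrt{C_vC_H}\,t$. Together with the Lipschitz estimate from $v^2\leq C_v$, this yields the required growth of $h_p$ outside a cylinder of radius $\rho(T,C_v,C_H)$, with no circularity. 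Your alternative cutoff route via $\varphi_R(r)$, modeled on Proposition~\ref{GradMaxPrinc}, sidesteps this issue entirely and is closer to the localisation the paper actually performs.
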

\begin{proof}
 We consider the function $h_p(x,t) = |x-p|^2+2nt+R^2$, and we suppose that $\inf_{M_0}h\geq 0$. The estimates of Proposition \ref{prop:tame.est} imply that for any $T>0$ there exists $\rho=\rho(T, C_v, C_H)$ such that for all $t\in[0,T)$, $\inf_{M_t\setminus\mathcal{C}_\rho(p)} h_p\geq 0$. Therefore, we may apply the maximum principle on $M_t\cap\mathcal{C}_\rho$ to see that this is preserved on the time interval $[0,T)$. As $T$ was arbitrary, the statement now follows.
\end{proof}

\begin{cor}
 Let $M_t$ be an entire {\sensible} solution to \eqref{EntireMCF} and
suppose there exists $R>0$ such that for all $t>0$, $M_t\cap B_R\neq\emptyset$; 
 i.e.~that $M_t$ does not escape to infinity. Then there exists a sequence $t_i\ra\infty$ such that $M_{t_i}$ converges to an $n$-plane. 
\end{cor}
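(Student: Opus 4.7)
The plan is to extract a convergent subsequence of $M_{t_i}$ using the a priori estimates established for {\sensible} solutions, and then show the limit is flat. Concretely, by hypothesis we may pick $t_i \to \infty$ and points $p_i = (x_i, y_i) \in M_{t_i} \cap B_R$, so that $|x_i| \leq R$ and $\|y_i\| \leq R$. Passing to a subsequence we may assume $(x_i, y_i) \to (x_\infty, y_\infty)$ with $|x_\infty|, \|y_\infty\| \leq R$. Write $M_{t_i} = \mathrm{graph}\,\U(\cdot, t_i)$ with $\U(x_i, t_i) = y_i$.

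Next, I would obtain $C^\infty_\mathrm{loc}$ control on the family $\{\U(\cdot, t_i)\}$. The uniform gradient bound $v^2 \leq C_v$ from Proposition \ref{prop:tame.est} translates, by the equivalence in Appendix \ref{MCFasGraphs}, into a uniform bound on $|D\U(\cdot, t_i)|$. Combined with the anchoring condition $\U(x_i, t_i) = y_i \to y_\infty$, this gives uniform $C^1$ bounds on $\U(\cdot, t_i)$ on every compact subset of $\bb{R}^n$. Higher regularity follows from Proposition \ref{prop:tame.est} and Corollary \ref{sensibleestimates}: for $t_i \geq 1$ we have $\|\II\|^2 \leq m/2$ and $\|\n^k\II\|^2 \leq c_k$ uniformly, and these curvature bounds, together with the bound on $v$, yield uniform $C^{k+2}_\mathrm{loc}$ bounds on $\U(\cdot, t_i)$ for all $k$. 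By Arzel\`a--Ascoli and a diagonal argument, a subsequence (still denoted $t_i$) satisfies $\U(\cdot, t_i) \to \U_\infty$ in $C^\infty_\mathrm{loc}(\bb{R}^n)$ for some smooth entire spacelike $\U_\infty$.

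Finally, I would identify the limit. Since $\|\II\|^2_{M_{t_i}} \leq m/(2t_i) \to 0$ uniformly by Proposition \ref{prop:tame.est}, and since the $C^\infty_\mathrm{loc}$ convergence of graphs implies convergence of the induced second fundamental forms, we conclude that $M_\infty := \mathrm{graph}\,\U_\infty$ has $\II \equiv 0$. A submanifold of $\bb{R}^{n,m}$ with vanishing second fundamental form is totally geodesic, i.e.~an affine $n$-plane in $\bb{R}^{n,m}$; since $M_\infty$ is an entire spacelike graph through the point $(x_\infty, y_\infty)$, it is precisely such a plane. Hence $M_{t_i} \to M_\infty$ in $C^\infty_\mathrm{loc}$, where $M_\infty$ is an $n$-plane, as claimed.

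The only step requiring any care is the passage from the pointwise curvature decay to the uniform $C^0_\mathrm{loc}$ control needed to apply Arzel\`a--Ascoli: without the hypothesis $M_t \cap B_R \neq \emptyset$, the graphs could drift off to infinity and no subsequential limit would exist. This is where the nonescape assumption is used, but once the anchoring points $p_i$ are fixed the bounded-gradient estimate immediately converts pointwise control into locally uniform control, so even this is a mild obstacle rather than a serious one.
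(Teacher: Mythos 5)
Your proof is correct and is exactly the direct argument the paper implies when it calls this a ``straightforward'' consequence of Proposition~\ref{prop:tame.est}: the nonescape hypothesis provides anchoring points, the uniform bounds on $v$, $\|\II\|$ and $\|\n^k\II\|$ from Proposition~\ref{prop:tame.est} and Corollary~\ref{sensibleestimates} give $C^\infty_{\mathrm{loc}}$ precompactness, and the decay $\|\II\|^2\leq m/(2t)$ forces the limit to be totally geodesic, hence a spacelike $n$-plane. Nothing to add.
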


\begin{cor}
 There are no entire shrinking or translating solutions to spacelike MCF in $\bb{R}^{n,m}$ with bounded $v$ and $H$.
\end{cor}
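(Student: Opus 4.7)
The plan is to treat the two cases separately. In each I combine the respective soliton equation with the hypothesis $\|H\|\leq C_H$ to force $H\equiv 0$; the soliton equation itself then reduces $M_0$ to an affine hyperplane, which is only a trivial instance of the soliton type under consideration.

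For a translating solution $M_t=M_0+tV$ with $V\in\bb{R}^{n,m}$ constant, the MCF equation gives the soliton relation $H=V^\perp$. Both $v$ and $\|H\|$ are invariant under the translation, so the hypotheses on $M_0$ propagate to uniform bounds on all of $M_t$ for $t\geq 0$; in particular, $M_t$ is {\sensible} in the sense of Definition \ref{def:sensible} with constant $f(t)\equiv\sup_{M_0}\|H\|^2$. Proposition \ref{prop:tame.est} then yields
\[\sup_{M_t}\|H\|^2 \leq \bigl(C_H^{-1}+\tfrac{2}{n}t\bigr)^{-1}\xrightarrow{t\to\infty}0,\]
which, combined with the $t$-independence of $\sup_{M_t}\|H\|$ for a translator, forces $H\equiv 0$ and hence $V^\perp=0$. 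Thus $V$ is everywhere tangent to $M_0$, so the ``translation'' is only a reparametrisation rather than a genuine translator.

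For a shrinking solution, after translating the apex to the origin we may write $M_t=\sqrt{(T-t)/T}\,M_0$ on $[0,T)$, with soliton equation $H+\tfrac{X^\perp}{2T}=0$ on $M_0$. Because mean curvature has dimension of inverse length while $v$ is scale invariant, we have $\sup_{M_t}\|H\|=\sqrt{T/(T-t)}\,\sup_{M_0}\|H\|$, which can stay bounded as $t\to T^-$ only when $\sup_{M_0}\|H\|=0$. Then $H\equiv 0$ and the soliton equation gives $X^\perp\equiv 0$; that is, the position vector is everywhere tangent to $M_0$, so $M_0$ is invariant under dilations from the origin. A smooth entire spacelike graph with this property has graph function positively homogeneous of degree one and smooth at $0$, hence linear (by Euler's relation), so $M_0$ is a hyperplane and $M_t\equiv M_0$. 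The only subtle point is confirming that a translator genuinely fits Definition \ref{def:sensible}; once this is in hand Proposition \ref{prop:tame.est} closes that case, while the shrinking case needs only the scaling law for $\|H\|$ at the singular time and requires no further PDE estimates.
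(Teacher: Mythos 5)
Your translator argument is correct and is the paper's intended route: translation invariance of $v$ and $\|H\|$ makes the translating solution tame, Proposition~\ref{prop:tame.est} forces $\|H\|^2\to 0$, and $t$-independence of $\sup_{M_t}\|H\|$ then gives $H\equiv 0$, hence $V^\perp\equiv 0$.

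Your shrinker argument, however, has a gap. You deduce $\sup_{M_0}\|H\|=0$ purely from the blowup $\sup_{M_t}\|H\|=\sqrt{T/(T-t)}\,\sup_{M_0}\|H\|\to\infty$ as $t\to T^-$, but this only yields a contradiction if one assumes $\|H\|$ is bounded over the \emph{whole} existence interval $[0,T)$, which trivializes the shrinker case (any self-shrinker with nonzero curvature fails this automatically, no PDE input needed). The natural reading of the corollary -- and the one parallel to your translator case -- is that $v$ and $H$ are bounded on the self-shrinker $M_0$ at one time slice. Under that reading the scaling law alone does not close the case, and you should do exactly what you did for the translator: observe that the shrinking flow is \sensible{} on $[0,T)$ since $v$ is scale-invariant and $\sup_{M_t}\|H\|^2=\tfrac{T}{T-t}\sup_{M_0}\|H\|^2$ is a continuous bound, apply Proposition~\ref{prop:tame.est} to get $\|H\|^2\leq\bigl(C_H^{-1}+\tfrac{2}{n}t\bigr)^{-1}$, and note this decay contradicts the scaling growth for $t>0$ unless $\sup_{M_0}\|H\|^2=0$. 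This matches the paper's framing of the statement as a ``corollary of Proposition~\ref{prop:tame.est}.'' Your concluding geometry in both cases (tangential $V$ preserves $M_0$; $X^\perp\equiv 0$ plus Euler's relation and smoothness at the apex forces a plane) is correct, if slightly more than the bare nonexistence claim requires.
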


\section{Convergence of entire solutions}\label{s:sensibleconvergence}
In this section we prove convergence of {\sensible} (in the sense of Definition \ref{def:sensible}) entire solutions to spacelike MCF  (\ref{EntireMCF}) with initial data $M_0$ which is asymptotic to a spacelike cone.  

Suppose that $L$ is a uniformly spacelike cone centred at the origin, smooth away from the origin, given graphically by functions $U:\bb{R}^n\to \bb{R}^m$ such that for all $\l>0$ and $x\in\bb{R}^n$, $U(\l x) = \l U(x)$. 

\begin{defses}\label{def:asymptotic.cone}
We say that $M_0$, given by the graph of $\U_0$, is asymptotic to the cone $L$ if
\begin{equation}\label{asyptoticconvergence}\underset{R\ra\infty}\lim\,\underset{\bb{R}^n \setminus B_R}\sup |\U_0 - U| =0 .
\end{equation}
Note that, in this setting, $L$ satisfies the same gradient estimate as $M_0$.
\end{defses}

We recall how to renormalise solutions to MCF in the standard way. We write $s=\frac 1 2 \log(2t+1)$, and define
\begin{equation}\label{eq:renormalise}
\widetilde{X}(x,s)= \frac{1}{\sqrt{1+2t}}X(x,t).
\end{equation}
We will write all quantities for the rescaled flow with a tilde to avoid confusion.
We also recall that submanifolds satisfying
\begin{equation}\label{eq:self.exp}
\widetilde{H} = \widetilde{X}^\perp
\end{equation} 
 are called \emph{self-expanders} and are critical points of the renormalised MCF.

We now state the following convergence statement, whose proof shall take up the remainder of the section.

\begin{theorem}\label{ConvergenceTheorem}
 Suppose that $M_t$ is a {\sensible} entire solution to spacelike MCF \eqref{EntireMCF} such that $M_0$ is asymptotic  to the cone $L$ as in Definition \ref{def:asymptotic.cone}. For any sequence $t_i\ra\infty$ there is a subsequence (also labelled $t_i$) and a self-expander $\widetilde{M}_{\infty}$ 
  such that the renormalised flow satisfies $\widetilde{M}_{t_i}\rightarrow \widetilde{M}_{\infty}$ in $C_\text{loc}^\infty$ as $i\ra\infty$.  
\end{theorem}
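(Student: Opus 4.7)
The plan is to work in the renormalised variables from \eqref{eq:renormalise}, extract a $C^\infty_{\mathrm{loc}}$ limit via Arzel\`a--Ascoli, and identify it as a stationary point of the renormalised flow. First I would translate the {\sensible} estimates of Proposition \ref{prop:tame.est} and Corollary \ref{sensibleestimates} into the rescaled picture. The gradient $v$ is scale invariant, while $\widetilde\II = \sqrt{1+2t}\,\II$ and $\widetilde\n^k\widetilde\II = (1+2t)^{(k+1)/2}\n^k\II$. The decay rates $\|\II\|^2\leq m/(2t)$ and $\|\n^k\II\|^2 t^{k+1}\leq c_k$ from Proposition \ref{prop:tame.est} and Corollary \ref{sensibleestimates} therefore give uniform bounds $v\leq C_v$ and $\|\widetilde\n^k\widetilde\II\|\leq C_k$ on $\widetilde M_s$ for $s$ bounded below.

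Next I would prove a local height bound for the renormalised graph functions. Writing $\widetilde M_s=\mathrm{graph}\,\widetilde\U(\cdot,s)$, the relation between graphs is $\widetilde\U(\widetilde x, s)=e^{-s}\U(e^s\widetilde x, t)$ with $e^{2s}=1+2t$. The estimate $\|\U(\cdot,t)-\U_0\|\leq\sqrt{2nt}$ from Theorem \ref{EntireExistence}, together with the asymptotic cone condition \eqref{asyptoticconvergence} and the homogeneity $U(\l x)=\l U(x)$, yield $e^{-s}\U_0(e^s\widetilde x)\to U(\widetilde x)$ locally uniformly as $s\to\infty$, so $\|\widetilde\U(\widetilde x, s)\|\leq \|U(\widetilde x)\|+\sqrt{n}+o(1)$ on compact subsets of $\bb R^n$. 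Combining this local height bound with the gradient and curvature estimates, Arzel\`a--Ascoli extracts a subsequence $s_i$ along which $\widetilde\U(\cdot, s_i)\to\widetilde\U_\infty$ in $C^\infty_{\mathrm{loc}}(\bb R^n)$, giving a smooth spacelike limit $\widetilde M_\infty$ with bounded gradient.

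The final step is to verify that $\widetilde M_\infty$ satisfies $\widetilde H=\widetilde X^\perp$. Under the renormalisation, unparametrised MCF becomes $(\partial_s\widetilde X)^\perp=\widetilde H-\widetilde X^\perp$, so self-expanders are exactly the stationary points of the rescaled flow. To force $\partial_s\widetilde X\to 0$ on $\widetilde M_\infty$, I would set up a monotonicity-type integral identity for the rescaled spacelike MCF: differentiating $F(s)=\int_{\widetilde M_s}\Phi(\widetilde X)\,\psi(\widetilde X)\,d\widetilde\mu$ for a suitable weight $\Phi$ (analogous to Huisken's Gaussian, but adapted to the indefinite metric) and a compactly supported cutoff $\psi$ should yield $\dot F(s)=-\int\|\widetilde H-\widetilde X^\perp\|^2\Phi\psi\,d\widetilde\mu+(\text{cutoff errors})$, with the errors controlled by the uniform estimates, so that integration in $s$ gives $\int_0^\infty\int\|\widetilde H-\widetilde X^\perp\|^2\psi\,d\widetilde\mu\,ds<\infty$ and hence $\|\widetilde H-\widetilde X^\perp\|^2\to 0$ locally along the subsequence.

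The main obstacle is precisely this last step: in the Lorentzian setting $|\widetilde X|^2$ can change sign on $\widetilde M_s$, so one cannot simply copy the Euclidean Gaussian weight, and identifying the correct positive weight $\Phi$ (perhaps involving $\|\widetilde X^\perp\|^2$ rather than $|\widetilde X|^2$, so as to take advantage of the induced Riemannian structure on $\widetilde M_s$) and controlling the boundary-in-$\widetilde X$ error with the estimates of Proposition \ref{prop:tame.est} is the technical crux. A possible alternative, bypassing monotonicity, is to exploit parabolic scale invariance: since $X_\l(x,t):=\l^{-1}X(x,\l^2 t)$ also solves MCF, one has $\widetilde X(\cdot,s)=X_\l(\cdot,(1-\l^{-2})/2)$ with $\l=e^s$, whose initial data $\l^{-1}M_0$ converges to the cone $L$ as $s\to\infty$; any $C^\infty_{\mathrm{loc}}$ limit is then MCF from $L$ evaluated at $t=1/2$, and the homogeneity of $L$ forces the limit to be self-similar, hence a self-expander.
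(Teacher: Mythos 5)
Your setup — renormalise, establish local $C^\infty$ bounds from Proposition \ref{prop:tame.est} and Corollary \ref{sensibleestimates}, extract a subsequential limit via Arzel\`a--Ascoli, then try to force the expander defect $\widetilde H-\widetilde X^\perp$ to vanish — matches the paper's outline up to the last step. But that last step is precisely where your argument has a genuine gap, and you acknowledge as much. The paper does \emph{not} use a Huisken-type monotonicity formula at all; it instead runs a pointwise maximum-principle argument. It computes a good evolution inequality for $\|\widetilde H-\widetilde X^\perp\|^2$ (Lemma~\ref{finalevols}), shows that the weighted quantity $f=\|\widetilde H-\widetilde X^\perp\|^2\tau_c^{-1}$ with $\tau_c=c+|\widetilde X|^2$ is a supersolution of the renormalised heat operator, constructs an auxiliary barrier $\psi=(t-t_0+\delta)^{-p}e^{-\tilde r^2/\Upsilon(t-t_0+\delta)}$ which is a subsolution, and uses the asymptotic-cone decay from Lemmas~\ref{C0close}, \ref{C1close}, \ref{Xperpest} to control the noncompact tail (the set $\{f\geq\epsilon/2\}$ is compact for $t\geq t_0$), so that the weak maximum principle applied to $g=f-\epsilon-a\psi$ yields $\limsup_t\sup_{\widetilde M_t}f\leq 2\epsilon$. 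In other words, the paper turns your monotonicity question into a localised pointwise estimate.

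Both of your proposed workarounds, as written, fail. For the monotonicity route, you correctly flag that there is no obvious Gaussian weight in the pseudo-Euclidean setting; the issue is worse than you suggest, because $|\widetilde X|^2$ changes sign and the usual divergence-structure cancellations do not go through. More importantly, your alternative scale-invariance argument quietly uses uniqueness of entire solutions: to say the $C^\infty_\text{loc}$ limit ``is MCF from $L$ evaluated at $t=1/2$'' and then invoke homogeneity to conclude it is self-similar, you need to know that spacelike MCF starting from a cone has a unique solution, and the paper explicitly notes (Remark after Theorem~\ref{thm:entire}) that uniqueness fails for entire flows. Homogeneity of $L$ only gives you that \emph{some} self-similar solution exists; it does not force an arbitrary subsequential limit to be self-similar unless you already have uniqueness or a monotonicity quantity. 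So the scaling argument either circularly reinstates the monotonicity problem or implicitly assumes a uniqueness statement that the paper shows is false.

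Concretely, what you are missing is: (i) the preservation of the asymptotic-cone condition along the flow (Lemma~\ref{C0close}), proved by attaching large expanding quasi-spheres at infinity; (ii) the resulting exponential decay of $\|X^\perp\|^2/|X|^2$ away from the origin (Lemma~\ref{Xperpest}); and (iii) the evolution inequality $\hos\|\widetilde H-\widetilde X^\perp\|^2\tau_c^{-1}\leq 0$, which together allow a direct noncompact maximum principle. This replaces any integral monotonicity by a pointwise supersolution estimate; the price is that you must control the tail (via (ii) and the barrier $\psi$), but this is exactly what the asymptotic-cone hypothesis delivers.
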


%\begin{remark}
%  Notice that a self-expanding solution $M_t$ for spacelike MCF in $\mathbb{R}^{3,3}$ will give a solution $\varphi_t$ to the $\GG_2$-Laplacian flow by the correspondence described in Section \ref{s:intro}.  This solution $\varphi_t$ is not obviously an expanding solution.  It is therefore natural to ask what special closed $\GG_2$-structures correspond to self-expanders for spacelike MCF in $\mathbb{R}^{3,3}$.
%\end{remark}

We first show that (\ref{asyptoticconvergence}) is preserved by the flow, by attaching expanding quasi-spheres of arbitrarily large radius to the initial data. For this purpose, we begin with the following result.
\begin{lemma}\label{shiftlemma}  Let $p\in\bb{R}^{n,m}$, let $R>0$ and recall the inside $I_t$ of the quasi-sphere expander from Definition \ref{quasisphere}.    
 Suppose that $M_0\subset I_0$  
 and $M_0$ has $v^2\leq C_v$ everywhere. For $\epsilon<1$, let
 \[O_\e:=\{x+y\in\bb{R}^{n,m}| x\in M_0, y\in \bb{R}^m = \text{\emph{span}}\{e_1, \ldots,e_m\}, \|y\|\leq \e\}.\]
 There exists $C=C(C_v,R)>0$ such that, for all $\e<1$, 
 $O_\e\subset I_{C\e/n}$.
\end{lemma}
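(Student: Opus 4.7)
The plan is to reduce the claim to a direct estimate for the pseudo-distance $|z-p|^2$ under a purely timelike shift of $z$, exploiting the graph structure of $M_0$. Write $p = (p^s, p^t) \in \bb{R}^n \oplus \bb{R}^m$. Since $M_0$ is spacelike with $v^2 \leq C_v$ everywhere, it is the graph of some $u : \bb{R}^n \to \bb{R}^m$, and (compare Step 4 in the proof of Lemma \ref{AttachBarriers}) there exists $\alpha = \alpha(C_v) \in [0,1)$ with $\|u(x_1^s) - u(x_2^s)\| \leq \alpha |x_1^s - x_2^s|$ for all $x_1^s, x_2^s \in \bb{R}^n$. Applying the hypothesis $M_0 \subset I_0$ at the single point $(p^s, u(p^s)) \in M_0$ yields the crucial base bound $K := \|u(p^s) - p^t\| \leq R$.

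Now fix $x = (x^s, u(x^s)) \in M_0$ and $y \in \text{span}\{e_1,\ldots,e_m\}$ with $\|y\| \leq \epsilon < 1$, and set $a := |x^s - p^s|$ and $\rho := \|u(x^s) - p^t\|$. Expanding using the spacelike--timelike splitting and the Euclidean triangle inequality for $\|\cdot\|$ gives
\[
|x+y-p|^2 \;=\; a^2 - \|u(x^s) + y - p^t\|^2 \;\geq\; a^2 - (\rho + \epsilon)^2 ,
\]
so the conclusion $x + y \in I_{C\epsilon/n}$ reduces to establishing $(\rho + \epsilon)^2 \leq a^2 + R^2 + 2C\epsilon$ for some $C = C(C_v, R)$. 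Two a priori bounds on $(a,\rho)$ are available: from $M_0 \subset I_0$, $\rho^2 \leq a^2 + R^2$; and from the Lipschitz estimate plus triangle inequality, $\rho \leq \alpha a + K \leq \alpha a + R$.

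The hard (and really only) step is that neither bound alone controls $\rho$ uniformly in $a$, so one must combine them. Suppose the desired inequality fails, i.e.\ $(\rho + \epsilon)^2 > a^2 + R^2$. Substituting $\rho + \epsilon \leq \alpha a + R + \epsilon$ and squaring yields the quadratic constraint
\[
(1 - \alpha^2)\, a^2 \;<\; 2 \alpha (R + \epsilon)\, a + 2 R \epsilon + \epsilon^2 .
\]
Since $\alpha < 1$, the positive-leading-coefficient quadratic forces $a \leq a_{\max}$ for some $a_{\max} = a_{\max}(R, \alpha)$, uniformly in $\epsilon \in (0,1)$, and consequently $\rho \leq \alpha a_{\max} + R =: \rho_{\max}$ throughout the problematic region. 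The elementary estimate $(\rho + \epsilon)^2 - (a^2 + R^2) \leq 2\rho\epsilon + \epsilon^2 \leq (2\rho_{\max} + 1)\epsilon$ (using $\epsilon < 1$ and $\rho^2 \leq a^2 + R^2$) then closes the argument with $C := \rho_{\max} + \tfrac{1}{2}$, depending only on $R$ and $C_v$. The role of uniform spacelikeness is precisely to deliver the strict inequality $\alpha < 1$; without it the combined constraints would not bound $\rho$ at all, and the shifted set $O_\e$ could escape the inside of every expanded quasi-sphere.
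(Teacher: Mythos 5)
Your proof is correct, and it reaches the same conclusion by a genuinely different (and cleaner) route than the paper. Both arguments rest on the same two ingredients -- the containment $M_0\subset I_0$, which gives $\rho^2\leq a^2+R^2$ (and, at $x^s=p^s$, the base bound $K\leq R$), and uniform spacelikeness, which gives the Lipschitz bound $\rho\leq\alpha a+K$ with $\alpha=\alpha(C_v)<1$ -- but they combine them differently. The paper, after translating $p$ to the origin, derives an explicit piecewise bound for $\|\U_0\|$ (one formula on $B_\rho(0)$, another on $\bb{R}^n\setminus B_\rho(0)$ using that the graph outside $B_\rho$ misses the quasi-sphere), then computes and minimises a quadratic $\psi(|x|)$ in the outer region by hand. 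You instead work directly with the two constraints on the pair $(a,\rho)$: you observe that the conclusion is trivial whenever $(\rho+\epsilon)^2\leq a^2+R^2$, and that in the complementary case the quadratic constraint $(1-\alpha^2)a^2<2\alpha(R+\epsilon)a+2R\epsilon+\epsilon^2$ forces $a\leq a_{\max}(R,\alpha)$ uniformly in $\epsilon<1$, hence $\rho\leq\rho_{\max}$, after which $(\rho+\epsilon)^2-(a^2+R^2)\leq 2\rho\epsilon+\epsilon^2\leq(2\rho_{\max}+1)\epsilon$ closes the estimate. This dispenses with the explicit region-by-region estimate and the minimisation of $\psi$, and it makes plain where uniform spacelikeness enters: $\alpha<1$ is exactly what makes the leading coefficient $1-\alpha^2$ positive, which is what bounds $a$. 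The paper's computation does give a more explicit constant, but the logical content is the same and your version is easier to audit.
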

\begin{proof}
 Without loss of generality we may take $p=0$. By observations in Appendix \ref{MCFasGraphs}, since $v^2<C_v$ there exists  $\tau\in(0,1)$ such that for any direction $\t\in \bb{R}^m$ (using the notation of Section \ref{s:Dirichlet}),
 \[D_i\U_0\cdot \t<\sqrt{1-\tau} .\]
Hence, a short calculation using the gradient estimate (and mean value theorem) shows that if we take
 \[\rho = R\sqrt{\frac{1-\tau}{\tau}} \]
then the graph of $\U_0$ over $\bb{R}^n\setminus B_{\rho}(0)\subset \bb{R}^n$ cannot intersect the quasi-sphere $S_0$.
 We may therefore estimate 
 \[
 \|\U_0\| \leq
 \begin{cases}
  \sqrt{R^2+|x|^2} &\text{on } B_\rho(0),\\
  \sqrt{1-\tau}\,(|x|-\rho)+\sqrt{R^2+\rho^2} &\text{on } \bb{R}^n \setminus B_\rho(0) .
 \end{cases}
 \]
 We consider a point $x+y \in O_\e$. Over $B_\rho(0)$ we have
 \begin{flalign*}
  \|y\|^2 \leq (\|\U_0\|+\e)^2
  &=\|\U_0\|^2+2\e\|\U_0\|+\e^2 
  \leq |x|^2+R^2+2\e \frac{R}{\sqrt{\tau}}+\e^2,
 \end{flalign*}
and so at any point $x+y\in \mathcal{C}_\rho \cap O_\e$ (where $\mathcal{C}_\rho$ is as in Definition \ref{def:cyl.sqsphere.cutoff}), 
\[|x+y|^2>-R^2-\e\left(2\frac{R}{\sqrt{\tau}}+\e\right)  .\]
 
Now we consider the minimum value of $|x+y|^2$ on $ O_\e\setminus\mathcal{C}_\rho$. We have that
\begin{flalign*}
|x+y|^2 &\geq |x|^2-\left[\sqrt{1-\tau}\,(|x|-\rho)+\sqrt{R^2+\rho^2}+\e\right]^2\\
&=\tau|x|^2 +2\left[\rho (1-\tau) - (\sqrt{R^2+\rho^2}+\e)\sqrt{1-\tau}\right]|x|\\
&\qquad -(1-\tau)\rho^2+2\rho\sqrt{1-\tau}(\sqrt{R^2+\rho^2}+\e)-(\sqrt{R^2+\rho^2}+\e)^2\\
&:=\psi(|x|).
\end{flalign*}
Note that $\psi$ is a quadratic in $|x|$ with positive highest order term.  Therefore, $\psi$ attains its global minimum at
\[|x|=\frac{(\sqrt{R^2+\rho^2}+\e)\sqrt{1-\tau}-\rho(1-\tau)}{\tau}=\rho +\frac{\sqrt{1-\tau}}{\tau}\e  ,\]
and 
\begin{flalign*}
\psi&\geq \left(\rho+\frac{\sqrt{1-\tau}}{\tau}\e\right)^2 -\left[\frac{\e}{\tau} +\sqrt{R^2+\rho^2}\right]^2\\
&=\rho^2-(R^2+\rho^2)+2\left[\rho\sqrt{1-\tau}-\sqrt{R^2+\rho^2}\right]\frac{\e}{\tau}+\left[1-\tau-1\right]\frac{\e^2}{\tau^2}\\
&=-R^2-2\left[\frac{1}{\sqrt{1-\tau}} - \sqrt{1-\tau}\right]\frac{\e\rho}{\tau}-\frac{\e^2}{\tau}  .
\end{flalign*}
The claim now follows.
\end{proof}

\begin{lemma}\label{C0close}
 Suppose that $w$ is a {\sensible} solution to \eqref{EntireMCF} and the initial data satisfies \eqref{asyptoticconvergence}. Then, for all $t>0$, 
 \[\underset{R\ra\infty}\lim\,\underset{x\in\bb{R}^n \setminus B_R}\sup |w(x,t) - U(x)| =0 .\]
 More precisely, for any $\e, T>0$ there exists $\rho=\rho(\e,T)$ such that for all $t\in[0,T)$,
 \[ \underset{(x,t)\in (\bb{R}^n \setminus B_\rho)\times[0,T)}\sup |w(x,t) - U(x)| <\e .\] 
\end{lemma}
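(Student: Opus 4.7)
The strategy is to construct expanding quasi-sphere expander barriers (Corollary \ref{hyperbolicexpander}) to sandwich $w(x_0, t)$ close to $U(x_0)$ for $|x_0|$ sufficiently large, uniformly in $t \in [0, T)$.

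Fix $\epsilon, T > 0$. Since $L$ is uniformly spacelike, there exists a constant $\tau \in (0, 1]$ such that $\|U(x) - U(y)\|^2 \leq (1-\tau)|x-y|^2$ for all $x,y \in \bb{R}^n$.  By \eqref{asyptoticconvergence}, for any prescribed $\eta > 0$ we can choose $\rho_1 = \rho_1(\eta)$ so that $\|\U_0(x) - U(x)\| < \eta$ whenever $|x| > \rho_1$.   For each $x_0$ with $|x_0|$ large (to be determined) and each unit timelike vector $\theta \in \bb{R}^m$, I will attach two expanding quasi-sphere expanders with centres $p^{\pm} = (x_0, U(x_0) \pm M\theta)$ and starting square radius $-R^2$, where the parameters $M$ and $R$ depend on $\epsilon, T, \eta$, and $\tau$.

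Assuming the containment $M_0 \subset I_0(S^{\pm})$ has been established, Corollary \ref{hyperbolicexpander} (which applies since $w$ is \sensible) yields $M_t \subset I_t(S^\pm)$ for all $t \in [0, T)$.  Evaluating at $\bar x = x_0$ and decomposing $w(x_0,t) - U(x_0) = a\theta + b$ with $b \perp \theta$ in $\|\cdot\|$, this gives
\[
   (a \mp M)^2 + \|b\|^2 \leq R^2 + 2nT,
\]
so $|a| \leq \sqrt{R^2 + 2nT} - M$ from the two barriers together.  Choosing $M$ so that $\sqrt{R^2 + 2nT} - M \leq \epsilon/2$, and noting $\theta$ was arbitrary, taking the supremum over unit directions gives $\|w(x_0, t) - U(x_0)\| \leq \epsilon/2 < \epsilon$, as desired.

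The main technical obstacle is the global verification of $M_0 \subset I_0(S^\pm)$.  Writing $\U_0 = U + \delta$ with $\|\delta(\bar x)\| < \eta$ for $|\bar x| > \rho_1$, applying Cauchy--Schwarz together with $\|U(\bar x) - U(x_0)\|^2 \leq (1-\tau)|\bar x - x_0|^2$ reduces the containment at each point $\bar x$ in this regime to a downward-opening quadratic inequality in $s = |\bar x - x_0|$ whose maximum value is $(\eta+M)^2/\tau - R^2$.  Requiring this to be $\leq 0$ gives the compatibility constraint $M + \eta \leq R\sqrt{\tau}$, which must be balanced against $\sqrt{R^2 + 2nT} - M \leq \epsilon/2$.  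For the complementary regime $|\bar x| \leq \rho_1$, we use the uniform bound on $\|\U_0\|$ on $\ov{B_{\rho_1}}$, noting that $|\bar x - x_0| \geq |x_0| - \rho_1$ is large, so the containment is automatic once $|x_0|$ is chosen large enough.  The hard part is that this parameter balancing genuinely requires exploiting that $(1-\tau)<1$ (i.e.~that $L$ is \emph{strictly} spacelike, not just lightlike), and then choosing $|x_0|$ large enough that $\eta(|x_0|) \to 0$ makes the constraints simultaneously satisfiable for the given $\epsilon$ and $T$; failure of this balance in the degenerate limit $\tau \to 0$ is precisely where the uniform spacelikeness of the asymptotic cone is essential.
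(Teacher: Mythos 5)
Your strategy of attaching expanding quasi-spheres to get a two-sided bound is the right general idea (it is the same mechanism as the paper's), but your \emph{choice of centre} introduces a gap that cannot be closed. You place the centres at $p^{\pm}=(x_0,U(x_0)\pm M\theta)$, i.e.~displaced from the cone purely in a timelike direction $\theta\in\bb{R}^m$. The paper instead places the centre at $(x_0,U(x_0))+R\nu$ for a unit $\nu\in N_{(x_0,U(x_0))}L$, which is a \emph{normal} direction to the cone (a mix of spacelike and timelike components unless $DU(x_0)=0$). This difference is decisive.

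Concretely, your own computation gives the containment constraint $M+\eta\leq R\sqrt{\tau}$, and the conclusion requires $\sqrt{R^2+2nT}-M\leq\e/2$. Combining these forces
\[
\sqrt{R^2+2nT}-R\sqrt{\tau}\;\leq\;\frac{\e}{2}-\eta.
\]
But the function $R\mapsto\sqrt{R^2+2nT}-R\sqrt{\tau}$ attains its minimum at $R^2=\frac{2n\tau T}{1-\tau}$, where its value is exactly $\sqrt{2nT(1-\tau)}$. For a non-flat uniformly spacelike cone one has $\tau\in(0,1)$, so this minimum is a fixed positive constant depending only on $n$, $T$, and $\tau$ --- it does not tend to $0$ as $R\to\infty$ or as $\eta\to 0$. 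Hence for $\e<2\sqrt{2nT(1-\tau)}$ the two requirements are incompatible, and your argument only bounds $\|w-U\|$ by roughly $\sqrt{2nT(1-\tau)}$, not by $\e$. Your remark that ``failure of the balance in the limit $\tau\to0$'' is the danger has the dependence backwards: the balance actually only closes as $\tau\to 1$, i.e.~when the cone is flat.

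The reason the paper's choice works is tangency. With centre $(x_0,U(x_0))+R\nu$, $\nu\in N_pL$, the quasi-sphere passes through $(x_0,U(x_0))$ and is tangent to $T_pL$ there, so the first-order terms in the containment inequality cancel. (In the model $n=m=1$, $U(x)=\sqrt{1-\tau}|x|$, one computes $h(x):=|X(x)-(p+R\nu)|^2+R^2=\tau(x-1)^2$ for $x>0$, independent of $R$ --- containment is automatic.) By scaling of cones and quasi-spheres this tangency persists at every scale, which is the content of Step~1 of the paper's proof. Once the centre lies on the cone to this order, the displacement at time $T$ in the normal direction is only $\sqrt{R^2+2nT}-R=O(T/R)$, which can be made smaller than $\e$ by taking $R$ large. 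Your purely-timelike displacement destroys the tangency and therefore has to ``buy back'' first-order slack of size $R(1-\sqrt\tau)$, which is exactly the obstruction above.
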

\begin{proof} 
Our aim is to show that sufficiently far away from the origin we may attach quasi-sphere  to $M_0$ with arbitrarily large negative 
square radius, which then contain $M_0$. As the barriers starting from these quasi-spheres move arbitrarily slowly the theorem will then be achieved. We make this intuitive argument explicit, due to the difficulty in visualising higher codimension submanifolds. 

For any $p$ in the asymptotic cone $L$ and any unit vector $\nu\in N_pL$ we define
  \[\phi_{p,\nu,\l}(x):=|x-p-\l\nu|^2+\l^2.\]
We now complete the proof in several steps.

\medskip

\paragraph{\bf Step 1:} \emph{There exists $R_0>0$ such that for any $x\in \partial B_1(0)\subset\bb R^n$ and any $\nu\in N_{(x,U(x))}L$, we may attach a quasi-sphere of square radius $-R_0^2$ in any unit direction $\nu$ which contains $L$; that is, $\phi_{(x,U(x)), \nu, R_0}\geq 0$ everywhere on $L$.}
   \begin{proof}
  We observe that at $p=(x, U(x))$, $\phi_{p,\nu, \l}(p)=0$ for all $\l$ and, since $L$ is spacelike, $\phi_{p,\nu, 0}\geq 0$ on $L$. Furthermore, 
  \[\tilde{\l}(p):=\inf\{\l\in [0,1]\,|\,   \phi_{p,\nu,\l}\geq 0 \text{ on }L \text{ for all unit }\nu \in N_p L\}>0\] 
  as otherwise we may contradict uniform spacelikeness using the mean value theorem. Furthermore, the fact that $L$ is uniformly spacelike implies that there exists $\tilde{R}$ such that if $\tilde{\l}(p)<1$, then there is $y\in B_{\tilde{R}}$ such that $\phi_{p,\nu, \tilde{\l}}(y,U(y))=0$. As $\phi_{p,\nu,\lambda}$ is smooth in $p,\nu,\lambda$, we see by standard methods that $\tilde{\l}$ is (Lipschitz) continuous. Therefore $\tilde{\l}$ has a positive minimum, $R_0$, on $\partial B_1(0)$ as claimed.
  \end{proof}
  
\paragraph{\bf Step 2:} \emph{For any $\e>0$, $R>0$, there exists $\rho_0=\rho_0(R,\e, \U_0)>0$ such that for any $x\in\bb{R}^n\setminus B_\rho(0)$, and any $\nu\in N_{(x,U(x))} L$, $M_0$ is contained inside a quasi-sphere of square radius $-R^2$ with centre 
 $(x,U(x))- (R-\e)\nu$.
 }
 \begin{proof} 
By the scaling properties of the cone and  quasi-sphere, we may choose $\rho_1=\rho_1(R,R_0)$ sufficiently large so that Step 1 implies that for any $p\in L\setminus \mathcal{C}_{\rho_1}$ and any unit $\nu\in N_pL$ we can attach a quasi-sphere of square radius $-R^2$ in direction $\nu$ which contains $L$. The condition (\ref{asyptoticconvergence}) now implies that for any $\tilde{\e}>0$, there exists $\rho_1<\rho_2=\rho_2(M_0,\tilde \e, \rho_1)$ such that 
 \[\sup_{\bb{R}^n\setminus B_{\rho_2}(0)} \|\U_0-U\|<\tilde{\e} .\]
 Lemma \ref{shiftlemma} now implies that by choosing $\tilde{\e}$ to be sufficiently small and relabelling constants the claim follows.
 \end{proof}

\paragraph{\bf Step 3:}\emph{Completing the proof.} Given any $\tilde{\e}>0$, $T>0$, we choose $R>0$ so large that the quasi-sphere expander starting from square radius $-R^2$ as in Step 2 moves at most $\frac{\tilde{\e}}{2}$ in direction $\nu$ on the time interval $[0,T]$. We now apply Step 2 to find a $\tilde{\rho}$ such that for all $x\in\bb{R}^n\setminus B_\rho(0)$, we may attach expanding quasi-spheres in any direction as in Step 2. The proof is complete by choosing $\tilde{\e}<\frac{\e}{C_v}$.
\end{proof}

The following result is proved in a similar way to Proposition \ref{GradMaxPrinc}, and indicates that if we have only small osculation of $\|u\|^2$, then $v^2$ decays exponentially in time.
\begin{lemma}\label{C1close}
 For any $\e\in(0,1)$ and {\sensible} entire solution to \eqref{EntireMCF} such that 
 \[v^2<C_v, \qquad \|H\|^2<C_H  ,\]
 there exists $R=R(\e, C_v)>1$ such that: if for all $t\in[0,T)$
 \[0<u_A^2<C_u<1 \text{ on } \mathcal{C}_R \cap M_t,\]
 then for all $y\in \mathcal{C}_1\cap M_t$, 
 \[w_A^2(y,t) \leq (1+\e)e^{C_u}+e^{-t}\sup_{\mathcal{C}_R\cap M_0}w_A^2e^{u_A^2}\] 
\end{lemma}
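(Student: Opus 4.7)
The plan is to localise the evolution inequality from Lemma \ref{evolgoodwk} via a suitable cutoff and extract exponential decay through Hamilton's maximum principle. Setting $f = w_A^2 e^{u_A^2}$ and $c = e^{u_A^2}$, Lemma \ref{evolgoodwk} reads $\ho f \leq -2(f/c)(f-c)$, and since $f/c = w_A^2 \geq 1$ and $f - c \geq 0$, this simplifies to the linear inequality $\ho f \leq -2(f - c)$. On $\mathcal{C}_R \cap M_t$ the hypothesis $u_A^2 < C_u$ gives $c \leq e^{C_u}$, yielding the key estimate
\[
\ho f \leq -2f + 2 e^{C_u}
\]
in this region.

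To localise, I would take a standard cutoff $\varphi_R:[0,\infty)\to[0,1]$ satisfying $\varphi_R\equiv 1$ on $[0,1]$, $\text{supp}\,\varphi_R\subset[0,R]$, and $|\varphi_R'|\leq 2/R$, $|\varphi_R''|\leq 10/R^2$, and consider $\tilde f := f\cdot \varphi_R^p(r)$ for a fixed integer $p\geq 2$, with $r$ as in Corollary \ref{C0evol}. At a spatial maximum of $\tilde f$, the identity $\n \tilde f = 0$ converts the cross term $-2\langle \n f, \n \varphi_R^p\rangle$ in $\ho \tilde f$ into a purely algebraic expression in $f$ and $\varphi_R$. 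Combining this with the bounds $|\ho r|, |\n r|^2 \leq C(n,C_v)$ valid on $\text{supp}\,\varphi_R'\subset\{r\geq 1\}$ (as in Proposition \ref{GradMaxPrinc}) and the sensible bound $f\leq C_v e^{C_u}$ from Proposition \ref{prop:tame.est} yields
\[
\ho \tilde f \leq -2\tilde f + 2 e^{C_u} + K_1(R^{-1}+R^{-2}) C_v e^{C_u}
\]
at every spatial maximum, with $K_1 = K_1(n,p,C_v)$. Choosing $R = R(\e,C_v)$ large enough that $K_1 C_v(R^{-1}+R^{-2}) \leq 2\e$ then gives $\ho \tilde f \leq -2\tilde f + 2(1+\e)e^{C_u}$ at every spatial maximum.

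Because $\tilde f$ is compactly supported (the set $\bar{\mathcal{C}}_R\cap M_t$ is a smooth graph over the closed Euclidean ball $\bar B_R\subset\bb{R}^n$), the spatial supremum $M(t) := \sup_y \tilde f(y,t)$ is attained at each $t$, and Hamilton's maximum principle applies. The resulting comparison ODE $M'(t)\leq -2M(t) + 2(1+\e)e^{C_u}$ integrates to
\[
M(t) \leq (1+\e)e^{C_u} + \bigl(M(0) - (1+\e)e^{C_u}\bigr)e^{-2t},
\]
which, in either sign case and using $e^{-2t}\leq e^{-t}$, is bounded above by $(1+\e)e^{C_u} + M(0)e^{-t}$. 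Restricting to $y\in\mathcal{C}_1\cap M_t$, where $\varphi_R\equiv 1$ and hence $\tilde f = f$, and using $w_A^2 = f e^{-u_A^2} \leq f$ (since $u_A^2\geq 0$), yields the stated inequality, with $M(0)\leq \sup_{\mathcal{C}_R\cap M_0} w_A^2 e^{u_A^2}$.

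The principal technical obstacle is obtaining the right structure in $\ho \tilde f$ at spatial maxima: one must choose $p$ large enough that \emph{all} cutoff-generated error terms carry a factor $R^{-1}+R^{-2}$, so that they can be absorbed into the small perturbation $\e e^{C_u}$ of the source term $e^{C_u}$; crucially, no smallness of $C_u$ is used, only largeness of $R$. Once this is arranged, the linear (rather than quadratic) decay built into $\ho f \leq -2(f-c)$ automatically produces the genuine exponential, rather than algebraic, convergence asserted in the lemma.
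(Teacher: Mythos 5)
Your proof is correct, but it takes a genuinely different route from the paper's. You exploit $w_A^2\geq 1$ to linearise Lemma~\ref{evolgoodwk}: with $f=w_A^2 e^{u_A^2}$ and $c=e^{u_A^2}$ you write $\ho f\leq -2(f/c)(f-c)\leq -2(f-c)\leq -2f+2e^{C_u}$ on $\mathcal{C}_R$, and then combine this with the direct a priori bound $f\leq C_v e^{C_u}$ (from $w_A^2\leq v^2<C_v$ and $u_A^2<C_u$ on $\mathcal{C}_R$) and a single cutoff $\varphi_R^p$ with $p\geq 2$ to get a linear ODE comparison $M'(t)\leq -2M(t)+2(1+\e)e^{C_u}$ after enlarging $R$. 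The paper instead keeps the quadratic form $\ho f\leq -e^{-C_u}f^2+f$ and runs the localisation twice (first $p=3$ to establish an a priori bound $\Lambda$ on $\varphi_R^3 f$, then $p=5$ with $\Lambda$ substituted for the troublesome cutoff term), and finally extracts exponential decay from the quadratic inequality via a tangent-line/concavity estimate. Your linearisation sidesteps the two-stage structure and the concavity trick entirely, which makes the argument shorter and, to my mind, cleaner; it gives the rate $e^{-2t}$, which you then weaken to $e^{-t}$ by a trivial case split, whereas the paper's rate comes out as $e^{-2bt}$ with $b\geq\tfrac12$. Both approaches land on the same conclusion with $R=R(\e,C_v,n)$.

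Two small remarks. First, the bound $f\leq C_v e^{C_u}$ on $\mathrm{supp}\,\varphi_R$ is not a consequence of Proposition~\ref{prop:tame.est} per se — it follows directly from the lemma's hypotheses $v^2<C_v$ and $u_A^2<C_u$ on $\mathcal{C}_R$ (the role of Proposition~\ref{prop:tame.est} is rather to make $v^2<C_v$ available for all $t$ when one feeds the lemma with a {\sensible} solution). Second, your cutoff is supported in $\{r\leq R\}$ while the hypothesis $u_A^2<C_u$ is given on $\mathcal{C}_R=\{r<R\}$; this is the same harmless relabelling the paper makes (its $\varphi_R$ is supported on $\{r\leq 1+R\}$), and is resolved by enlarging the cylinder in the hypothesis or shrinking the cutoff's support slightly. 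Finally, it is not really ``choosing $p$ large'' that matters: $p\geq 2$ suffices to keep the powers of $\varphi_R$ nonnegative, and the absorption of the error terms is driven by the $R^{-1}+R^{-2}$ factors together with the uniform bound on $f$ on the support of $\varphi_R'$, not by further increasing $p$.
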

\begin{proof}
 We see from Lemma \ref{evolgoodwk} that for $f=w_A^2e^{u_A^2}$ we have that
 \[\ho f \leq -e^{u_A^2}w_A^2(w_A^2-1) \leq -e^{-C_u}f^2+f   .\]
 We choose $\varphi_R$ as in Proposition \ref{GradMaxPrinc}, and set $g=\varphi^p_R f$, where $R$ will be determined later. Arguing as in the proof of Proposition \ref{GradMaxPrinc}, we have at an increasing maximum:
 \begin{flalign*}
 0\leq\ho f\varphi^p_R(r) \leq -\delta \varphi^p_R f^2+f\varphi^p_R +fC(R^{-1}+R^{-2})\varphi^{p-2}_Rv^2,
\end{flalign*}
 where $\delta=e^{-C_u}$. Taking $p=3$,
\[\delta (\varphi^3_Rf)^2\leq \varphi^3_Rf[C(R^{-1}+R^{-2})C_v+1],\]
which implies that
\[\varphi^3_Rf\leq \max \left\{\frac{\sqrt{2}(C(R^{-1}+R^{-2})C_v+1)}{\sqrt{\d}}, \sup_{M_0\cap \mathcal{C}_R} f\right\}=:\Lambda.\]
Taking instead $p=5$,
 \begin{flalign*}
 \ho g \leq -\delta g^2+g+C(R^{-1}+R^{-2})\Lambda C_v 
\end{flalign*} 
 or, writing $\tilde{g} = g - \frac{1}{2\d}$, we have that
 \begin{flalign*}
 \ho \tilde{g} \leq -\delta \tilde{g}^2+C(R^{-1}+R^{-2})\Lambda C_v+\frac{1}{4\d}  .
\end{flalign*} 
Using the concavity of $y(x)=C-x^2$, we may estimate  that for \[\delta \tilde{g}\geq\sqrt{\delta C(R^{-1}+R^{-2})\Lambda C_v+1/4}:=b\]
 we have
 \begin{flalign*}
 \ho \tilde{g} \leq -2 b \tilde{g}.
\end{flalign*} 
Hence, 
 \[\tilde{g}\leq \max\left\{e^{-2bt}\sup_{M_0}\tilde{g}, \frac{b}{\d}\right\}< \frac{b}{\d}+\sup_{M_0}\tilde{g}e^{-2bt}  .\]
 Picking $R$ sufficiently large depending on $\e$ and $C_v$ (where we estimate $\d$ by $1$), we may assume that $b\leq \frac{1+2\e}{2}$. Hence on $\mathcal{C}_1\cap M_t$, 
 \[w_A^2e^{u_A^2}\leq (1+\e)e^{C_u}+e^{-(1+2\e)t}\sup_{\mathcal{C}_R\cap M_0}w_A^2e^{u_A^2}  \]
 as claimed.
\end{proof}

Corollary \ref{hyperbolicexpander} implies that there exists a constant $c_0$ depending only on the initial data such that, on $M_t$,
\begin{equation}0<r^2<c_0 +2nt +|X|^2.
 \label{Xvsr}
\end{equation}
Hence, on $M_0$, there exists $C$ depending on the gradient bound $C_v$ such that
\[\frac{|X^\perp |^2}{c_0 + |X|^2}\leq C.\]
We now use \eqref{asyptoticconvergence} to show that $\frac{|X^\perp|^2}{|X|^2}$ decays far away from the origin.
\begin{lemma}\label{Xperpest}
 Suppose $M_0$ satisfies \eqref{asyptoticconvergence}. For all $T>0$, there exists $R(T, L)>0$ and $C=C(C_v)$ such that for all $t\in[0,T)$ and $p\in M_t\setminus \mathcal{C}_R$, 
 \[\frac{\|X^\perp\|^2}{|X|^2}\leq Ce^{-\frac t 2}.\]
\end{lemma}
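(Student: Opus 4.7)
The strategy is a weighted maximum principle argument, initialised by the cone asymptotic hypothesis and propagated using the evolution machinery from Section \ref{s:evol}.

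First, I would establish an initial bound on $\|X^\perp\|^2/|X|^2$ far from the origin. By Lemma \ref{C0close}, for any $\eta>0$ there exists $\rho=\rho(\eta,T,L)$ such that $\|\U(x,t)-U(x)\|\leq\eta$ for all $|x|\geq \rho$ and $t\in[0,T]$. Since $L$ is a cone, the homogeneity relation $U(x)=x\cdot DU(x)$ forces $X^\perp\equiv 0$ on $L$. Combining this with the uniform estimates $v^2\leq C_v$, $\|\II\|^2\leq m/(2t)$ (Proposition \ref{prop:tame.est}) and the higher-derivative bounds (Corollary \ref{sensibleestimates}), parabolic interpolation upgrades the $C^0$ closeness to a pointwise bound $\|X^\perp\|^2/|X|^2\leq\eta'$ on $M_t\cap\{|X|^2\geq R_0^2\}$ for some $R_0=R_0(\eta,T,L)$.

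Next I would propagate this bound by maximum principle. A natural candidate test function is
\[F := \|X^\perp\|^2 e^{t/2} - \alpha(|X|^2+c_0),\]
with $\alpha,c_0>0$ chosen so that $F\leq 0$ initially on $\{|X|^2\geq R^2\}$. Using Lemma \ref{wkJevol} (through Corollary \ref{Jevolest}), the identity $\ho|X|^2=-2n$ from Corollary \ref{C0evol}, and the uniform mean curvature bound $\|H\|^2\leq C_H$ from Proposition \ref{prop:tame.est}, one computes
\[\ho F = e^{t/2}\ho\|X^\perp\|^2 + \tfrac{1}{2}e^{t/2}\|X^\perp\|^2 + 2n\alpha.\]
At a positive interior maximum, $\n F=0$ yields $e^{t/2}\n\|X^\perp\|^2=2\alpha X^\top$; inserting this into the dissipative gradient term of $\ho\|X^\perp\|^2$ and exploiting the largeness $|X|^2\geq R^2$ with $R=R(T,L)$ sufficiently large, one arranges for the positive lower-order contributions to be absorbed, giving a contradiction. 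Behaviour at spatial infinity is controlled via the expanding quasi-sphere barriers of Corollary \ref{hyperbolicexpander}, which trap the flow and ensure any maximum is attained in an interior region where the argument applies.

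The main obstacle is the positive $\tfrac{1}{2}e^{t/2}\|X^\perp\|^2$ term coming from differentiating the weight, together with the positive contributions from the mean curvature correction terms $-4\langle H,X\rangle + 2\langle H,X\rangle^2$ in Lemma \ref{wkJevol}: at a maximum of $F$ these are of the same order as the useful dissipative terms. Overcoming this will force $R$ to be taken sufficiently large relative to $T$ and to the rate of approach of $M_0$ to the cone $L$, so that the factor $|X|^{-2}$ suppresses the unwanted terms. The fact that the final constant $C$ depends only on $C_v$ reflects that, once one works on the correct spatial scale determined by $R$, the estimate depends only on the spacelike graph structure through the gradient bound.
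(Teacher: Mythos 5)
Your strategy is genuinely different from the paper's, and unfortunately the second step does not close. The paper proves this lemma by rotating coordinates so that at a point $p\in L$ far from the origin the ambient timelike directions $e_A$ are aligned with the normal space $N_pL$, then applies Lemma \ref{C1close} (which the proposal never invokes) to the rotated height and gradient functions. Lemma \ref{C1close} is what supplies the factor $e^{-t}$: it shows that when the rotated height $\check u_A^2$ stays below a small constant on a large cylinder, the rotated gradient $\check w_A^2$ decays like $(1+\epsilon)e^{\epsilon}+Ce^{-t}$, with $C$ depending only on $C_v$. The passage from $\check w_A^2$ to $\|X^\perp\|^2/|X|^2$ is then purely algebraic, using the identity $\sum_i\|\check f_i^\perp\|^2=\sum_B\|\check e_B^\perp\|^2-n$ and the decomposition $q=\check x+\check w$. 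This sidesteps every step of your maximum-principle argument.

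The gap in your proposal is in the propagation step, and you have in fact put your finger on it without resolving it. At an increasing interior maximum of $F=\|X^\perp\|^2e^{t/2}-\alpha(|X|^2+c_0)$ you have $\|X^\perp\|^2e^{t/2}=\alpha(|X|^2+c_0)$, so the term produced by differentiating the weight is $\tfrac12 e^{t/2}\|X^\perp\|^2=\tfrac12\alpha(|X|^2+c_0)$, which grows like $|X|^2$. The dissipative term, evaluated at the maximum using $e^{t/2}\n\|X^\perp\|^2=2\alpha X^\top$ and $\|X^\perp\|^2=\alpha e^{-t/2}(|X|^2+c_0)$, is only of order $\alpha$ (a constant, independent of $|X|$). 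There is no $|X|^{-2}$ factor anywhere in the evolution equation to call upon; taking $R$ large widens the gap rather than closing it. Moreover Corollary \ref{Jevolest} applies only where $|X|^2<\epsilon\|X^\perp\|^2$, i.e.~where the ratio $\|X^\perp\|^2/|X|^2$ is \emph{large}, which is exactly where the maximum could sit, so the structure of the dissipative term cannot be improved in the relevant regime. Finally, the interpolation step is also unsubstantiated: $C^0$ closeness to the cone plus interior curvature bounds yields $C^1$ closeness only on a shrunk scale, and converting $C^1$ closeness of the graph to smallness of $\|X^\perp\|^2/|X|^2$ with a constant depending only on $C_v$ is precisely the content of Lemma \ref{C1close} that you would need to re-derive. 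The paper's rotated-coordinate computation is the efficient way to do that bookkeeping, and it is the missing idea here.
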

\begin{proof}
We pick $\e<e^{-2T}$ sufficiently small such that $(1+\e)e^\e-1<e^{-T}$. Let $R$ be as in Lemma \ref{C1close} with this choice of $\e$.

At $p\in L$, let $\{\nu_1, \ldots, \nu_m\}$ be an orthonormal basis of $N_pL$. We rotate about $0$ in $\bb{R}^{n,m}$ so that $\nu_A$ is in the $e_A$ direction. In rotated coordinates we will write all objects with a check, e.g.~$\check{p}$ for the rotation of $p$,  $\check{\mathcal{C}}_R(\check{p})$ for a cylinder of radius $R$ at $\check{p}$ in the new rotated coordinates. Since $L$ is a cone, $\nu_1, \ldots, \nu_m$ is also an orthonormal basis  of $N_{\l p}L$ for all $\l>0$. From scaling properties of the cone, by assuming $|p|$ is sufficiently large, we see that $L$ may be arbitrarily well approximated by its tangent plane, $\bb{R}^n$. Therefore, due to the gradient bound and Lemma \ref{C0close}, there exists $R_1>0$ such that for all ${p}\in M_t \setminus \mathcal{C}_{R_1}$ (in the unrotated coordinates),
\[0<\check{u}_A^2(\check q, t)<\e \text{ for all } t\in[0,T), \check q \in \check{\mathcal{C}}_{R}(\check{p})\cap \check{M_t}.\]
We assume we have made $R_1$ sufficiently large so that $|X|>1$ (if not, we increase $R_1$). 
We therefore may apply Lemma \ref{C1close} so that on $[0,T)$ we have
\[\check{w}_A^2 \leq (1+\e)e^\e+Ce^{-t},\]
where $C=C(C_v)$. Rotating back, we see that for all $q\in M_t \cap\check{\mathcal{C}}_1(\check{p})$ there exist orthonormal timelike constant vectors $\check{e}_1,\ldots , \check{e}_m$ such that 
\[\|\check{e}_A^\perp\|^2\leq (1+\e)e^\e+Ce^{-t} \ .\]
Let $\check{f}_i$ be an orthonormal basis of $T_pL$ (similarly extended). Since $$\nu_A = \sum_i \ip{\check{f}_i}{\nu_A}\check{f}_i - \sum_B \ip{\nu_A}{\check{e}_B}\check{e}_B,$$ we then have that
\[-1=|\nu_A|^2 = \sum_i \ip{\check{f}_i}{\nu_A}^2 - \sum_B \ip{\nu_A}{\check{e}_B}^2.\]
Thus,
\[\sum_i \|\check{f}_i^\perp\|^2 = \sum_B \|\check{e}_B^\perp\|^2-n .\]

We deduce that for any $q\in M_t \cap \check{\mathcal{C}}_1(\check{p})$, we may write $q=\check{x}+\check{w}$ where $\check{x}\in T_p L$ and $\check{w}\in N_pL$, to estimate
\begin{flalign*}
\frac{\|X^\perp\|}{\sqrt{|X|^2}}&\leq 2\frac{\|\check{x}^\perp\|+ \|\check{w}^\perp\|}{\sqrt{|\check{x}|^2-\|\check{w}\|^2}}\\
&\leq 2 \frac{|\check{x}|\sqrt{n}\sqrt{(1+\e)e^\e-1+Ce^{-t}}+ \sqrt{\e} ((1+\e)e^\e+Ce^{-t})}{\sqrt{|\check{x}|^2-m\e}}\\
&\leq \check{C}e^{-\frac{t}{2}}
\end{flalign*}
due to the chosen bounds on $\e$. As $\check{C}$ depends only on $C_v$, the statement follows.
\end{proof}

Recall the renormalising of MCF given in \eqref{eq:renormalise} and that we write all quantities for the rescaled flow with a tilde.
For convenience we define the scaling factor $\l(t) = \frac{1}{\sqrt{1+2t}}$.  We see that, under the renormalisation,
\[\frac{d\widetilde{X}}{ds} = \widetilde{H} -\widetilde{X}.\]

We want to understand how quantities evolve under the renormalised flow based on their evolution under MCF.  We will say a quantity $f$ (for example, a function) is of degree $\alpha$ if when the submanifold is dilated by a factor of $\l$, the rescaled quantity satisfies $\tilde f =\l^\alpha f$.
\begin{lemma}\label{degreeevol}
 Suppose $f$ is a function of degree $\alpha$. Then
  \[\hos \tilde f  = \l^{\a-2}\ho f - \a\tilde{f}\ .\]
 The same statement is true of tensors. 
\end{lemma}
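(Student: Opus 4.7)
The plan is to compute both the time derivative and the Laplacian of $\tilde{f}$ separately and combine. First I would record the basic scaling identities: from $s=\frac{1}{2}\log(2t+1)$ we get $\frac{ds}{dt}=(1+2t)^{-1}=\l^2$, and differentiating $\l^{-2}=1+2t$ gives $\frac{d\l}{dt}=-\l^3$. Since $f$ is of degree $\a$, the definition gives $\tilde{f}=\l^\a f$ (viewing $\l(t)$ as a spatially constant dilation factor at each instant).

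For the time derivative, the chain rule yields
\[
\dds{\tilde f}=\frac{dt}{ds}\,\ddt{}\!\left(\l^\a f\right)=\l^{-2}\left(\a\l^{\a-1}\ddt{\l}\,f+\l^\a\ddt f\right)=-\a\l^\a f+\l^{\a-2}\ddt f=-\a\tilde f+\l^{\a-2}\ddt f.
\]
For the Laplacian, because $\widetilde{X}=\l X$ at each fixed $t$, the induced metric scales as $\tilde g_{ij}=\l^2 g_{ij}$, hence $\tilde g^{ij}=\l^{-2}g^{ij}$ and the Christoffel symbols (being scale-invariant in a global conformal rescaling by a constant factor) give $\ti\Delta=\l^{-2}\Delta$ when acting on scalars. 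Combined with the degree-$\a$ rescaling of $f$, this gives $\ti\Delta\tilde f=\l^{-2}\Delta(\l^\a f)=\l^{\a-2}\Delta f$. Subtracting the two formulas produces
\[
\hos\tilde f=\dds{\tilde f}-\ti\Delta\tilde f=\l^{\a-2}\ho f-\a\tilde f.
\]

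For tensors the argument is identical once one checks that the covariant derivative $\nabla$ on $M$ is unaffected by the global constant rescaling (since $\n\to\n$ under multiplication of the metric by a constant), so $\tilde\n=\n$ and hence $\ti\Delta T=\l^{-2}\Delta T$ on any tensor $T$; the scaling $\tilde T=\l^\a T$ and the calculation above then go through verbatim. There is no real obstacle here---the argument is essentially bookkeeping with the scaling exponents---the only subtlety is being careful that, because $\l$ is spatially constant, the Laplacian picks up only an overall factor of $\l^{-2}$ and there are no additional gradient terms from differentiating $\l$.
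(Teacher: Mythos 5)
Your proof is correct and follows the same elementary chain-rule/rescaling computation that the paper relies on (via the citation to Huisken's Lemma 9.1); you have simply written it out explicitly, including the consistent observation that $\l$ is spatially constant so the Laplacian acquires only the overall factor $\l^{-2}$ and the Christoffel symbols are unchanged. The remark on the tensor case matches the paper's parenthetical note about rescaling the coordinate vectors.
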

\begin{proof}
The proof for functions is exactly as in \cite[Lemma 9.1]{Huiskenconvex}. The tensor case follows identically, however note that we also rescale the coordinate vectors.
\end{proof}

We now note that we have the following evolution inequalities along the renormalised flow. 
\begin{lemma}\label{finalevols}
There exist $\Upsilon, p>0$ depending on $n$ and $C_v$ such that along the renormalised flow, 
 \[\hos \frac{1}{t^p}e^{-\frac{\tilde{r}^2}{\Upsilon t}}\geq 0 .\]
 where $\tilde{r}^2$ is the renormalisation of $r^2$ in Corollary \ref{C0evol}. Furthermore,
 \begin{flalign*}
\hos \|\widetilde H - \widetilde X^\perp\|^2  &\leq - 2\|\widetilde H - \widetilde X^\perp\|^2 - \frac{1}{2}\frac{\left|\n \| \widetilde H - \widetilde X^\perp\|^2 \right|^2}{\| \widetilde H - \widetilde X^\perp\|^2}.
\end{flalign*}
\end{lemma}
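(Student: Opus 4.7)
The plan is to prove the two inequalities by direct computation, using the scaling behaviour from Lemma \ref{degreeevol}, the $C^0$ evolution from Corollary \ref{C0evol}, and the evolution equations for curvature from Lemma \ref{evolcurv}.

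For the first inequality, the strategy is logarithmic differentiation: writing $\chi = t^{-p} e^{-\tilde r^2/(\Upsilon t)}$ and expanding,
\[
 \frac{\hos \chi}{\chi} = -\frac{p}{t} + \frac{\tilde r^2}{\Upsilon t^2} - \frac{\hos \tilde r^2}{\Upsilon t} - \frac{|\tilde\nabla \tilde r^2|^2}{\Upsilon^2 t^2}.
\]
Two ingredients drive the estimate. First, since $r^2$ has scaling degree $2$, Lemma \ref{degreeevol} gives $\hos \tilde r^2 = \ho r^2 - 2\tilde r^2$, and Corollary \ref{C0evol} combined with $w_A^2 \geq 1$ yields $\hos \tilde r^2 \leq -2n - 2\tilde r^2$. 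Second, in the spacelike base-graph coordinates the quantity $r^2$ equals the Euclidean $|x|^2$, so the induced gradient bound $|\tilde\nabla \tilde r^2|^2 \leq C_1 \tilde r^2$ follows directly from the hypothesis $v^2 \leq C_v$, with $C_1 = C_1(n,m,C_v)$. Substituting, the lower bound becomes
\[
 \frac{\hos \chi}{\chi} \geq \frac{2n - p\Upsilon}{\Upsilon t} + \frac{\tilde r^2}{\Upsilon t^2}\Bigl(1 + 2t - \frac{C_1}{\Upsilon}\Bigr),
\]
which is nonnegative once $\Upsilon \geq C_1$ and $p \leq 2n/\Upsilon$.

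For the second inequality, one computes the evolution of $Y = \widetilde H - \widetilde X^\perp$ along the rescaled flow. The building blocks are the evolution of $H$ from Lemma \ref{evolcurv}, translated to the rescaled setting via Lemma \ref{degreeevol} ($H$ has degree $-1$), and the corresponding evolution of $X^\perp$, obtained by starting from the identity $\nabla^\perp_j X^\perp = -\II(X_j, X^\top)$ (cf.~the calculation preceding Lemma \ref{evolcurv}) and differentiating once more, using Codazzi--Mainardi to convert tangential derivatives of $\II$ into derivatives of $H$. The key point is that the additional $-\widetilde X$ appearing in the rescaled velocity $\partial_s \widetilde X = \widetilde H - \widetilde X$ contributes exactly the extra $-Y$ term expected from the gradient-flow structure of the rescaled equation, so that
\[
 \hos^\perp Y = \tilde\Delta^\perp Y - Y + Q,
\]
with $Q$ quadratic in $\II$ and $Y$ satisfying $\langle Q, Y\rangle \geq 0$. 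Combining with the Weitzenb\"ock-type identity for a timelike normal field
\[
 \hos \|Y\|^2 = -2\langle \hos^\perp Y, Y\rangle - 2|\tilde\nabla^\perp Y|^2,
\]
(where $|\tilde\nabla^\perp Y|^2 := -g^{ij}\langle \nabla_i^\perp Y, \nabla_j^\perp Y\rangle \geq 0$ since the normal bundle is negative-definite), the $-Y$ produces $-2\|Y\|^2$, the $Q$-term has the right sign, and one obtains $\hos \|Y\|^2 \leq -2\|Y\|^2 - 2|\tilde\nabla^\perp Y|^2$. The Kato inequality $|\tilde\nabla^\perp Y|^2 \geq |\tilde\nabla \|Y\||^2 = |\tilde\nabla\|Y\|^2|^2/(4\|Y\|^2)$ then converts the last term into the claimed form.

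The main obstacle is the second step: verifying that the curvature cross-terms in $\hos^\perp(\widetilde H - \widetilde X^\perp)$ really do combine to produce the clean $-Y$ leading part plus a $Y$-nonnegative remainder. This is the manifestation of the gradient-flow identity for the (pseudo-)Gaussian area and requires careful bookkeeping of signs across the timelike normal and spacelike tangent decompositions of $\bb{R}^{n,m}$, as the various quadratic curvature terms (the expanding-soliton analogues of the $-|\II|^2$ term in the usual Huisken monotonicity computation) must be cancelled or reabsorbed against the $\widetilde{X}^\perp$-contribution.
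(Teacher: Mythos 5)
Your argument for the first inequality is essentially the paper's: logarithmic differentiation of $t^{-p}e^{-\tilde r^2/(\Upsilon t)}$, the evolution of $\tilde{r}^2$ via Corollary \ref{C0evol} and Lemma \ref{degreeevol}, and a gradient estimate $|\nabla\tilde r^2|^2\leq C_1\tilde r^2$ coming from the uniform bound $v^2\leq C_v$, arriving at the same admissible range $\Upsilon\geq C_1$, $0<p\leq 2n/\Upsilon$. This part is fine and matches the paper.

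For the second inequality, the overall structure you propose is the paper's as well (compute the rescaled evolution of the expander quantity $Y = \widetilde H - \widetilde X^\perp$, then apply the Cauchy--Schwarz/Kato step), but the content of the lemma is precisely the computation you defer. You identify it yourself as "the main obstacle" and then justify it only by appeal to a "gradient-flow identity for the (pseudo-)Gaussian area" --- this is not a soft structural fact and the paper makes no such appeal; it simply does the bookkeeping. Concretely, writing $\widetilde{W}_A=\widetilde{H}_A+\widetilde{Q}_A$ with $Q_A=\langle X,\nu_A\rangle$, Lemmas \ref{evolcurv}, \ref{Generalnormaloneform} and \ref{degreeevol} give $\hos\widetilde{H}_A = -\widetilde{S}^B_A\widetilde{H}_B + \widetilde{H}_A$ (degree $-1$) and $\hos\widetilde{Q}_A = -\widetilde{S}^B_A\widetilde{Q}_B - 2\widetilde{H}_A - \widetilde{Q}_A$ (degree $+1$), where $\widetilde{S}^B_A$ is the nonnegative tensor $\langle\nu^B,\widetilde\II_{ij}\rangle\langle\widetilde\II^{ij},\nu_A\rangle$; the linear terms sum to $\widetilde{H}_A - 2\widetilde{H}_A - \widetilde{Q}_A = -\widetilde{W}_A$ and the quadratic pieces to $-\widetilde{S}^B_A\widetilde{W}_B$. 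So your claimed structure (after correcting the slip $\hos^\perp Y = \tilde\Delta^\perp Y - Y + Q$ to $\hos^\perp Y = -Y + Q$, since $\hos$ already subtracts the Laplacian) does hold, but asserting it without this verification is the gap. A further snag in your proposed route: differentiating $\nabla^\perp_j X^\perp = -\II(X_j, X^\top)$ spatially only produces the Laplacian part of the heat operator; the $\partial_s$ contribution still needs the frame evolution of Lemma \ref{ddtnu}, which is exactly what Lemma \ref{Generalnormaloneform} packages so that both pieces come out together.
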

\begin{proof}
We have that 
\[\hos \tilde{r}^2 = -2n - 2\tilde r^2-(\tilde{v}^2-m).\]
Since
\[\hos e^f = e^f\left(\hos f - |\n f|^2\right),\]
if we let $f = -\frac{\tilde{r}^2}{\Upsilon t}-p\log t$, we see that
\begin{flalign*}
 \hos t^{-p}e^{-\frac{\tilde{r}^2}{\Upsilon t}}&= t^{-p}e^{-\frac{\tilde{r}^2}{\Upsilon t}}\left(\frac 1 {\Upsilon t} (2n+2\tilde{r}^2+(\tilde{v}^2-k)) +\frac{\tilde{r}^2}{\Upsilon t^2} - \frac{p}{t} - \frac{4\tilde{r}^2|\n \tilde{r}|^2}{\Upsilon^2 t^2}\right)\\
 &\geq t^{-p}e^{-\frac{\tilde{r}^2}{\Upsilon t}}\left(\frac 1 {\Upsilon t} (2n+2\tilde{r}^2+(\tilde{v}^2-k)-\Upsilon p) +  \frac{\tilde{r}^2(\Upsilon-4C_v)}{\Upsilon^2 t^2}\right).
\end{flalign*}
Hence, taking $\Upsilon= 4C_v$ and $p = \frac{2n}{4C_v}$ gives the first claim.

Since $H_A$ is degree $-1$, using Lemmas \ref{evolcurv} and \ref{degreeevol} we obtain that
\[\hos \widetilde{H}_A=- \widetilde H_B\ip{\nu^B}{\widetilde \II_{ij}}\ip{\widetilde \II^{ij}}{\nu_A}+\widetilde H_A.\]
Similarly, for $Q_A:=\ip{X}{\nu_A}$ of degree $1$, we may use Lemmas \ref{Generalnormaloneform} and \ref{degreeevol} to yield
\[\hos \widetilde{Q}_A = -\widetilde Q_B\ip{\nu^B}{\widetilde \II_{ij}}\ip{\widetilde \II^{ij}}{\nu_A}-2\widetilde H_A-\widetilde Q_A.\]
Writing $\widetilde{W}_A=\widetilde{H}_A+\widetilde{Q}_A$ and the nonnegative 2-tensor $\widetilde{S}^B_A = \ip{\nu^B}{\widetilde \II_{ij}}\ip{\widetilde \II^{ij}}{\nu_A}$, we have that
\begin{align*}
 \hos \widetilde W_A & = -\widetilde W_B \widetilde S^B_A - \widetilde{W}_A
\end{align*}
 As $\|\widetilde H - \widetilde X^\perp\|^2 = \sum_{A=1}^m \widetilde{W}_A^2$, we have that
\begin{flalign*}
\hos \|\widetilde H - \widetilde X^\perp\|^2 &= -2\widetilde W_B \widetilde S^{BA} \widetilde W_A - 2\|\widetilde H - \widetilde X^\perp\|^2 - 2\|\n^\perp( \widetilde H - \widetilde X^\perp)\|^2\\
 &\leq -2\widetilde W_B \widetilde S^{BA} \widetilde W_A - 2\|\widetilde H - \widetilde X^\perp\|^2 - \frac{1}{2}\frac{\left|\n \| \widetilde H - \widetilde X^\perp\|^2 \right|^2}{\| \widetilde H - \widetilde X^\perp\|^2}.
\end{flalign*}
The second claim follows.
\end{proof}

Given equation \eqref{Xvsr}, on $\tilde{M}_t$, we now know that for $c>c_0$ we have 
\begin{equation} 1<c+|\tilde{X}|^2=:\tau_c\qquad \text{ and } \qquad \lim_{R\ra\infty}\inf_{\ti M_t\setminus\mathcal{C}_R} \tau_c \ra \infty.
 \label{NewXest}
\end{equation}
This leads us to a further evolution inequality along the renormalised flow.

\begin{lemma}
There exists $c>0$ such that, along the renormalised flow, 
 \[\hos \|\widetilde H - \widetilde X^\perp\|^2 \tau_c^{-1} \leq 0.\]
\end{lemma}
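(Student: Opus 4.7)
The plan is to compute $\hos(F\tau_c^{-1})$ where $F=\|\widetilde H - \widetilde X^\perp\|^2$, using the product rule
\[ \hos(F\tau_c^{-1}) = \tau_c^{-1}\hos F + F\hos\tau_c^{-1} - 2\ip{\n F}{\n\tau_c^{-1}}, \]
together with the evolution inequality for $F$ already established in the preceding lemma, and a direct computation for $\tau_c^{-1}$.

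First I would compute $\hos\tau_c$. Since $|X|^2$ is of degree $2$, Lemma \ref{degreeevol} combined with $\ho|X|^2=-2n$ from Corollary \ref{C0evol} gives $\hos|\widetilde X|^2 = -2n - 2|\widetilde X|^2$, hence $\hos\tau_c = -2\tau_c + 2c - 2n$. A standard calculation then yields
\[ \hos\tau_c^{-1} = -\tau_c^{-2}\hos\tau_c - 2\tau_c^{-3}|\n\tau_c|^2 = 2\tau_c^{-1} - (2c-2n)\tau_c^{-2} - 2\tau_c^{-3}|\n\tau_c|^2. \]
Substituting this and the gradient inequality for $F$ from the previous lemma, the $+2F\tau_c^{-1}$ coming from $F\hos\tau_c^{-1}$ exactly cancels the $-2F\tau_c^{-1}$ coming from $\tau_c^{-1}\hos F$, leaving
\[ \hos(F\tau_c^{-1}) \leq -\tfrac{1}{2}\tau_c^{-1}\tfrac{|\n F|^2}{F} - (2c-2n)F\tau_c^{-2} - 2F\tau_c^{-3}|\n\tau_c|^2 + 2\tau_c^{-2}\ip{\n F}{\n\tau_c}. \]

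The main step is to absorb the cross term $2\tau_c^{-2}\ip{\n F}{\n\tau_c}$ into the two negative gradient terms. By Cauchy--Schwarz followed by Young's inequality with the weights $a=\tau_c^{-1/2}|\n F|/\sqrt{2F}$ and $b=\sqrt{2F}\,\tau_c^{-3/2}|\n\tau_c|$, one has $2ab \leq a^2+b^2$, which gives precisely
\[ 2\tau_c^{-2}|\ip{\n F}{\n\tau_c}| \leq \tfrac{1}{2}\tau_c^{-1}\tfrac{|\n F|^2}{F} + 2F\tau_c^{-3}|\n\tau_c|^2. \]
Plugging this in, the gradient terms disappear and we are left with
\[ \hos(F\tau_c^{-1}) \leq -(2c-2n)F\tau_c^{-2}. \]

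Finally, choosing $c \geq \max\{c_0+1, n\}$ (so that $\tau_c>0$ by \eqref{NewXest} and the coefficient $2c-2n\geq 0$), the right-hand side is nonpositive since $F\geq 0$, which is the desired conclusion. I do not anticipate a serious obstacle: the only place that requires care is matching the coefficients in Young's inequality so that exactly the two available negative terms are produced, and ensuring $c$ is large enough simultaneously for \eqref{NewXest} and for the coefficient $2c-2n$ to be nonnegative.
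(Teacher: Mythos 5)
Your proposal is correct and follows essentially the same route as the paper: expand $\hos(F\tau_c^{-1})$ by the product rule, use $\hos|\widetilde X|^2=-2n-2|\widetilde X|^2$ to compute $\hos\tau_c^{-1}$, and absorb the cross term $2\tau_c^{-2}\ip{\n F}{\n\tau_c}$ with Young's inequality into the two available negative gradient terms before taking $c>n$. The only cosmetic difference is bookkeeping: the paper keeps the $-2f\tau_c^{-1}$ term and writes the final bound as $f\tau_c^{-1}\big(-2+2(n+|\widetilde X|^2)\tau_c^{-1}\big)$, which one checks equals your $-(2c-2n)F\tau_c^{-2}$, so the two endpoints coincide.
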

\begin{proof}
 Writing $f=\|\widetilde H - \widetilde X^\perp\|^2$ and estimating using Young's inequality:
\begin{flalign*}
\hos f \tau^{ -1}_c&\leq -2f\tau_c^{-1} -\frac{|\n f|^2}{2f}\tau_c^{-1}- 2 \ip{\n f}{\n \tau_c^{-1}}\\
&\qquad+f\tau_c^{  -1}\left( -\tau_c^{-1}\hos |\widetilde X|^2  - 2\tau_c^{-2}|\n|\widetilde X|^2|^2\right)\\
&\leq f\tau_c^{  -1}\left(-2+2(n+|\widetilde X|^2)\tau_c^{-1}\right)  .
\end{flalign*}
For $c> n$, we may estimate $(n+|\widetilde X|^2)\tau^{-1}<1$, giving the result.
\end{proof}

Finally, we use an argument similar to \cite[Theorem A.2]{ClutterbuckSchnuerer} to get our claimed convergence.
\begin{proof}[Proof of Theorem \ref{ConvergenceTheorem}]
We write $f=\|\widetilde H - \widetilde X^\perp\|^2 \tau_c^{-1}$. Clearly $f$ is bounded. We claim that
\begin{equation}\limsup_{t\ra\infty} \sup_{M_t} f =0 .
 \label{fconvergence}
\end{equation}

Let $\e>0$. We first show that outside cylinders of sufficiently large radii and for sufficiently large times, $f$ is smaller than $\e$: Under the renormalised flow $H$ is bounded, so we only need to show that $\|\widetilde{X}^\perp\|^2\tau_c^{-1}$ is small. We first wait until 
\[t_0 = 2\log\left(\frac{2C}{\e}\right)\]
where $C$ is as in Lemma \ref{Xperpest}. By Lemma \ref{Xperpest}, there exists  $R=R(t_0,L)$ such that for all $t\in[0,2t_0)$, on $M_t\setminus \mathcal{C}_R$, 
$$f \leq Ce^{-\frac{t}{2}} .$$
In particular, for all $t\in[t_0,2t_0)$, on $M_t\setminus \mathcal{C}_R$, 
$$f \leq \frac{\e}{2} .$$

  On the interval $[t_0, \infty)$, for $a,\delta>0$  we consider the function
\[g = f-\e -a\psi ,\quad\text{where}\quad\psi = (t-t_0+\d)^{-p}e^{-\frac{\tilde{r}^2}{\Upsilon(t-t_0+\d)}}.\]
 We choose $p,\Upsilon>0$ as in Lemma \ref{finalevols} so that the heat operator acting on $\psi$ is positive.  Furthermore, after choosing $\d>0$ small (so that this is a smooth function at time $t=t_0$) we observe that there exists  $a>0$ such that at time $t_0$, $g<0$. Outside $\mathcal{C}_R$, this is trivially true, and inside this follows since $\psi$ is strictly positive and continuous, and $f$ is bounded.

For all  $t>t_0$, Lemma \ref{Xperpest} guarantees that the set such that $f\geq\frac{\e}{2}$ is compact. Observing that 
\[\ho g \leq 0 ,\]
we may therefore apply the weak maxiumum principle on larger and larger compact domains to imply that $g\leq 0$. Since $\psi$ decays uniformly to zero, 
\[\limsup_{t\ra\infty} \sup_{\tilde{M}_t} f \leq 2\e .\]
Since $\e$ was arbitrary, \eqref{fconvergence} holds.

As a result, for any $t_i\ra\infty$, for all $1\leq j\in\bb{N}$ Proposition \ref{prop:tame.est} and Corollary \ref{sensibleestimates} we have uniform curvature and higher order estimates on $\mathcal{C}_j\cap\tilde{M}_{t_i}$. Arzel\'a--Ascoli and \eqref{fconvergence} imply there exists a subsequence which converges to a portion of MCF expander as $i\ra\infty$. Repeating this argument for each $j$ and taking a diagonal sequence implies the statement.
\end{proof}

\begin{appendix}

\section{Spacelike mean curvature flow of graphs}\label{MCFasGraphs}
In this appendix we consider the mean curvature flow equation in terms of a graph; that is, for a function $\U:\bb{R}^n\times[0,T)\ra\bb{R}^m$ defining the parametrisation $\hat{X}:\bb{R}^n\times[0,T) \ra \bb{R}^{n,m}$ by
\[\hat{X}(x,t) = x^if_i + \U^A(x,t)e_A .\]
We see that
\[\hat{X}_i = f_i +D_i\U(x,t)^Ae_A, \qquad g_{ij}= \delta_{ij} - D_i\U^AD_j\U_A,\]
and we consider the flow only when it is spacelike; i.e.~$g_{ij}>0$ as a matrix. Equation (\ref{UnparametrisedMCF}) now reads
\[\left(\ddt {\hat X} \right)^\perp = H = g^{ij}\left(\frac{\partial^2 \hat X}{\partial x^i\partial x^j}\right)^\perp,\]
which implies 
\[\left(\ddt{\U^A} -g^{ij}D_{ij}^2\U^A \right)e_A^\perp=0 .\]
We observe that
\begin{align*}
\hat{g}_{AB}&:=\ip{e_A^\perp}{e_B^\perp} \\
&\,\,= \ip{e_A - \ip{e_A}{\hat{X}_i}g^{ij}\hat{X}_j}{e_B- \ip{e_B}{\hat{X}_k}g^{kl}\hat{X}_l}=-\delta_{AB}-D_l\U_Ag^{lk}D_k\U_B.
\end{align*}
At any point we may take coordinates on $\bb{R}^n$ so that $D_i\U^AD_j\U_A$ is diagonal with eigenvalues $\l_i\in[0,1)$. We see that (without summation in $A$),
\begin{equation}\hat{g}_{AA} = -1 -\sum_j \frac{(D_j\U_A)^2}{1-\l_j}<-1 ,
 \label{eIidentity}
\end{equation}
and so $\hat{g}$ is a symmetric negative definite matrix (which is bounded as $\l_i<1$). Hence, $\hat{g}$ is invertible and spacelike mean curvature flow is equivalent to
\begin{equation}\label{eq:graph.MCF}\displaystyle\ddt{\U^A} -g^{ij}(D\U)D^2_{ij} \U^A=0 \qquad \text{for } A=1,\ldots, m\text{ on } \bb{R}^n \times[0,T)
\end{equation}
where, as usual, $g^{ij}$ is the inverse of $g_{ij}=\delta_{ij} - D_i\U^AD_j\U_A$.

We see that the gradient function
\[v^2 = \sum_A \|e_A^\perp\|^2 = \sum_A -\hat{g}_{AA} = m + D_i\U_Ag^{ij}D_j\U^A=m-n+\sum_i \frac{1}{1-\l_i}\ .\]
Clearly while the gradient function is uniformly bounded, $\l_i<1$ and therefore $g_{ij}$ is positive definite and (\ref{EntireMCF}) is uniformly parabolic. Hence $v^2$ acts as both an estimate on how spacelike the surface is, and also the parabolicity of the PDE. We observe that this is equivalent to a bound for some $c\in(0,1)$ 
\[D_vu\cdot \t:=v^iD_iu^A\t_A<1-c\text{ for all } v\in \bb{R}^n, \t\in \bb{R}^m \text{ where } \|\t\|=1, |v|=1\ .\]
One way to see this is to consider $v$ and $\t$ which are maximisers of $D_vu\cdot \t$ and note that then, due to properties of maximisers, $v$ is the largest eigenvector of $D_{i}u^AD_{j}u_A$ with eigenvalue $\l_i=(D_vu\cdot\t)^2$. The claimed equivalence now follows.

In the Neumann boundary condition case, over the domain $\Omega$ the same equation \eqref{eq:graph.MCF}  holds, but we still need to consider the boundary condition. We require that $\mu$, which by abuse of notation is both the unit normal to $\Sigma$ and the unit normal to $\partial \Omega$, is in $T M$.  Thus
\[\mu = \mu^if_i = \mu^\top = \ip{\mu}{\hat{X}_i}g^{ij}\hat{X}_j = \mu_ig^{ij}\hat{X}_j,\]
so for all $1\leq A\leq m$,
\begin{equation}0=\mu_ig^{ij}D_j\U^A.
 \label{partbdry}
\end{equation}
Multiplying by $D_l\U_A$ and summing over $A$ we have that
\[0=\mu_ig^{ij}D_j\U^AD_l\U_A = \mu_ig^{ij}(\delta_{jl}-g_{jl}) = \mu_ig^{il}-\mu_l \ .\]
Substituting this back in to (\ref{partbdry}) yields that for all $1\leq A\leq m$
\[D_\mu \U^A = \mu^iD_i \U^A = 0.\]

In graphical coordinates the spacelike MCF with Neumann boundary condition thus becomes that, for all $ I=1,\ldots, m$,
\begin{equation}
 \begin{cases}
  \displaystyle\ddt{\U^A} -g^{ij}(D\U)D^2_{ij} \U^A=0 & \text{ on } \Omega \times[0,T),\\[4pt]
  D_\mu \U^A=0 & \text{ on } \partial \Omega \times[0,T),\\
  \U^A(\cdot,0) =\U^A_0(\cdot) & \text{ on } \Omega,
 \end{cases}
\end{equation}
where $g^{ij}$ is strictly positive definite and bounded if and only if  $v<C<\infty$.

\section{Uniqueness of MCF over compact domains.}\label{app:unique}
We demonstrate the uniqueness of solutions to the Dirichlet and Neumann problems simultaneously. 
\begin{proposition}\label{Uniqueness}
 Suppose that $\Omega\subset \bb{R}^n$ is bounded, with smooth boundary $\partial \Omega$. Let $u, z \in C^{2;1}(\Omega \times [0,T))\cap C^\infty (\Omega \times (0,T))$ be solutions to either \eqref{MCFgraphNeumann} or \eqref{MCFgraphDirichlet} which are uniformly spacelike; i.e.~there exists  $c\in(0,1)$ such that 
 \[|D\U^A|,|D\hat{z}^A|<1-c\qquad \text{ for all } 1 \leq A \leq m .\]
If $\hat{u}(\cdot,0)=\hat{z}(\cdot,0)$ then for all $t\in[0,T)$, $\hat{u}(\cdot,t)=\hat{z}(\cdot,t)$.
 \end{proposition}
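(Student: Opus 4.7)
The plan is to form the difference $w^A := \hat{u}^A - \hat{z}^A$, derive a linear parabolic inequality for $|w|^2 := \sum_A (w^A)^2$, and apply the parabolic maximum principle together with the respective boundary condition.

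First, I would subtract the PDEs for $\hat{u}^A$ and $\hat{z}^A$. Since the coefficients $g^{ij}(p)$ depend smoothly on $p = D\hat{u}$ and since $|D\hat{u}|, |D\hat{z}| \leq 1-c$, we may write
\[
g^{ij}(D\hat{u}) - g^{ij}(D\hat{z}) = \int_0^1 \frac{\partial g^{ij}}{\partial p^B_k}\bigl(D\hat{z} + s\, Dw\bigr)\, ds \cdot D_k w^B =: a^{ij}_{kB}\, D_k w^B,
\]
where $a^{ij}_{kB}$ is uniformly bounded thanks to the gradient bound $1-c$. Subtracting the equations yields
\[
\partial_t w^A - g^{ij}(D\hat{u})\, D^2_{ij} w^A = a^{ij}_{kB}\, (D^2_{ij}\hat{z}^A)\, D_k w^B =: b^{kA}_B\, D_k w^B,
\]
with $b^{kA}_B$ bounded on $\Omega \times [0,T')$ for any $T' < T$ by the assumed $C^{2;1}$ regularity of $\hat{z}$ and the uniform spacelikeness.

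Next, I would compute the evolution of $|w|^2$. Using $g^{ij} D^2_{ij} |w|^2 = 2\sum_A w^A g^{ij} D^2_{ij} w^A + 2\sum_A g^{ij} D_i w^A D_j w^A$, we obtain
\[
\partial_t |w|^2 - g^{ij} D^2_{ij} |w|^2 = -2\sum_A g^{ij} D_i w^A D_j w^A + 2\sum_A w^A b^{kA}_B\, D_k w^B.
\]
Uniform spacelikeness gives a uniform lower bound $g^{ij}\xi_i\xi_j \geq c'|\xi|^2$, so the first term is strongly negative. Using Young's inequality, the cross term is bounded above by $\sum_A g^{ij} D_i w^A D_j w^A + C|w|^2$ for some $C = C(c, \|b\|_\infty)$. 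Absorbing gives the key differential inequality
\[
\partial_t |w|^2 - g^{ij} D^2_{ij} |w|^2 \leq C|w|^2 \quad \text{on } \Omega \times [0, T').
\]

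Finally, I would apply the maximum principle to $f := e^{-Ct}|w|^2$, which satisfies $\partial_t f \leq g^{ij} D^2_{ij} f$ and vanishes at $t = 0$. In the Dirichlet case, $w^A = 0$ on $\partial\Omega$ gives $f = 0$ on the parabolic boundary, and the weak maximum principle forces $f \equiv 0$. In the Neumann case, $D_\mu w^A = 0$ on $\partial\Omega$ implies $D_\mu f = 2e^{-Ct}\sum_A w^A D_\mu w^A = 0$ on $\partial\Omega$, and the boundary-point (Hopf) version of the maximum principle, as in \cite[Theorem 3.1]{Stahlfirst}, again yields $f \equiv 0$. In either case $w \equiv 0$, proving the claim on $[0,T')$; since $T' < T$ was arbitrary, uniqueness holds on $[0,T)$.

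The main technical point, rather than a real obstacle, is to verify that the uniform spacelikeness bound genuinely gives uniform ellipticity of $g^{ij}(D\hat{u})$ and uniform bounds on the derivatives $\partial g^{ij}/\partial p^B_k$, so that the coefficient $b^{kA}_B$ appearing in the linearised system is controlled on $\Omega \times [0,T')$; this is exactly the content of the calculations in Appendix~\ref{MCFasGraphs}.
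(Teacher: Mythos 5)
Your proposal is correct and follows essentially the same route as the paper: form the difference, linearise the coefficient via the fundamental theorem of calculus, derive a Gronwall-type inequality for the squared norm of the difference, and close with the maximum principle under the respective boundary condition. The only cosmetic differences are that you make the Young's-inequality absorption step explicit and work on compact time subintervals $[0,T')$ before letting $T' \to T$, which is a slightly more careful way of controlling $\sup |D^2 \hat{z}|$ than the paper's presentation.
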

 \begin{proof}
  We consider $y^A=\U^A-\hat{z}^A$.
  \begin{flalign*}
   \ddt{}y^A&= g^{ij}(D\U) D_{ij}y^A+[g^{ij}(D\U)-g^{ij}(D\hat{z})] D_{ij}\hat{z}^A\\
   &=g^{ij}(D\U) D_{ij}y^A+\int_0^1\frac{d}{d\tau}g^{ij}(Du_\tau) d\tau D_{ij} \hat{z}^A\\
   &=g^{ij}(D\U) D_{ij}y^A-D_l(\U-\hat z)_B\int_0^12D_ku_\tau^Bg^{ik}(Du_\tau)g^{jl}(Du_\tau)d\tau D_{ij}\hat{ z}^A\\
   &=g^{ij}(D\U) D_{ij}y^A-D_ly_B V^{Blij} D_{ij}\hat{ z}^A
  \end{flalign*}
where $u_\tau = \tau \U- (1-\tau) \hat{z} $ and $V^{Blij}$ is bounded due to spacelikeness of $\U$ and $\hat z$. We see that considering everything graphically,
\begin{flalign*} 
\ddt{}\|y\|^2 &= g^{ij}(D\U)D_{ij}\|y\|^2 -D_ly_B V^{Blij} D_{ij}\hat{ z}^Ay_A - 2\sum_{A=1}^m D_iy_Ag^{ij}(D\U)D_jy_A\\
&\leq g^{ij}(D\U)D_{ij}\|y\|^2+C\|y\|^2\ .
\end{flalign*}
where $C$ depends on $C_v$ and $\underset{\Omega \times[0,T)}\sup|D^2v|$. If $\hat{u}$, $\hat{z}$ satisfy \eqref{MCFgraphDirichlet} then on $\partial \Omega$, $\|y\|^2=0$. On the other hand if $\hat{u}$, $\hat{z}$ satisfy \eqref{MCFgraphNeumann} then at the boundary $D_\mu \|y\|^2=0$.

In both cases we may apply the maximum principle to yield
\[\sup_{\Omega}\|y\|^2(\cdot,t) \leq e^{Ct}\sup_{\Omega}\|y\|^2(\cdot,0)\]
which implies the result.
 \end{proof}

\section{Evolution of symmetric 2-tensors}\label{app:evol}

We derive a general evolution equation for a symmetric 2-tensor along a spacelike mean curvature flow $M_t$.
\begin{lemma}\label{Generaltensorevol}
 Suppose that $T$ is a smooth symmetric 2-tensor on $\bb{R}^{n,m}$. We write the restriction of this tensor to $TM_t$ as
 \[T_{ij} = T(X_i, X_j).\]
 Then
 \[\n_kT_{ij} = \ov\n_kT_{ij} +T(\II_{ki},X_j)+T(X_i,\II_{kj}).\]
 and
 \begin{flalign*}
 \ho T_{ij}& = H^Ah_{Ai}^kT_{kj}+H^Ah_{Ai}^kT_{kj}-g^{kl}\ov\n_k\ov\n_l T_{ij}-2\ov\n_kT(\II_i^k,X_j)\\
 &\qquad -2\ov\n_kT(X_i,\II_j^k) -h_{ki}^Bh_{Bl}^pT_{pj}-h_{kj}^Bh_{Bl}^pT_{pi}-2h^A_{ik}h^{Bk}_jT_{AB}.
\end{flalign*}
 \end{lemma}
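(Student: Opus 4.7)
The plan is to derive both formulas by combining the tensor derivative definition \eqref{tensorderiv} with the Gauss and Weingarten formulas, much as in the proof of Lemma \ref{Generalnormaloneform} but now with two tangential arguments instead of one normal argument.

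For the gradient formula, I would expand
\[\n_kT_{ij}=X_k\bigl(T(X_i,X_j)\bigr)-T(\n_kX_i,X_j)-T(X_i,\n_kX_j)\]
via \eqref{tensorderiv}, use $X_k(T(X_i,X_j))=(\ov\n_{X_k}T)(X_i,X_j)+T(\ov\n_kX_i,X_j)+T(X_i,\ov\n_kX_j)$, and apply the Gauss formula $\ov\n_kX_i=\n_kX_i+\II_{ki}$ so that the tangential corrections cancel, leaving the claimed $\ov\n_kT_{ij}+T(\II_{ki},X_j)+T(X_i,\II_{kj})$.

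For the evolution equation, I would compute $\ddt{T_{ij}}$ and $\Delta T_{ij}=g^{lk}\n_l\n_kT_{ij}$ separately and subtract. The time derivative uses $\ov\n_{\ddt{}}X_i=\ov\n_iH$ together with the Weingarten decomposition $\ov\n_iH=H^Ah_{Ai}^kX_k+\np_iH$ (obtained by differentiating $\ip{\nu_A}{X_k}=0$), yielding
\[\ddt{T_{ij}}=(\ov\n_HT)(X_i,X_j)+H^Ah_{Ai}^kT_{kj}+H^Ah_{Aj}^kT_{ik}+T(\np_iH,X_j)+T(X_i,\np_jH).\]
For the Laplacian, I would apply the gradient formula twice. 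The piece $\n_l(\ov\n_kT_{ij})$ is handled by treating $\ov\n T$ as an ambient $(0,3)$-tensor and reapplying the Gauss expansion in each slot, which after tracing with $g^{lk}$ gives
\[g^{lk}(\ov\n_l\ov\n_kT)(X_i,X_j)+(\ov\n_HT)(X_i,X_j)+2\ov\n_kT(\II_i^k,X_j)+2\ov\n_kT(X_i,\II_j^k).\]
For the remaining terms $\n_l(T(\II_{ki},X_j))$ and $\n_l(T(X_i,\II_{kj}))$, I would decompose $\ov\n_l\II_{ki}$ using $\II_{ki}=h_{ki}^A\nu_A$ into its normal part and its tangent part $h_{ki}^Ah_{lp}^Ag^{mp}X_m$, the latter coming from the Weingarten identity $(\ov\n_l\nu_A)^\top=g^{mp}h_{lp}^AX_m$. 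After tracing with $g^{lk}$, Codazzi--Mainardi collapses the resulting $g^{lk}\np_l\II_{ki}$ to $\np_iH$, producing $T(\np_iH,X_j)+T(X_i,\np_jH)$; the tangential part contributes the $h_{ki}^Bh_{Bl}^pT_{pj}+h_{kj}^Bh_{Bl}^pT_{pi}$ terms (with implicit $g^{lk}$ trace); and the boundary-term pairings $T(\II_{ki},\II_{lj})+T(\II_{li},\II_{kj})$ contribute $2h^A_{ik}h^{Bk}_jT_{AB}$ upon expanding in the $\nu_A$ frame.

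Subtracting $\Delta T_{ij}$ from $\ddt{T_{ij}}$, the $(\ov\n_HT)(X_i,X_j)$ terms cancel and the $T(\np H,\cdot)$ pieces cancel pairwise, leaving exactly the stated identity. The main technical nuisance is the careful handling of $\ov\n_l\II_{ki}$: one must use the Weingarten relation for the (timelike, orthonormal) normal frame to extract the tangential part, and invoke Codazzi--Mainardi after the $g^{lk}$-trace so that the normal-derivative pieces coming from $\Delta T_{ij}$ exactly match (with the right sign) those produced by $\ddt{T_{ij}}$. Everything else is routine Gauss--Weingarten bookkeeping, analogous to the proof of Lemma \ref{Generalnormaloneform}.
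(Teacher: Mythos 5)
Your approach matches the paper's: derive the gradient formula from the Gauss formula and the Leibniz rule, compute $\ddt{}T_{ij}$ by decomposing $\ov\n_iH=\np_iH+H^Ah_{Ai}^kX_k$, compute $\Delta T_{ij}$ by iterating the gradient formula, trace, and invoke Codazzi--Mainardi so the $g^{lk}\np_l\II_{ki}=\np_iH$ pieces cancel against those in the time derivative. The one slip is in the bookkeeping of the factor $2$: tracing $\n_l(\ov\n_kT_{ij})$ by applying the Gauss expansion to each slot of the ambient $(0,3)$-tensor $\ov\n T$ yields only \emph{one} copy of $\ov\n_kT(\II_i^k,X_j)$ (and one of $\ov\n_kT(X_i,\II_j^k)$), not two; the second copy arises from the ambient-derivative-of-$T$ term $\ov\n_lT(\II_{ki},X_j)$ in the Leibniz expansion of $\n_l\bigl(T(\II_{ki},X_j)\bigr)$, which your itemised account of that piece (normal part of $\ov\n_l\II$, tangent part via Weingarten, and the $T(\II,\II)$ pairings) omits. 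This is self-correcting once the calculation is carried out, but as written you double-count from the first source and miss the contribution from the second.
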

 \begin{proof}
  We have that
  \begin{flalign*}
  \ddt{}T_{ij} &=\ov \n_H T_{ij} +T(\ov \n_i H, X_j)+T(X_i,\ov \n_j H)\\
   &=\ov \n_H T_{ij} +T(\np_i H, X_j)+T(X_i,\np_j H)+H^Ah_{Ai}^kT_{kj}+H^Ah_{Aj}^kT_{ik},
  \end{flalign*}
  where we used that
  \[\ov \n_i H = \np_i H + H^Ah_{Ai}^kX_k .\]
We calculate
\[\n_kT_{ij} = \ov\n_kT_{ij} +T(\II_{ki},X_j)+T(X_i,\II_{kj}) .\]
We also see
\begin{flalign*}
 \n_l\n_kT_{ij} &= \ov\n_l\ov\n_kT_{ij} + \ov \n_{\II_{kl}}T_{ij} + \ov \n_{k}T(\II_{li}, X_j)+ \ov \n_{k}T(X_i,\II_{lj})\\
 &\qquad+ \ov\n_lT(\II_{ki}, X_j)+T(\np_l \II_{ki},X_j)+h_{ki}^Bh_{Bl}^pT_{pj}+T(\II_{ki}, \II_{lj})\\
 &\qquad+ \ov\n_lT(X_i,\II_{kj})+T(X_i,\np_l \II_{kj})+h_{kj}^Bh_{Bl}^pT_{ip}+T(\II_{li}, \II_{kj}) .
\end{flalign*}
Putting this together and using Codazzi--Mainardi gives the evolution of $T_{ij}$.
\end{proof}

\section{An extension of boundary quantities}\label{app:extension}
We demonstrate the following.
\begin{lemma}\label{extensionexists}
For $\Sigma$ as in Section \ref{s:Neumann}, there exists a smooth extension of $\mu$ and $\AS$,  the outward normal and the second fundamental form of $\Sigma$ respectively, such that on $\Sigma$,
\[\ov\n_\mu \mu =0, \qquad \ov\n_\mu\AS(X,Y)=0.\]
\end{lemma}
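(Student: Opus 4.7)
The idea is the standard ``parallel transport along normal lines'' extension, adapted to the product structure $\Sigma=\partial\Omega\times\bb{R}^m$. The key point is that although the ambient metric is indefinite, the vector $\mu$ is spacelike of unit length, so the geometry of $\Sigma$ in the $\mu$-direction is genuinely Riemannian.

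First I would construct a tubular neighbourhood. Since $\partial\Omega\subset\bb{R}^n$ is smooth and compact, there exists $\varepsilon>0$ such that the map
\[
\Phi:\Sigma\times(-\varepsilon,\varepsilon)\ra\bb{R}^{n,m},\qquad
\Phi(y,s):=y+s\,\mu(y),
\]
is a diffeomorphism onto an open neighbourhood $U$ of $\Sigma$. Here $\mu(y)$ depends only on the $\bb{R}^n$-component of $y\in\partial\Omega\times\bb{R}^m$, and the translation-invariance in the $\bb{R}^m$-directions guarantees that $\varepsilon$ can be taken uniform.

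Next I would extend $\mu$ and $\AS$ to $U$ by making them constant along the normal lines $s\mapsto\Phi(y,s)$. Concretely, for $p=\Phi(y,s)\in U$ define
\[
\tilde\mu(p):=\mu(y),\qquad \tilde\AS(p)(V,W):=\AS(y)\bigl(\pi_*V,\pi_*W\bigr),
\]
where $\pi:U\to\Sigma$ is the projection $\pi(\Phi(y,s))=y$, and $\AS$ is regarded as a symmetric $(0,2)$-tensor on $\bb{R}^{n,m}$ by declaring it to vanish on $\mu$ (which is well-defined since $T_y\Sigma\oplus\bb{R}\mu(y)=\bb{R}^{n,m}$). Then the integral curves of $\tilde\mu$ are precisely the normal lines $s\mapsto y+s\mu(y)$, which are straight lines in $\bb{R}^{n,m}$, and $\tilde\mu$ is constant along each of them. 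Hence $\ov\n_{\tilde\mu}\tilde\mu\equiv 0$ throughout $U$. The same reasoning gives $\ov\n_{\tilde\mu}\tilde\AS\equiv 0$ throughout $U$, since the tensor $\tilde\AS$ (when written in the product coordinates $(y,s)$) depends only on $y$.

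Finally, to produce globally defined smooth objects on $\bb{R}^{n,m}$, choose a smooth cutoff $\chi\in C^\infty_c(U)$ with $\chi\equiv 1$ on a smaller neighbourhood $U'\subset\subset U$ of $\Sigma$, and set
\[
\mu_{\mathrm{ext}}:=\chi\tilde\mu,\qquad \AS_{\mathrm{ext}}:=\chi\tilde\AS,
\]
extended by zero outside $U$. Since both quantities agree with $\tilde\mu$ and $\tilde\AS$ on the neighbourhood $U'$ of $\Sigma$, and $\ov\n_\mu(\cdot)$ at a point of $\Sigma$ depends only on the germ of the extension there, the identities $\ov\n_\mu \mu=0$ and $\ov\n_\mu\AS=0$ persist on $\Sigma$. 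Smoothness of the extensions, together with uniform bounds on $\mu_{\mathrm{ext}}$, $\AS_{\mathrm{ext}}$ and all their derivatives on $\bb{R}^{n,m}$, follow from the smoothness and compactness of $\partial\Omega$ and the translation-invariance in the $\bb{R}^m$-directions.

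There is no real obstacle: the only point requiring care is the existence of the tubular neighbourhood $U$, which in the indefinite setting needs the fact that $\mu$ is spacelike (so $\Phi$ has non-degenerate differential at $s=0$); this is built into the definition of $\Sigma$. Everything else is a routine cutoff argument.
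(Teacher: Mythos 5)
Your overall strategy — construct a tubular neighbourhood, transport $\mu$ and $\AS$ to be ``constant along the normal lines'', then cut off — is a genuinely different and conceptually cleaner route than the paper's. The paper instead works with the signed distance function $d$, sets $\widehat\mu=-Dd$ and $\widetilde A=D\widehat\mu$ (i.e.\ the Hessian of $d$), observes that $D_{\widehat\mu}\widetilde A=-\widetilde A^2$ is \emph{not} zero, and then corrects by hand, replacing $\widetilde A$ with $\widehat A=\widetilde A - d\,\widetilde A^2$ so that the normal derivative cancels on $\Sigma$. Your construction, done correctly, avoids that Riccati correction entirely, so it is a worthwhile alternative.

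However, as written the definition of $\tilde\AS$ is wrong, and this is a genuine gap, not a cosmetic one. Writing $p=\Phi(y,s)$, the differential of $\pi$ is $\pi_*V=(I+sD\mu)^{-1}V^\top$, which depends nontrivially on $s$. Consequently, for constant ambient fields $X,Y$ one finds on $\Sigma$ that $\tilde\mu\bigl(\tilde\AS(X,Y)\bigr)=-\AS(D\mu\,X^\top,Y^\top)-\AS(X^\top,D\mu\,Y^\top)$, which is $-2\AS^2$--type and does not vanish unless $\partial\Omega$ is flat. (Concretely: for the unit circle in $\bb{R}^2$ the pullback $\pi^*\AS$ scales like $r^{-2}$ along radii.) Your parenthetical remark ``declare it to vanish on $\mu$'' already contains the fix: one should first extend $\AS(y)$ to a bilinear form $\AS^{\mathrm{ext}}(y)$ on all of $\bb{R}^{n,m}$ (with $\mu(y)\in\ker$), and then set $\tilde\AS(\Phi(y,s)):=\AS^{\mathrm{ext}}(y)$ with \emph{no} $\pi_*$. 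Then $\tilde\AS$ is literally a fixed bilinear form (in the ambient affine coordinates) along each normal line, so $\ov\n_{\tilde\mu}\tilde\AS=0$ by the same argument you give for $\tilde\mu$. Relatedly, the justification ``$\tilde\AS$ depends only on $y$ in the product coordinates $(y,s)$'' is not by itself sufficient, since $(y,s)$ are curvilinear and $\ov\n$ has nontrivial Christoffel symbols in them; what actually matters is constancy in the flat ambient coordinates, which the corrected definition provides. The cutoff step is fine.
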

\begin{proof}
We consider the function $d:\bb{R}^n\ra \bb{R}$, defined to be the signed minimum distance to $\partial \Omega\subset \bb{R}^n$ where $d$ is positive in $\Omega$. Since $\partial \Omega$ is smooth, there exists a $c_d, C_d$ such that on the set $U:=\{x\in \bb{R}^n|d(x)<c_d\}$, $d$ is smooth and $|D d|+ |D^2 d|+|D^3d|<C_d$. On $U$, we define
\[\widehat{\mu} = -Dd, \qquad \widetilde{A}(X,Y) =\ip{D_X \widehat{\mu}}{Y} \ ,\]
and outside $U$ we take both to be zero. Standard properties of the distance function imply that on $U$, $D_{\widehat{\mu}} \widehat{\mu}=0$, and clearly on $\partial \Omega$, $\widetilde{A}$ is the second fundamental form of $\partial \Omega$. Since $\bb{R}^n$ is flat we have that
\[D_{\widehat{\mu}}\widetilde{A}(X,Y) = \ip{D_{\widehat{\mu}}(D_X {\widehat{\mu}}) - D_{D_{\widehat{\mu}} X} {\widehat{\mu}}}{Y}=\ip{D_X(D_{\widehat{\mu}}{\widehat{\mu}}) - D_{D_X{\widehat{\mu}} } {\widehat{\mu}}}{Y}=-\widetilde{A}^2(X,Y).\]
We therefore set 
\[\widehat{A}(X,Y) = \widetilde{A}(X,Y)-d\widetilde{A}^2(X,Y)\]
and we see that on $\partial \Omega$, $D_{\widehat{\mu}}\widehat{A}(X,Y)=0$. We now choose a smooth cutoff function $\chi:\bb{R}\ra\bb{R}$ such that $\chi(x)=1$ for $|x|<\frac{c_d}{3}$, $\chi(x)=0$ for $|x|>\frac{2c_d}{3}$. For $(x,y)\in \bb{R}^n\times\bb{R}^m$, define $\mu(x,y)=\chi(d(x))\widehat \mu(x)$ and define $\AS$ to be the pullback of $\chi(d(x))\widehat{A}$ by the standard projection. All claimed properties now hold.
\end{proof}

\end{appendix}

\bibliographystyle{plain}
\bibliography{bibspacelike}

\end{document}